\documentclass[11pt]{article}

\RequirePackage[OT1]{fontenc}
\RequirePackage{amsthm,amsmath}


\usepackage{latexsym,amsmath,amsfonts}

\usepackage{amssymb}
\usepackage{pstricks,pst-node,pst-tree}
\usepackage{thmtools,thm-restate}
\usepackage{graphicx,epsfig,epstopdf}
\usepackage{multirow}
\usepackage{natbib}
\usepackage{lastpage}
\usepackage{titletoc}
\usepackage{tikz}
\usepackage{pgfplots}
\usepackage{xcolor}

\usepackage{tikz}

\usepackage{xcolor}

\usepackage{amsmath}
\usepackage{amssymb}
\usepackage{graphicx}
\usepackage{xcolor}
\usepackage{array}
\usepackage{multirow}
\usepackage{caption}
\usepackage{enumitem}
\usepackage{natbib}
\usepackage{float}

\usepackage{fontenc}
\usepackage{multirow}
\usepackage{booktabs}
\usepackage{url}
\usetikzlibrary{shapes,arrows}

\usepackage{amsmath}
\usepackage{amssymb}
\usepackage{graphicx}
\usepackage{color}
\usepackage{hyperref}
\usepackage{url}
\usepackage{array}
\usepackage{multirow}
\usepackage{enumitem}
\usepackage{float}

\usepackage[margin=1.35truein]{geometry}


\def\style{authordate4}

\tikzstyle{block} = [rectangle, draw, fill=blue!20,
text width=5em, text centered, rounded corners, minimum width=8em,node distance=3cm,
minimum height=3em]
\tikzstyle{line} = [draw, -latex']
\tikzstyle{linena} = [draw]
\tikzstyle{cloud} = [draw, ellipse,fill=red!20, minimum height=2em]

\usetikzlibrary{arrows,calc,angles,positioning,intersections,quotes,decorations.markings,backgrounds,patterns}
\usetikzlibrary{decorations.pathreplacing}
\pgfplotsset{compat=1.11}
\tikzset{
	mynode/.style={fill,circle,inner sep=1pt,outer sep=0pt}
}






\usepackage{mathtools}  
\usepackage{tabulary}
\usepackage{booktabs}

\usepackage{tikz}
\usetikzlibrary{arrows,calc,angles,positioning,intersections,quotes,decorations.markings,backgrounds,patterns}
\usetikzlibrary{decorations.pathreplacing}
\usepackage{pgfplots}
\pgfplotsset{compat=1.11}
\tikzset{
	mynode/.style={fill,circle,inner sep=1pt,outer sep=0pt}
}


{\catcode `\@=11 \global\let\AddToReset=\@addtoreset}
\AddToReset{equation}{section}

\AddToReset{Theorem}{section}

\newtheorem{cor}{Corollary}[section]
\newtheorem{lem}{Lemma}[section]
\newtheorem{rem}{Remark}[section]
\newtheorem{thm}{Theorem}[section]

\newtheorem{Def}{Definition}[section]

\newcommand{\cA}{{\cal A}}

\newcommand{\cC}{{\cal C}}

\newcommand{\cG}{{\cal G}}

\newcommand{\cK}{{\cal K}}
\newcommand{\cL}{{\cal L}}
\newcommand{\cM}{{\cal M}}
\newcommand{\cN}{{\cal N}}

\newcommand{\cU}{{\cal U}}

\newcommand{\Np}{{\mathbb{N}}^{+}}
\newcommand{\Nn}{{\mathbb{N}}^{-}}


\def\ba{\begin{array}}
	\def\bc{\begin{center}}
		\def\bd{\begin{description}}
			\def\be{\begin{enumerate}}
				\def\ea{\end{array}}
			\def\ec{\end{center}}
		\def\ed{\end{description}}
	\def\edt{\end{document}}
\def\ee{\end{enumerate}}
\def\ben{\begin{equation}}
\def\benn{\begin{equation*}}
\def\een{\end{equation}}
\def\eenn{\end{equation*}}
\def\benr{\begin{eqnarray}}
\def\eenr{\end{eqnarray}}
\def\benrr{\begin{eqnarray*}}
\def\eenrr{\end{eqnarray*}}

\def\al{\alpha}

\def\D{\Delta}

\def\edt{\end{document}}
\def\ep{\epsilon}
\def\g{\gamma}
\def\G{\Gamma}
\def\h{\hat}
\def\ka{\kappa}

\def\iny{\infty}
\def\ka{\kappa}

\def\la{\lambda}

\def\noi{\noindent}

\def\nn{\nonumber}

\def\Om{\Omega}
\def\om{\omega}

\def\e{\mathrm e}

\def\si{\sigma}

\def\Si{\Sigma}

\def\vep{\varepsilon}

\def\vs{\vskip}

\def\R{{\mathbb R}}
\def\Z{{\mathbb Z}}
\def\z{\zeta}

\DeclareMathOperator*{\argmin}{arg\,min}

\DeclareMathOperator*{\argmax}{arg\,max}


\begin{document}

\bc
{\Large {\bf Inference on the Change Point for High Dimensional Dynamic Graphical Models}}\\[.5cm]
Abhishek Kaul$^a$\footnote{Email: abhishek.kaul@wsu.edu.}, Hongjin Zhang$^a$,\\ Konstantinos Tsampourakis$^b$, and George Michailidis$^c$\\[0.25cm]

$^a$Department of Mathematics and Statistics,\\ 
Washington State University, Pullman, WA 99164, USA.\\[0.25cm]

$^b$School of Mathematics\\ 
University of Edinburgh, Edinburgh, Scotland, EH9 3FD. \\[0.25cm]

$^c$Department of Statistics and the Informatics Institute\\
University of Florida, Gainsville, FL 32611-8545, USA.

\ec
\vs .1in
{\renewcommand{\baselinestretch}{1}
	\begin{abstract}
We develop an estimator for the change point parameter for a dynamically evolving graphical model, and also obtain its \textit{asymptotic distribution} under \textit{high dimensional scaling}. To procure the latter result, we establish that the proposed estimator exhibits an $O_p(\psi^{-2})$ rate of convergence, wherein $\psi$ represents the jump size between the graphical model parameters before and after the change point. Further, it retains sufficient adaptivity against plug-in estimates of the graphical model parameters. We characterize the forms of the asymptotic distribution under the both a vanishing and a non-vanishing regime of the magnitude of the jump size. Specifically, in the former case it corresponds to the \textit{argmax} of a negative drift asymmetric two sided Brownian motion, while in the latter case to the \textit{argmax} of a negative drift asymmetric two sided random walk, whose increments depend on the distribution of the graphical model. Easy to implement algorithms are provided for estimating the change point and their performance assessed on synthetic data. The proposed methodology is further illustrated on RNA-sequenced microbiome data and their changes between young and older individuals.		
\end{abstract} }
\noi {\it Keywords: high dimensions, dynamic graphical models, change point, inference, limiting distribution.}

\section{Introduction and problem formulation}\label{sec:intro}

Graphical models capture statistical dependencies amongst a collection of random variables. They have been extensively used in the analysis of genetics and genomics  \citep{sinoquet2014probabilistic}, metabolomics  \citep{basu2017sparse}, microbiome \citep{kaul2017structural} and neuroimaging data \citep{cribben2012dynamic}.

An undirected graphical model is a statistical model associated with a graph, whose nodes correspond to variables of interest (e.g., genes), while the edges reflect conditional dependencies amongst them. In many applications as above, the number of edges (parameters in the graphical model) to be estimated from the available data is relative small. This gave rise to a rich body of literature on recovery of \textit{sparse} graphical models. Likelihood based methods for Gaussian graphical models (GGMs) \citep{friedman2008sparse,yuan2010high} and regression based methods \citep{meinshausen2006high,cai2011constrained} leveraging $\ell_1$ penalties were developed for this task, and their theoretical properties established \citep{buhlmann2011statistics}.

Graphical models have also been used in applications where the data are collected over time. In that setting, the assumption of a fixed graphical model over an extended sampling period could be unrealistic and may lead to flawed inference on its structure. Hence, there is interest in estimating graphical models that evolve in a piecewise manner, characterized by one or more \textit{change points}. To that end, \cite{kolar2012estimating} consider fused lasso regularization together with a regression approach (neighborhood selection) to estimate a time evolving sparse GGM.  Likelihood based approaches together with suitable regularization  procedures are considered in \cite{kolar2010estimating,gibberd2017multiple,avanesov2018change}, while \cite{keshavarz2018sequential} develop an online detection problem of change in the GGM's structure. \cite{angelosante2011sparse} propose a dynamic programming algorithm together with neighborhood selection for the problem at hand, while \cite{roy2017change} provide a likelihood based approach for Markov random fields with a single change point. 

Note that the emphasis in the literature is primarily on estimating the location of the underlying change points and also the connectivity structure of the graphical models between change points. On the other hand, to the best of our knowledge, the question of uncertainty quantification through construction of confidence intervals for the change point under \textit{high dimensional scaling} for sparse graphical models has not been addressed in the literature. Further, the same question for a regularized linear regression problem under high dimensional scaling is also open. The latter problem is of independent interest, but also related to the main theme of this paper, since such regression problems constitute the main building block in the neighborhood selection method used to estimate graphical models.
Finally note that to do inference on the change point parameter, sharp convergence rates need to be established, an issue resolved for the graphical model in the sequel.

More generally, the results in the literature on inference on the change point encompassing other high dimensional models is also rather sparse. Under the simplest dynamic model which is of a mean shift, \cite{bhattacharjee2017common,bhattacharjee2019change} provide such limiting distributions for the \textit{single} change point parameter, in a regime where the dimensionality $p$ is smaller than the number of samples $T$ ($p<T)$. Under similar dimensional restrictions, \cite{bhattacharjee2018change} provide inference results for a single change point for a dynamically evolving stochastic block model. \cite{wang2019inference} provide a limiting distribution for the single change point parameter under a mean shift $p$ diverging, but at a slower rate than $T^2/\log T.$ The only article we are aware of that allows $p$ to grow exponentially, while allowing limiting distributions is that of \cite{kaul2020inference} under the same mean shift model.

\bigskip\noindent
\textbf{Change point problem formulation for the graphical model} We consider the following setting. Multivariate data  are collected for $T$ time periods and at a certain point during that time, their covariance matrix exhibits a change. Specifically, let
\vspace{-2mm}
\benr\label{model:dggm}
z_t=\begin{cases} w_t, & t=1,...,\lfloor T\tau^0\rfloor\\
	x_t, & t=\lfloor T\tau^0\rfloor+1,....,T,\end{cases}
\eenr
with $z_t\in\R^p,$ $t=1,...,T.$ The variables $w_t, x_t\in\R^p$ are independent and zero mean subgaussian random variables (r.v.'s), with unknown covariance matrices $\Si$ and $\D,$ respectively. The change point parameter $\tau^0\in (0,1)$ is unknown and needs to be estimated from the available data, together with the underlying covariance matrices.
We allow the dimension $p$ to diverge potentially at an exponential rate, i.e., $\log p=o(T^{\delta}),$ for some $0<\delta<1,$ while imposing a sparsity assumption on the inverse covariance (precision) matrices $\Si^{-1}$ and $\D^{-1}$, specified in Section \ref{sec:mainresults}.

We require additional notation to aid further discussion on the main objectives of this article. For any $p\times p$ matrix $W,$ define a $(p-1)$-dimensional vector $W_{-i,j}$ as the $j^{th}$ column of $W$ with the $i^{th}$ entry removed, and similarly define $W_{i,-j}.$ Also define a $(p-1)\times (p-1)$ matrix $W_{-i,-j}$ as the sub-matrix of $W$ with the $i^{th}$ row and the $j^{th}$ column removed. Next, define the following $\R^{p-1}$ parameter vectors 
\benr\label{def:muga}
\hspace{5mm}\mu^0_{(j)}= \Si_{-j,-j}^{-1}\Si_{-j,j},\quad{\rm and}\quad \g^0_{(j)}= \D_{-j,-j}^{-1}\D_{-j,j},\quad j=1,...,p.
\eenr

The parameters $\mu^0_{(j)},$ and $\g^0_{(j)}$'s correspond to the coefficients used in the neighborhood selection procedure. They can be directly related to the underlying graph as follows. When $\mu^{0}_{(j)k}=0$ ($k^{th}$ component of $\mu^0_{(j)}$) $\Leftrightarrow$ the $(j,k)^{th}$ entry of the corresponding precision matrix is zero, and thus indicates the absence of an edge between these nodes in the corresponding graph. These coefficients can also be interpreted through a linear regression mechanism, e.g. $\mu^0_{(j)}$ plays the role of a coefficient vector in the regression of the $j^{th}$ component of $z$ being the response, and the remaining ones as predictors. Next, we use them to characterize the magnitude of the jump size across the two graphical models. Specifically, let $\eta^0_{(j)}=\mu^0_{(j)}-\g^0_{(j)},$ $j=1,...,p,$ and define,
\benr\label{def:jumpsize}
\xi_{2,2}=\Big(\sum_{j=1}^p\|\eta^0_{(j)}\|_2^2\Big)^{\frac{1}{2}},\quad{\rm and}\quad \psi=\frac{\xi_{2,2}}{\surd p}.\footnotemark
\eenr
\footnotetext{The jump sizes $\xi_{2,2},$ and $\psi$ depend on the length of the sampling period $T;$ however, this dependence is suppressed for clarity of exposition.}
The quantities $\xi_{2,2},$ and $\psi$ reflect the magnitude of the difference between the pre- and post-change graphical models, the latter being a normalized version that plays a central role in subsequent analysis. Henceforth, we refer to $\psi$ as the \textit{jump size}. Note that $\xi_{2,2}$ or $\psi$ are non-zero, either if the conditional dependence structure (edges) exhibits changes, or the magnitude of the model parameters changes. This definition of jump size is somewhat similar to that in \cite{kolar2012estimating}, who define it as $\min_{j}\|\eta^0_{(j)}\|_2.$ The advantage of using $\psi$ over $\min_{j}\|\eta^0_{(j)}\|_2$ is that the latter requires changes in each and every row and column of the precision matrix, whereas the former allows for sub-block changes of the precision matrix pre and post the change point. Another metric of the jump size employed in the literature includes $\|\Si- \D\|_F$ \citep{gibberd2017multiple}, which is comparable to $\xi_{2,2}$.

\bigskip\noindent
\textbf{Change point estimation criterion function.}
Let $z_t\in\R^p, t=1,..,T$ and let $\mu,$ and $\g$ be the concatenation of $\mu_{(j)}'$s and $\g_{(j)}'$s. Then, consider the squared loss function
\benr
Q(z,\tau,\mu,\g)&=&\frac{1}{T}\Big[\sum_{t=1}^{\lfloor T\tau\rfloor}\sum_{j=1}^p(z_{tj}-z_{t,-j}^T\mu_{(j)})^2 
+ \sum_{t=(\lfloor T\tau\rfloor+1)}^{T}\sum_{j=1}^p(z_{tj}-z_{t,-j}^T\g_{(j)})^2\Big],\nn
\eenr
with $\tau\in (0,1)$.

Next, suppose that estimates for $\mu$ and $\g$ are available, so that the following bound holds:
\benr\label{eq:optimalmeans}
\max_{1\le j\le p}\Big(\|\h\mu_{(j)}-\mu^0_{(j)}\|_2\vee\|\h\g_{(j)}-\g^0_{(j)}\|_2\Big)\hspace{2.5cm}\\
\le c_u\surd(1+\nu^2)\frac{\si^2}{\ka}\Big\{\frac{s \log (p\vee T)}{Tl_T}\Big\}^{\frac{1}{2}}\hspace{-1.5cm},\nn
\eenr
with probability at least $1-o(1)$, with $l_T$ being a sequence separating the change point parameter from the boundary of its parametric space, i.e. $\lfloor T\tau^0\rfloor\wedge(T-\lfloor T\tau^0\rfloor)\ge Tl_T,$ (see, Condition A). The quantities $\si,\ka,\nu,$ are additional model parameters defined in Section \ref{sec:mainresults} (see, Condition B). Then, a plug-in estimator $\tilde\tau$ of the change point is given by
\benr\label{est:optimal}
\tilde\tau:=\tilde\tau(\h\mu,\h\g)=\argmin_{\tau\in(0,1)} Q(z,\tau,\h\mu,\h\g).
\eenr

\noindent
\textbf{Key contributions.} 

The main objective of this work is to establish that the change point estimator $\tilde\tau$ is sufficiently regular, so as to have a limiting distribution, thereby enabling the construction of asymptotically valid confidence intervals for $\tau^0$ under high dimensional scaling. The inference results obtained in Section \ref{sec:mainresults} are agnostic to the choice of the estimators used for $\mu, \g$, as long as the latter satisfy certain properties (see $\ell_2$ error bound (\ref{eq:optimalmeans})). A specific estimator for these parameters is presented in Section \ref{sec:nuisance}.

The first key contribution is establishing a sharp rate of convergence for the change point estimator; specifically, we obtain in Section \ref{sec:mainresults} that $\big(\lfloor T\tilde\tau\rfloor-\lfloor T\tau^0\rfloor\big)=O_p(\psi^{-2}).$ This rate is free of  auxiliary terms involving dimensional parameters $s,p$ and other logarithmic terms of the sampling period $T,$ whereas the dimension $p$  appears only through the jump size $\psi$. Further, the jump size can diminish to zero, provided that $s\log^{3/2}(p\vee T)=o\big(\surd(Tl_T)\big)$, with $s$ characterizing the sparsity of the graphical model (see, Section \ref{sec:mainresults}). Further, the obtained rate of convergence described above is sharper, and the minimum jump size assumption weaker than available results in the literature. For example, \cite{kolar2012estimating} obtain a rate of convergence $O_p(\psi^{-2}p\log T)$ under a minimum jump size assumption of order $O(p\log T\big/ T)^{1/2},$ \cite{gibberd2017multiple} a rate $O_p(\psi^{-2}p^2\log p)$ for jump size $O\{p \surd(\log p^{\beta/2}/T)\},$ \cite{li2019detection} a rate $O_p(\psi^{-2}\log^4 T),$ and finally \cite{roy2017change} provide a rate of $O_p\big(\psi^{-2}\log(pT)\big)$ for jump size of order $O(\log pT)^{1/4}$ for a Markov random field model.  The significance of this sharper convergence rate is that it leads to the existence of a \textit{limit distribution} for the change point estimator, under high dimensional scaling. Therefore, the second main contribution of this work is the derivation of this limit distribution under both a \textit{vanishing} ($\psi\rightarrow 0$) and \textit{non-vanishing} regime.

\bigskip\noindent
\textbf{Characterization of the limit distribution of $\tilde\tau$.} \\
\textit{Vanishing jump size $\psi$:} let $W_1(r),$ and $W_2(r)$ be two independent Brownian motions defined on $[0,\iny)$. Define the following process
\benr\label{def:Zr}
Z(r)=\begin{cases} 2W_1(r)-|r| &  {\rm if}\,\, r<0,\\
	0 &  {\rm if}\,\, r=0,\\
	\frac{2\si_2^*}{\si_1^*}W_2(r)-\frac{\si_2^2}{\si_1^2}|r| & {\rm if}\,\, r>0,	
\end{cases}
\eenr
where $0<\si_1,\si_2,\si_{1}^*,\si_{2}^*<\iny,$ are parameters that control both the variance and the negative drift of the process $Z(r).$ Then, for $\psi\to 0,$ we obtain,
\benr
T(\si^{*}_1)^{-2}\si_1^4\psi^{2}(\tilde\tau-\tau^0)\Rightarrow \argmax_{r\in \R} Z(r)\nn
\eenr 
The density of this limit distribution is available in closed form in \cite{bai1997estimation}.

\noindent
\textit{Non-vanishing jump size $\psi\to\psi_{\iny}$} ($0<\psi_{\iny}<\iny$): 

Let $\cL$ represent the form of the distribution of the limiting random variable of the sequence $p^{-1}\sum_{j=1}^p\Big\{2\vep_{tj}z_{t,-j}^T\eta^0_{(j)}-\eta^{0T}_{(j)}z_{t,-j}z_{t,-j}^T\eta^0_{(j)}\Big\},$ where $\vep_{tj}$ are r.v.'s measuring the orthogonal distance between $z_{tj}$ and the space of the remaining components, see, (\ref{def:epsilons}) for an explicit definition. Then define the following negative drift two sided random walk initialized at origin
\benr\label{def:cCr}
\cC_{\iny}(r)=
\begin{cases}\sum_{t=1}^{-r} z_t, & r\in \Nn=\{-1,-2,-3,...\}\\ 	
	0,				  &	r=0 \\
	\sum_{t=1}^{r}z_t^*,		  &	r\in \Np=\{1,2,3,...\}.
\end{cases}
\eenr
Further, $z_t\sim^{i.i.d}\cL\big(-\psi_{\iny}^2\si_1^2,\,\bar\si^{2}_{1}\big)$ and $z_t^*\sim^{i.i.d}\cL(-\psi_{\iny}^2\si_2^2,\,\bar\si^{2}_{2}),$ and $z_t$ and $z_t^*$ are also independent of each other over all $t.$ The notation in the arguments of $\cL(\cdotp,\cdotp)$ are representative of the mean and variance of this distribution. The quantities $0<\si_1,\si_2<\iny$ are the same as in the construction of the process $Z(r),$ and control the negative drift of the given two sided random walk. The parameters $0<\bar\si_1^2,\bar\si_2^2<\iny$ are estimable variance parameters of this limiting process which are different from those under the vanishing regime. Then, we obtain 
\benr
(\lfloor T\tilde\tau\rfloor-\lfloor T\tau^0\rfloor)\Rightarrow \argmax_{r\in \Z}\cC_{\iny}(r).
\eenr
This limit distribution does not have any explicit characterization, but its quantiles can be approximated numerically, thereby enabling the construction of asymptotically valid confidence intervals. 


\vspace{1.5mm}
\noi\textbf{\emph{Notation}}:  $\R$ shall denote the real line. For any vector $\delta,$ the norms $\|\delta\|_1,$ $\|\delta\|_2,$ $\|\delta\|_{\iny}$ represent the usual 1-norm, Euclidean norm, and sup-norm, respectively. For any set of indices $U\subseteq\{1,2,...,p\},$ let $\delta_U=(\delta_j)_{j\in U}$ represent the subvector of $\delta$ containing the components corresponding to the indices in $U.$ Let $|U|$ and $U^c$ represent the cardinality and complement of $U.$ We denote by $a\wedge b=\min\{a,b\},$ and $a\vee b=\max\{a,b\},$ for any $a,b\in\R.$ The notation $\lfloor \cdotp \rfloor$ is the usual greatest integer function. We use a generic notation $c_u>0$ to represent universal constants that do not depend on $T$ or any other model parameter. In the following this constant $c_u$ may be different from one term to the next. All limits in this article are with respect to the sample size $T\to\iny.$ We use $\Rightarrow$ to represent convergence in distribution.

\section{Theoretical analysis}\label{sec:mainresults}

Next, we state sufficient conditions required to establish the main theoretical results regarding the plugin least squares estimator $\tilde\tau$ in (\ref{est:optimal}). Specifically, an $O_p(\psi^{-2})$ rate of convergence is obtained for $\lfloor T\tilde\tau\rfloor$, together with its limiting distributions in the two \textit{regimes} discussed in Section \ref{sec:asymptotic-regime}. 


\subsection{Rate of convergence of the change point estimator}\label{sec:convergence-rate}

\hfill

\vspace{1.5mm}
{\it {{\noi{\bf Condition A (assumption on the model parameters):}} Let $S_{1j}=\{k;\,\mu^0_{{(j)}k}\ne 0\},$ and $S_{2j}=\{k;\,\g^0_{{(j)}k}\ne 0\},$ $1\le j\le p$ be sets of non-zero indices.\\~
		(i) Assume that $\max_{1\le j\le p} |S_{1j}|\vee |S_{2j}|=s\ge 1.$ \\~
		(ii) Assume a change point exists and is sufficiently separated from the boundaries of $(0,1),$ i.e., for some positive sequence $l_T\to 0,$ we have $\big(\lfloor T\tau^0\rfloor\big)\wedge \big(T-\lfloor T\tau^0\rfloor\big) \ge Tl_T\to\iny$\\~
		(iii) Let  $\psi$ be as defined in (\ref{def:jumpsize}). Then, for an appropriately chosen small enough constant $c_{u1}>0,$ the following relations hold,
		\benr
		&(a)&  c_u\surd(1+\nu^2)\frac{\si^2}{\psi\ka}\Big\{\frac{s\log^{3/2}(p\vee T)}{\surd (Tl_T)}\Big\}\le c_{u1},\,\,{\rm and}\nn\\
		&(b)& c_u\surd(1+\nu^2)\frac{\si^2}{\psi\ka}\Bigg\{\frac{s\log (p\vee T)}{T^{\big(\frac{1}{2}-b\big)}\surd l_T}\Bigg\}\le c_{u1},\nn
		\eenr
		for some $0<b<(1/2).$ The parameters $\si^2,\nu,\ka$ are defined in Condition B.}}

Condition A controls the rate at which the sparsity level of the graphical model $s$ and its dimension $p$ diverge as a function of $T$. Further, it specifies the behavior of the normalized jump size $\psi$ and the distance $l_T$ of the change point from the boundary of the observation interval, as a function of $T$. Condition A(iii) encompasses the two regimes of interest on the asymptotic behavior of the jump size. Specifically, it allows for a potentially vanishing jump size, $\psi\to 0,$ when $s\log^{3/2}(p\vee T)=o\big(\surd (Tl_T)\big).$ Alternatively, $s,p$ can diverge at an arbitrary rate provided the jump size is large enough to compensate for the increasing dimensions $s,p,$ so that Condition A(iii) holds (also see Remark \ref{rem:dimension.restriction}). 

To the best of our knowledge, this is the weakest condition assumed on the jump size in the dynamic networks literature, where the counterpart of $\psi$ is typically assumed to be diverging. The constant $b>0$ in A(iii)(b) is any arbitrary, but fixed number between $(0,1/2).$ The rate conditions (a) and (b) of A(iii) are stated in the given form to provide generality and neither (a) or (b) necessarily implies the other without additional rate restrictions; for example, (b) implies (a) if $\log p\le c_uT^{2b},$ while (a) implies (b) if $\log p \ge c_uT^{2b}.$ Condition A(iii)(b) is an assumption that arises in our analysis of the regression type estimator for the change point parameter $\tilde\tau$. This assumption can be compared to existing results on inference for change points in the classical fixed dimensional regression setting. For fixed $s,p,l_T$, the rate required for the minimum jump size $\psi$ in Part (iii) can be replaced with $ T^{\big(\frac{1}{2}-b\big)}\psi\to \iny.$ This condition is identical to Assumption A7 in \cite{bai1997estimation} and serves an analogous role in our analysis.

Sparsity on coefficient vectors $\mu^0_{(j)}$ and $\g^0_{(j)}$ is equivalent to assuming that both pre and post network structures of $\Si^{-1}$ and $\D^{-1}$ are such that each node has at most $s$ connecting edges out of a total of $(p-1)$ possible edges. This is a direct extension of the same assumption in the static setting \citep{yuan2010high}. We also note that this sparsity assumption in our setting holds column- or row-wise on the underlying precision matrices. In other settings, such as high dimensional vector autoregressive models, sparsity is often assumed on the entire $p\times p$ coefficient matrix; this distinction is important for any heuristic comparisons made on rate assumptions across settings.    

\vspace{1.5mm}
{\it {{\noi{\bf Condition B (assumption on the underlying distributions):}}\\~
		(i) The vectors $w_t=(w_{t1},...,w_{tp})^T,$ $t=1,..,\lfloor T\tau^0\rfloor,$ and $x_t=(x_{t1},...,x_{tp})^T,$ $t=\lfloor T\tau^0\rfloor+1,...T,$ are independent subgaussian r.v's with mean vector zero, and variance proxy $\si^2\le c_u.$ (see Definition \ref{def:subg})\\~
		(ii) The $p$-dimensional matrices $\Sigma:=Ew_tw_t^T$ and $\Delta:=Ex_tx_t^T$ have bounded eigenvalues, i.e., $0<\ka\le\big\{\rm{min eigen}(\Si)\wedge {\rm min eigen}(\D)\big\} \le \big\{{\rm max eigen}(\Si)\vee {\rm max eigen}(\D)\big\}\le\phi<\iny.$ Consequently, the condition numbers of $\Si$ and $\D$ are also bounded above by $\nu=\phi/\ka.$ }}

The sub-Gaussian assumption represents a significant relaxation to assuming a Gaussian distribution, since it allows asymmetric distributions, including a centered mixture of two Gaussian distributions. Our methodology allows this general setup since $\tilde\tau$ is estimated using least squares, as opposed to a likelihood based approach used for GGM's. This condition serves the following three purposes. First, it allows the residual process in the estimation of $\tau^0$ to converge weakly to the distribution (\ref{def:Zr}). Second, under a suitable choice of regularization parameters, it allows estimation of nuisance parameters at the rates of convergence presented in (\ref{eq:optimalmeans}). Finally, in addition to other technical uses, part (ii) of this condition provides an upper bound on the components of $\mu^0_{(j)}$  and $\g^0_{(j)},$ $j=1,...,p,$, which is necessary to our analysis (Lemma \ref{lem:condnumberbound}). For the remainder of the presentation in the current section, we are \textit{agnostic} regarding the choice of the estimator of the nuisance parameters and instead require the following condition.

\vspace{1.5mm}
{\it {{\noi{\bf Condition C (assumption on nuisance parameter estimates):}} Let $\pi_T\to 0$ be a positive sequence. Then, with probability $1-\pi_T,$ the following relations are assumed to hold. \\~
		(i) The vectors $\h\mu_{(j)}$ and $\h\g_{(j)},$ $1\le j\le p,$ satisfy the bound (\ref{eq:optimalmeans}). \\~
		(ii) The vectors $(\h\mu_{(j)}-\mu^0_{(j)})\in\cA_{1j},$ $(\h\g_{(j)}-\g^0_{(j)})\in\cA_{2j},$ for each $1\le j\le p.$ Here $\cA_{ij},$ $i=1,2,$ $j=1,...,p,$ is a convex subset of $\R^{p-1}$ defined as, ${\cA_{ij}}=\big\{\delta\in\R^{p-1};\,\,\|\delta_{S_{ij}^c}\|_1\le 3\|\delta_{S_{ij}}\|_1\big\},$ with $S_{ij}$ being the set of indices defined in Condition A(i) and $S_{ij}^c$ being its complement set.}}

This condition is a mild requirement and is known to hold in the static setting by common precision matrix estimation methods, including neighborhood selection \citep{meinshausen2006high,yuan2010high}. Condition C(ii) provides a restriction on the sparsity level of the estimated edge parameters and is common in the $
\ell_1$ regularization literature. In Section \ref{sec:nuisance}, the estimates of the nuisance parameters developed satisfy this condition. Further, other common regularization mechanisms, such as SCAD or the Dantzig selector are also applicable.

This condition allows estimates $\h\mu_{(j)}$ and $\h\g_{(j)}$ to be irregular, in the sense that they are only required to be in a $\{s\log (p\vee T)/T\}^{1/2}$ order neighborhood of the vectors $\mu^0_{(j)}$ and $\g^0_{(j)},$ $j=1,...,p,$ in the $\ell_2$ norm. They are not required to possess oracle properties, i.e., selection mistakes in the identification of the signs of these coefficient do not influence the eventual change point estimate $\tilde\tau$ in its rate of convergence and limiting distribution. Accordingly, we do not require irrepresentable conditions on the covariance matrices $\Si$ and $\D,$ as assumed in \cite{kolar2012estimating}, nor minimum magnitude conditions of the coefficient vectors $\mu^0_{(j)},$ $\g^0_{(j)},$ the latter again guaranteeing highly accurate selection in the components of  $\mu^0_{(j)},$ and $\g^0_{(j)},$ $j=1,...,p.$

Next, define for $\mu,\g\in\R^{p(p-1)}$ and $\tau\in (0,1),$
\benr\label{def:cU}
\cU(z,\tau,\mu,\g)=Q(z,\tau,\mu,\g)-Q(z,\tau^0,\mu,\g),\nn
\eenr
where $\tau^{0}\in(0,1)$ is the unknown change point parameter and $Q(z,\tau,\mu,\g)$ is the squared loss defined earlier. For any non-negative sequences $0\le v_T\le u_T\le 1,$ define the collection
\benr\label{def:setG}
\cG(u_T,v_T)=\Big\{\tau\in (0,1);\,\,Tv_T\le \big|\lfloor T\tau \rfloor-\lfloor T\tau^0\rfloor\big|\le Tu_T\Big\}
\eenr
The following Lemma provides a uniform lower bound on the expression $\cU(z, \tau,\h\mu,\h\g),$ over the collection $\cG(u_T,v_T)$ that is instrumental to obtain the desired rate of convergence for the proposed estimator.

\begin{lem}\label{lem:mainlowerb} Suppose Condition A, B and C hold and let $0\le v_T\le u_T$ be any non-negative sequences. For any $0<a<1,$ let $c_{a1}=4\cdotp 48c_{a2},$ with $c_{a2}\ge \surd(1/a),$ and
	\benr
	c_{a3}=c_u\Big\{\frac{c_{a1}(\si^2\vee\phi)\surd(1+\nu^2)}{\ka(1\wedge \psi)}\Big\}.\nn
	\eenr	
	Additionally, let $u_T\ge c_{a1}^2\si^4\big/(T\phi^2),$ then for $T\ge 2,$ we have,	
	\benr\label{eq:12}
	\inf_{\tau\in\cG(u_T,v_T)}\cU(z,\tau,\h\mu,\h\g)\ge \ka\xi^{2}_{2,2}\Big[v_T-c_{a3}\max\Big\{\Big(\frac{u_T}{T}\Big)^{\frac{1}{2}},\,\,\frac{u_T}{T^{b}}\Big\}\Big]
	\eenr
	with probability at least $1-3a-o(1).$
\end{lem}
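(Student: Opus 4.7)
The plan is to decompose $\cU$ algebraically and control each piece separately. By symmetry of the expansion, I treat $\tau > \tau^0$; the case $\tau < \tau^0$ follows identically with pre- and post-change parameters swapped. For $\tau > \tau^0$, only indices $t \in \{\lfloor T\tau^0\rfloor+1,\ldots,\lfloor T\tau\rfloor\}$ contribute, and on these $z_t = x_t$. Introducing the projection residuals $\vep_{tj} = z_{tj} - z_{t,-j}^T\g^0_{(j)}$ and plug-in errors $\delta^{\mu}_{(j)} = \h\mu_{(j)} - \mu^0_{(j)}$ and $\delta^{\g}_{(j)} = \h\g_{(j)} - \g^0_{(j)}$, expanding the squared losses produces the decomposition
\benn
\cU(z,\tau,\h\mu,\h\g) = T_1 - T_2 + T_3 + T_4,
\eenn
where $T_1 = T^{-1}\sum_{t,j}(z_{t,-j}^T\eta^0_{(j)})^2$ is a positive quadratic main term, $T_2 = 2T^{-1}\sum_{t,j}\vep_{tj}\,z_{t,-j}^T\eta^0_{(j)}$ is a mean-zero cross term of the true residuals against the true jump directions, $T_3$ is a quadratic functional of $\delta^{\mu}_{(j)},\delta^{\g}_{(j)}$, and $T_4$ is a linear-in-plug-in mixed term.

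For the dominant $T_1$, I would first compute $E T_1 = T^{-1}(\lfloor T\tau\rfloor - \lfloor T\tau^0\rfloor)\sum_j \eta^{0T}_{(j)}\D_{-j,-j}\eta^0_{(j)} \ge \ka\xi_{2,2}^2 v_T$ by Condition B(ii) together with the definition of $\cG(u_T,v_T)$. Deviation of $T_1$ from its expectation is controlled by a Hanson--Wright/Bernstein bound for sub-exponential quadratic forms (valid under Condition B(i)), and uniformity over $\tau \in \cG(u_T,v_T)$ is obtained by applying a Doob-type maximal inequality to the partial-sum process indexed by $k = \lfloor T\tau\rfloor - \lfloor T\tau^0\rfloor$. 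Combining these yields $T_1 \ge \ka\xi_{2,2}^2 v_T - C\xi_{2,2}^2\surd(u_T/T)$ with probability at least $1-a$.

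The cross term $T_2$ has mean-zero sub-exponential per-$t$ increments whose variance is of order $\xi_{2,2}^2$; a second Bernstein-type maximal inequality over $k \in [Tv_T,Tu_T]$ gives $|T_2| \le C\xi_{2,2}\surd(u_T/T)$ with probability at least $1-a$. For the plug-in terms, Condition C(ii) places each $\delta_{(j)}$ in the cone $\cA_{ij}$ where off-support mass is dominated by on-support mass; combined with Condition B(ii), this implies a restricted-eigenvalue bound $\sum_{t=1}^k (z_{t,-j}^T\delta_{(j)})^2 \le C k\|\delta_{(j)}\|_2^2$ holding uniformly on an event of probability $1-o(1)$. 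Using the $\ell_2$ rate from Condition C(i) to bound $\|\delta_{(j)}\|_2$, summing over $j$, and invoking the rate condition A(iii)(b) shows that $|T_3|$ is at most $C\ka\xi_{2,2}^2 u_T/T^b$. The mixed term $T_4$ splits by Cauchy--Schwarz into the product of a factor controlled as in $T_2$ and a factor controlled as in $T_3$, giving the same order of bound.

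The main obstacle is obtaining uniformity over $\tau \in \cG(u_T,v_T)$ while keeping the leading constants dimension-free in $p$ and $s$. A naive union bound over the $O(Tu_T)$ integer values of $\lfloor T\tau\rfloor$ would destroy the tight $\surd(u_T/T)$ scaling of $T_1$ and $T_2$; instead the partial-sum processes must be handled via Doob or Ottaviani-type maximal inequalities calibrated to the sub-exponential tail. A secondary difficulty is the plug-in quadratic term $T_3$: without the cone restriction of Condition C(ii), the quadratic form $\sum_t(z_{t,-j}^T\delta_{(j)})^2$ would exhibit a $p$-blowup incompatible with the announced error. The cone restriction collapses the effective dimension from $p$ to $s$ and is precisely what permits the $u_T/T^b$ error mode to absorb the plug-in contribution under Condition A(iii)(b). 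Assembling the four pieces and collecting the three high-probability events (each of probability at least $1-a$) against the $o(1)$ probability failure of the restricted eigenvalue event gives the claimed bound with probability at least $1-3a-o(1)$.
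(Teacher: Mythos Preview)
Your overall architecture matches the paper's: the same algebraic decomposition (the paper writes $\cU = R_1 - R_2 - R_3$ in terms of $\h\eta_{(j)}$ and then splits $\h\eta_{(j)} = \eta^0_{(j)} + (\h\eta_{(j)} - \eta^0_{(j)})$, which is equivalent to your $T_1,\ldots,T_4$), the same use of Kolmogorov's maximal inequality in place of a union bound over $\tau$ for the leading stochastic terms, and the same reliance on the cone restriction of Condition C(ii) for the plug-in pieces. But your variance claim for $T_2$ is wrong and the error propagates. The per-$t$ increment $\zeta_t = \sum_{j=1}^p \vep_{tj}z_{t,-j}^T\eta^0_{(j)}$ has sub-exponential parameter of order $\xi_{2,1} = \sum_j\|\eta^0_{(j)}\|_2$, not $\xi_{2,2}$; since $\xi_{2,1}$ can be as large as $\surd{p}\,\xi_{2,2}$, Kolmogorov's inequality gives $\sup_\tau|T_2| \le c_{a1}\xi_{2,1}(u_T/T)^{1/2}$, not $C\xi_{2,2}(u_T/T)^{1/2}$. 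After normalizing by $\ka\xi_{2,2}^2$ and using $\xi_{2,1}/\xi_{2,2}^2 \le 1/\psi$, this is exactly where the factor $1/(1\wedge\psi)$ in $c_{a3}$ originates. Your understated bound would miss this factor and produce a lower bound on $\cU$ that is too strong by a factor $\surd{p}$.

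A second issue: the mixed piece $\sum_{t,j}\vep_{tj}z_{t,-j}^T(\h\eta_{(j)}-\eta^0_{(j)})$ inside your $T_4$ is not handled by Cauchy--Schwarz in the paper and a generic Cauchy--Schwarz argument is unlikely to give dimension-free control. The paper uses a H\"older $\ell_\infty$--$\ell_1$ split, bounding $\sup_\tau\|T^{-1}\sum_t\vep_{tj}z_{t,-j}\|_\infty$ via Bernstein with a union bound over $O(p^2T)$ coordinates (incurring a $\log(p\vee T)$ factor), then pairing this against $\sum_j\|\h\eta_{(j)}-\eta^0_{(j)}\|_1$ and absorbing the product through Condition A(iii)(a). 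This is the second place where the jump-size restriction of Condition A enters the argument, and it is not visible from the Cauchy--Schwarz sketch you give.
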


Lemma \ref{lem:mainlowerb} is a tool that allows us to obtain the rate of convergence of the change point estimator $\tilde\tau.$ An observation that provides some insight into this connection and the adaptivity property of the proposed plug-in least squares estimator is as follows. Although $\cU(z,\tau,\h\mu,\h\g)$ involves the $p$-dimensional r.v.'s $z_t,$ and the estimates $\h\mu_{(j)},$ and $\h\g_{(j)}$ which approximate $(p-1)$-dimensional unknown parameters $\mu^0_{(j)}$ and $\g^0_{(j)},$ $j=1,...,p,$ up to the rate $O\big(\surd(s\log p/T)\big),$ yet, the eventual lower bound of Lemma \ref{lem:mainlowerb} is free of the dimensions $s,p$ under the assumed conditions. Intuitively, the plug-in least squares estimator of the change point behaves as if the nuisance parameters $\mu^0_{(j)}$ and $\g^0_{(j)}$ are \textit{known}. This is a key property that dictates the rate of convergence established in the next Theorem. Further insight on the inner workings of this result is provided in Remark \ref{rem:kolmogorov}.

\begin{thm}\label{thm:optimalapprox} Suppose Conditions A, B and C hold, and for any $0<a<1,$ let $c_{a1},c_{a2}$ and $c_{a3}$ be as defined in Lemma \ref{lem:mainlowerb}. Then, for $T$ sufficiently large the following hold:\\
	(i) When $\psi\to 0$,  $(1+\nu^2)^{-1}(\si^2\vee\phi)^{-2}\ka^2\psi^2\big|\lfloor T\tilde\tau\rfloor-\lfloor T\tau^0\rfloor
	\big|\le c_u^2c_{a1}^2,$ with probability at least $1-3a-o(1).$ Equivalently, in this case we get that $\psi^2\big(\lfloor T\tilde\tau\rfloor-\lfloor T\tau^0\rfloor
	\big)=O_p(1).$ \\~	
	(ii) When $\psi\not\to 0,$ we have, $\big|\lfloor T\tilde\tau\rfloor-\lfloor T\tau^0\rfloor
	\big|\le c_{a3}^2,$ with probability at least $1-3a-o(1).$ Equivalently, in this case we obtain $\big(\lfloor T\tilde\tau\rfloor-\lfloor T\tau^0\rfloor\big)=O_p(1).$
\end{thm}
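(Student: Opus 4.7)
The plan is to combine the optimality of $\tilde\tau$---which gives $\cU(z,\tilde\tau,\h\mu,\h\g)\le 0$ upon taking $\tau=\tau^0$ in the definition of $\cU$---with the uniform lower bound of Lemma~\ref{lem:mainlowerb} to exclude large deviations. The target constant is $M = c\,c_{a3}^2$ for a universal $c$ fixed below; since $c_{a3}^2$ carries the factor $(1\wedge\psi)^{-2}$, this automatically produces the $\psi^{-2}$ scaling in case~(i) and a $T$-free bound in case~(ii).

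The core argument is a dyadic peeling. Suppose, for a contradiction, that $|\lfloor T\tilde\tau\rfloor-\lfloor T\tau^0\rfloor| > M$. Partition the offending region into shells
\[
\cG_k := \cG(u_{T,k},\,v_{T,k}),\quad v_{T,k}=2^{k-1}M/T,\quad u_{T,k}=2^{k}M/T,\quad k=1,\ldots,K,
\]
with $K=O(\log T)$, the outermost shell being truncated using Condition~A(ii). Applying Lemma~\ref{lem:mainlowerb} to $\cG_k$ yields
\[
\inf_{\tau\in\cG_k}\cU(z,\tau,\h\mu,\h\g)\;\ge\;\ka\xi_{2,2}^2\Bigl[\tfrac{2^{k-1}M}{T} - c_{a3}\max\bigl\{2^{k/2}\tfrac{\sqrt{M}}{T},\;2^{k}\tfrac{M}{T^{1+b}}\bigr\}\Bigr].
\]
Its positivity reduces to two inequalities: $\sqrt{M}>2^{1-k/2}c_{a3}$ (satisfied for every $k\ge 1$ as soon as $M\ge 4c_{a3}^2$) and $T^{b}>2c_{a3}$ (eventually true under Condition~A). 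Choosing $c$ so that $M\ge 4c_{a3}^2$ therefore makes each shell's infimum strictly positive, which, combined with $\cU(z,\tilde\tau,\h\mu,\h\g)\le 0$, rules out $\tilde\tau\in\bigcup_k\cG_k$. This delivers $|\lfloor T\tilde\tau\rfloor-\lfloor T\tau^0\rfloor|\le M$, and both parts of the theorem follow upon substituting $(1\wedge\psi)^{-2}=\psi^{-2}$ (case~(i)) or $=1$ (case~(ii)) into the expression of $c_{a3}^2$.

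The main technical obstacle is propagating Lemma~\ref{lem:mainlowerb}'s $1-3a-o(1)$ coverage through the union bound over the $K=O(\log T)$ shells without degrading it. I would handle this by invoking the lemma at shell $k$ with tail level $a_{k}=a/2^{k}$, so that $\sum_{k}3a_{k}\le 3a$: since $c_{a_{k}3}^{2}$ then scales like $2^{k}$ times its base value while the positivity threshold $2^{2-k}c_{a_{k}3}^{2}$ carries a compensating $2^{2-k}$, a single $M$ of the advertised order works uniformly in $k$. Any residual $\log T$-type overhead---e.g., from the side-condition $u_{T,k}\ge c_{a_{k}1}^{2}\si^{4}/(T\phi^{2})$ at the innermost shell, or from the mild enlargement of the constant introduced by the $a_k$ tracking---is absorbed either into the universal constant $c_u$ appearing in the theorem or into the $o(1)$ remainder of its probability statement.
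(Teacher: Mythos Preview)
Your dyadic peeling is a genuinely different device from what the paper does. The paper never partitions the deviation region into shells; instead it runs an \emph{iterative tightening} from the outside in: apply Lemma~\ref{lem:mainlowerb} with $u_T=1$ to force $\tilde\tau$ into $\cG(v_T^*,0)$ with $v_T^*=c_{a3}\max\{T^{-1/2},T^{-b}\}$, reset $u_T\leftarrow v_T^*$, and repeat. The successive bounds satisfy a contraction whose fixed point is $c_{a3}^2/T$. The paper then argues that, because each recursion uses a smaller $u_T$, the underlying bad events from Lemmas~\ref{lem:optimalcross}--\ref{lem:WUURE} are nested, so the probability level stays at $1-3a-o(1)$ without any union bound.

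Your peeling is a reasonable alternative, but the specific probability bookkeeping you propose has a gap. With $a_k=a/2^k$ you have $c_{a_k3}\propto 2^{k/2}$. You correctly recheck the first positivity constraint and find $M\ge 2^{2-k}c_{a_k3}^2=O(1)\cdot c_{a3}^2$, which is fine. But you do not revisit the \emph{second} constraint once $c_{a3}$ is replaced by $c_{a_k3}$: it becomes $T^b>2c_{a_k3}\propto 2^{k/2}$. At the outermost shells $k\approx K=\log_2(T/M)$ this reads $T^b\gtrsim\sqrt{T/M}$, i.e.\ $M\gtrsim T^{1-2b}$. In the non-vanishing regime $M\asymp c_{a3}^2=O(1)$ and this fails outright; in the vanishing regime it would force $\psi\lesssim T^{b-1/2}$, which Condition~A does not supply. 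So positivity breaks on the outer shells and you cannot exclude $\tilde\tau$ from them---this is not a ``$\log T$ overhead'' that the $o(1)$ term can swallow.

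A minimal repair within your framework: take $a_k\propto a/k^2$ rather than geometric. Then $c_{a_k3}\propto k$, the second constraint becomes $T^b\gtrsim K=O(\log T)$ (harmless for large $T$), and the first constraint $\sqrt{M}\gtrsim k\,2^{-k/2}$ is maximized near $k=3$ and still yields $M\ge c\,c_{a3}^2$. Alternatively, adopt the paper's recursion, which at every step applies Lemma~\ref{lem:mainlowerb} on the full remaining set $\cG(u_T,0)$ at the \emph{fixed} level $a$, so the second constraint stays $T^b>2c_{a3}$ throughout and never inflates with $k$.
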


Theorem \ref{thm:optimalapprox} provides a $O_p(1)$ bound, wherein the bounding constant $c_a$ depends on the probability of the bound. This is in contrast to existing localizing bounds in the literature, for e.g. an $O\big(\log (p\vee T)\big)$ bound in \cite{roy2017change} that holds with probability $1-o(1),$ namely, the bounding constant is free of the probability of the bound.

\begin{rem}\label{rem:dimension.restriction} (On dimensional rate assumptions) {\rm One may observe that Theorem \ref{thm:optimalapprox} is obtained without any explicit restriction on the rate of divergence of $s$ and $p$ with respect to the sampling period $T,$ and is based on their inter-relationship with the jump size $\psi.$ The result holds true for $s,p$ diverging at an arbitrary rate with respect to $T,$ as long as the jump size $\psi$ is large enough to compensate in order to preserve Condition A(iii). This is however not the complete picture. Effectively, this result has transferred the burden of an additional assumption controlling the divergence of $s,p$ to Condition C on the nuisance parameter estimates. In order to obtain feasible estimates of the latter, an additional assumption of the form $s\log p=o(Tl_T)$ is required \big(see, Condition A$'$(i) and Theorem \ref{thm:alg1.nearoptimal} in Section \ref{sec:nuisance}\big).  }   	
\end{rem}

The following remark provides insight on how Lemma \ref{lem:mainlowerb} and Theorem \ref{thm:optimalapprox} eliminate dimensional parameters $s,p$ and other logarithmic terms of $T$ to obtain the rate of convergence.
To aid presentation, define for each $j=1,...,p,$ the following r.v.'s,
\benr\label{def:epsilons}
\vep_{tj}=\begin{cases}z_{tj}-z_{t,-j}^T\mu^0_{(j)}, & t=1,...,\lfloor T\tau^0\rfloor\\
	z_{tj}-z_{t,-j}^T\g^0_{(j)}, & t=\lfloor T\tau^0\rfloor+1,...,T.\end{cases}
\eenr

\begin{rem}\label{rem:kolmogorov} {\rm The behavior of the estimator $\tilde\tau,$ is in part controlled by a stochastic noise term of the form,
		\benr
		\sup_{\tau;\, \tau\ge\tau^0} \xi_{2,2}^{-1}\Big|\sum_{t=\lfloor T\tau^0\rfloor}^{\lfloor T\tau\rfloor} \sum_{j=1}^p \vep_{tj}z_{t,-j}^T\h\eta_{(j)}\Big|,\quad{\rm where}\,\, \h\eta_{(j)}=\h\mu_{(j)}-\h\g_{(j)}, \nn
		\eenr
		and its mirroring counterpart, wherein $\vep_{tj}$ is as defined in (\ref{def:epsilons}). Note the need for uniformity over $\tau$ of this stochastic term. A large proportion of the literature upper bounds such uniform stochastic terms using subexponential type tail bounds and obtains uniformity over $\tau$ by means of union bounds over the at most $T$ distinct values $\lfloor T\tau\rfloor.$ Thus, logarithmic terms of $T$ end up appearing in the upper bound for this stochastic term, which transfers over to the eventual bound for the change point estimate. Additionally, dimensional parameters $s,p$ also often show up, depending upon how one chooses to control the nuisance estimates $\h\eta_{(j)}.$ This approach is insufficient for inference, since it does not yield an $O_p(\psi^{-2})$ rate of convergence; in other words, it does not establish uniform tightness of the sequence $\psi^{2}\big(\lfloor T\tilde\tau\rfloor-\lfloor T\tau^0\rfloor\big),$ which in turn is necessary for the existence of a limiting distribution. To overcome this problem, we develop a novel application of Kolmogorov's inequality (Theorem \ref{thm:kolmogorov}) on partial sums in order to control such stochastic terms with sharper upper bounds. This is achieved by first using a triangle inequality,
		\benr
		\sup_{\tau;\, \tau\ge\tau^0} \xi_{2,2}^{-1}\Big|\sum_{t=\lfloor T\tau^0\rfloor}^{\lfloor T\tau\rfloor} \sum_{j=1}^p \vep_{tj}z_{t,-j}^T\h\eta_{(j)}\Big|\le \sup_{\tau;\, \tau\ge\tau^0} \xi_{2,2}^{-1}\Big|\sum_{t=\lfloor T\tau^0\rfloor}^{\lfloor T\tau\rfloor} \sum_{j=1}^p \vep_{tj}z_{t,-j}^T\eta_{(j)}^0\Big|\nn\\
		+\sup_{\tau;\, \tau\ge\tau^0} \xi_{2,2}^{-1}\Big|\sum_{t=\lfloor T\tau^0\rfloor}^{\lfloor T\tau\rfloor} \sum_{j=1}^p \vep_{tj}z_{t,-j}^T(\h\eta_{(j)}-\eta_{(j)}^0)\Big|.\nn
		\eenr
		The first term on the rhs can now be controlled at an optimal rate $O(\surd{T}),$ (see, Lemma \ref{lem:optimalcross} and Lemma \ref{lem:optimalsqterm}) without any additional logarithmic terms of $T,$ leveraging Kolmogorov's inequality. Moreover, under Conditions A and C, the second term on the rhs of the above inequality can also be controlled with the same upper bound, despite high dimensionality and without dimensional parameters $s,p,$ being involved in the upper bound (see, Lemma \ref{lem:nearoptimalcross}, Lemma \ref{lem:term123} and the proof of Lemma \ref{lem:mainlowerb}). This provides the desired sharper control on the stochastic noise terms and consequently allows for the rate of convergence presented in Theorem \ref{thm:optimalapprox}.}
\end{rem}

\subsection{Asymptotic distribution of the change point estimator}
\label{sec:asymptotic-regime}

To obtain the asymptotic distribution the following technical condition is required. \\
\vspace{1.5mm}
{\it {{\noi{\bf Condition D:}} (i) Given covariance $\Si$ and $\D$, the following limits  exist,
		\benr\label{eq:addasm}
		\xi_{2,2}^{-2}\sum_{j=1}^p\eta^{0T}_{(j)}\Si_{-j,-j}\eta^0_{(j)}\to \si_1^2,\quad{\rm and}\quad\xi_{2,2}^{-2}\sum_{j=1}^p\eta^{0T}_{(j)}\D_{-j,-j}\eta^0_{(j)}\to \si_2^2,\quad 0<\si_1^2,\si_2^2<\iny.\nn
		\eenr
		(ii) For $\vep_{tj},$ for $t=1,...,T,$ and $j=1,...,p,$ as defined in (\ref{def:epsilons}), assume that,
		\benr
		\xi_{2,2}^{-2}p^{-1}{\rm var}\Big(\sum_{j=1}^p\vep_{tj}z_{t,-j}^T\eta^0_{(j)}\Big)\to \si_1^{*2},&&\quad{\rm for}\,\,t=1,...,\lfloor T\tau^0\rfloor\,\,{\rm and},\nn\\
		\xi_{2,2}^{-2}p^{-1}{\rm var}\Big(\sum_{j=1}^p\vep_{tj}z_{t,-j}^T\eta^0_{(j)}\Big)\to \si_2^{*2},&&\quad{\rm for}\,\,t=\lfloor T\tau^0\rfloor+1,...,T,\nn
		\eenr
		where $ 0<\si_1^{*2},\si_2^{*2}<\iny.$
}}

Recall that all limits in this article are with respect to the sampling period $T.$ The limits of Condition D are acting in $T$ via the dimension $p$ and the jump size $\xi_{2,2}.$ As briefly described earlier in the construction of limiting processes (\ref{def:Zr}) and (\ref{def:cCr}), the limits $\si_1^2$ and $\si_2^2$ control the magnitude of the negative drift of the two components of these processes. On the other hand, the limits $\si_1^{*2}$ and $\si_2^{*2}$ control the variance of the process (\ref{def:Zr}).  

Note that finiteness of the limits appearing in Condition D are already guaranteed by prior assumptions, and this condition only assumes their stability. To see this,  first consider Condition D(i) and note that the assumed convergence is on a sequence that is guaranteed to be bounded, i.e.,
\benr
\ka \xi_{2,2}^2\le \sum_{j=1}^p \eta^{0T}_{(j)}\Si_{-j,-j}\eta^{0}_{(j)}\le \phi\xi_{2,2}^2,\nn
\eenr
wherein the inequalities follow from the bounded eigenvalues assumption on the covariance matrix $\Si$ \big(Condition B(ii)\big), and analogously for the post-change covariance matrix $\D.$ An easier to interpret, but stronger sufficient condition for the finiteness for the limits in Condition D(i) is as follows. Let $\Si=\big[\si_{ij}\big]_{i,j=1,...,p},$ and $\D$ be symmetric matrices such that,
\benr
\|\Si\|_1=\max_{1\le j\le p}\sum_{ij}|\si_{ij}|<\iny,\nn
\eenr
and analogous for the matrix $\D.$ Then, we have,
\benr
\xi_{2,2}^{-2}\sum_{j=1}^p\eta^{0T}_{(j)}\Si_{-j,-j}\eta^0_{(j)}\le \|\Si\|_{\iny}\|\Si\|_1\xi_{2,2}^{-2}\sum_{j=1}^{p}\|\eta_{(j)}\|_2^2= \|\Si\|_{\iny}\|\Si\|_1<\iny,\nn
\eenr
where the inequality follows from the relation $\|\Si\|_2^2\le \|\Si\|_{\iny}\|\Si\|_1,$ with $\|\Si\|_2$ denoting the operator norm. In other words, finiteness of the assumed limits of D(i) are guaranteed by absolute summability of components of each row (or column) of the underlying covariances, which are in turn satisfied by large classes of such matrices, including Toeplitz and banded ones.

Next, finiteness of the assumed limits of D(ii) can be illustrated by using properties of subgaussian distributions assumed earlier in Condition B. Specifically, let $\z_{tj}=\vep_{tj}z_{t,-j}^T\eta^0$ and $\z_t=\sum_{j=1}^p\z_{tj},$ and note that $E(\z_{t})=0$. Further, using part (ii) of Lemma \ref{lem:subez} we get that $\z_t\sim{\rm subE}(\la),$ $\la=O(\xi_{2,1}),$ with $\xi_{2,1}=\sum_{j=1}^p\|\eta^0_{(j)}\|_2.$ Hence, $\xi_{2,2}^{-1}p^{-1}{\rm var}(\z_t)=O\big(\xi_{2,1}^2\big/p\xi_{2,2}^2\big)=O(1)<\iny,$ which follows by utilizing the elementary relation $\xi_{2,1}\le \surd{p}\xi_{2,2}$ between the $1-$norm and $2-$norm. 

Next, we state the result for the asymptotic distribution of the change point estimator for the \textit{vanishing jump size regime} $\psi\to 0.$ 

\begin{thm}\label{thm:limitingdist} (Vanishing jump size regime) Suppose Conditions A, B, C, and D hold. Further, assume that $\psi\to 0,$ while satisfying, 
	\benr\label{eq:rateextra}
	\frac{1}{\psi}\Big\{\frac{s\log^{3/2} (p\vee T)}{\surd (Tl_T)}\Big\}= o(1).
	\eenr	
	Then, the estimator $\tilde\tau$ of (\ref{est:optimal}) has the following limiting distribution.
	\benr
	T(\si_1^{*})^{-2}\si_1^4\psi^2(\tilde\tau-\tau^0)\Rightarrow \argmax_{r\in\R} Z(r).\nn
	\eenr
	where $Z(r)$ is as defined in (\ref{def:Zr}).
\end{thm}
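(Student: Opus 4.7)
The plan is to apply an argmax continuous mapping theorem to a suitably rescaled version of the criterion $\cU(z,\tau,\h\mu,\h\g)$ so as to identify its weak limit as the process $Z(r)$ of (\ref{def:Zr}). Let $k=\lfloor T\tau\rfloor-\lfloor T\tau^0\rfloor$ and reparametrize via $r=T(\si_1^*)^{-2}\si_1^4\psi^2(\tau-\tau^0)$. By Theorem \ref{thm:optimalapprox} we already have $\psi^2 k=O_p(1)$, so it is enough to establish weak convergence of the rescaled criterion process on compact sets $\{|r|\le M\}$ together with almost sure uniqueness of $\argmax Z$, the latter being a standard property of an asymmetric two-sided Brownian motion with negative drift \citep{bai1997estimation}.

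I would first reduce the analysis to an oracle version of the criterion by writing
\benn
\cU(z,\tau,\h\mu,\h\g)=\cU^{*}(z,\tau)+R(z,\tau,\h\mu,\h\g),\quad\text{with}\quad\cU^{*}(z,\tau):=Q(z,\tau,\mu^0,\g^0)-Q(z,\tau^0,\mu^0,\g^0),
\eenn
and showing that, after the rescaling that converts $\cU^{*}$ into a non-degenerate limit, the remainder $R$ vanishes uniformly in probability over $|r|\le M$. This is exactly the adaptivity property that drives Lemma \ref{lem:mainlowerb}: the cross terms $\sum_t\vep_{tj}z_{t,-j}^T(\h\eta_{(j)}-\eta^0_{(j)})$ and the pure quadratic terms $\sum_t\big(z_{t,-j}^T(\h\eta_{(j)}-\eta^0_{(j)})\big)^2$ arising in $R$ must be controlled through Kolmogorov's partial-sum inequality in the manner of Remark \ref{rem:kolmogorov}, combined with the $\ell_2$ rate of Condition C on $\h\mu,\h\g$; the strengthened assumption (\ref{eq:rateextra}) is what lets these remainder terms be sent to $o_p(1)$ at the sharper scale required for a limit distribution rather than merely for a rate. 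For the oracle piece, substituting $x_{tj}=x_{t,-j}^T\g^0_{(j)}+\vep_{tj}$ for $t>\lfloor T\tau^0\rfloor$ yields, for $r>0$,
\benn
T\cU^{*}(z,\tau)=\sum_{t=\lfloor T\tau^0\rfloor+1}^{\lfloor T\tau^0\rfloor+k}\Big\{\sum_{j=1}^p\big(x_{t,-j}^T\eta^0_{(j)}\big)^{2}-2\sum_{j=1}^p\vep_{tj}x_{t,-j}^T\eta^0_{(j)}\Big\},
\eenn
and a mirroring expression (with $w_t$ in place of $x_t$ and the sign of the cross term reversed) for $r<0$. Condition D(i) gives the per-summand mean as $\si_2^2\xi_{2,2}^{2}$, while Condition D(ii) gives the per-summand variance as of order $4p\xi_{2,2}^{2}\si_2^{*2}$; the pure quadratic contribution to the variance is of smaller order in the vanishing regime because $\xi_{2,2}^{2}/p=\psi^{2}\to 0$. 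Donsker's functional central limit theorem then yields weak convergence of the rescaled $-\cU^{*}$ on compacta to $2\si_2^{*}(\si_1^{*})^{-1}W_2(r)-\si_2^2\si_1^{-2}r$ on $r>0$ and to $2W_1(-r)-|r|$ on $r<0$, with $W_1$ and $W_2$ independent because they are built from the disjoint pre- and post-change blocks of the data. Matching with (\ref{def:Zr}) and invoking the argmax continuous mapping theorem delivers the stated limit.

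The principal obstacle is the uniform negligibility of $R$ over $|r|\le M$: the $\ell_2$ error of the plug-in estimators is only of order $\{s\log(p\vee T)/(Tl_T)\}^{1/2}$, which is not, on its own, small enough to dominate a summation over $O(\psi^{-2})$ time points unless one replaces pointwise tail bounds by Kolmogorov-type partial-sum control of the type described in Remark \ref{rem:kolmogorov}; any coarser bound would leave logarithmic or dimensional factors in the normalization and so preclude convergence to $Z(r)$. Secondary but essential tasks are handling the boundary near $r=0$ (only finitely many summands are involved on either side, treated by a discretization step) and tightness in the tails, which follows from the negative drift of $Z$ once one observes that the probability of the argmax escaping $[-M,M]$ is controlled uniformly in $T$ through Theorem \ref{thm:optimalapprox} and becomes arbitrarily small as $M\to\iny$.
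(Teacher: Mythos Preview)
Your proposal is correct and follows essentially the same route as the paper's proof: an application of the argmax theorem, with tightness supplied by Theorem \ref{thm:optimalapprox}, weak convergence established by decomposing into an oracle criterion $\cU^{*}$ plus a plug-in remainder, controlling the remainder uniformly via the Condition C rates together with the strengthened assumption (\ref{eq:rateextra}), and identifying the limit of the oracle piece through Condition D and a functional central limit theorem. The only cosmetic difference is that the paper first proves convergence to an auxiliary process $G(r)$ under the normalization $\tau^*=\tau^0+rT^{-1}\psi^{-2}$ and then performs the change of variable $\argmax_r G(r)=^d(\si_1^{*2}/\si_1^4)\argmax_r Z(r)$ at the end, whereas you bake the final scaling into the reparametrization from the start; these are equivalent.
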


The density function of this limiting distribution is readily available in \cite{bai1997estimation},  thereby allowing straightforward computation of its quantiles. The only difference between assumption (\ref{eq:rateextra}) and the rate restriction of Condition A(iii) is that the rhs has been tightened to $o(1)$ from $O(1).$ This slightly stronger requirement for the existence of the limiting distribution is in coherence with classical results in the literature \citep{bai1994,bai1997estimation}.

\begin{rem} (On adaptation) {\rm Note that the posited limiting distribution is the same as one would obtain when the nuisance parameters $\mu^0,$ $\g^0$ were \textit{known}. This is despite $\tilde\tau$ utilizing $2p$ estimated vectors $\h\mu_{(j)}$ and $\h\g_{(j)},$ $j=1,...,p,$ each of dimension $(p-1).$ This is effectively the adaptation property as described in \cite{bickel1982adaptive}, but in a high dimensional setting and within a change point parameter context.}
\end{rem}

A note of interest concerns the jump size scaling of Condition A and its relation to the inference properties in high dimensional dynamic models. We note that the scaling $\psi\ge c_us\log^{3/2}(p\vee T)/\surd{Tl_T}$ viewed from a sparsity ($s$) perspective assumes a more sparse regime than the scaling $\psi\ge c_u\big\{s\log(p\vee T)/Tl_T\big\}^{1/2}$ for which near optimal estimation results have been established in context of other dynamic models such as that of linear regression, see, e.g. \cite{rinaldo2020localizing}. Assuming an increased sparsity level is a key distinction that makes the inference results feasible. While we only prove sufficiency of this assumption and not its necessity, however, some evidence pointing to the sharpness of this assumption follows. In a linear regression framework, Lemma 4 of \cite{rinaldo2020localizing} shows that the minimax optimal rate of estimation under a scaling  $\psi\ge c_u\big\{s/T\big\}^{1/2}$ is $O_p(s\psi^{-2}),$ i.e., slower than $O_p(\psi^{-2})$ obtained above and in turn disallowing inference. Thus, at the very least, one may conclude that the sparsity level necessary for feasibility of inference should be diverging at a slower rate such as that assumed in Condition A.  Additional indirect evidence for the sharpness of this super-sparse scaling arises from recent results on inference for a regression coefficient in the presence of high dimensionality. The debiased lasso (\cite{van2014asymptotically}) and orthogonalized moment estimators \citep{belloni2011inference,belloniinference,belloni2017confidence} and \cite{ning2017general} developed for this purpose, require a similar super-sparsity assumption $s\log p/\surd{T}=o(1)$ for validity of inference results, over an ordinary sparsity assumption $s\log p/T=o(1),$ the latter permitting only near optimal estimation properties. The necessity of this assumption remains unknown in this regression coefficient setting as well, however it is the sharpest sufficient condition currently available.

Next, we obtain the limiting distribution in the non-vanishing jump size regime $\psi\to\psi_{\iny},$ $0<\psi_{\iny}<\iny.$ Note that available results in the literature for this non-vanishing regime are primarily available for mean shift models either for fixed $p$ \citep{jandhyala1999capturing,fotopoulos2010exact}, or for growing $p$, but dense settings \cite{bhattacharjee2017common,bhattacharjee2019change,wang2020dating}. Further, the first two papers require $p$ diverging more slowly than $T,$ while the last one requiring $p$ diverging more slowly than $T^2/\log T.$  \cite{kaul2020inference} provides an analysis of the latter case under high dimensional scaling, with $p$ potentially diverging exponentially with $T.$ 

To proceed further, we require an additional distributional assumption, as explained next. The stochastic term that controls the change point estimator $\tilde\tau$ has a distribution of the form $\psi\sum_{t=1}^{r\psi^{-2}}u_t,$ for constant $r>0$, with $u_t$ being independent random variables of finite variance. In the vanishing regime $\psi\to 0,$ we have that $r\psi^{-2}\to \iny$ and thus a functional central limit theorem becomes applicable, yielding a Brownian motion as the resulting process over $r.$ On the other hand, in the non-vanishing regime $\psi\to \psi_{\iny},$ the stochastic term described earlier is no longer over a diverging number of r.v.'s, and is instead a sum of a finite number of finite variance ones. Thus, central limit theoretic results are no longer applicable on this sum, and thus under this non-vanishing case one requires a further parametric assumption on the underlying distribution to characterize the distribution of the above described term. This condition is stated below.

\vspace{1.5mm}
{\it {{\noi{\bf Condition B$'$ (further distributional assumption):}} Suppose Conditions B and D hold. let $\si_1^2,\si_2^2$ be as defined in Condition D and let $\bar\si_1^2=\lim_{T}{\rm var}\Big[	p^{-1}\sum_{j=1}^p\Big\{2\vep_{tj}z_{t,-j}^{T}\eta^0_{(j)}-\eta^{0T}_{(j)}z_{t,-j}z_{t,-j}^T\eta^0_{(j)}\Big\}\Big],$ $t\le\lfloor T\tau^0\rfloor,$ and similarly define $\bar\si_2^2$ for $t>\lfloor T\tau^0\rfloor,$ such that $0<{\bar\si}_1^{2},{\bar\si}_2^{2}<\iny.$ Then, assume 
		\benr
		p^{-1}\sum_{j=1}^p\Big\{2\vep_{tj}z_{t,-j}^{T}\eta^0_{(j)}-\eta^{0T}_{(j)}z_{t,-j}z_{t,-j}^T\eta^0_{(j)}\Big\}&\Rightarrow& \cL\big(-\psi_{\iny}^2\si_1^2,\,{\bar\si}_1^{2}\big),\quad t\le \lfloor T\tau^0\rfloor\nn\\
		p^{-1}\sum_{j=1}^p\Big\{2\vep_{tj}z_{t,-j}^{T}\eta^0_{(j)}-\eta^{0T}_{(j)}z_{t,-j}z_{t,-j}^T\eta^0_{(j)}\Big\}&\Rightarrow& \cL\big(-\psi_{\iny}^2\si_2^2,\,{\bar\si}_2^{2}\big),\quad t> \lfloor T\tau^0\rfloor\nn
		\eenr
		for some distribution law $\cL$ which is continuous and supported in $\R.$}}

Note that the only additional requirement imposed by Condition B$'$, in comparison to Conditions B and D, is that the random variables under consideration are continuously distributed,  which is also clearly true in the typical Gaussian graphical model framework. The arguments in the notation $\cL(\mu,\si^2)$ are used to represent the mean and variance of the distribution $\cL,$ i.e,  $E\cL(\mu,\si^2)=\mu,$ and ${\rm var}\big(\cL(\mu,\si^2)\big)=\si^2.$  Further note that the representation $\cL(\mu,\si^2)$ is only for ease of presentation and does not imply that $\cL$ is characterized by only its mean and variance. 

Next, consider the mean of the sequence of r.v.'s under consideration for $t\le\lfloor T\tau^0\rfloor$ 
\benr
Ep^{-1}\sum_{j=1}^p\Big\{\vep_{tj}z_{t,-j}^{T}\eta^0_{(j)}-\eta^{0T}_{(j)}z_{t,-j}z_{t,-j}^T\eta^0_{(j)}\Big\}=-p^{-1}\sum_{j=1}^p\eta^{0T}_{(j)}\Si\eta^0_{(j)}\to-\psi_{\iny}^2\si_1^2,\nn
\eenr
and analogously for $t>\lfloor T\tau^0\rfloor.$ The equality follows since $E\eta^{0T}_{(j)}z_{t,-j}z_{t,-j}^T\eta_{(j)}^0=\eta^{0T}_{(j)}\Si\eta^0_{(j)},$ and  $E\vep_{tj}=Ez_{t,-j}^T\eta^0_{(j)}=0,$ and moreover, $\vep_{tj}$ and $z_{t,-j}$ are uncorrelated by construction in (\ref{def:epsilons}). Then, convergence in expected value follows from Condition D(i) provided that $\psi\to\psi_{\iny}.$ 

Next, we consider the variance terms of these random variables. From the properties of subgaussian and subexponential distributions (also see, discussion after Condition D), we have, 
\benr\label{eq:finite.var}
&&\text{var} \Big[p^{-1}\sum_{j=1}^p2\vep_{tj}z_{t,-j}^T\eta^0_{(j)}\Big]=O\big(\xi_{2,2}^2/p\big)=O(\psi^2_{\iny})<\iny,\quad{\rm and}\nn\\
&&\text{var}\Big[p^{-1}\sum_{j=1}^p\eta^{0T}_{(j)}z_{t,-j}z_{t,-j}^T\eta^0_{(j)}\Big]=O\big(\xi_{2,2}^4\big/p^2\big)=O(\psi^4_{\iny})<\iny,\quad {\rm thus},\nn\\
&&\text{var}\Big[p^{-1}\sum_{j=1}^p\Big\{2\vep_{tj}z_{t,-j}^{T}\eta^0_{(j)}-\eta^{0T}_{(j)}z_{t,-j}z_{t,-j}^T\eta^0_{(j)}\Big\}\Big]=\nn\\
&&\hspace{2.75in}2\big(O(\psi^2_{\iny})+O(\psi^4_{\iny})\big)<\iny.
\eenr
Relation (\ref{eq:finite.var}) implies that the sequence of r.v.'s in Condition B$'$ have bounded variances, thereby implying the distribution of the limiting random variable is well defined ($\cL<\iny,$ a.s.), i.e., supported in $\R.$ Consequently, Condition B$'$ simply provides a notation $\cL$ to whatever distribution this may be, with an appropriate variance notation $\bar\si_1^2$ or $\bar\si_2^2,$ respectively. The reader may observe that thus far in our discussion no additional assumption has been made in addition to Condition B and Condition D and the change of regime to the non-vanishing jump size. A further notational comment here is that the result to follow does not assume $p$ to be necessarily diverging. In the case of fixed $p$, the weak convergence ($\Rightarrow$) of Condition B$'$ can be replaced with an equality in distribution ($=^d$). Alternatively, one may view $p$ as a constant sequence in $T$ to maintain notational precision. 

The two-sided random walk defined in (\ref{def:cCr}) can now be utilized to characterize the limiting distribution of the change point estimator in the current non-vanishing regime. For this stochastic process, we have that   $z_t\sim^{i.i.d}\cL(-\psi_{\iny}^2\si_1^2,\,\bar\si^{2}_{1})$ and $z_t^*\sim^{i.i.d}\cL(-\psi_{\iny}^2\si_2^2,\,\bar\si^{2}_{2}),$ and $z_t$ and $z_t^*$ are also independent of each other over all $t.$ The only additional assumption of Condition B$',$ of continuity of the distribution law $\cL$ is assumed for the regularity of the \textit{argmax} of this two sided random walk (see, Lemma \ref{lem:regularity.argmax}). 

\begin{thm} (Non-vanishing jump regime)\label{thm:wc.non.vanishing} Suppose Conditions A, B$'$, C, and D hold. Further, assume that $\psi\to \psi_{\iny},$ $0<\psi_{\iny}<\iny,$ and that $\big\{s\log^{3/2} (p\vee T)\big/\surd (Tl_T)\big\}= o(1).$ 
	Then, the estimator $\tilde\tau$ in (\ref{est:optimal}) has the following limiting distribution:
	\benr
	(\lfloor T\tilde\tau\rfloor-\lfloor T\tau^0\rfloor)\Rightarrow \argmax_{r\in\Z}   \cC_{\iny}(r).\nn
	\eenr
	where $\cC_{\iny}(r)$ is as defined in (\ref{def:cCr}).
\end{thm}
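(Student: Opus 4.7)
The plan is to combine the $O_p(1)$ localization from Theorem \ref{thm:optimalapprox}(ii) with an explicit expansion of the criterion on the integer grid inside the localized neighborhood, and then invoke a discrete argmax continuous mapping theorem. For any $\varepsilon>0$, Theorem \ref{thm:optimalapprox}(ii) yields an integer $M=M(\varepsilon)$ with $P(A_M)\ge 1-\varepsilon$, where $A_M=\{|\lfloor T\tilde\tau\rfloor-\lfloor T\tau^0\rfloor|\le M\}$. Since $Q(z,\cdot,\h\mu,\h\g)$ changes value only at the jump times of $\lfloor T\tau\rfloor$, on $A_M$ the minimizer $\tilde\tau$ coincides with the minimizer of the discrete profile loss $D_T(r):=T\big[Q(z,\tau^0+r/T,\h\mu,\h\g)-Q(z,\tau^0,\h\mu,\h\g)\big]$ over $r\in\Z$ with $|r|\le M$. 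It therefore suffices to identify the finite-dimensional limit law of $\{-D_T(r)/p\}_{|r|\le M}$ and then let $M\to\iny$.

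For fixed $r>0$ only post-change observations enter $D_T(r)$. Substituting $z_{tj}=z_{t,-j}^T\g^0_{(j)}+\vep_{tj}$ from (\ref{def:epsilons}) and expanding the squares gives, after elementary algebra,
\begin{equation*}
-\frac{D_T(r)}{p}=\sum_{t=\lfloor T\tau^0\rfloor+1}^{\lfloor T\tau^0\rfloor+r}\frac{1}{p}\sum_{j=1}^p\Big\{2\vep_{tj}z_{t,-j}^T\eta^0_{(j)}-\eta^{0T}_{(j)}z_{t,-j}z_{t,-j}^T\eta^0_{(j)}\Big\}+R_T(r),
\end{equation*}
with an analogous formula for $r<0$ summed over pre-change observations. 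The remainder $R_T(r)$ collects cross-terms carrying factors of $\h\mu_{(j)}-\mu^0_{(j)}$ or $\h\g_{(j)}-\g^0_{(j)}$. Because the outer sum contains at most $M$ indices, Condition C combined with Condition B(ii) and the subgaussian/subexponential moment machinery used throughout Section \ref{sec:convergence-rate} (notably the arguments behind Lemma \ref{lem:mainlowerb}) deliver $R_T(r)=o_p(1)$ under the strengthened rate assumption $\{s\log^{3/2}(p\vee T)\}\big/\surd(Tl_T)=o(1)$.

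The leading stochastic term is then a sum of $|r|$ independent copies of the scalar statistic appearing in Condition B$'$. For $r>0$ these copies converge weakly to $\cL(-\psi_{\iny}^2\si_2^2,\bar\si_2^2)$; for $r<0$ the analogous expansion, together with matching sign bookkeeping, produces copies converging to $\cL(-\psi_{\iny}^2\si_1^2,\bar\si_1^2)$. Independence across $t$ from Condition B(i) and the Cramér--Wold device then give joint weak convergence $\{-D_T(r)/p\}_{|r|\le M}\Rightarrow\{\cC_{\iny}(r)\}_{|r|\le M}$, with the two arms of $\cC_{\iny}$ independent because the pre- and post-change samples share no observations. Continuity of $\cL$ under Condition B$'$ gives a.s.\ uniqueness of $\argmax_{r\in\Z}\cC_{\iny}(r)$ through Lemma \ref{lem:regularity.argmax}, while strict negativity of the drift on both arms forces $\cC_{\iny}(r)\to-\iny$ a.s.\ as $|r|\to\iny$, so the argmax is a.s.\ finite. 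Applying the finite-dimensional argmax continuous mapping theorem on $|r|\le M$ and letting $M\to\iny$ inside the localization event $A_M$ then delivers $(\lfloor T\tilde\tau\rfloor-\lfloor T\tau^0\rfloor)\Rightarrow\argmax_{r\in\Z}\cC_{\iny}(r)$.

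The main obstacle is the uniform control of $R_T(r)$: although the outer summation has only $O(1)$ terms, the inner sum over $j$ couples $p$ high-dimensional quadratic forms against the estimation errors $\h\mu_{(j)}-\mu^0_{(j)}$ and $\h\g_{(j)}-\g^0_{(j)}$, and these cross-terms must vanish without picking up dimensional factors of $s$ or $\log p$. This is precisely where the stronger super-sparse scaling (as opposed to the estimation-optimal $s\log(p\vee T)/Tl_T=O(1)$) is used, via the Kolmogorov-inequality based device sketched in Remark \ref{rem:kolmogorov}.
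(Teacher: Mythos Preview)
Your proposal is correct and follows essentially the same route as the paper: tightness from Theorem \ref{thm:optimalapprox}(ii), an expansion of the re-centered criterion into the leading sum of Condition B$'$ plus an $o_p(1)$ remainder driven by the nuisance errors, weak convergence of the leading term to $\cC_\iny(r)$, and regularity of the limiting argmax via Lemma \ref{lem:regularity.argmax}. The only cosmetic differences are that the paper packages the remainder control into a dedicated result (Lemma \ref{lem:est.known.cC.approx}, proved through Lemma \ref{lem:limiting.dist.residual.terms} using Lemmas \ref{lem:etabounds} and \ref{lem:nearoptimalcross} rather than the Kolmogorov device you cite) and invokes the abstract argmax theorem (Theorem \ref{thm:argmax}) directly instead of your localize-to-$|r|\le M$-then-let-$M\to\iny$ argument, but these are equivalent formulations of the same proof.
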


The process $\cC_{\iny}(r)$ is a two sided random walk with negative drift and continuously distributed increments. Further, the map $\argmax_{r\in \Z}\cC_{\iny}(r)$ is almost surely unique and possesses a distribution supported on $\Z$, as shown in the proof of Theorem \ref{thm:wc.non.vanishing}.

\begin{rem} (A comparison on the limiting distribution results obtained to those established for mean shift models)
	{\rm  The obvious distinction between the stochastic processes (\ref{def:Zr}) and (\ref{def:cCr}) is that the first is continuous and the other discrete. An additional subtle observation distinguishing these processes is the stochastic term that characterizes them. Specifically, the limiting process in the vanishing regime is characterized by the sequence  $\z_t=p^{-1}\sum_{j=1}^p\vep_{tj}z_{t,-j}^{T}\eta^0_{(j)},$ $t=1,...,T,$ whereas in the non-vanishing regime by the sequence $\z_t=p^{-1}\sum_{j=1}^p\Big\{\vep_{tj}z_{t,-j}^{T}\eta^0_{(j)}-\eta^{0T}_{(j)}z_{t,-j}z_{t,-j}^T\eta^0_{(j)}\Big\},$ $t=1,...,T.$ A somewhat unusual consequence of this distinction is that the increments of the limiting process change from symmetric to asymmetric in the vanishing and non-vanishing regimes, respectively. We further note that this observation distinguishes the above result from that for mean shift models, where instead the same sequence of r.v.'s characterizes limiting processes for both vanishing and non-vanishing regimes. Another consequence of the above discussion is that the presence of an additional quadratic form in the sequence of interest leads to an inflation in the variance of the limiting process in the non-vanishing regime. The reason as to why this happens can be intuitively observed from (\ref{eq:finite.var}), where the variance of the quadratic form is $O(\psi^4),$ whereas the variance of the remainder is  $O(\psi^2).$ Thus, in the vanishing regime the first part of the r.v. under consideration dominates the quadratic form, which is no longer true in the non-vanishing jump regime.} 	
\end{rem}

\begin{rem} (Numerical approximations of distribution law $\cL$ and using Theorem \ref{thm:wc.non.vanishing} in applications)  {\rm  To construct a confidence interval for $\lfloor T\tau^0\rfloor$ based on Theorem \ref{thm:wc.non.vanishing}, one needs to obtain quantiles of the given limiting distribution. Unlike the limiting distribution of Theorem \ref{thm:limitingdist}, the cdf of this distribution is not available analytically. This can be achieved by simulating realizations of the two sided random walk $\cC_{\iny}(r),$ to obtain Monte Carlo approximations of the required quantiles. Doing so in turn requires producing realizations from the incremental distributions $\cL\big(-\psi_{\iny}^2\si_1^2,\,\bar\si_1^2\big)$ and $\cL\big(-\psi_{\iny}^2\si_1^2,\,\bar\si_1^2\big)$ of Condition B$',$ which first needs to be identified. We first note that the means of these distributions can be computed as plug in estimates from the estimated jump size and the given form of Condition D(i). The variances $\bar\si_1^2$ and $\si_2^2$ can also be estimated as piecewise sample variances from the observed data by noting that one has available $T$ predicted realizations, $\h\z_t=p^{-1}\sum_{j=1}^p\Big\{2\vep_{tj}z_{t,-j}^{T}\h\eta_{(j)}-\h\eta_{(j)}z_{t,-j}z_{t,-j}^T\h\eta_{(j)}\Big\},$ $t=1,...,T.$ The details of this estimation process are provided in Appendix \ref{sec:add.numerical}. In view of this discussion, the only missing link that remains is the form of the distribution $\cL.$ Since no explicit assumptions on the form of the underlying data generating distribution have been made in the article, thus identifying the distribution $\cL$ is not analytically feasible. For the Gaussian case, the distribution $\cL$ becomes an average of inter-dependent Variance-Gamma distributed random variables, which to the best of our knowledge has no known analytical form. We overcome this hurdle of choosing the form of $\cL$ by performing an empirical fit to the predicted realizations $\h\z_t,$ $t=1,...,T,$ by means of the Kolmogorov-Smirnov goodness of fit test. Details of this process are described in Appendix \ref{sec:add.numerical} and Algorithm 3 therein.}
\end{rem}

We conclude this section with a natural question that arises due to the inherent characteristic of change point estimators which splits distributional behavior into distinct regimes based on the jump size as discussed above. Given the fact that the distinction of a vanishing versus a non-vanishing jump size is unverifiable in practice, it remains unclear as to which of the two confidence intervals constructed using Theorems \ref{thm:limitingdist} or \ref{thm:wc.non.vanishing} would be a better representation in a real data setting. An immediate, but naive observation is that since the space of validity of the vanishing and non-vanishing regimes is $\psi_{\iny}=0$ and $\psi_{\iny}\in(0,\iny),$ respectively, thus without any additional information one may place more emphasis on the latter regime based solely on the larger size of the space of validity. A principled approach to this question has been undertaken in Section 5 of \cite{bhattacharjee2018change} in a stochastic block model framework under dense alternatives. They propose obtaining an empirical distribution of the change point estimator via replicated estimates obtained on synthetic data simulated under estimated nuisance parameters. They establish convergence of this empirical distribution to the underlying limiting distributions irrespective of the jump size regime. A similar approach may be used here, even though it entails high computational costs, further compounded by the high dimensional nature of the problem. Consequently, we do not pursue this further.     

\section{Construction of a feasible $O_p(\psi^{-2})$ estimator of $\lfloor T\tau^0\rfloor$}\label{sec:nuisance}

The results of Section \ref{sec:mainresults} rely on estimates
of the nuisance parameters satisfying Condition C. A procedure to obtain such estimates is discussed next. 
We start by introducing some more notation. For any $\tau\in(0,1),$ such that $\lfloor T\tau\rfloor\ge 1,$ consider $\ell_1$ regularized (lasso) estimates of the regression of each column of the observed variable $z$ on the remaining columns, for each of the two binary partitions induced by $\tau.$ Specifically, for each $j=1,...,p,$ define,
\benr\label{est:lasso}
\h\mu_{(j)}(\tau) = \argmin_{\substack{\mu_{(j)}\in \R^{p-1}}} \Big\{\frac{1}{\lfloor T\tau\rfloor}\sum_{t=1}^{\lfloor T\tau\rfloor} \big(z_{tj}- z_{t, -j}^T\mu_{(j)}\big)^2 + \la_j\|\mu_{(j)}\|_1 \Big\},\\
\h\g_{(j)}(\tau) = \argmin_{\substack{\g_{(j)}\in \R^{p-1}}} \Big\{\frac{1}{(T-\lfloor T\tau\rfloor)}\sum_{t=\lfloor T\tau\rfloor+1}^{T} \big(z_{tj}- z_{t, -j}^T\g_{(j)}\big)^2 + \la_j\|\g_{(j)}\|_1 \Big\},\hspace{-0.75cm}\nn
\eenr
where $\quad{\la_j>0}.$ To develop a feasible estimator for $\tau^0,$ recall the following from Section \ref{sec:mainresults}: (a) The missing links required to implement $\tilde\tau(\h\mu,\h\g)$ are the edge parameter estimates $\h\mu_{(j)}$ and $\h\g_{(j)},$ $j=1,....,p.$ (b) These edge estimates require sufficient Condition C to be satisfied in order to retain the results of Section \ref{sec:mainresults}. We shall fulfill these nuisance estimate requirements using the estimators in (\ref{est:lasso}) implemented in a twice iterated manner. The iterations are between the change point parameter $\tau$ and the edge parameters $\mu_{(j)}$ and $\g_{(j)}.$ This iterative approach is conceptually similar to that in \cite{atchade2017scalable}, with the added refinement of limiting the procedure to two iterations and further illustrating the redundancy of any further iterations. 

The twice iterative approach of the estimator to be considered is as follows. Rough edge estimates $\check\mu_{(j)}=\h\mu_{(j)}(\check\tau),$ and $\check\g_{(j)}=\h\g_{(j)}(\check\tau),$ $j=1,...,p,$ computed using a nearly arbitrary $\check\tau\in(0,1)$ (see, initializing condition of Algorithm 1 below) possess sufficient information, so that a single step update $\lfloor T\h\tau\rfloor=\lfloor T\tilde\tau(\check\mu,\check\g)\rfloor,$ moves into a near optimal neighborhood $O_p\big(\psi^{-2}\log(p\vee T)\big)$. With the availability of such a near optimal estimate $\lfloor T\h\tau\rfloor,$ we show that another update $\h\mu_{(j)}=\h\mu_{(j)}(\h\tau),$ and $\h\g_{(j)}=\h\g_{(j)}(\h\tau)$ satisfies all theoretical requirements of Condition C. This allows us to perform another update wherein Condition C is now applicable, thus ensuring that the results of Section \ref{sec:mainresults} hold for this second update. Note that the second update of the change point $\lfloor T\tilde\tau\rfloor=\lfloor T\tilde\tau(\h\mu,\h\g)\rfloor,$ moves $\lfloor T\h\tau\rfloor$ from the near optimal neighborhood $O_p\big(\psi^{-2}\log(p\vee T)\big)$ into an $O_p\big(\psi^{-2}\big)$ neighborhood of $\lfloor T\tau^0\rfloor.$ This is a direct consequence of Theorem \ref{thm:optimalapprox}. Additionally, Theorem \ref{thm:limitingdist} also provides the limiting distribution of this second update $\tilde\tau,$ thereby allowing inference on $\tau^0.$ The procedure is stated as Algorithm 1 below and is described visually in Figure \ref{fig:schematic}.

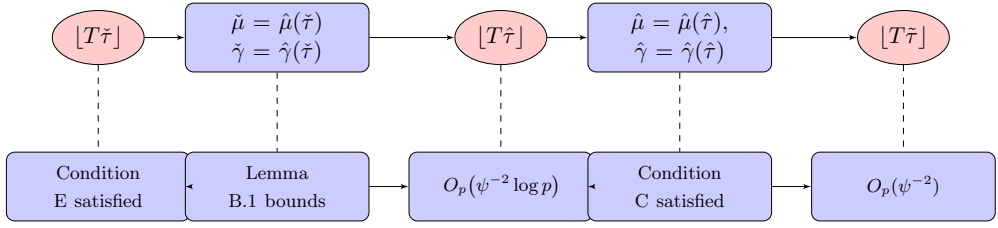
\begin{figure}[]
	\centering
	\resizebox{0.9\textwidth}{!}{
		\begin{tikzpicture}[node distance = 3.75cm,auto]
		\node [cloud] (ctau) {$\lfloor T\check\tau\rfloor$};
		\node [block,right of=ctau] (ctheta) {$\check\mu=\h\mu(\check\tau)$\\ $\check\g=\h\g(\check\tau)$};
		\node [cloud, right of=ctheta] (htau) {$\lfloor T\h\tau\rfloor$};
		\node [block,right of=htau] (htheta) {$\h\mu=\h\mu(\h\tau),$ $\h\g=\h\g(\h\tau)$};
		\node [cloud,right of=htheta] (btau) {$\lfloor T\tilde\tau\rfloor$};
		\node [block, below of=ctau, node distance=2.5cm] (arb) {\footnotesize{Condition E satisfied}};
		\node [block, below of=ctheta, node distance=2.5cm] (c1) {\footnotesize{Lemma \ref{lem:check.mu.g}} bounds};
		\node [block, below of=htau, node distance=2.5cm] (nopt) {\footnotesize{$O_p\big(\psi^{-2}\log p\big)$}};
		\node [block, below of=btau, node distance=2.5cm] (opt) {\footnotesize{$O_p(\psi^{-2})$}};
		\node [block, below of=htheta, node distance=2.5cm] (c2) {\footnotesize{Condition C satisfied}};
		\path [line] (ctau) -- (ctheta);
		\path [line] (ctheta) -- (htau);
		\path [line] (htau) -- (htheta);
		\path [line] (htheta) -- (btau);
		\path [linena,dashed] (ctau) -- (arb);
		\path [linena,dashed] (htau) -- (nopt);
		\path [linena,dashed] (btau) -- (opt);
		\path [linena,dashed] (ctheta) -- (c1);
		\path [linena,dashed] (htheta) -- (c2);
		\path[line](arb)  --  (c1);
		\path[line](c1)  -- (nopt);
		\path[line](nopt)  --  (c2);
		\path[line](c2)  --  (opt);
		\end{tikzpicture}}
	\caption{\footnotesize{A schematic of the proposed Algorithm 1 and its underlying working mechanism.}}
	\label{fig:schematic}
\end{figure}

\begin{figure}[]
	\centering
	\noi\rule{\textwidth}{0.5pt}
	
	\vspace{-2mm}
	\flushleft {\bf Algorithm 1:} $O_p(\psi^{-2})$ estimation of $\lfloor T\tau^0\rfloor:$
	
	\vspace{-2mm}
	\noi\rule{\textwidth}{0.5pt}
	
	\vspace{-2mm}
	\flushleft{\bf (Initialize):} Choose any $\check\tau\in (0,1)$ satisfying Condition E.
	
	\vspace{-1.5mm}
	\flushleft{\bf Step 1:} Obtain $\check\mu_{(j)}=\h\mu_{(j)}(\check\tau),$ and $\check\gamma_{(j)}=\h\gamma(\check\tau),$ $j=1,...,p,$ and update change point as,
	
	\vspace{-5mm}
	\benr
	\h\tau=\argmin_{\tau\in (0,1)}Q(z,\tau,\check\mu,\check\g)\nn
	\eenr
	
	\vspace{-5mm}
	\flushleft{\bf Step 2:} Obtain $\h\mu_{(j)}=\h\mu_{(j)}(\h\tau),$ and $\h\gamma_{(j)}=\h\gamma(\h\tau),$ $j=1,...,p,$ and perform another update,
	
	\vspace{-5mm}
	\benr
	\tilde\tau=\argmin_{\tau\in (0,1)}Q(z,\tau,\h\mu,\h\g)\nn
	\eenr
	
	\vspace{-4mm}
	\flushleft{\bf (Output):} $\tilde\tau$
	
	\vspace{-2.5mm}
	\noi\rule{\textwidth}{0.5pt}
\end{figure}


The following condition is imposed on the initializer $\check\tau$ of Algorithm 1.

\vspace{1.5mm}
{\it {{\noi{\bf Condition E (initializer):}} Assume initializer $\check\tau$ of Algorithm 1 satisfies, 
		\benr
		(i)\,\,\lfloor T\check\tau\rfloor\wedge(T-\lfloor T\check\tau\rfloor)\ge c_uTl_T,\qquad(ii)\,\, |\lfloor T\check\tau\rfloor-\lfloor T\tau^0\rfloor|\le \frac{c_{u}\ka l_T}{s(\si^2\vee\phi)} T^{(1-k)},\nn
		\eenr		
		for any constant $k>0.$\footnote{Without loss of generality we assume $k<b,$ where $b$ is as defined in Condition A$'$.}}}

The first requirement of Condition E is clearly innocuous and simply requires a separation of the chosen initializer from the boundaries of the parametric space of the change point which is satisfied with any $\check\tau\in [c_{u1},c_{u2}]\subset(0,1).$ Regarding the second requirement, for simplicity consider the case when $l_T\ge c_u<1,$ i.e., the true change point $\tau^0$ lies in some bounded subset of $(0,1),$ and the sparsity parameter is bounded above by a constant. Then, the requirement reduces to  $|\lfloor T\check\tau\rfloor-\lfloor T\tau^0\rfloor|=o(T^{1-k}),$ where the constant $k$ is any arbitrarily small but fixed value; in other words, the initializer may be in any arbitrary polynomial neighborhood $o(T^{(1-k)})$ of $\lfloor T\tau^0\rfloor.$ 

We establish that Step 1 of Algorithm 1 moves any starting value in this $o(T^{1-k})$ neighborhood into a near optimal neighborhood. Subsequently, the next iteration of Step 2 moves it to an $O_p(\psi^{-2})$ neighborhood of $\lfloor T\tau^0\rfloor,$ i.e., $o(T^{1-k})$-nbd.$\longrightarrow^{\rm Step 1}$ near optimal-nbd., $O_p(\psi^{-2}\log p)$ $\longrightarrow^{\rm Step 2}$ optimal-nbd., $O_p(\psi^{-2}).$ Note the sequential improvement in the rate of convergence from initialization to Step 2. Moreover, the improvement to optimality occurs in exactly two iterations. Another important consequence of these results is that it shows the redundancy of any further iterations, in the sense that since an optimal rate has been obtained at Step 2, performing further iterations will not yield any statistical improvement in the estimation of $\tau^0.$ This perspective showcases the mildness of Condition E.

From a practical perspective, a valid initializer $\check\tau$ in an $o(T^{1-k})$ neighborhood of $\lfloor T\tau^0\rfloor$ can be obtained by means of a preliminary coarse grid search
as follows: consider $T^{k}$ equally separated values in $\{1,...,T\}$ forming a coarse grid of possible initializers. Then, select the best fitting value $\check\tau$ for Algorithm 1, which by the pigeonhole principle it must be in an $o(T^{1-k})$ neighborhood of $\tau^0,$ and hence a valid initializer. A similar preliminary coarse grid search has also been heuristically utilized in \cite{roy2017change} in a different high dimensional model setting and also in \cite{kaul2019efficient,kaul2020inference} together with empirical evidence in its support. All simulation experiments in Section \ref{sec:numerical}, as well as the application Section \ref{sec:application} consider a preliminary search grid of $\check\tau\in\{0.25,0.5,0.75\}$ to select the initializer for Algorithm 1. Alternatively, one may also resort to Algorithm 2 below for the implementation of inference results without the requirement of Condition E.

\begin{rem} {\rm The restriction (ii) of Condition E can be further relaxed. One may eliminate the parameter $s$ from the bound $O(s^{-1}T^{1-k})$ and instead assume the initializer $\lfloor T\check\tau\rfloor$ to be in a $O(T^{1-k})$ neighborhood of $\lfloor T\tau^0\rfloor.$ The only consequence of this relaxation, assuming all other assumptions to hold, is
		that rate of convergence of Step 1 of Algorithm 1 will slow down to $O_p(\psi^{-2}s\log p)$ instead of $O_p(\psi^{-2}\log p).$ There will be no consequence in the rate of convergence of Step 2 of Algorithm 1, thus all inferential properties of $\tilde\tau$ of Algorithm 1 are retained.}  
\end{rem}

Next, we provide a modification of Condition A that is sufficient to obtain near optimality of $\h\tau$ in Step 1 of Algorithm 1 and is weaker than the original.

\vspace{1.5mm}
{\it {{\noi{\bf Condition A$'$ (assumption on model parameters):}} Let $\psi$ be as in (\ref{def:jumpsize}), $s$ and $l_T$ as in Condition A and parameters $\si^2,\nu$ and $\ka$ as in Condition B.\\~
		(i) Assume that for an appropriately chosen small constant $c_{u1}>0,$ the following holds
		\benr
		c_u\surd(1+\nu^2)\frac{\si^2s}{\psi\ka}\Big\{\frac{\log (p\vee T)}{T^{(1-2b)}l_T} \Big\}^{\frac{1}{2}}\le c_{u1},\nn
		\eenr
		for some $0<b<(1/2).$\\~
		(ii) Assume that $c_u\ka T^{(1-b)}l_T \ge (\si^2\vee\phi)s\log (p\vee T)$ \\~
		(iii) Assume $\tau^0$ is separated from the parametric boundary, i.e., $(\lfloor T\tau^0\rfloor)\wedge (T-\lfloor T\tau^0\rfloor)  \ge Tl_T.$}}

The following Theorem establishes that $\lfloor T\h\tau\rfloor$ in Step 1 of Algorithm 1 lies in an $O\big(\psi^{-2}\log (p\vee T)\big)$ neighborhood of $\tau^0.$ Inferential properties of $\tilde\tau$ of Step 2 rely critically on this result.

\begin{thm}\label{thm:alg1.nearoptimal} Suppose Conditions A$'$, B and E hold. Let $\h\tau$ be the change point estimate in Step 1 of Algorithm 1. Then, for sufficiently large $T$, we obtain
	\benr\label{eq:32}
	\qquad(1\wedge \psi^{2})(1+\nu^2)^{-1}(\si^{2}\vee\phi)^{-2}\ka^2\big|\lfloor T\h\tau\rfloor-\lfloor T\tau^0\rfloor\big|\le c_u\log(p\vee T)
	\eenr
	with probability $1-o(1).$ In other words, $(1\wedge \psi^2)\big(\lfloor T\h\tau\rfloor-\lfloor T\tau^0\rfloor\big)=O\big(\log(p\vee T)\big),$ with probability converging to $1.$
\end{thm}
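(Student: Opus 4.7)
The plan is to follow the same overall template as the proof of Theorem \ref{thm:optimalapprox}: produce a uniform lower bound on the criterion difference $\cU(z,\tau,\check\mu,\check\g)=Q(z,\tau,\check\mu,\check\g)-Q(z,\tau^0,\check\mu,\check\g)$ over a set of $\tau$ that are far from $\tau^0$, and then convert this into a localization bound for the minimizer $\h\tau$. The key difference is that the nuisance estimates $\check\mu,\check\g$ used in Step 1 do \textit{not} satisfy the optimal $\ell_2$ rate of Condition C, but only a weaker rate whose suboptimality contributes an extra $\log(p\vee T)$ factor to the final result.

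First, I would establish sharp $\ell_2$ error bounds on $\check\mu_{(j)}=\h\mu_{(j)}(\check\tau)$ and $\check\g_{(j)}=\h\g_{(j)}(\check\tau)$ (this is essentially Lemma \ref{lem:check.mu.g}). Since Condition E(ii) places $\check\tau$ in an $o(s^{-1}T^{(1-k)})$ neighborhood of $\tau^0$, each of the two sub-samples used in the lasso objectives (\ref{est:lasso}) consists predominantly of observations from a single regime, plus a controlled amount of contamination from the opposite regime. Standard lasso theory (a restricted eigenvalue / compatibility argument on the sample Gram matrices) then yields an $\ell_2$ bound of the form $\|\check\mu_{(j)}-\mu^0_{(j)}\|_2\vee\|\check\g_{(j)}-\g^0_{(j)}\|_2\le c_u\surd(1+\nu^2)(\si^2/\ka)\{s\log(p\vee T)/(Tl_T)\}^{1/2}$, together with the cone property $(\check\mu_{(j)}-\mu^0_{(j)})\in\cA_{1j},(\check\g_{(j)}-\g^0_{(j)})\in\cA_{2j}$. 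The factor $s^{-1}$ in Condition E(ii) is precisely what prevents an extra $s$ factor from contaminating this bound.

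Second, I would derive a uniform lower bound on $\cU(z,\tau,\check\mu,\check\g)$ for $\tau\in\cG(u_T,v_T)$ that mirrors Lemma \ref{lem:mainlowerb}. Expanding the squared loss, one isolates a dominant signal term of order $\ka\xi_{2,2}^2 v_T$ arising from the misspecification of the change point, and two families of noise terms: an oracle cross-term $\sum_{t}\sum_{j}\vep_{tj}z_{t,-j}^{T}\eta^0_{(j)}$ and its residual counterpart $\sum_{t}\sum_{j}\vep_{tj}z_{t,-j}^{T}(\check\eta_{(j)}-\eta^0_{(j)})$, together with analogous quadratic pieces. The oracle family is handled by the Kolmogorov inequality argument indicated in Remark \ref{rem:kolmogorov} (via Lemmas \ref{lem:optimalcross}--\ref{lem:optimalsqterm}), giving a bound of order $\xi_{2,2}(u_T)^{1/2}$ free of $s,p$. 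The residual family is handled by combining Lemma \ref{lem:nearoptimalcross} and Lemma \ref{lem:term123} with the $\ell_2$ bound from the first step; this is where the $\log(p\vee T)$ inflation enters.

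The main obstacle, as usual, is controlling the residual noise family without letting an $s$ factor leak into the final bound: the cone constraint on $\check\eta_{(j)}-\eta^0_{(j)}$ and the sharper $s^{-1}$-adjusted bound from Lemma \ref{lem:check.mu.g} are what prevent this and ensure only a single $\log(p\vee T)$ factor appears. Once the uniform lower bound
\benn
\inf_{\tau\in\cG(u_T,v_T)}\cU(z,\tau,\check\mu,\check\g)\ge \ka\xi_{2,2}^2\Big[v_T-c_u \tfrac{(\si^2\vee\phi)\surd(1+\nu^2)}{\ka(1\wedge\psi)}\{\log(p\vee T)\}^{1/2}\sqrt{u_T/T}\Big]
\eenn
is established on the event produced by the two preceding steps, we choose $u_T$ as an upper envelope consistent with the already-known consistency of Step 1 and set $v_T=c_u(1+\nu^2)(\si^2\vee\phi)^2\ka^{-2}(1\wedge\psi^2)^{-1}\log(p\vee T)/T$, which makes the right-hand side strictly positive. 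Since $\h\tau$ minimizes $Q(z,\cdot,\check\mu,\check\g)$, we have $\cU(z,\h\tau,\check\mu,\check\g)\le 0$, forcing $\h\tau\notin\cG(u_T,v_T)$ and yielding (\ref{eq:32}) with probability $1-o(1)$.
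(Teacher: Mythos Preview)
There is a genuine gap in your first step, and it cascades through the rest of the argument. You claim that the Step 1 lasso estimates $\check\mu_{(j)},\check\g_{(j)}$ satisfy the Condition C rate
\[
\|\check\mu_{(j)}-\mu^0_{(j)}\|_2\vee\|\check\g_{(j)}-\g^0_{(j)}\|_2\le c_u\surd(1+\nu^2)\frac{\si^2}{\ka}\Big\{\frac{s\log(p\vee T)}{Tl_T}\Big\}^{1/2},
\]
and you attribute this to Lemma \ref{lem:check.mu.g}. But that is not what Lemma \ref{lem:check.mu.g} gives, and under Condition E alone this rate is not available: the contamination from the $O(s^{-1}T^{1-k})$ misspecification of $\check\tau$ enters through the $\la_{2j}\propto \|\eta^0_{(j)}\|_2\,u_T/l_T$ term in Theorem \ref{thm:est.nuisance.para}, and with $u_T\asymp l_T/(sT^{k})$ this bias term is \emph{not} dominated by $\la_{1j}$. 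What Lemma \ref{lem:check.mu.g} actually delivers are aggregate bounds of the form $\big(s\sum_j\|\check\mu_{(j)}-\mu^0_{(j)}\|_2^2\big)^{1/2}\le c_u\xi_{2,2}/T^{k}$ and $\surd{s}\sum_j\|\check\mu_{(j)}-\mu^0_{(j)}\|_2\le c_u\xi_{2,1}/T^{k}$. The Condition C rate you wrote down is precisely what Corollary \ref{cor:optimalmeans.step2} establishes for the Step 2 estimates \emph{after} Theorem \ref{thm:alg1.nearoptimal} has been proved, so invoking it here is circular.

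This error propagates: with the correct (weaker) bounds on $\check\mu,\check\g$, the uniform lower bound on $\cU(z,\tau,\check\mu,\check\g)$ acquires an additional $u_T/T^{k}$ term (see Lemma \ref{lem:lower.b.near.optimal}), i.e.
\[
\inf_{\tau\in\cG(u_T,v_T)}\cU(z,\tau,\check\mu,\check\g)\ge \ka\xi_{2,2}^2\Big[v_T-c_m\max\Big\{\Big(\frac{u_T\log(p\vee T)}{T}\Big)^{1/2},\,\frac{u_T}{T^{k}}\Big\}\Big].
\]
A single choice of $(u_T,v_T)$ then fails: starting from the only a priori envelope $u_T=1$, the term $u_T/T^{k}$ dominates and forces $v_T\gtrsim T^{-k}$, far from the target $\log(p\vee T)/T$. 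The paper resolves this exactly as in the proof of Theorem \ref{thm:optimalapprox}, by iterating the bound---each recursion feeds the previous $v_T^*$ back as the new $u_T$, and the $u_T/T^{k}$ term shrinks geometrically until the $(u_T\log(p\vee T)/T)^{1/2}$ term takes over, at which point the limit of the recursion yields the claimed $c_m^2\log(p\vee T)$ bound. Your proposal omits this recursion entirely, and the phrase ``choose $u_T$ as an upper envelope consistent with the already-known consistency of Step 1'' is circular: Theorem \ref{thm:alg1.nearoptimal} \emph{is} the consistency result for Step 1.
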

%
Theorem \ref{thm:alg1.nearoptimal} shows that $\lfloor T\h\tau\rfloor$ in Step 1 of Algorithm 1 will satisfy a bound $O_p\big(\psi^{-2}\log(p\vee T)\big),$ despite the algorithm initializing with any $\lfloor T\check\tau\rfloor$ in a $O(s^{-1}T^{(1-k)})$ neighborhood of $\tau^0.$ This result now allows us to study the behavior of estimates of the edge parameters and the change point parameter obtained from the second iteration in Step 2 of Algorithm 1. We note here that the properties of these second iteration estimates rely solely on the bound (\ref{eq:32}) of $\lfloor T\h\tau\rfloor,$ and the availability of this bound renders no further use of the initial edge estimates $\check\mu_{(j)}$ and $\check\g_{(j)},$ $j=1,...,p.$ This feature allows Algorithm 1 to be modular in its construction, in the sense that for Step 2 to yield an estimate $\lfloor T\tilde\tau\rfloor$ that is $O_p(\psi^{-2})$, it does not require the estimator in Step 1 to be specifically the one selected in Algorithm 1. Alternatively, Step 1 of Algorithm 1 can readily be replaced with any other near optimal estimator available in the literature, i.e.,  satisfying a bound $O\big(\psi^{-2}\log (p\vee T)\big)$ with probability $1-o(1).$ This is described below as Algorithm 2.

\vspace{-2mm}
\begin{figure}[H]
	\noi\rule{\textwidth}{0.5pt}
	
	\vspace{-2mm}
	\flushleft {\bf Algorithm 2:} $O_p(\psi^{-2})$ estimation of $\lfloor T\tau^0\rfloor:$
	
	\vspace{-2mm}
	\noi\rule{\textwidth}{0.5pt}
	
	\vspace{-1mm}
	\flushleft{\bf Step 1:} Implement any $\h\tau$ from the literature that satisfies (\ref{eq:32}) with probability $1-o(1).$
	
	\vspace{-1mm}
	\flushleft{\bf Step 2:}  Obtain $\h\mu_{(j)}=\h\mu_{(j)}(\h\tau),$ and $\h\gamma_{(j)}=\h\gamma(\h\tau),$ $j=1,...,p,$ and perform update,
	
	\vspace{-4mm}
	\benr
	\tilde\tau=\argmin_{\tau\in (0,1)}Q(z,\tau,\h\mu,\h\g)\nn
	\eenr
	
	\vspace{-4mm}
	\flushleft{\bf (Output):} $\tilde\tau$
	
	\vspace{-2mm}
	\noi\rule{\textwidth}{0.5pt}
\end{figure}

An estimator from the literature that can be used in Step 1 of Algorithm 2 comes from \cite{atchade2017scalable}, which obeys the bound of Theorem \ref{thm:alg1.nearoptimal}. However, this estimator is likelihood based and hence limits the algorithm to the Gaussian setting. Further, it requires stronger sufficient conditions on the minimum jump size and separation sequence $l_T$ for analytical validity. To the best of our knowledge, there is no available estimator in the current literature that would serve as a replacement for Step 1 of Algorithm 1 under the  assumptions of Condition A$'$ (or Condition A) and Condition B.

The following results describe the statistical behavior of $\h\mu_{(j)}$ and $\h\g_{(j)},$ $j=1,...,p$ and $\tilde\tau$ obtained from Step 2 of Algorithm 1 or Algorithm 2. These results show that edge parameter updates $\h\mu_{(j)}$ and $\h\g_{(j)},$ $j=1,...,p$ obtained using the near optimal $\h\tau,$ are of a tighter precision than those in Step 1. In particular, these satisfy all requirements of Condition C.
Thus, the results of Section \ref{sec:mainresults} hold and a a higher precision estimate $\tilde\tau$ is obtained compared to that from Step 1 of Algorithms 1 or 2.

\begin{cor}\label{cor:optimalmeans.step2} Suppose Conditions A$'$, B and E hold. Let $\h\mu_{(j)},$ and $\h\g_{(j)},$ $j=1,...,p,$ be the edge estimates from Step 2 of Algorithms 1 or 2. Then, the following two properties hold with probability at least $1-o(1).$\\~
	(i) $\h\mu_{(j)}-\mu^0_{(j)}\in\cA_{1j},$ and $\h\g_{(j)}-\g^0_{(j)}\in\cA_{2j},$ $j=1,...,p$, where $\cA_{ij}$ are sets defined in Condition C.\\~
	(ii) $\max_{1\le j\le p}\Big(\|\h\mu_{(j)}-\mu^0_{(j)}\|_2\vee\|\h\g_{(j)}-\g^0_{(j)}\|_2\Big)\le c_u\surd(1+\nu^2)\frac{\si^2}{\ka}\Big\{\frac{s \log (p\vee T)}{Tl_T}\Big\}^{\frac{1}{2}}.$ \\~
	Consequently, these second iteration edge estimates satisfy all requirements of Condition C.
\end{cor}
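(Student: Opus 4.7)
The plan is to condition on the high-probability event furnished by Theorem \ref{thm:alg1.nearoptimal}, namely
\[
E_T=\Big\{(1\wedge\psi^{2})\big|\lfloor T\h\tau\rfloor-\lfloor T\tau^0\rfloor\big|\le c_u\log(p\vee T)\Big\},
\]
on which the subsample $\{1,\ldots,\lfloor T\h\tau\rfloor\}$ used to compute $\h\mu_{(j)}(\h\tau)$ consists of a large ``clean'' part drawn from the pre-change distribution and a small ``contaminated'' part drawn from the post-change distribution (or vice versa, if $\h\tau<\tau^0$). Under Condition A$'$(ii) one has $s\log(p\vee T)=o(Tl_T)$, and Condition A$'$(i) ensures that the contamination size $(1\wedge\psi^{2})^{-1}\log(p\vee T)$ is negligible compared to the effective sample size $\lfloor T\h\tau\rfloor\asymp Tl_T$. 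All remaining arguments are carried out on $E_T$, and an entirely symmetric analysis handles $\h\g_{(j)}(\h\tau)$ on $\{\lfloor T\h\tau\rfloor+1,\ldots,T\}$.

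With this setup, the proof reduces to a standard Lasso oracle analysis applied to each of the $p$ regressions in \eqref{est:lasso}. Two ingredients are needed for every $j$: (a) a restricted eigenvalue (RE) lower bound of order $\ka$ for the sample Gram matrix $(1/\lfloor T\h\tau\rfloor)\sum_t z_{t,-j}z_{t,-j}^{T}$ on the cone $\cA_{1j}$, and (b) a supremum-norm upper bound on the empirical score $(2/\lfloor T\h\tau\rfloor)\sum_t z_{t,-j}(z_{tj}-z_{t,-j}^{T}\mu^0_{(j)})$ that is dominated by the tuning parameter $\la_j\asymp \si^{2}\surd(1+\nu^{2})\sqrt{\log(p\vee T)/Tl_T}$. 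The clean portion of the subsample dominates, so the RE bound on the clean submatrix (obtained via standard small-ball/Mendelson style arguments for subgaussian designs under $s\log(p\vee T)=o(Tl_T)$) carries over to the full subsample up to a factor $\ge\ka/2$, uniformly in $j$, with probability $1-o(1)$. Likewise the score splits into a clean part, whose supremum norm is controlled by a subgaussian maximal inequality together with a union bound over $(p-1)p$ coordinates, and a contaminated part, whose supremum norm is bounded deterministically in terms of $|\h\tau-\tau^0|T/(Tl_T)$ times $\phi\|\eta^0_{(j)}\|_2$. Choosing $\la_j$ as above and invoking the cone condition and basic inequality of Lasso theory then deliver (i) and the stated $\ell_2$ rate in (ii), uniformly over $j$ by a final union bound which is absorbed by the logarithm already present in $\la_j$.

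The main technical obstacle is the contamination bias in the score: one must verify that, for every coordinate $k$,
\[
\frac{1}{\lfloor T\h\tau\rfloor}\Big|\sum_{t\in\text{contam}} z_{t,-j,k}\,z_{t,-j}^{T}\eta^0_{(j)}\Big|=O_p\!\Big(\frac{|\h\tau-\tau^0|T}{Tl_T}\,\phi\|\eta^0_{(j)}\|_2\Big)
\]
is of smaller order than $\la_j$. Using sparsity of $\mu^0_{(j)}$ and $\g^0_{(j)}$ (hence $\|\eta^0_{(j)}\|_2\le c_u\surd s$, via Lemma \ref{lem:condnumberbound} and the bounded condition number in Condition B), together with the event bound $|\h\tau-\tau^0|T\le (1\wedge\psi^{2})^{-1}\log(p\vee T)$, this reduces to checking the inequality $(1\wedge\psi^{2})^{-1}\surd\{s\log(p\vee T)/Tl_T\}\le c_u$, which is exactly what Condition A$'$(i) yields after rearrangement. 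A symmetric verification handles the RE perturbation from the contaminated Gram contribution, bounded in operator norm by the same small ratio. Once these two rate bookkeeping steps are in place, everything else is a routine invocation of standard high-dimensional Lasso bounds, establishing both claims (i) and (ii), and hence verifying Condition C for the Step 2 edge estimates.
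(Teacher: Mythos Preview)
Your strategy coincides with the paper's: intersect with the localization event from Theorem~\ref{thm:alg1.nearoptimal}, then run a Lasso oracle analysis on the resulting sample split and show the mis-specified (contaminated) portion of the score is dominated by the tuning parameter. The paper streamlines this by invoking Theorem~\ref{thm:est.nuisance.para}, which packages exactly these two ingredients (the RE lower bound of Lemma~\ref{lem:lower.RE.ordinary} and the score bound of Lemma~\ref{lem:bounds.for.nuis.thm}) \emph{uniformly over all} $\tau\in\cG(u_T,0)$, and then substitutes $u_T=c_m^{2}T^{-1}\log(p\vee T)$ to read off the rate. Your sketch reproduces the same argument from first principles, so the route is essentially the same; two points, however, need correction.

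First, you treat the Lasso analysis on $E_T$ as if $\h\tau$ were deterministic, but even after localization $\h\tau$ is a data-dependent random variable. The score bound and the RE bound must therefore hold simultaneously for every $\tau$ in the localized set $\{\tau:\ |\lfloor T\tau\rfloor-\lfloor T\tau^0\rfloor|\le Tu_T\}$, not just for one fixed $\tau$; otherwise the basic inequality at the realized $\h\tau$ cannot be combined with a high-probability RE/score event that was proved for a different, non-random $\tau$. This uniformity is what the paper's Theorem~\ref{thm:est.nuisance.para} supplies (via the extra union bound over the at most $T$ distinct values of $\lfloor T\tau\rfloor$), and it is the step your sketch omits.

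Second, the bound $\|\eta^0_{(j)}\|_2\le c_u\surd s$ is both a misreading of Lemma~\ref{lem:condnumberbound} and needlessly loose: that lemma gives $\|\mu^0_{(j)}\|_2\vee\|\g^0_{(j)}\|_2\le\nu$, hence $\|\eta^0_{(j)}\|_2\le 2\nu$, a bounded constant under Condition~B. With the extra $\surd s$ your contamination check becomes the inequality $(1\wedge\psi^{2})^{-1}\surd\{s\log(p\vee T)/(Tl_T)\}\le c_u$, and this is \emph{not} ``exactly what Condition~A$'$(i) yields after rearrangement''; for small $b$ and a sufficiently fast-vanishing $\psi$ it can fail (e.g.\ $s=1$, $l_T\asymp 1$, $p=T$, $b=0.1$, $\psi=T^{-0.3}$ satisfies A$'$(i)--(ii) but violates your inequality). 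Using the correct constant bound $\|\eta^0_{(j)}\|_2\le 2\nu$ removes the spurious $\surd s$ and aligns your contamination term with the paper's $\la_{2j}$, which is what the paper compares to $\la_{1j}$ in its proof of the corollary.
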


Corollary \ref{cor:optimalmeans.step2} provides the feasibility of Condition C, while
the following corollary is now a direct consequence of Theorems \ref{thm:optimalapprox} and \ref{thm:limitingdist}.

\begin{cor}\label{cor:final}  Suppose Conditions A, B and E hold, additionally assume that model dimensions are restricted to satisfy $c_u\ka T^{(1-b)}l_T \ge (\si^2\vee\phi)s\log (p\vee T).$ Then, $\tilde\tau$ obtained from Algorithms 1 or 2 satisfies the error bounds of Theorem \ref{thm:optimalapprox}. Additionally, assuming Conditions D, B$'$ and (\ref{eq:rateextra}) hold, then $\tilde\tau$ converges to the limiting distributions of Theorem \ref{thm:limitingdist} and Theorem \ref{thm:wc.non.vanishing} in the vanishing and non-vanishing jump size regimes, respectively.
\end{cor}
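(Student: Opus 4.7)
\textbf{Proof proposal for Corollary \ref{cor:final}.} The plan is to chain together the preceding results: show that the hypotheses of the corollary already imply the premises of Theorem \ref{thm:alg1.nearoptimal}, then use Corollary \ref{cor:optimalmeans.step2} to verify Condition C for the second-iteration edge estimates, and finally invoke Theorems \ref{thm:optimalapprox}, \ref{thm:limitingdist}, and \ref{thm:wc.non.vanishing} on the resulting plug-in estimator $\tilde\tau$.

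First I would check that Conditions A, B, E together with the extra dimension restriction $c_u\ka T^{1-b}l_T\ge(\si^2\vee\phi)s\log(p\vee T)$ imply Condition A$'$. Condition A$'$(iii) is identical to A(ii). Condition A$'$(ii) is the added dimension restriction by construction. For Condition A$'$(i), observe that
\[
c_u\surd(1+\nu^2)\frac{\si^2 s}{\psi\ka}\Big\{\frac{\log(p\vee T)}{T^{(1-2b)}l_T}\Big\}^{1/2}
= c_u\surd(1+\nu^2)\frac{\si^2}{\psi\ka}\cdot\frac{s\,\surd{\log(p\vee T)}}{T^{(1/2-b)}\surd{l_T}},
\]
which is dominated by the left-hand side of Condition A(iii)(b), since $\log(p\vee T)\ge 1$ for $T$ large. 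Thus A(iii)(b) implies A$'$(i), and Conditions A$'$, B, E are in force. By Theorem \ref{thm:alg1.nearoptimal}, the first-iteration estimator $\h\tau$ produced by Algorithm 1 (or by construction in Algorithm 2) satisfies the near-optimal bound $(1\wedge\psi^2)|\lfloor T\h\tau\rfloor-\lfloor T\tau^0\rfloor|=O\bigl(\log(p\vee T)\bigr)$ with probability $1-o(1)$.

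Next I would apply Corollary \ref{cor:optimalmeans.step2}: with the same Conditions A$'$, B, E in place, the second-iteration edge estimates $\h\mu_{(j)}=\h\mu_{(j)}(\h\tau)$ and $\h\g_{(j)}=\h\g_{(j)}(\h\tau)$, $j=1,\ldots,p$, lie in the cones $\cA_{1j}, \cA_{2j}$ and satisfy the uniform $\ell_2$-bound in \eqref{eq:optimalmeans}, with probability at least $1-o(1)$. This is precisely Condition C with $\pi_T=o(1)$. Therefore all three Conditions A, B, C required by Theorem \ref{thm:optimalapprox} are satisfied for the plug-in $\tilde\tau=\argmin_\tau Q(z,\tau,\h\mu,\h\g)$ output in Step 2. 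Invoking Theorem \ref{thm:optimalapprox} directly then yields the claimed $O_p(\psi^{-2})$ error bounds, in both the vanishing and non-vanishing regimes of $\psi$.

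For the distributional statements, once Condition C has been verified as above, I would simply check that all remaining hypotheses of the inference theorems carry over verbatim. In the vanishing regime, the added assumption (\ref{eq:rateextra}) together with the already-established Conditions A, B, C, D places us exactly in the setting of Theorem \ref{thm:limitingdist}, yielding the Brownian-motion argmax limit. In the non-vanishing regime, the standing hypotheses A, C, D together with Condition B$'$ (which itself strengthens B by adding the distributional convergence) match the premises of Theorem \ref{thm:wc.non.vanishing}, giving the random-walk argmax limit. The only delicate bookkeeping is the passage from the assumption $s\log^{3/2}(p\vee T)=o(\surd(Tl_T))$ in Theorem \ref{thm:wc.non.vanishing} to what is available here; but under $\psi\to\psi_\infty\in(0,\infty)$, Condition A(iii)(a) gives exactly this. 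I do not expect any serious obstacle: the proof is a verification argument that stitches together the near-optimal-to-optimal rate improvement established in Section \ref{sec:nuisance} with the inferential guarantees of Section \ref{sec:mainresults}.
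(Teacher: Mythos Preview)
Your proposal is correct and follows the same approach as the paper, which simply observes that Corollary~\ref{cor:optimalmeans.step2} verifies Condition~C and then appeals directly to Theorems~\ref{thm:optimalapprox}, \ref{thm:limitingdist}, and~\ref{thm:wc.non.vanishing}; you in fact supply more detail than the paper by explicitly checking that Condition~A together with the added dimension restriction imply Condition~A$'$ (needed for Corollary~\ref{cor:optimalmeans.step2}), and that verification is correct. One small slip in your final paragraph: the hypothesis $s\log^{3/2}(p\vee T)=o(\surd(Tl_T))$ of Theorem~\ref{thm:wc.non.vanishing} does not follow from Condition~A(iii)(a), which only gives an $O(1)$ bound; it follows instead from the explicitly assumed~\eqref{eq:rateextra} once $\psi\to\psi_\infty\in(0,\infty)$.
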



\section{Simulation Studies}\label{sec:numerical}

We investigate the performance of Algorithm 1 and the inference results obtained in Theorems \ref{thm:limitingdist} and Theorem \ref{thm:wc.non.vanishing}.

Next, we describe the design of the simulation studies. In all settings considered, the unobserved variables $w_t,x_t$ of model (\ref{model:dggm}) are generated as independent, $p$-dimensional, zero mean Gaussian r.v.'s with distinct covariance structures. Specifically, we set $w_t\sim \cN(0,\Si),$ $t=1,...,\lfloor T\tau^0 \rfloor$ and $x_t\sim \cN(0,\Delta),$ $t=\lfloor T\tau^0\rfloor+1,...,T.$ The observation period $T$ is set to $\{300,400,500\}$, the dimension $p$ to $\{25,50,150,250\}$ and the relative location of the change point $\tau^0$ to $\{0.2,0.4,0.6,0.8\}.$ All computations are carried out in \texttt{R} using the \texttt{glmnet} package. In all simulation settings, the initializer for Algorithm 1 is chosen via a preliminary search grid of $\check\tau\in\{0.25,0.5,0.75\}$ as described in the discussion ensuing Condition E.

\noindent\textit{Structure of the covariance matrices:} To construct the pre-change point covariance $\Si,$ we consider a Toeplitz type matrix $\G$ with the $(l,m)^{th}$ component set as $\G_{(l,m)}=\rho^{|l-m|^a},$ $l,m=1,...,p.$ We set $\rho=0.4$ and $a=1/\log s,$ where $s$ specified below.\footnote{We choose the $\log s$ root of $|l-m|$ so as to somewhat preserve the magnitude of correlations and in turn condition dependencies}. Then, set $\Si=\cdotp A\cdotp \G,$ where $\cdotp$ denotes a componentwise product.  The matrix $A$ is constructed as a symmetric block diagonal matrix with alternating signs $\{-1,1\}$ within each block of size $s\times s.$ This allows both positive and negative correlations in $\Si$ and also induces a sparsity structure with each row and column having $s$ non-zero components. We set $s=0.15*p,$ i.e., sparsity in the pre-change covariance is set at $15\%.$ The post-change point covariance $\D$ is a banded matrix with the sparsity (length of bands) set at $20\%$ of the dimension size, i.e., $s=0.2*p.$ The non-zero correlations for each row and column of $\D$ are chosen as a sequence of $s$ equally spaced values between $\{\rho_2=0.5,...,0\}_{s\times 1}.$ Examples of the adjacency matrices corresponding to $\Si$ and $\D$ obtained from this construction are depicted in Figure \ref{fig:covariance.picture}. 

\begin{figure}[]
	\centering
	\resizebox{\textwidth}{!}{
		\begin{minipage}[b]{0.45\textwidth}
			\includegraphics[width=\textwidth]{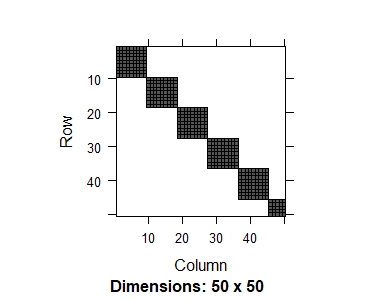}
		\end{minipage}
		\begin{minipage}[b]{0.45\textwidth}
			\includegraphics[width=\textwidth]{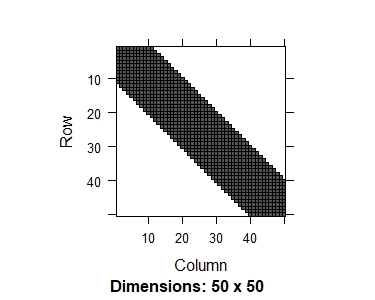}
	\end{minipage}}
	\caption{Adjacency matrices $|{\rm sign}(\Si)|$ and $|{\rm sign}(\D)|,$ with $p=50,$ $s_1=0.2\cdotp p,$ and $s_2=0.15\cdotp p.$ Here $|\cdotp|$ is componentwise absolute value. These matrices represent the underlying networks. Each dark pixel is an edge between the corresponding nodes, i.e., a non-zero entry of the precision matrix.}
	\label{fig:covariance.picture}
\end{figure}

\noindent\textit{Selection of tuning parameters:}  The tuning parameters $\la_{j},$ $j=1,...,p$ used to obtain $\ell_1$ regularized mean estimates of nuisance parameters in Steps 1 and 2 of Algorithm 1 are selected based on a BIC type criterion. Specifically, we set $\la_j=\la,$ $j=1,...,p,$ and evaluate $\h\mu_{(j)}(\la),$ and $\h\g_{(j)}(\la)$ over an equally spaced grid of seventy five values in the interval $(0,1).$ Upon letting $\h S=\cup_{j=1}^p\big[\{k;\,\,\h\mu_{(j)}k\ne 0\}\cup \{k;\,\,\h\g_{(j)}k\ne 0\}\big]$ we evaluate the criteria,
\benr\label{eq:bic}
BIC(\la,\tau)&=& \sum_{t=1}^{\lfloor T\tau\rfloor}\sum_{j=1}^p\big(z_{tj}-z_{t,-j}^T\h\mu_{(j)}(\la)\big)^2+\\
&&\sum_{t=\lfloor T\tau\rfloor+1}^{T}\sum_{j=1}^p(z_{tj}-z_{t,-j}^T\h\g_{(j)}(\la)\big)^2+|\h S|\log T.\nn
\eenr
For Step 1 of Algorithm 1, we set $\la$ as the minimizer of $BIC(\la,\check\tau),$ and for Step 2 of Algorithm 1 we select $\la$ as the minimizer of $BIC(\la,\h\tau).$ 

We construct confidence intervals using the limiting distributions from Theorem's \ref{thm:limitingdist} and  \ref{thm:wc.non.vanishing}. The significance level is set to $\al\in\{0.1,0.05,0.01\}$ for all settings. Confidence intervals are constructed in the integer time scale as $\big[\big(\lfloor T\tilde\tau\rfloor-ME\big),\, \big(\lfloor T\tilde\tau\rfloor+ME\big)\big],$ wherein $\tilde\tau$ is the output of Algorithm 1 and the margin of error ($ME$) is computed based on the corresponding jump size regime as follows. Under the vanishing regime, we have $ME=q_{\alpha}^v\si_1^{*2}\si_1^{-4}\psi_{\iny}^{-2},$ where $q_{\al}^v$ represents the $\big(1-\alpha/2\big)^{th}$ symmetric quantile of the argmax of the two sided negative drift Brownian motion in Theorem \ref{thm:limitingdist}.  This critical value is evaluated by using its distribution function provided in \cite{bai1997estimation}. Under the non-vanishing regime we have $ME=q_{\al}^{nv},$ where the quantile is computed based on the results of Theorem \ref{thm:wc.non.vanishing}. The critical value $q_{\al}^{nv}$ of the argmax of the two sided negative drift random walk is computed based on its Monte Carlo approximation, by simulating $3000$ realizations of this distribution. These calculations also require estimation of the drift and variance parameters $\si_1^2,\si_2^2$ and $\si_1^{*2},\si_2^{*2},\bar\si_1^2,\bar\si_2^2,$ respectively, as well as identification of the distribution law $\cL$ of Condition $B'.$  Succinctly, the drift parameters are estimated as plug-in estimates based on the estimated jump size and the corresponding covariance matrices. The variance parameters are estimated as sample variances using predicted realizations of r.v.' of interest and finally $\cL$ is chosen based on a negative centered and scaled chi-squared distribution with its degrees of freedom chosen via the Kolmogorov-Smirnov goodness of fit test. The details of these computations are provided in Appendix \ref{sec:add.numerical} of the Supplement.

The following performance metrics are computed for the change point estimates on the integer time scale: bias ($\big|E\big(\lfloor T\h\tau\rfloor-\lfloor T\tau^0\rfloor\big)\big|$), root mean squared error (rmse, $E^{1/2}\big(\lfloor T\h\tau\rfloor-\lfloor T\tau^0\rfloor)^2$), coverage (relative frequency of the number of times $\tau^0$ lies in the confidence interval) and the average margin of error (average over replicates of the margin of error of each confidence interval). All reported metrics are based on 500 replicates for each simulation setting. Selected results are provided in Tables \ref{tab:t01} and \ref{tab:t02}, while for the other settings in Tables \ref{tab:t03} and \ref{tab:t04} in Appendix \ref{sec:add.numerical} of the Supplement.

\begin{table}[]
	\centering
	\resizebox{1\textwidth}{!}{
		\begin{tabular}{ccccccccc}
			\toprule
			\multicolumn{3}{c}{\multirow{2}{*}{$\tau^0=0.2$}} & \multicolumn{6}{c}{Coverage (Av. margin of error)}                                        \\ \cmidrule{4-9} 
			\multicolumn{3}{c}{}                                                                                                                                     & \multicolumn{3}{c}{Non-vanishing}         & \multicolumn{3}{c}{Vanishing}                 \\ \midrule
			$T$                                            & $p$                                           & bias (rmse)                                             & $\al=0.1$ & $\al=0.05$    & $\al=0.01$    & $\al=0.1$     & $\al=0.05$    & $\al=0.01$    \\ \midrule
			300                                            & 25                                            & 0.016 (0.245)                                           & 0.94 (0)  & 0.95 (0.036) & 0.98 (0.772)  & 0.95 (0.148)  & 0.95 (0.214)  & 0.95 (0.389) \\
			300                                            & 50                                            & 0.032 (0.219)                                           & 0.97 (0)  & 0.97 (0)     & 0.97 (0.116)  & 0.97 (0.068)  & 0.97 (0.098)  & 0.97 (0.178)  \\
			300                                            & 150                                           & 2.294 (12.89)                                           & 0.93 (0)  & 0.93 (0)     & 0.93 (0.014)  & 0.93 (0.036)  & 0.93 (0.053)  & 0.93 (0.098)  \\
			300                                            & 250                                           & 12.97 (32.05)                                           & 0.72 (0)  & 0.72 (0)     & 0.72 (0)      & 0.72 (0.051)  & 0.72 (0.075)  & 0.72 (0.141) \\  \midrule
			400                                            & 25                                            & 0.008 (0.253)                                           & 0.95 (0)  & 0.95 (0.024) & 0.99 (0.824)  & 0.95 (0.157)  & 0.95 (0.228)  & 0.95 (0.417) \\
			400                                            & 50                                            & 0.016 (0.155)                                           & 0.97 (0)  & 0.97 (0)     & 0.98 (0.102)  & 0.98 (0.071)  & 0.98 (0.103)  & 0.98 (0.189) \\
			400                                            & 150                                           & 0.086 (1.661)                                           & 0.99 (0)  & 0.99 (0)     & 0.99 (0)      & 0.99 (0.022)  & 0.99 (0.032)  & 0.99 (0.059)  \\
			400                                            & 250                                           & 2.886 (18.42)                                           & 0.93 (0)  & 0.93 (0)     & 0.93 (0)      & 0.93 (0.026)  & 0.93 (0.039)  & 0.93 (0.073) \\  \midrule
			500                                            & 25                                            & 0.002 (0.272)                                           & 0.95 (0)  & 0.95 (0.020) & 0.99 (0.834)  & 0.95 (0.167) & 0.95 (0.242) & 0.95 (0.448) \\
			500                                            & 50                                            & 0.006 (0.118)                                           & 0.98 (0)  & 0.98 (0)     & 0.98 (0.098)  & 0.98 (0.073) & 0.98 (0.107) & 0.98 (0.198) \\
			500                                            & 150                                           & 0.006 (0.077)                                           & 0.99 (0)  & 0.99 (0)     & 0.99 (0)      & 0.99 (0.021) & 0.99 (0.03)  & 0.99 (0.055) \\
			500                                            & 250                                           & 0.042 (0.475)                                           & 0.98 (0)  & 0.98 (0)     & 0.98 (0)      & 0.98 (0.016) & 0.98 (0.023) & 0.98 (0.043) \\ \bottomrule
	\end{tabular}}
	\caption{\footnotesize{Simulation results for $\tau^0=0.20$ based on 500 replicates. Bias, rmse and av.margin of error rounded to three decimals, coverage rounded to two decimals.}}
	\label{tab:t01}
\end{table}

\begin{table}[]
	\centering
	\resizebox{1\textwidth}{!}{
		\begin{tabular}{cclllllll}
			\toprule
			\multicolumn{3}{c}{\multirow{2}{*}{$\tau^0=0.4$}} & \multicolumn{6}{c}{Coverage (Av. margin of error)}                                                                                                                                                \\ \cmidrule{4-9} 
			\multicolumn{3}{c}{}                              & \multicolumn{3}{c}{Non-vanishing}                                                               & \multicolumn{3}{c}{Vanishing}                                                                   \\ \midrule
			$T$   & $p$   & \multicolumn{1}{c}{bias (rmse)}   & \multicolumn{1}{c}{$\al=0.1$} & \multicolumn{1}{c}{$\al=0.05$} & \multicolumn{1}{c}{$\al=0.01$} & \multicolumn{1}{c}{$\al=0.1$} & \multicolumn{1}{c}{$\al=0.05$} & \multicolumn{1}{c}{$\al=0.01$} \\ \midrule
			300   & 25    & 0.012 (0.245)                     & 0.94 (0.002)                  & 0.95 (0.143)                  & 0.99 (0.884)                  & 0.94 (0.179)                 & 0.94 (0.262)                  & 0.94 (0.492)                   \\
			300   & 50    & 0.006 (0.134)                     & 0.98 (0)                      & 0.98 (0)                      & 0.99 (0.274)                  & 0.98 (0.084)                 & 0.98 (0.125)                  & 0.98 (0.238)                  \\
			300   & 150   & 0 (0.063)                         & 0.99 (0)                      & 0.99 (0)                      & 0.99 (0.002)                  & 0.99 (0.025)                 & 0.99 (0.037)                  & 0.99 (0.071)                  \\
			300   & 250   & 0.534 (4.56)                      & 0.98 (0)                      & 0.98 (0.008)                  & 0.98 (0.016)                  & 0.98 (0.016)                 & 0.98 (0.024)                  & 0.98 (0.046)                  \\    \midrule
			400   & 25    & 0.014 (0.279)                     & 0.93 (0)                      & 0.94 (0.088)                  & 0.99 (0.898)                  & 0.93 (0.187)                 & 0.93 (0.275)                  & 0.93 (0.519)                  \\
			400   & 50    & 0.006 (0.100)                     & 0.99 (0)                      & 0.99 (0)                      & 0.99 (0.324)                  & 0.99 (0.086)                 & 0.99 (0.128)                  & 0.99 (0.245)                   \\
			400   & 150   & 0 (0)                             & 1 (0)                         & 1 (0)                         & 1 (0.002)                     & 1 (0.024)                    & 1 (0.037)                     & 1 (0.072)                      \\
			400   & 250   & 0 (0)                             & 1 (0)                         & 1 (0)                         & 1 (0)                         & 1 (0.014)                    & 1 (0.021)                     & 1 (0.042)                      \\   \midrule
			500   & 25    & 0.026 (0.326)                     & 0.93 (0)                     & 0.94 (0.068)                   & 0.99 (0.95)                   & 0.94 (0.194)                 & 0.94 (0.287)                  & 0.94 (0.546)                  \\
			500   & 50    & 0.004 (0.110)                     & 0.98 (0)                     & 0.98 (0)                       & 0.99 (0.306)                  & 0.98 (0.088)                 & 0.98 (0.132)                  & 0.98 (0.252)                  \\
			500   & 150   & 0.002 (0.045)                     & 0.99 (0)                     & 0.99 (0)                       & 0.99 (0)                      & 0.99 (0.025)                 & 0.99 (0.037)                  & 0.99 (0.072)                  \\
			500   & 250   & 0 (0)                             & 1 (0)                        & 1 (0)                          & 1 (0)                         & 1 (0.014)                    & 1 (0.021)                     & 1 (0.041)                      \\ \bottomrule
	\end{tabular}}
	\caption{\footnotesize{Simulation results for $\tau^0=0.40$ based on 500 replicates. Bias, rmse and av.margin of error rounded to three decimals, coverage rounded to two decimals.}}
	\label{tab:t02}
\end{table}

The estimates exhibit very little bias for smaller values of $p$ for a given sample size $T$. Further, the results improve whenever the change point is located closer to the middle of the observation interval. The proposed inference methodology also provides reasonably good control on the nominal significance levels with an expected deterioration observed for larger values of $p$ and values of $\tau^0$ closer to the boundary of the parametric space (also see, results of Table \ref{tab:t03} and \ref{tab:t04}). The cases where coverage was found to be poor was at the largest considered value of $p=250$ and $\tau^0=0.8,$ but importantly the coverage improves to the nominal level as $T$ increases. 

These numerical results lead to the following observations: (a)  The margin of error may become less than one and due to the underlying discrete-ness of the inference problem it may become infeasible to distinguish certain significance levels. (b) The distribution becomes more concentrated around the change point parameter for larger dimension $p$, thus making it more difficult to distinguish between coverage levels. To relate this feature to the results of Section \ref{sec:mainresults}, recall that in the non-vanishing regime, the variance of the limiting process is of $O(\psi^2+\psi^4),$ where $\psi=\xi_{2,2}/\surd{p}$ (see (\ref{eq:finite.var}). Owing to the simulation design under consideration, when $p$ increases, $\psi$ is observed to be decreasing, thus leading to a diminishing variance as well. (c) An inherent high computational cost for the recovery of large size graphical models, limits our numerical experiments to $500$ replicates. 

\section{Age Evolving Associations of the Gut Microbiome}\label{sec:application}

Microbiome studies are becoming increasingly important, due to recent findings on interactions of human microbiota with several gastrointestinal, as well as potentially neurological health outcomes \citep{svoboda2020could,sharma2019gut}. Large scale microbiome data have become available in the last decade and are obtained by 16s rRNA sequencing technology. The resulting data correspond to operational taxonomic units (OTUs) which represent counts of observed microbial taxa identified by their genetic signature. In this setting it is often of interest to investigate relationships among the microbes to understand their effects on health outcomes. For our analyses we consider the global human gut microbiome data of \cite{yatsunenko2012human} available publicly at the repository MG-RAST (\url{http://metagenomics.anl.gov/}) under accession numbers {\it qiime:621}.

It has been discussed in the literature that the gut microbiome  of an individual undergoes a significant transformation from infancy/adolescence to adulthood. This transition age is often determined based on domain knowledge or other significant life events.  \cite{lozupone2013meta} suggest this transition age at around two years due to a switch over from breast milk (or formula milk) to solid food. This age has also been employed in \cite{kaul2017structural} for geographical classification of subjects based on their microbiota. However, \cite{lane2019household} suggest that such a transition could occur well into adolescence of an individual, due to various social interactions that children are exposed to, including those with siblings, pets and farm animals, or early exposure to antibiotics. Hence, it is of interest to estimate this transition point based on microbiome data. We employ the model in (\ref{model:dggm}), and estimate the transition age in the second order association structure of the taxa and also quantify its uncertainty through a confidence interval.

Although the Global gut data set contains measurements from individuals from the United States (US), Venezuela and Malawi, we limit our analysis to only $T=310$ individuals from the US. This is done to avoid location heterogeneity since our objective is to study evolution of microbial associations over age. Analysis is carried out at the second to finest, i.e., the {\it genus} level of bacterial taxonomy. We subset the analysed set of {\it genera} by retaining only those present in at least $35\%$ of the samples. This limits the number of  {\it genera} to $p=166$ for model (\ref{model:dggm}). A further pre-processing of the data set is carried out via a $\log$-relative abundance transformation of the raw OTU data, in order to switch over from a count to a continuous scale. The reference group chosen for this transformation is {\it Bifidobacterium}\footnote{Phylogeny: {\it Bacteria$\to$Actinobacteria$\to$Actinobacteria $\to$Bifidobacteriales$\to$Bifidobacteriaceae$\to$Bifidobacterium}} due to it being a highly observed taxa which is present in all analyzed samples. This transformation is motivated by the compositional structure of the data set, see, e.g., \cite{aitchison1982statistical} and is often adopted in the microbiome literature, see, e.g., \cite{mandal2015analysis}, \cite{kaul2017analysis,kaul2017structural}. We also note here that despite the above $log$-transformation the underlying data are clearly non-Gaussian, as illustrated in Figure \ref{fig:age.hist}.

The data specimens under consideration have an associated age variable distributed over ($0.08$ years, $57$ years). This distribution is also presented in Figure \ref{fig:age.hist}. To study the age evolution of the underlying graphical models, all specimens are first sorted according to the age variable. Model (\ref{model:dggm}) is then implemented with Algorithm 1 of Section \ref{sec:nuisance}, with a preliminary search grid over $\check\tau\in\{0.25,0.5,0.75\},$ which is the same as that used in the simulation studies of Section \ref{sec:numerical}. All other computations such as tuning parameters selection, drift and asymptotic variance computation are as described in Section \ref{sec:numerical} and Appendix \ref{sec:add.numerical} of the supplement.
The estimated change point and the corresponding confidence intervals are obtained in the integer scale associated with index numbers of observations. We choose the significance level at $\al=0.05,0.01,$ i.e., a coverage of $95\%$ and $99\%,$ respectively. These estimated values are then mapped back to the age variable to obtain the transition point in the age scale. The results of our analyses are discussed below and depicted in Figure \ref{fig:appl.results}.

\begin{figure}
	\centering
	\begin{minipage}[]{0.45\textwidth}
		\includegraphics[scale=0.5]{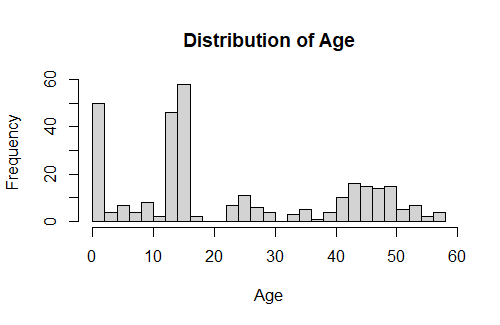}
	\end{minipage}
	\begin{minipage}[]{0.45\textwidth}
		\includegraphics[scale=0.5]{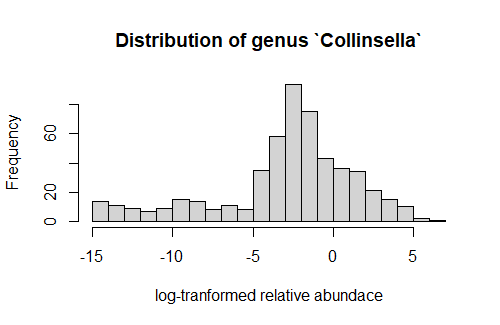}
	\end{minipage}
	\caption{\footnotesize{{\bf Left panel}: Distribution of observed {\it Age} variable. {\bf Right panel}: Distribution of log-relative abundance of genus {\it Collinsella} (one of $p=166$ {\it genera})}}
	\label{fig:age.hist}
\end{figure}

\begin{figure}[]	\centering
	\resizebox{0.75\textwidth}{!}{
		\begin{tikzpicture}[node distance = 5cm,auto]
		\node[left of=st,xshift=1.75cm] (g1) at (-0.75,-2.25)
		{\includegraphics[width=0.3\textwidth]{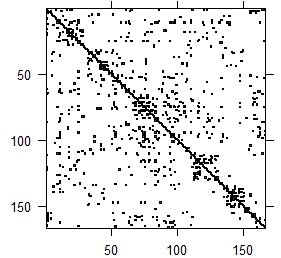}};
		\node[right of=st,xshift=-1.25cm] (g2) at (-0.75,-2.25)
		{\includegraphics[width=0.3\textwidth]{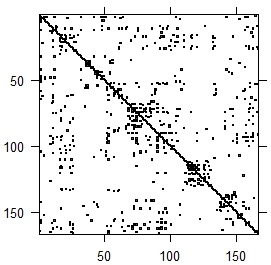}};
		\node[below of=g1, left of=st,xshift=1.5cm,yshift=0.25cm] (vg1) at (0,-2.2)
		{\includegraphics[width=0.25\textwidth]{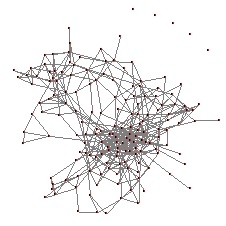}};
		\node[below of=g2,right of=st,xshift=-1.25cm,yshift=0.25cm] (vg2) at (0,-2.25)
		{\includegraphics[width=0.25\textwidth]{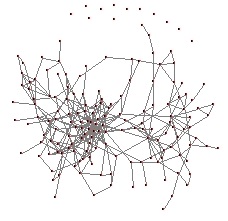}};
		\node [] (st){};
		\node [below of=st,align=center,yshift=-3.75cm] (end) {Est. age = 14yrs, \\
			99\%-CI(v):\,\,\,\,\big[13yrs, 14yrs\big]\\
			99\%-CI(nv):\,\big[14yrs, 14yrs\big]};
		\node [below of=vg1,xshift=1.25cm,yshift=2.5cm] (a1)  {};
		\node [below of=vg2,xshift=-1.25cm,yshift=2.5cm] (a2)  {Transition};
		\path [linena,dashed] (st) -- (end);
		\path [line] (a1) -- (a2);
		\end{tikzpicture}}
	\caption{\footnotesize{Summary of results: Estimated adjacency of precision matrices. Estimated change point located at $\lfloor T\tilde\tau\rfloor=88$ (Age = $14yrs$). Under vanishing jump regime: $99\%$ confidence intervals for $\lfloor T\tau^0\rfloor$ is $[86.56,\,\, 89.43]$ $(Age: [13,\,\,14].$ Under non-vanishing jump regime:$99\%$ confidence intervals for $\lfloor T\tau^0\rfloor$ is $[88,\,\,88]$ $(Age: [14,\,\,14].$}}
	\label{fig:appl.results}
\end{figure}

The estimated change point is $\lfloor T\tilde\tau\rfloor=\lfloor 0.283\cdotp T\rfloor=88.$ Upon mapping this index back to the age variable yields a transition age of $14yrs.$ Confidence intervals are constructed assuming both vanishing and non-vanishing jump size regimes and presented for both the index level and the age level in Table \ref{tab:cis}. At a coverage level of $99\%,$ and under the vanishing jump regime, the associated confidence interval at the index level is $[86.56,89.43],$ which yields an interval $[13yrs,14yrs]$ for the age of transition. All other intervals yield only the single index of $88,$ i.e., the margin of error for these intervals was less than one.

\begin{table}[]
	\centering
	\resizebox{0.75\textwidth}{!}{
		\begin{tabular}{lcccc}
			\toprule
			& \multicolumn{2}{c}{$\al=0.05$}     & \multicolumn{2}{c}{$\al=0.01$}     \\ \midrule 
			& Vanishing          & Non-Vanishing & Vanishing          & Non-vanishing \\ 
			\midrule
			Index level & {[}87.20, 88.79{]} & {[}88, 88{]}  & {[}86.56, 89.43{]} & {[}88, 88{]}  \\
			Age level   & {[}14, 14{]}       & {[}14, 14{]}  & {[}13, 14{]}       & {[}14, 14{]}  \\ \hline
	\end{tabular}}
	\vspace{1mm}
	\caption{Estimated confidence intervals under vanishing and non-vanishing jump size regimes at $95\%$ and $99\%$ coverage. Intervals presented at both index level and corresponding age level.}
	\label{tab:cis}
\end{table}

From Figure \ref{fig:appl.results}, the visual distinguishing feature between the two estimated graphical models is the increased sparsity of the post $14$ years network. The conditional dependencies appear to get more consolidated for older subjects and a more prominent hub structure is seen to develop. Note that in the network adjacency matrices illustrated in Figure \ref{fig:appl.results}, the {\it genera} under consideration are ordered according to their phylogenic classification and this appears to be the reason for the observed hub structure, i.e., the relative abundance of {\it genera} of the same {\it phylum} seem to be associated amongst themselves, with these associations getting more consolidated for older subjects. The relatively dense network observed in the pre 14 years group is also reasonable from a microbiome domain perspective, since the microbiome structure of younger subjects are known to be more volatile, moreover they are often more exposed to foreign microbes via a wide variety of social and environmental interactions, see, e.g., \cite{lane2019household}.

\bibliography{meanchange}
\bibliographystyle{\style}

\pagebreak

\setcounter{page}{1}

\begin{center}
	{\sc{\Large{Supplementary Materials: Inference on the Change Point in High Dimensional Dynamic Graphical Models}}}
\end{center}	
%
\startcontents[sections]
\printcontents[sections]{ }{1}{}

\appendix

\numberwithin{equation}{section}
\numberwithin{lem}{section}
\numberwithin{thm}{section}

\section{Proofs of results in Section 2}\label{sec:appA}

The following notations are required for readability of this section. In addition to $\xi_{2,2}$ defined in (\ref{def:jumpsize}), we also define $\xi_{2,1}=\sum_{j=1}^p\|\eta_{(j)}^0\|_2$ in the $\ell_{2,1}$ norm. Also, in all to follow we denote as $\h\eta_{(j)}=\h\mu_{(j)}-\h\g_{(j)},$ $j=1,...,p.$ We also recall the definition of r.v.'s $\vep_{tj}$ from (\ref{def:epsilons}),
\benr
\vep_{tj}=\begin{cases}z_{tj}-z_{t,-j}^T\mu^0_{(j)}, & t=1,...,\lfloor T\tau^0\rfloor\\
	z_{tj}-z_{t,-j}^T\g^0_{(j)}, & t=\lfloor T\tau^0\rfloor+1,...,T.\end{cases}\nn
\eenr
\begin{proof}[Proof of Lemma \ref{lem:mainlowerb}] For any fixed $\tau\ge \tau^0$ consider,
	\benr\label{eq:10}	
	&&\hspace{1cm}\cU(z,\tau,\h\mu,\h\g)=Q(z,\tau,\h\mu,\h\g)-Q(z,\tau^0,\h\mu,\h\g)\\
	&&\hspace{1cm}=\frac{1}{T}\sum_{t=1}^{\lfloor T\tau\rfloor}\sum_{j=1}^p(z_{tj}-z_{t,-j}^T\h\mu_{(j)})^2 + \frac{1}{T}\sum_{t=\lfloor T\tau\rfloor+1}^{T}\sum_{j=1}^p(z_{tj}-z_{t,-j}^T\h\g_{(j)})^2\nn\\
	&&\hspace{1cm}- \frac{1}{T}\sum_{t=1}^{\lfloor T\tau^0\rfloor}\sum_{j=1}^p(z_{tj}-z_{t,-j}^T\h\mu_{(j)})^2 - \frac{1}{T}\sum_{t=\lfloor T\tau^0\rfloor+1}^{T}\sum_{j=1}^p(z_{tj}-z_{t,-j}^T\h\g_{(j)})^2\nn\\
	&&\hspace{1cm}=\frac{1}{T}\sum_{t=\lfloor T\tau^0\rfloor+1}^{\lfloor T\tau\rfloor}\sum_{j=1}^p(z_{tj}-z_{t,-j}^T\h\mu_{(j)})^2 - \frac{1}{T}\sum_{t=\lfloor T\tau^0\rfloor+1}^{\lfloor T\tau\rfloor}\sum_{j=1}^p(z_{tj}-z_{t,-j}^T\h\g_{(j)})^2\nn\\
	&&\hspace{1cm}=\frac{1}{T}\sum_{t=\lfloor T\tau^0\rfloor+1}^{\lfloor T\tau\rfloor}\sum_{j=1}^p\big(z_{t,-j}^T\h\eta_{(j)}\big)^2-\frac{2}{T}\sum_{t=\lfloor T\tau^0\rfloor+1}^{\lfloor T\tau\rfloor}\sum_{j=1}^p\vep_{tj}z_{t,-j}^T\h\eta_{(j)}\nn\\
	&&\hspace{1cm}+\frac{2}{T}\sum_{t=\lfloor T\tau^0\rfloor+1}^{\lfloor T\tau\rfloor}\sum_{j=1}^p(\h\g_{(j)}-\g^0_{(j)})^Tz_{t,-j}z_{t,-j}^T\h\eta_{(j)}.\nn
	\eenr
	
	The expansion in (\ref{eq:10}) provides the following relation,
	\benr\label{eq:11}
	\inf_{\substack{\tau\in\cG(u_T,v_T);\\ \tau\ge\tau^0}}\cU(z,\tau,\h\mu,\h\g)\ge 	\inf_{\substack{\tau\in\cG(u_T,v_T);\\ \tau\ge\tau^0}} \frac{1}{T}\sum_{t=\lfloor T\tau^0\rfloor+1}^{\lfloor T\tau\rfloor}\sum_{j=1}^p\big(z_{t,-j}^T\h\eta_{(j)}\big)^2\nn\\
	-2\sup_{\substack{\tau\in\cG(u_T,v_T);\\ \tau\ge\tau^0}}\frac{1}{T}\Big|\sum_{t=\lfloor T\tau^0\rfloor+1}^{\lfloor T\tau\rfloor}\sum_{j=1}^p\vep_{tj}z_{t,-j}^T\h\eta_{(j)}\Big|\hspace{1.21cm}\nn\\
	-2\sup_{\substack{\tau\in\cG(u_T,v_T);\\ \tau\ge\tau^0}}\frac{1}{T}\Big|\sum_{t=\lfloor T\tau^0\rfloor+1}^{\lfloor T\tau\rfloor}\sum_{j=1}^p(\h\g_{(j)}-\g^0_{(j)})^Tz_{t,-j}z_{t,-j}^T\h\eta_{(j)}\Big|\hspace{-1.21cm}\nn\\
    =R1-R2-R3\hspace{1.5in}
	\eenr
	Bounds for the terms $R1,R2$ and $R3$ are provided in Lemma \ref{lem:term123} and Lemma \ref{lem:assumptionbounds}. In particular,
	\benr
	R1&\ge& \ka\xi_{2,2}^2\Big[v_T-\frac{c_{a1}\si^2}{\ka}\Big(\frac{u_T}{T}\Big)^{\frac{1}{2}}- c_u(\si^2\vee\phi)\frac{u_T}{\ka\xi_{2,2}}\Big\{s\log(p\vee T)\sum_{j=1}^p\|\h\eta_{(j)}-\eta^0_{(j)}\|_2^2\Big\}^{\frac{1}{2}}\Big]\nn\\
	&\ge& \ka\xi_{2,2}^2\Big[v_T-\frac{c_{a1}\si^2}{\ka}\Big(\frac{u_T}{T}\Big)^{\frac{1}{2}}- c_{u1}(\si^2\vee\phi)\frac{u_T}{\ka T^{b}}\Big],\nn
	\eenr
	with probability at least $1-a-o(1).$ The first inequality follows from Lemma \ref{lem:term123} and the final inequality follows by using the bounds of Lemma \ref{lem:assumptionbounds}. Next we obtain upper bounds for the terms $R2\big/\ka\xi_{2,2}^2$ and $R3\big/\ka\xi_{2,2}^2.$ For this purpose, first note that  $(\xi_{2,1}\big/\xi_{2,2})\le \surd{p},$ consequently $(\xi_{2,1}\big/\xi_{2,2}^2)\le 1\big/\psi.$ Now consider,
	\benr
	\frac{R2}{\ka\xi_{2,2}^2}&\le& c_{a1}\surd(1+\nu^2)\frac{\si^2\xi_{2,1}}{\ka\xi_{2,2}^2}\Big(\frac{u_T}{T}\Big)^{\frac{1}{2}}+ c_{u}\surd(1+\nu^2)\frac{\si^2}{\ka\xi_{2,2}^2}\Big(\frac{u_T}{T}\Big)^{\frac{1}{2}}\log (p\vee T)\sum_{j=1}^p\|\h\eta_{(j)}-\eta^0_{(j)}\|_1\nn\\
	&\le& c_{a1}\surd(1+\nu^2)\frac{\si^2}{\ka\psi}\Big(\frac{u_T}{T}\Big)^{\frac{1}{2}}+ \Big\{\surd(1+\nu^2)\frac{\si^2}{\ka\psi}\Big(\frac{u_T}{T}\Big)^{\frac{1}{2}}\Big\}\Big\{c_u\surd(1+\nu^2)\frac{\si^2}{\ka\psi}\frac{s\log^{3/2}(p\vee T)}{\surd(Tl_T)}\Big\}\nn\\
	&\le& c_uc_{a1}\surd(1+\nu^2)\frac{\si^2}{\ka\psi}\Big(\frac{u_T}{T}\Big)^{\frac{1}{2}}\nn
	\eenr
	with probability at least $1-a-o(1).$ As before, the first inequality follows from Lemma \ref{lem:term123} and the final inequality follows by using the bounds of Lemma \ref{lem:assumptionbounds}. Similarly we can also obtain,
	\benr
	\frac{R3}{\ka\xi_{2,2}^2}&\le&  c_u(\si^2\vee \phi)\frac{u_T}{\ka\xi_{2,2}}\Big\{s\log(p\vee T)\sum_{j=1}^p\|\h\g_{(j)}-\g^0_{(j)}\|_2^2\Big\}^{\frac{1}{2}}\nn\\
	&&\cdotp\Big[1+
	\frac{1}{\xi_{2,2}}\Big\{s\log(p\vee T)\sum_{j=1}^p\|\h\eta_{(j)}-\eta^0_{(j)}\|_2^2\Big\}^{\frac{1}{2}}\Big]\nn\\
	&\le& c_{u1}(\si^2\vee \phi)\frac{u_T}{\ka T^{b}}\nn
	\eenr
	with probability at least $1-a-o(1).$ Substituting these bounds in (\ref{eq:11}) and applying a union bound over these events yields the bound (\ref{eq:12}) uniformly over the set $\{\cG(u_T,v_T);\,\tau\ge\tau^0\}.$ The mirroring case of $\tau\le\tau^0$ follows with similar arguments.
\end{proof}

\bc$\rule{3.5in}{0.1mm}$\ec

The main idea of the proof of Theorem \ref{thm:optimalapprox} is to use a contradiction argument as follows. Using Lemma \ref{lem:mainlowerb} recursively, we show that any value of $\lfloor T\tau\rfloor$ lying outside an $O(c_{a3}^2)$ neighborhood of $\lfloor T\tau^0\rfloor$ satisfies, $\cU(z,\tau,\h\mu,\h\g)>0,$ with probability at least $1-3a-o(1).$ Upon noting that by definition of $\tilde\tau,$ we have, $\cU(z,\tilde\tau,\h\mu,\h\g)\le 0,$ yields the desired result.  The complete argument is below.

\vspace{1.5mm}
\begin{proof}[Proof of Theorem \ref{thm:optimalapprox}]
	To prove this result, we show that for any $0<a<1,$ the bound
	\benr\label{eq:13}
	\big|\lfloor T\tilde\tau\rfloor-\lfloor T\tau^0\rfloor\big|\le c_{a3}^2,
	\eenr
	holds with probability at least $1-3a-o(1).$ Note that Part (i) of this theorem is a direct consequence of the bound (\ref{eq:13}) in the case where $\psi\to 0.$ The proof for the bound (\ref{eq:13}) to follow relies on a recursive argument on Lemma \ref{lem:mainlowerb}, where the optimal rate of convergence $O_p(1)$ is obtained by a series of recursions with the rate of convergence being sharpened at each step.
	
	We begin by considering any $v_T>0,$ and applying Lemma \ref{lem:mainlowerb} on the set $\cG(1,v_T)$ to obtain,
	\benr
	\inf_{\tau\in\cG(1,v_T)} \cU(z,\tau,\h\mu,\h\g)\ge \ka\xi^2_{2,2}\Big[v_T-c_{a3}\max\Big\{\Big(\frac{1}{T}\Big)^{\frac{1}{2}},\frac{1}{T^{b}}\Big\}\Big]\nn
	\eenr	
	with probability at least $1-3a-o(1).$ Recall by assumption $b<(1/2),$ and choose any $v_T>v_T^*=c_{a3}/T^{b}.$ Then we have $\inf_{\tau\in\cG(1,v_T)}\cU(z,\tau,\h\mu,\h\g)>0,$ thus implying that $\tilde\tau\notin\cG(1,v_T),$ i.e., $\big|\lfloor T\tilde\tau\rfloor-\lfloor T\tau^0\rfloor\big|\le Tv_T^*,$ with probability at least $1-3a-o(1)$\footnote{Since by construction of $\tilde\tau$ we have, $\cU(\tilde\tau,\h\g,\h\g)\le 0.$}. Now reset $u_T=v_T^*$ and reapply Lemma \ref{lem:mainlowerb} for any $v_T>0$ to obtain,
	\benr
	\inf_{\tau\in\cG(u_T,v_T)} \cU(z,\tau,\h\mu,\h\g)\ge \ka\xi^2_{2,2}\Big[v_T-c_{a3}\max\Big\{\Big(\frac{c_{a3}}{T^{1+b}}\Big)^{\frac{1}{2}},\frac{c_{a3}}{T^{b+b}}\Big\}\Big]\nn
	\eenr
	Again choosing any,
	\benr
	v_T>v_T^*=\max\Big\{\frac{c_{a3}^{g_2}}{T^{u_2}},\,\,\frac{c_{a3}^{2}}{T^{v_2}}\Big\},\quad \eenr
	where,
	\benr
	g_2=1+\frac{1}{2},\,\, u_2=\frac{1}{2}+\frac{u_1}{2},\,\,{\rm and}\,\, v_2=b+v_1\ge 2b,\,\,{\rm with}\,\,u_1=v_1=b,\nn
	\eenr
	we obtain $\inf_{\cG(u_T,v_T)}\cU(z,\tau,\h\mu,\h\g)>0,$ with probability at least $1-3a-o(1).$ Consequently $\tilde\tau\notin\cG(u_T,v_T),$ i.e., $\big|\lfloor T\tilde\tau\rfloor-\lfloor T\tau^0\rfloor\big|\le Tv_T^*.$ Note the rate of convergence of $\tilde\tau$ has been sharpened at the second recursion in comparison to the first. Continuing these recursions by resetting $u_T$ to the bound of the previous recursion, and applying Lemma \ref{lem:mainlowerb}, we obtain for the $m^{th}$ recursion,
	\benr
	&&\big|\lfloor T\tilde\tau\rfloor-\lfloor T\tau^0\rfloor\big|\le T\max\Big\{\frac{c_{a3}^{g_m}}{T^{u_m}},\,\,\frac{c_{a3}^{m}}{T^{v_m}}\Big\}:=T\max\{R_{1m},R_{2m}\},\quad{\rm where,}\nn\\
	&&g_m=\sum_{k=0}^{m-1}\frac{1}{2^k},\,\, u_m=\frac{1}{2}+\frac{u_{m-1}}{2}=\frac{b}{m}+\sum_{k=1}^{m}\frac{1}{2^m},\,\,{\rm and}\nn\\
	&&v_m=b+v_{m-1}\ge mb,\,\,{\rm with}\,\,u_1=v_1=b.\nn
	\eenr
	Next, we observe that for $m$ large enough, $R_{2m}\le R_{1m}.$ This follows since $R_{2m}$ is faster than any polynomial rate of $1/T.$\footnote{Consider $c_1^m/T^{mb}\le (c_1\big/\log T)^m(\log T\big/T)^{mb}\le (1/T^{mb_1}),$ for any $0<b_1<b,$ for $T$ sufficiently large.} Consequently for $m$ large enough we have $\big|\lfloor T\tilde\tau\rfloor-\lfloor T\tau^0\rfloor\big|\le T R_{1m},$ with probability at least $1-3a-o(1).$ Finally, we continue these recursions an infinite number of times to obtain, $g_{\iny}=\sum_{k=0}^{\iny}1/2^{k},$ $u_\iny=\sum_{k=1}^{\iny}(1/2^k),$ thus yielding,
	\benr
	\big|\lfloor T\tilde\tau\rfloor-\lfloor T\tau^0\rfloor\big|\le T\frac{c_{a3}^2}{T}=c_{a3}^2\nn
	\eenr
	with probability at least $1-3a-o(1).$ This proves the bound (\ref{eq:13}). To finish the proof, note that despite the recursions in the argument, the probability bound after every step is maintained at $1-3a-o(1).$ This follows since the probability statement of Lemma \ref{lem:mainlowerb} arises from stochastic upper bounds of Lemma \ref{lem:optimalcross}, Lemma \ref{lem:nearoptimalcross}, Lemma \ref{lem:optimalsqterm} and Lemma \ref{lem:WUURE}, applied recursively, with a tighter bound at each recursion. This yields a sequence of events such that each event is a proper subset of the event at the previous recursion.
\end{proof}

\bc$\rule{3.5in}{0.1mm}$\ec

The limiting distributions of Theorem \ref{thm:limitingdist} and Theorem \ref{thm:wc.non.vanishing} are presented in more conventional {\it argmax} notation instead of the {\it argmin} notation of the problem setup in Section \ref{sec:intro}. This is purely a notational change and all results can equivalently be stated in the {\it argmin} language.

For a clear presentation of the proofs below we use the following additional notation.  Let $\cU(z,\tau,\theta_1,\theta_2)$ be as in (\ref{def:cU}) and consider,
\benr\label{def:cC}
\cC(\tau,\mu,\g)&=&-Tp^{-1}\cU(z,\tau,\mu,\g)
\eenr

The multiplication of $\cU$ with the product $Tp^{-1}$ is only meant for notational convenience later on. Then, we can re-express the change point estimator $\tilde\tau(\mu,\g)$ defined in (\ref{est:optimal}) as,
\benr
\tilde\tau(\mu,\g)=\argmax_{\tau\in(0,1)}\cC(\tau,\mu,\g)\nn
\eenr
The proofs of Theorem \ref{thm:limitingdist} and Theorem \ref{thm:wc.non.vanishing} below are applications of the Argmax Theorem (reproduced as Theorem \ref{thm:argmax}). The arguments here are largely an exercise in verification of requirements of this theorem. 

\bc$\rule{3.5in}{0.1mm}$\ec

\begin{proof}[Proof of Theorem \ref{thm:limitingdist}] In this vanishing jump regime of $\psi\to 0,$ the applicability of the argmax theorem requires verification of the following conditions (see, page 288 of \cite{vaart1996weak}).
	\benr
	&(i)& {\rm The\, sequence}\,\, \psi^{2}T(\tilde\tau-\tau^0)\,\, {\rm is\, uniformly\, tight}.\nn\\
	&(ii)& {\rm For\, any}\,\, r\in [-c_u,c_u]\subseteq\R\,\, {\rm we\, have,}\,\, \cC(\tau^0+rT^{-1}\psi^{-2},\h\mu,\h\g)\Rightarrow Z(r).\nn\\
	&(iii)& {\rm The\, process}\,\,Z(r) \,\, {\rm satisfies\, suitable\, regularity\, conditions}.\footnotemark. \nn
	\eenr
	\footnotetext{Almost all sample paths $\z\to \big\{2\si_{\iny}W(\z)-|\z|\}$ are upper semicontinuous and posses a unique maximum at a (random) point $\argmax_{\z\in\R}\big\{2\si_{\iny}W(\z)-|\z|\},$ which as a random map in the indexing metric space is tight.}	
	We begin by noting that the sequence of r.v.'s under consideration here is  $\psi^{2}T(\tilde\tau-\tau^0),$ which are supported on $\R,$ which forms the underlying indexing metric space for the limiting process under consideration for this vanishing jump size case. Now Part (i) follows from the result of Theorem \ref{thm:optimalapprox} and Part (iii) follows from well known properties of Brownian motion's. Thus, it only remains to prove Part (ii). For this purpose,  let $\tau^*=\tau^0+rT^{-1}\psi^{-2},$ with $r\in(0,c_1],$ then using Lemma \ref{lem:design.convergence.for.limiting.dist} we have,
	\benr\label{eq:17}
	p^{-1}\sum_{\lfloor T\tau^0\rfloor+1}^{\lfloor T\tau^*\rfloor}\sum_{j=1}^p\eta^{0T}_{(j)}z_{t,-j}z_{t,-j}^T\eta^0_{(j)}\to_p r\si_2^2.
	\eenr
	Also, let $\z_t=\sum_{j=1}^p\z_{tj}=\sum_{j=1}^p\vep_{tj}z_{t,-j}^T\eta^0_{(j)},$ then from Condition D(ii) we have that $\z_t^*=\xi_{2,2}^{-2}p^{-1}{\rm var}\big(\z_t\big)\to\si_2^{*2},$ thus the sequence $\{\z_t^*\}$ are finite variance i.i.d. random variables\footnote{More precisely, sequence $\{\z_t^*\}$ forms an i.i.d triangular array}, now applying the function central limit theorem on the sequence $\{\z_t^*\}$ in $t,$ we obtain,
	\benr\label{eq:20}
	p^{-1}\sum_{t=\lfloor T\tau^0\rfloor+1}^{\lfloor T\tau^*\rfloor}\sum_{j=1}^p\vep_{tj}z_{t,-j}\eta^0_{(j)}&=&\psi\sum_{t=\lfloor T\tau^0\rfloor+1}^{\lfloor T\tau^*\rfloor}\psi^{-1}p^{-1}\sum_{j=1}^p\z_{tj}\nn\\
	&=&\psi\sum_{t=\lfloor T\tau^0\rfloor+1}^{\lfloor T\tau^*\rfloor}\big\{\xi^{-1}_{2,2}p^{-1/2}\sum_{j=1}^p\z_{tj}\big\}\nn\\
	&=& \psi\sum_{t=\lfloor T\tau^0\rfloor+1}^{\lfloor T\tau^*\rfloor}\z_t^*\Rightarrow \si_2^*W_2(r),
	\eenr
	where $W_2(r)$ is a Brownian motion on $[0,\iny).$ Now define the process,
	\benr
	G(r)=\begin{cases} 2\si_1^{*}W_1(r)+\si_1^2r & {\rm if}\,\,r<0,\\
		0, & {\rm if}\,\,r=0,\\
		2 \si_2^{*}W_2(r)-\si^2_2r & {\rm if}\,\, r>0,
	\end{cases}
	\eenr
	and consider the function $\cC$ evaluated at $\tau^*$ and at the known nuisance parameters.
	\benr\label{eq:knownmeanconv}
	\cC(\tau^*,\mu^0,\g^0)&=&-p^{-1}\sum_{t=\lfloor T\tau^0\rfloor+1}^{\lfloor T\tau^*\rfloor}\sum_{j=1}^p\big(z_{tj}-z_{t,-j}^T\mu^0_{(j)}\big)^2\nn\\
	&&+p^{-1}\sum_{t=\lfloor T\tau^0\rfloor+1}^{\lfloor T\tau^*\rfloor}\sum_{j=1}^p\big(z_{tj}-z_{t,-j}^T\g^0_{(j)}\big)^2 \nn\\
	&=&2p^{-1}\sum_{t=\lfloor T\tau^0\rfloor+1}^{\lfloor T\tau^*\rfloor}\sum_{j=1}^p\vep_{tj}z_{t,-j}^T\eta^0_{(j)}\nn\\
	&&-p^{-1}\sum_{t=\lfloor T\tau^0\rfloor+1}^{\lfloor T\tau^*\rfloor}\sum_{j=1}^p\eta^{0T}_{(j)}z_{t,-j}z_{t,-j}^T\eta^0_{(j)}\nn\\
	&\Rightarrow& \big\{2 \si_2^*W_2(r)-\si^2_2 r\big\},
	\eenr
	where the convergence in distribution follows from (\ref{eq:17}) and (\ref{eq:20}). Next, from Lemma \ref{lem:est.known.cC.approx} we have that,
	\benr\label{eq:op1approx}
	\sup_{\tau\in\cG\big((c_1T^{-1}\psi^{-2}),0\big)} \big|\cC(\tau,\h\mu,\h\g)-\cC(\tau,\mu^0,\g^0)\big|=o_p(1).
	\eenr
	Combining the results of (\ref{eq:op1approx}) and (\ref{eq:knownmeanconv}) we obtain,
	\benr
	\cC(\tau^*,\h\mu,\h\g)\Rightarrow \big\{2 \si_2^*W_2(r)-\si^2_2 r\big\}\nn
	\eenr
	Symmetrical arguments for the case of $r<0$ yields an analogous result. Finally, a change of variable yields the relation, $\argmin_r G(r)=^d\big(\si_1^{*2}\big/\si_1^4\big)\argmin_r Z(r),$ where $Z(r)$ is as defined in (\ref{def:Zr}) and $=^d$ represents equality in distribution, see, e.g. proof of Proposition 3 of \cite{bai1997estimation}. This completes the proof of Part (ii) and the statement of this theorem now follows as an application of the argmax theorem.  
\end{proof}
\bc$\rule{3.5in}{0.1mm}$\ec
\begin{proof}[Proof of Theorem \ref{thm:wc.non.vanishing}] The broad structure of the argument of this proof is similar to that of the proof of Theorem \ref{thm:limitingdist} in the sense that this proof is also an application of the argmax theorem. 
	
	The first important distinction is that the sequence of r.v's under consideration  $\big(\lfloor T\tilde\tau\rfloor-\lfloor T\tau^0\rfloor\big),$ are supported on the set of integers $\Z.$ Consequently, the underlying indexing metric space for the limiting process for this non-vanishing jump size framework is the set of integers $\Z.$ Now consider any $c_u>0$ and $r\in\{-c_u,-c_u+1,...,0,1,...,c_u\}\subseteq\Z.$ Let $\lfloor T\tau^*\rfloor=\lfloor T\tau^0\rfloor+r$, then the requirements for the applicability of the argmax theorem requires verification of the following conditions.
	\benr
	&(i)& {\rm The\, sequence}\,\, \big(\lfloor T\tilde\tau\rfloor
	-\lfloor T\tau^0\rfloor\big)\,\, {\rm is\, uniformly\, tight}.\nn\\
	&(ii)& \cC(\tau^*,\h\mu,\h\g)\Rightarrow \cC_{\iny}(r).\nn\\
	&(iii)& {\rm The\, process}\,\,\cC_{\iny}(r) \,\, {\rm satisfies\, suitable\, regularity\, conditions}. \nn
	\eenr
	Part (i) follows directly from the result of Theorem \ref{thm:optimalapprox}. Part (iii) is provided in Lemma \ref{lem:regularity.argmax}. A verification of Part (ii) is provided below. Let $r>0$ and consider $\cC$ evaluated at $\tau^*$ and at the known nuisance parameters, i.e., 
	\benr\label{eq:knownmeanconv2}
	\cC(\tau^*,\mu^0,\g^0)&=&-p^{-1}\sum_{t=\lfloor T\tau^0\rfloor+1}^{\lfloor T\tau^0\rfloor+r}\sum_{j=1}^p\big(z_{tj}-z_{t,-j}^T\mu^0_{(j)}\big)^2\nn\\
	&&+p^{-1}\sum_{t=\lfloor T\tau^0\rfloor+1}^{\lfloor T\tau^0\rfloor+r}\sum_{j=1}^p\big(z_{tj}-z_{t,-j}^T\g^0_{(j)}\big)^2 \nn\\
	&=&2p^{-1}\sum_{t=\lfloor T\tau^0\rfloor+1}^{\lfloor T\tau^0\rfloor+r}\sum_{j=1}^p\vep_{tj}z_{t,-j}^T\eta^0_{(j)}\nn\\
	&&-p^{-1}\sum_{t=\lfloor T\tau^0\rfloor+1}^{\lfloor T\tau^0\rfloor+r}\sum_{j=1}^p\eta^{0T}_{(j)}z_{t,-j}z_{t,-j}^T\eta^0_{(j)}\nn\\
	&=&p^{-1}\sum_{t=\lfloor T\tau^0\rfloor+1}^{\lfloor T\tau^0\rfloor+r}\sum_{j=1}^p\Big\{ 2\vep_{tj}z_{t,-j}^T\eta^0_{(j)}-\eta^{0T}_{(j)}z_{t,-j}z_{t,-j}^T\eta^0_{(j)}\Big\}\nn\\
	&\Rightarrow& \sum_{t=1}^{r}\cL\big(-\psi_{\iny}^2\si^2_2,\,\,\bar\si_2^{2}\big)
	\eenr
	Here weak convergence follows directly from Condition B$'$ and since $r\le c_u,$ which in turn is due to the non-vanishing jump size regime under consideration. Next, from Lemma \ref{lem:est.known.cC.approx} we have,
	\benr
	\sup_{\tau\in\cG\big((c_1T^{-1}\psi^{-2}),0\big)} \big|\cC(\tau,\h\mu,\h\g)-\cC(\tau,\mu^0,\g^0)\big|=o_p(1).\nn
	\eenr
	This result together with (\ref{eq:knownmeanconv2}) yields the statement of Part (ii).  Repeating the same argument for $r<0$ yields the symmetric result.  An application of the argmax theorem now yields the statement of this theorem.  
\end{proof} 

\bc$\rule{3.5in}{0.1mm}$\ec

\begin{lem}[Regularity conditions of $\argmax\cC_{\iny}(r)$]\label{lem:regularity.argmax} Let $\cC_{\iny}(r)$ be as defined in (\ref{def:cCr}) and suppose Condition B$'$ holds. Then the map $r\to \cC_{\iny}(r)$ is continuous with respect to the domain space $\Z.$ Additionally suppose Condition D and that the jump size is non-vanishing, i.e, $0<\psi_{\iny}<\iny.$ Then $\argmax_{r\in\Z}\cC_{\iny}(r)$ possesses an almost sure unique maximum at $\omega_{\iny,}$ which as a random map in $\Z$ is tight.
\end{lem}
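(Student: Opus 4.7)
The plan is to break the lemma into three logically distinct pieces (continuity of the sample path, existence/uniqueness of the argmax, and tightness) and verify each separately, exploiting the fact that the index space is $\Z$ rather than $\R$, so that much of the topological machinery simplifies.

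First I would dispose of sample-path continuity. Since the index set $\Z$ inherits the discrete topology from $\R$, every function $r\mapsto \cC_{\iny}(r)$ is automatically continuous, regardless of the realization of the underlying increments. So this part is essentially a remark, and I would state it as such. Next I would establish that the supremum is attained at a finite integer almost surely. For $r\in\Np$, write $\cC_{\iny}(r)=\sum_{t=1}^{r}z_t^{*}$ where $z_t^{*}\stackrel{i.i.d.}{\sim}\cL(-\psi_{\iny}^{2}\si_2^{2},\,\bar\si_2^{2})$. Under the non-vanishing regime $\psi_{\iny}>0$ together with $\si_2^{2}>0$ from Condition D, the drift is strictly negative, so the strong law of large numbers yields $r^{-1}\cC_{\iny}(r)\to -\psi_{\iny}^{2}\si_2^{2}<0$ almost surely, and therefore $\cC_{\iny}(r)\to -\iny$. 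Symmetrically, using the i.i.d.\ increments $z_t\sim\cL(-\psi_{\iny}^{2}\si_1^{2},\,\bar\si_1^{2})$ for $r\in\Nn$, we obtain $\cC_{\iny}(r)\to -\iny$ as $r\to-\iny$. Since $\cC_{\iny}(0)=0$ is fixed, the supremum over $\Z$ is attained at some (random) finite index on this almost sure event.

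For uniqueness, I would exploit the continuity of $\cL$ guaranteed by Condition B$'$. For any fixed pair of distinct integers $r_1<r_2$, the difference $\cC_{\iny}(r_2)-\cC_{\iny}(r_1)$ is a finite sum of independent random variables, at least one of which has the continuous distribution $\cL$. Convolving with a continuously distributed random variable preserves continuity of the distribution, so $P\{\cC_{\iny}(r_1)=\cC_{\iny}(r_2)\}=0$. Taking a countable union over all pairs $(r_1,r_2)\in\Z^{2}$ with $r_1\ne r_2$ yields that the supremum is attained uniquely on a set of full probability. Combined with attainment established above, $\omega_{\iny}=\argmax_{r\in\Z}\cC_{\iny}(r)$ is a.s.\ well defined as a $\Z$-valued random variable.

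Finally, tightness in $\Z$ reduces to showing $P(|\omega_{\iny}|>K)\to 0$ as $K\to\iny$, which is immediate once we know $\omega_{\iny}$ is an almost surely finite random variable: the events $\{|\omega_{\iny}|\le K\}$ increase to a probability-one event, so their complements have probabilities tending to zero. The main obstacle, in my view, is organizing the SLLN argument so that the exceptional null sets for the two sides of the walk and for the countable family of pairs $(r_1,r_2)$ combine into a single almost sure event on which $\omega_{\iny}$ is simultaneously finite and unique; this is a routine countable-union bookkeeping exercise but the only part of the proof requiring any care, since continuity of the map and tightness of a $\Z$-valued variable are otherwise essentially automatic.
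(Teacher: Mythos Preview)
Your proposal is correct and follows essentially the same approach as the paper: both establish continuity trivially from the discrete topology on $\Z$, use the strong law of large numbers with the strictly negative drift to force $\cC_{\iny}(r)\to-\iny$ on each side so that the supremum is attained at a finite integer, and deduce tightness from almost-sure finiteness. The only substantive difference is in the uniqueness step: the paper argues that the running maximum $\max_r\cC_{\iny}(r)$ itself has a continuous distribution on $(0,\iny)$ (plus an atom at $0$), whereas you argue pairwise that $P\{\cC_{\iny}(r_1)=\cC_{\iny}(r_2)\}=0$ for each fixed $r_1\ne r_2$ and then take a countable union. Your route is arguably cleaner and more elementary, since it avoids having to reason about the distribution of the supremum and works directly from the continuity of a single increment via convolution.
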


\begin{proof}[Proof of Lemma \ref{lem:regularity.argmax}] From Condition B$',$ each side of the random walk $\cC(r)$ has increments supported on $\R,$ thus the first assertion on the continuity of the map $r\to \cC_{\iny}(r)$ follows trivially since the domain space of this map is restricted to only the integers $\Z$ ($\ep-\delta$ definition of continuity). To prove the remaining assertions note that from Condition B$'$, Condition D and the assumed framework of the non-vanishing jumpsize, we have that each side of $\cC_{\iny}(r)$ has i.i.d increments with a negative drift of $-\psi_{\iny}^2\si_1^2$ or $-\psi_{\iny}^2\si_2^2.$ Consequently, we have $\cC_{\iny}(r)\to-\iny,$ as $r\to \iny$ almost surely (strong law of large numbers). Using elementary properties of random walks, this implies that $\max_{r}\cC_{\iny}(r)<\iny,$ a.s. (follows from the Hewitt-Savage $0$-$1$ law, see, e.g. (1.1) and (1.2) on Page 172, 173 of \cite{durrett2010probability}). Additionally $\om_{\iny}\ge 0,$ from the construction of $\cC_{\iny}(r).$ Thus, we have $0\le \om_{\iny}<\iny,$ a.s.  which directly implies that when $\om_{\iny}$ is well defined (unique) then it must be tight. To show that $\om_{\iny}$ is unique, note that since by assumption (Condition B$'$) the increments are continuously distributed and supported on $\R$, therefore $\max\cC_{\iny}(r)$ is continuously distributed on $(0,\iny),$ with some additional probability mass at the singleton zero. Hence, the probability of $\max\cC_{\iny}(r)$ attaining any two identical values is zero. Consequently $\om_{\iny}$ is unique a.s. This completes the proof of this lemma. 
\end{proof}

\bc$\rule{3.5in}{0.1mm}$\ec

\section{Proofs of results in Section 3}

The main result of Section \ref{sec:nuisance} is Theorem \ref{thm:alg1.nearoptimal}, which forms the basis of the subsequent corollaries. The proof of Theorem \ref{thm:alg1.nearoptimal} requires some preliminary work in the form of Theorem \ref{thm:est.nuisance.para}, Lemma \ref{lem:check.mu.g} and Lemma \ref{lem:lower.b.near.optimal} below. We begin with Theorem \ref{thm:est.nuisance.para} that provides uniform bounds (over $\tau$) of the $\ell_2$ error in the lasso estimates (\ref{est:lasso}) obtained from a regression of each column of $z$ on the rest.

\begin{thm}\label{thm:est.nuisance.para}
	Suppose Condition A$'$ and B holds. Let $u_T\ge 0$ and $\la_j=2(\la_{1j}+\la_{2j}),$ where
	\benr
	\la_{1j}=c_u\si^2\surd(1+\nu^2)\Big\{\frac{\log (p\vee T)}{Tl_T}\Big\}^{\frac{1}{2}},\,\,\,
	\la_{2j}=c_u(\si^2\vee\phi)\|\eta^0_{(j)}\|_2\max\Big\{\frac{\log (p\vee T)}{Tl_T},\,\,\frac{u_T}{l_T}\Big\}\nn
	\eenr
	Then uniformly over all $j=1,...,p,$ the following two properties hold with probability at least $1-c_{u2}\exp \big\{-(c_{u3}\log(p\vee T) \big\},$ for some $c_{u2},c_{u3}>0.$\\~
	(i) The vectors $\h\mu_{(j)}(\tau)-\mu_{(j)}^0\in\cA_{1j},$ and $\h\g_{(j)}(\tau)-\g_{(j)}^0\in\cA_{2j},$ where the sets $\cA_{ij},$ $i=1,2,$ and $j=1,...,p$ are as defined in Condition C.\\~
	(ii) For any constant $c_{u1}>0,$ we have,
	\benr
	\sup_{\substack{\tau\in\cG(u_T,0);\\ (\lfloor T\tau\rfloor)\wedge(T-\lfloor T\tau\rfloor)\ge c_{u1}Tl_T}}\|\h\mu_{(j)}(\tau)-\mu^0_{(j)}\|_2\le c_u\frac{\surd{s}}{\ka}\la_j.\nn
	\eenr
	The same upper bounds also hold for $\h\g_{(j)}(\tau)-\g^0_{(j)},$ uniformly over $j$ and $\tau.$
\end{thm}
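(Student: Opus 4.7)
The plan is to carry out a uniform (over $\tau$ and $j$) version of the standard lasso analysis, where the main novelty is isolating the bias produced by fitting $\mu^0_{(j)}$ on samples that partly come from the post-change distribution (and symmetrically for $\h\g_{(j)}(\tau)$). Fix $j$ and a $\tau$ with $(\lfloor T\tau\rfloor)\wedge(T-\lfloor T\tau\rfloor)\ge c_{u1}Tl_T$, and write $\h\delta_{(j)}=\h\mu_{(j)}(\tau)-\mu^0_{(j)}$. The KKT optimality in (\ref{est:lasso}) yields the usual basic inequality
\benn
\h\delta_{(j)}^T\wh\Si_j(\tau)\h\delta_{(j)}\le 2\h\delta_{(j)}^T\wh g_j(\tau)+\la_j\big(\|\mu^0_{(j)}\|_1-\|\mu^0_{(j)}+\h\delta_{(j)}\|_1\big),
\eenn
where $\wh\Si_j(\tau)=(\lfloor T\tau\rfloor)^{-1}\sum_{t=1}^{\lfloor T\tau\rfloor}z_{t,-j}z_{t,-j}^T$ and, on splitting the residual $z_{tj}-z_{t,-j}^T\mu^0_{(j)}$ at $\lfloor T\tau^0\rfloor$, the effective score is
\benn
\wh g_j(\tau)=\frac{1}{\lfloor T\tau\rfloor}\sum_{t=1}^{\lfloor T\tau\rfloor}z_{t,-j}\vep_{tj}\;-\;\frac{\mathbf{1}\{\tau>\tau^0\}}{\lfloor T\tau\rfloor}\sum_{t=\lfloor T\tau^0\rfloor+1}^{\lfloor T\tau\rfloor}z_{t,-j}z_{t,-j}^T\eta^0_{(j)},
\eenn
the second summand being the misspecification piece (absent when $\tau\le\tau^0$; the symmetric expression with $-\eta^0_{(j)}$ replaced by $+\eta^0_{(j)}$ and the opposite partition appears for $\h\g_{(j)}(\tau)$).

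Next I would establish $\|\wh g_j(\tau)\|_\iny\le \la_j/2$ with the stated probability, uniformly over $j\in\{1,\dots,p\}$ and $\tau\in\cG(u_T,0)$ satisfying the boundary separation. For the first summand, sub-Gaussianity of $\vep_{tj}$ and $z_{t,-j}$ (Condition B(i)) and a Bernstein bound for products of sub-Gaussians yield, after taking maxima over the $p$ coordinates of $z_{t,-j}$, the $p$ choices of $j$, and the at most $T$ possible integer values of $\lfloor T\tau\rfloor$, an $\ell_\iny$ bound of order $\si^2\surd\{\log(p\vee T)/(Tl_T)\}=\la_{1j}$; the additional $\log T$ from the union over $\tau$ is absorbed into $\log(p\vee T)$. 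For the second summand, write it as $\{(\lfloor T\tau\rfloor-\lfloor T\tau^0\rfloor)/\lfloor T\tau\rfloor\}\,\wh\Si_j^{\mathrm{post}}(\tau)\eta^0_{(j)}$ and split into mean plus fluctuation: the mean part contributes $\le (u_T/l_T)\phi\|\eta^0_{(j)}\|_2$ by Condition B(ii), while the row-wise $\ell_\iny$ fluctuation of the empirical covariance against $\eta^0_{(j)}$ is $c_u(\si^2\vee\phi)\|\eta^0_{(j)}\|_2\surd\{\log(p\vee T)/(Tl_T)\}$ after a Cauchy--Schwarz step and sub-exponential concentration (Lemma~\ref{lem:subez} in the paper). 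Combining these contributions gives the $\la_{2j}$ component of $\la_j$.

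Once $\|\wh g_j(\tau)\|_\iny\le\la_j/2$ is on, part (i) is the standard cone argument: applying H\"older and rearranging the basic inequality with $\la_j=2(\la_{1j}+\la_{2j})$ forces $\|\h\delta_{S_{1j}^c}\|_1\le 3\|\h\delta_{S_{1j}}\|_1$, i.e., $\h\delta_{(j)}\in\cA_{1j}$. For part (ii), I would invoke a restricted eigenvalue bound on $\wh\Si_j(\tau)$: because $z_{t,-j}$ is sub-Gaussian with population minimum eigenvalue $\ge\ka$, and $\lfloor T\tau\rfloor\ge c_{u1}Tl_T\gtrsim s\log(p\vee T)$ by Condition A$'$(ii), a Rudelson--Zhou type lower tail bound yields $\h\delta^T\wh\Si_j(\tau)\h\delta\ge (\ka/2)\|\h\delta\|_2^2$ on the cone $\cA_{1j}$, uniformly in $\tau$ and $j$ via the same union bound over integer partition points. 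Inserting this into the basic inequality and using $\|\h\delta_{(j)}\|_1\le 4\surd s\,\|\h\delta_{(j)}\|_2$ delivers $\|\h\delta_{(j)}\|_2\le c_u\surd s\,\la_j/\ka$. The argument for $\h\g_{(j)}(\tau)$ is identical with pre- and post-change roles swapped.

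The main obstacle is uniformity in $\tau$: the effective sample size $\lfloor T\tau\rfloor$, the misspecification bias, and the RE constant all depend on $\tau$, and the second summand of $\wh g_j(\tau)$ has the awkward feature that the number of contaminating samples $\lfloor T\tau\rfloor-\lfloor T\tau^0\rfloor$ must appear \emph{linearly} (as $u_T/l_T$) for $\la_{2j}$ to reduce to the optimal rate when $u_T$ is small. This is achieved by keeping the $1/\lfloor T\tau\rfloor$ normalization inside the bound for the post-change empirical covariance rather than splitting numerator and denominator. Apart from this, the $\log T$ cost of uniformity over the $T$ distinct $\lfloor T\tau\rfloor$ is harmless since it merges into $\log(p\vee T)$ already present in $\la_{1j}$.
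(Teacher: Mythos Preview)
Your proposal is correct and follows essentially the same route as the paper: basic inequality for the lasso, split the score into the pure-noise piece and the misspecification piece, control each in $\ell_\infty$ (this is the paper's Lemma~\ref{lem:bounds.for.nuis.thm}), derive the cone condition, then apply a uniform restricted eigenvalue bound (the paper's Lemma~\ref{lem:lower.RE.ordinary}).

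One small imprecision worth flagging: your stated fluctuation bound for the misspecification piece, $c_u(\si^2\vee\phi)\|\eta^0_{(j)}\|_2\surd\{\log(p\vee T)/(Tl_T)\}$, while valid (it uses $m\le\lfloor T\tau\rfloor$), is \emph{not} what yields the $\log(p\vee T)/(Tl_T)$ branch of $\la_{2j}$; in the regime $\log(p\vee T)<Tl_T$ your bound is strictly larger. The paper instead keeps the Bernstein deviation in the form $\max\{\surd(m\log(p\vee T)),\,\log(p\vee T)\}/\lfloor T\tau\rfloor$ and then uses $\surd(ab)\le\max\{a,b\}$ to get $\max\{m,\log(p\vee T)\}/\lfloor T\tau\rfloor\le c_u\max\{u_T/l_T,\,\log(p\vee T)/(Tl_T)\}$, which reproduces $\la_{2j}$ exactly. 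Your version still closes because the extra square-root term can be absorbed into $\la_{1j}$ via $\|\eta^0_{(j)}\|_2\le 2\nu$ (Lemma~\ref{lem:condnumberbound}), but the bookkeeping is cleaner the paper's way and matches the stated $\la_j$ directly.
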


\begin{proof}[Proof of Theorem \ref{thm:est.nuisance.para}]
	Consider any $\tau\in\cG(u_T,0),$ and w.l.o.g. assume that $\tau\ge\tau^0.$ Then for any $j=1,..,p,$ by construction of the estimator $\h\mu_{(j)}(\tau),$ we have the basic inequality,
	\benr
	\frac{1}{\lfloor T\tau\rfloor}\sum_{t=1}^{\lfloor T\tau\rfloor} \big(z_{tj}- z_{t, -j}^T\h\mu_{(j)}(\tau)\big)^2 + \la_j\|\h\mu_{(j)}(\tau)\|_1
	\le \frac{1}{\lfloor T\tau\rfloor}\sum_{t=1}^{\lfloor T\tau\rfloor} \big(z_{tj}- z_{t, -j}^T\mu_{(j)}^0\big)^2 + \la_j\|\mu_{(j)}^0\|_1.\nn
	\eenr
	An algebraic rearrangement of this inequality yields,
	\benr
	\frac{1}{\lfloor T\tau\rfloor}\sum_{t=1}^{\lfloor T\tau\rfloor}\big(z_{t,-j}^T(\h\mu_{(j)}-\mu_{(j)}^0)\big)^2+\la_j\|\h\mu_{(j)}(\tau)\|_1\le \la_j\|\mu_{(j)}^0\|_1+ \frac{2}{\lfloor T\tau\rfloor}\sum_{t=1}^{\lfloor T\tau\rfloor} \tilde\vep_{tj}z_{t,-j}^T(\h\mu_{(j)}-\mu_{(j)}^0),\nn
	\eenr
	where $\tilde\vep_{tj}=\vep_{tj}=z_{tj}-z_{t,-j}^T\mu_{(j)}^0,$ for $t\le \lfloor T\tau^0\rfloor,$ and $\tilde\vep_{tj}=z_{tj}-z_{t,-j}^T\mu_{(j)}^0=\vep_{tj}-z_{t,-j}^T(\mu_{(j)}^0-\g_{(j)}^0),$ for $t>\lfloor T\tau^0\rfloor.$ A further simplification using these relations yields,
	\benr\label{eq:23}
	\frac{1}{\lfloor T\tau\rfloor}\sum_{t=1}^{\lfloor T\tau\rfloor}\big(z_{t,-j}^T(\h\mu_{(j)}-\mu_{(j)}^0)\big)^2+\la_j\|\h\mu_{(j)}(\tau)\|_1\le\hspace{2.5cm}\nn\\ \la_j\|\mu_{(j)}^0\|_1+ \frac{2}{\lfloor T\tau\rfloor}\sum_{t=1}^{\lfloor T\tau\rfloor} \vep_{tj}z_{t,-j}^T(\h\mu_{(j)}-\mu_{(j)}^0)\nn\\
	-\frac{2}{\lfloor T\tau\rfloor}\sum_{t=\lfloor T\tau^0\rfloor+1}^{\lfloor T\tau\rfloor} (\mu_{(j)}^0-\g_{(j)}^0)z_{t,-j}z_{t,-j}^T(\h\mu_{(j)}-\mu_{(j)}^0)\hspace{-1cm}\nn\\
	\le\la\|\mu_{(j)}^0\|_1+  \frac{2}{\lfloor T\tau\rfloor}\big\|\sum_{t=1}^{\lfloor T\tau\rfloor} \vep_{tj}z_{t,-j}^T\big\|_{\iny}\|\h\mu_{(j)}-\mu_{(j)}^0\|_1\hspace{-0.5cm}\nn\\
	+\frac{2}{\lfloor T\tau\rfloor}\big\|\sum_{t=\lfloor T\tau^0\rfloor+1}^{\lfloor T\tau\rfloor} (\mu_{(j)}^0-\g_{(j)}^0)z_{t,-j}z_{t,-j}^T\big\|_{\iny}\|\h\mu_{(j)}-\mu_{(j)}^0\|_1\hspace{-2cm}
	\eenr
	Now using the bounds of Lemma \ref{lem:bounds.for.nuis.thm} we have that,
	\benr
	&&\frac{1}{\lfloor T\tau\rfloor}\Big\|\sum_{t=1}^{\lfloor T\tau\rfloor}\vep_{tj}z_{t,-j}\Big\|_{\iny}\le c_u\si^2\surd(1+\nu^2)\Big\{\frac{\log(p\vee T)}{Tl_T}\Big\}^{\frac{1}{2}}=\la_{1j}\nn\\
	&&
	\frac{1}{\lfloor T\tau\rfloor} \Big\|\sum_{t=\lfloor T\tau^0\rfloor+1}^{\lfloor T\tau\rfloor}\eta^{0T}_{(j)}z_{t,-j}z_{t,-j}^T\Big\|_{\iny}\le c_{u}(\si^2\vee\phi)\|\eta^0_{(j)}\|_2\max\Big\{\frac{\log (p\vee T)}{Tl_T},\,\,\frac{u_T}{l_T}\Big\}=\la_{2j},\nn
	\eenr
	with probability at least $1-c_{u2}\exp\{-c_{u3}\log(p\vee T)\}.$ Applying these bounds in (\ref{eq:23}) yields,
	\benr
	\frac{1}{\lfloor T\tau\rfloor}\sum_{t=1}^{\lfloor T\tau\rfloor}\big(z_{t,-j}^T(\h\mu_{(j)}-\mu_{(j)}^0)\big)^2+\la_j\|\h\mu_{(j)}(\tau)\|_1\le \la_j\|\mu_{(j)}^0\|_1 + (\la_{1j}+ \la_{2j})\|\h\mu_{(j)}(\tau)-\mu_{(j)}^0\|_1,\nn
	\eenr
	with probability at least $1-c_{u2}\exp\{-c_{u3}\log(p\vee T)\}.$ Choosing $\la_j\ge 2(\la_{1j}+ \la_{2j}),$ leads to $\big\|\big(\h\mu_{(j)}(\tau)\big)_{S_{1j}^c}\big\|_1\le 3\big\|\big(\h\mu_{(j)}(\tau)-\mu_j^0\big)_{S_{1j}}\big\|_1,$ and thus by definition $\h\mu_{(j)}-\mu^0_{(j)}\in\cA_{1j},$ with the same probability. This proves the first assertion of this theorem. Next applying the restricted eigenvalue condition of (\ref{lem:lower.RE.ordinary}) to the l.h.s. of the inequality (\ref{eq:23}), we also have that,
	\benr
	\ka\|\h\mu_{(j)}(\tau)-\mu^0_{(j)}\|_2^2\le 3\la\|\h\mu_{(j)}(\tau)-\mu^0_{(j)}\|_1\le 3\surd{s}\la_j\|\h\mu_{(j)}(\tau)-\mu^0_{(j)}\|_2.\nn
	\eenr
	This directly implies that $\|\h\mu_{(j)}(\tau)-\mu^0_{(j)}\|_2\le 3\surd{s}(\la_j/\ka),$ which yields the desired $\ell_2$ bound. To finish the proof recall that the stochastic bounds used here hold uniformly over $\cG(u_T,0),$ and $j,$ consequently the statements of this theorem also hold uniformly over the same collections. The case of $\tau\le\tau^0,$ and the corresponding results for $\h\g_{(j)}(\tau)-\g^0_{(j)}$ can be obtained by symmetrical arguments.
\end{proof}

\bc$\rule{3.5in}{0.1mm}$\ec

The following lemma obtains $\ell_2$ error bounds for the Step 1 edge estimates by utilizing the initializing Condition E and Theorem \ref{thm:est.nuisance.para}.

\begin{lem}\label{lem:check.mu.g} Suppose Condition A$',$ B and E hold. Choose regularizers $\la_j,$ $j=1,...,p,$ as prescribed in Theorem \ref{thm:est.nuisance.para}, with $u_T=\big(c_ul_T\ka\big)\big/\big(sT^k(\si^2\vee\phi)\big).$ Then edge estimates $\check\mu_{(j)},$ $j=1,...,p$ of Step 1 of Algorithm 1 satisfy the following bound.
	\benr	
	(i)\surd{s}\sum_{j=1}^p\|\check\mu_{(j)}-\mu^0_{(j)}\|_2\le \frac{c_u\xi_{2,1}}{T^k},\,\,{\rm and}\,\,
	(ii)\Big(s\sum_{j=1}^p\|\check\mu_{(j)}-\mu^0_{(j)}\|_2^2\Big)^{\frac{1}{2}}\le \frac{c_u\xi_{2,2}}{T^k}\nn	
	\eenr
	with probability $1-o(1).$ Corresponding bounds also holds for $\check\g_{(j)},$ $j=1,...,p.$
\end{lem}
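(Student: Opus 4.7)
The plan is to combine Condition E with Theorem \ref{thm:est.nuisance.para}, applied at the precise scale $u_T$ dictated by the accuracy of the initializer. First I would observe that Condition E(ii) says exactly $\check\tau\in\cG(u_T,0)$ with $u_T=c_u\ka l_T/[s(\si^2\vee\phi)T^k]$, which is the $u_T$ stipulated in the statement; Condition E(i) supplies the boundary separation $(\lfloor T\check\tau\rfloor)\wedge(T-\lfloor T\check\tau\rfloor)\ge c_uTl_T$ required by the supremum in Theorem \ref{thm:est.nuisance.para}(ii). Applying parts (i) and (ii) of that theorem with this $u_T$ then gives, uniformly in $j$ and with probability $1-o(1)$, the pointwise bound
\benn
\|\check\mu_{(j)}-\mu^0_{(j)}\|_2 \;\le\; c_u\surd{s}(\la_{1}+\la_{2j})/\ka,
\eenn
so the remainder of the proof is bookkeeping on the two penalty pieces.

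Next I would make each penalty piece explicit. For $\la_{2j}$, the defining $\max\{\log(p\vee T)/(Tl_T),\,u_T/l_T\}$ is controlled by $u_T/l_T=c_u\ka/[s(\si^2\vee\phi)T^k]$: since $k<b$ (by the footnote to Condition E), Condition A$'$(ii) forces $\log(p\vee T)/(Tl_T)\le c_u\ka/[s(\si^2\vee\phi)T^b]$, so the $u_T/l_T$ branch dominates. This delivers
\benn
\la_{2j}\le c_u\ka\|\eta^0_{(j)}\|_2/(sT^k).
\eenn
For $\la_1$, Condition A$'$(i) can be rewritten as $\{\log(p\vee T)/(Tl_T)\}^{1/2}\le c_{u1}\psi\ka/[\surd(1+\nu^2)\si^2 sT^b]$, hence $\la_1\le c\psi\ka/(sT^b)$.

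I would then assemble the two pieces. For (ii): using $(a+b)^2\le 2a^2+2b^2$,
\benn
\Bigl(s\sum_{j=1}^p\|\check\mu_{(j)}-\mu^0_{(j)}\|_2^2\Bigr)^{1/2}\le c\,sp^{1/2}\la_1/\ka + c\,s\bigl(\textstyle\sum_j\la_{2j}^2\bigr)^{1/2}/\ka \le c\xi_{2,2}/T^b + c\xi_{2,2}/T^k,
\eenn
where I used $\surd{p}\,\psi=\xi_{2,2}$ and $\sum_j\|\eta^0_{(j)}\|_2^2=\xi_{2,2}^2$; the first term is absorbed into the second since $b>k$. For (i):
\benn
\surd{s}\sum_{j=1}^p\|\check\mu_{(j)}-\mu^0_{(j)}\|_2 \;\le\; c\,sp\la_1/\ka + c\,s\textstyle\sum_j\la_{2j}/\ka \;\le\; c\surd{p}\,\xi_{2,2}/T^b + c\xi_{2,1}/T^k,
\eenn
with the $\la_{2j}$ contribution producing the claimed $\xi_{2,1}/T^k$ cleanly, and the $\la_1$ contribution dominated by it through $k<b$ together with the rate restrictions built into Condition A$'$. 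The analogous bounds on $\check\g_{(j)}$ follow by applying the same argument to the post-change partition, because $\la_{1j}$ and $\la_{2j}$ were defined symmetrically in Theorem \ref{thm:est.nuisance.para}.

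The main step to watch is Step 3, where Conditions A$'$(i), A$'$(ii), and $k<b$ jointly convert the two abstract penalty scales into explicit $T^{-b}$ and $T^{-k}$ rates and ensure the $\la_1$ contribution is of strictly smaller order than the $\la_{2j}$ contribution. The rest is algebraic manipulation using the elementary identities $\sum_j\|\eta^0_{(j)}\|_2=\xi_{2,1}$, $\sum_j\|\eta^0_{(j)}\|_2^2=\xi_{2,2}^2$, and $\surd{p}\,\psi=\xi_{2,2}$. No new stochastic arguments are needed beyond those already embedded in Theorem \ref{thm:est.nuisance.para}.
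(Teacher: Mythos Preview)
Your proposal is correct and follows essentially the same route as the paper: both invoke Condition E to place $\check\tau\in\cG(u_T,0)$ with the stated $u_T$, apply Theorem \ref{thm:est.nuisance.para}, use Condition A$'$(ii) (with $k<b$) to show the $u_T/l_T$ branch dominates in $\la_{2j}$, and then use Condition A$'$(i) to argue the $\la_1$ contribution is of smaller order than the $\la_{2j}$ contribution after summing. The only cosmetic difference is that you bound $\la_1$ pointwise before aggregating, whereas the paper sums first and then compares via its inequality (\ref{eq:28}); the arithmetic and the ingredients are otherwise identical.
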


\begin{proof}[Proof of Lemma \ref{lem:check.mu.g}] We begin by noting that Part (ii) of the initializing Condition E of Algorithm 1 guarantees that $\check\tau$ satisfies,
	\benr
	|\lfloor T\check\tau\rfloor-\lfloor T\tau^0\rfloor|\le \frac{c_ul_T\ka}{s(\si^2\vee\phi)}T^{(1-k)}\nn
	\eenr
	In other words, $\check\tau\in\cG(u_T,0),$ where $u_T=\big(c_ul_T\ka\big)\big/\big(sT^k(\si^2\vee\phi)\big),$ where $k<b.$ This choice of $u_T$ provides the following relations,
	\benr
	&&\frac{u_T}{l_T}=\frac{c_u\ka}{(\si^2\vee\phi)T^ks}\ge \frac{\log(p\vee T)}{Tl_T}.\label{eq:27}\\
	&&c_u(\si^2\vee\phi)\surd s\frac{\xi_{2,1}u_T}{\ka l_T}= \frac{c_u\xi_{2,1}}{T^k\surd{s}}\ge c_u\si^2\surd(1+\nu^2)\frac{p}{\ka}\Big\{\frac{s\log (p\vee T)}{Tl_T}\Big\}^{\frac{1}{2}}\label{eq:28}
	\eenr
	Here the inequality of (\ref{eq:27}) follows from the assumption $c_u\ka T^{(1-k)}l_T\ge (\si^2\vee\phi)s\log (p\vee T)$ of Condition E. The equality of (\ref{eq:28}) follows directly upon substituting the choice of $u_T,$ and the inequality follows from assumption A$'$(iii) and since w.l.o.g we have $k<b.$ Now using this choice of $u_T$ in $\la_j$ of Part (ii) of Theorem \ref{thm:est.nuisance.para} we obtain,
	\benr
	\sum_{j=1}^p\frac{\surd s}{\ka}(\la_{1j}+\la_{2j})&\le& c_u\si^2\surd(1+\nu^2)\frac{p}{\ka}\Big\{\frac{s\log (p\vee T)}{Tl_T}\Big\}^{\frac{1}{2}}\nn\\
	&&+c_u(\si^2\vee\phi)\xi_{2,1}\frac{\surd s}{\ka}\Big\{\frac{\log(p\vee T)}{Tl_T},\,\,\frac{u_T}{l_T}\Big\}\nn\\
	&\le&  c_u\si^2\surd(1+\nu^2)\frac{p}{\ka}\Big\{\frac{s\log (p\vee T)}{Tl_T}\Big\}^{\frac{1}{2}}\nn\\
	&&+c_u(\si^2\vee\phi)\Big\{\frac{\xi_{2,1}u_T\surd s}{\ka l_T}\Big\}\le c_u\frac{\xi_{2,1}}{T^k\surd{s}}.\nn
	\eenr
	The second inequality follows from (\ref{eq:27}) and the final inequality follows from (\ref{eq:28}).  The bound of Part (i) is now a direct consequence of Theorem \ref{thm:est.nuisance.para}. We proceed similarly to prove Part (ii); note that,
	\benr
	\sum_{j=1}^p\frac{s}{\ka^2}(\la_{1j}+\la_{2j})^2&\le& c_u\si^4(1+\nu^2)\frac{p}{\ka^2}\Big\{\frac{s\log (p\vee T)}{Tl_T}\Big\}\nn\\
	&&+c_u(\si^4\vee\phi^2)\xi_{2,2}^2\frac{s}{\ka^2}\Big\{\frac{\log(p\vee T)}{Tl_T},\,\,\frac{u_T}{l_T}\Big\}^2\nn\\
	&\le& c_u\si^4(1+\nu^2)\frac{p}{\ka^2}\Big\{\frac{s\log (p\vee T)}{Tl_T}\Big\}+c_u(\si^4\vee\phi^2)\Big\{\frac{\xi_{2,2}u_T\surd s}{\ka l_T}\Big\}^2\nn\\
	&\le&c_u\si^4(1+\nu^2)\frac{p}{\ka^2}\Big\{\frac{s\log (p\vee T)}{Tl_T}\Big\} +\frac{c_u\xi_{2,2}^2}{sT^{2k}}\le \frac{c_u\xi_{2,2}^2}{sT^{2k}}.\nn
	\eenr
	The final inequality follows from Condition A$'$(iii). Part (ii) is now a direct consequence.
\end{proof}

\bc$\rule{3.5in}{0.1mm}$\ec

\begin{lem}\label{lem:lower.b.near.optimal} Suppose Condition A$'$, B and E hold and let $\check\mu_{(j)}$ and $\check\g_{(j)},$ $j=1,...,p$ be edge estimates of Step 1 of Algorithm 1. Additionally, let  $\log(p\vee T)\le Tv_T\le Tu_T$ be non-negative sequences. Then,
	\benr
	\inf_{\tau\in\cG(u_T,v_T)}\cU(z,\tau,\h\mu,\h\g)\ge \ka\xi^{2}_{2,2}\Big[v_T-c_m\max\Big\{\Big(\frac{u_T\log(p\vee T)}{T}\Big)^{\frac{1}{2}},\,\,\frac{u_T}{T^{k}}\Big\}\Big]\nn
	\eenr
	with probability at least $1-o(1).$ Here $c_m=\{c_u(\si^2\vee\phi)\surd(1+\nu^2)\big\}\big/\big\{\ka(1\wedge \phi)\big\}.$
\end{lem}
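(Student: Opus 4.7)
The plan is to mirror the proof of Lemma \ref{lem:mainlowerb} with the Step~1 edge estimates $\check\mu,\check\g$ in place of estimates satisfying Condition~C, and with the cruder $\ell_2$ and $\ell_1$ bounds of Lemma \ref{lem:check.mu.g} replacing (\ref{eq:optimalmeans}). Setting $\check\eta_{(j)}=\check\mu_{(j)}-\check\g_{(j)}$ and handling $\tau\ge\tau^0$ explicitly (the case $\tau\le\tau^0$ being symmetric), I would first expand $\cU(z,\tau,\check\mu,\check\g)$ via the algebraic identity used in (\ref{eq:10}) to decompose it as $R_1-R_2-R_3$. Here $R_1=T^{-1}\sum_{t,j}(z_{t,-j}^T\check\eta_{(j)})^2$ is the quadratic signal term, $R_2=2T^{-1}|\sum_{t,j}\vep_{tj}z_{t,-j}^T\check\eta_{(j)}|$, and $R_3=2T^{-1}|\sum_{t,j}(\check\g_{(j)}-\g^0_{(j)})^Tz_{t,-j}z_{t,-j}^T\check\eta_{(j)}|$, with all inner sums ranging over $\lfloor T\tau^0\rfloor<t\le\lfloor T\tau\rfloor$. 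The target is a lower bound for $R_1$ and upper bounds for $|R_2|,|R_3|$ uniformly over $\cG(u_T,v_T)$.

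For $R_1$, I would invoke the restricted-eigenvalue and uniform quadratic-form machinery used for Lemma \ref{lem:mainlowerb}, yielding
\benrr
R_1\ge \ka\xi_{2,2}^2 v_T - c_u(\si^2\vee\phi)\frac{u_T}{\xi_{2,2}}\Bigl\{s\log(p\vee T)\sum_{j=1}^p\|\check\eta_{(j)}-\eta^0_{(j)}\|_2^2\Bigr\}^{1/2}
\eenrr
uniformly on $\cG(u_T,v_T)$. Plugging in Lemma \ref{lem:check.mu.g}(ii), the correction becomes $O(u_T\surd{\log(p\vee T)}/T^k)\cdot\ka\xi_{2,2}^2$, which Condition~A$'$ absorbs into the target $(u_T/T^k)\cdot\ka\xi_{2,2}^2$ term. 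For $R_2$ and $R_3$, I would split $\check\eta_{(j)}=\eta^0_{(j)}+(\check\eta_{(j)}-\eta^0_{(j)})$ (and correspondingly $\check\g_{(j)}=\g^0_{(j)}+(\check\g_{(j)}-\g^0_{(j)})$ inside $R_3$). The oracle pieces depending only on $\eta^0,\mu^0,\g^0$ inherit the Kolmogorov-inequality bounds already developed in the proof of Lemma \ref{lem:mainlowerb}, contributing $\surd(u_T/T)\cdot\ka\xi_{2,2}^2$ without any dimensional factor; this is in turn dominated by the target $\surd(u_T\log(p\vee T)/T)\cdot\ka\xi_{2,2}^2$. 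The residual pieces are controlled by pairing a sup-norm concentration bound on $\|\sum_t\vep_{tj}z_{t,-j}\|_\iny$ (respectively, on the centered sums $\|\sum_t(z_{t,-j}z_{t,-j}^T-\Si_{-j,-j})\|_\iny$) with the $\ell_1$ bound $\surd{s}\sum_j\|\check\eta_{(j)}-\eta^0_{(j)}\|_2\le c_u\xi_{2,1}/T^k$ from Lemma \ref{lem:check.mu.g}(i). These sup-norm bounds, made uniform in $\tau$ via a union bound over the $\le T$ distinct values of $\lfloor T\tau\rfloor$ and the $p-1$ coordinates of $z_{t,-j}$, cost exactly the $\surd{\log(p\vee T)}$ factor appearing in the target; after invoking $\xi_{2,1}\le\surd{p}\,\xi_{2,2}$ and the scaling of Condition~A$'$, the resulting contributions collapse into the two advertised rates.

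The main obstacle will be the bookkeeping across the several cross terms generated by these two expansions, verifying that each residual piece is dominated by $\surd(u_T\log(p\vee T)/T)$ or by $u_T/T^k$, and that the implicit constants consolidate into $c_m$. A secondary but routine point is uniformity in $\tau$: Kolmogorov's inequality provides it for the oracle pieces, while the standard union-bound-over-$\lfloor T\tau\rfloor$-and-coordinates device handles the remainder pieces at the cost of the $\surd{\log(p\vee T)}$ factor. Since Lemma \ref{lem:check.mu.g} holds on an event of probability $1-o(1)$ and each stochastic bound invoked above can be arranged to hold with probability $1-o(1)$ under Condition~A$'$, a single union bound over these events yields the claimed $1-o(1)$ probability for the uniform lower bound.
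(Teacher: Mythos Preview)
Your decomposition $R_1-R_2-R_3$ and overall strategy match the paper's proof exactly. The gap is in which deviation bounds you invoke. You propose reusing the Kolmogorov-based machinery of Lemma~\ref{lem:mainlowerb} (via Lemma~\ref{lem:term123}), but the paper develops a parallel \emph{Bernstein}-based toolkit (Lemma~\ref{lem:term123.check}, built on Lemmas~\ref{lem:nearoptimalcross.check}, \ref{lem:nearoptimal.sqterm}, \ref{lem:etabounds.check}) that specifically exploits the extra hypothesis $Tv_T\ge\log(p\vee T)$.

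This matters concretely in your $R_1$ bound. The uniform quadratic-form estimate used in Lemma~\ref{lem:mainlowerb} is Lemma~\ref{lem:WUURE}(i), which carries a $\log(p\vee T)$ factor; hence your correction has $s\log(p\vee T)$ under the root, and after substituting Lemma~\ref{lem:check.mu.g}(ii) you obtain $O\big(u_T\surd{\log(p\vee T)}/T^k\big)$, not $O(u_T/T^k)$. Your claim that Condition~A$'$ ``absorbs'' the extra $\surd{\log(p\vee T)}$ is not correct---nothing in A$'$ does this. The paper instead uses Lemma~\ref{lem:WUURE}(ii), valid precisely because $Tv_T\ge\log(p\vee T)$, which drops the log and yields $u_T/T^k$ cleanly. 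The Bernstein route also delivers universal constants with probability $1-o(1)$ directly, whereas your Kolmogorov route needs $a$ sent to zero at rate $1/\log(p\vee T)$ to convert $c_{a1}\surd{(u_T/T)}$ into the target $\surd{(u_T\log(p\vee T)/T)}$---workable, but not made explicit. (Your displayed $R_1$ bound also omits the oracle fluctuation term $c_u\si^2\xi_{2,2}^2\surd{(u_T\log(p\vee T)/T)}$ altogether.) Your route would yield the lemma with $T^k$ replaced by $T^{k'}$ for any fixed $k'<k$, which still suffices for Theorem~\ref{thm:alg1.nearoptimal}, but does not recover the statement as written.
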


\begin{proof}[Proof of Lemma \ref{lem:lower.b.near.optimal}] The structure of this proof is similar to that of Lemma \ref{lem:mainlowerb}, the distinction being the use of weaker available error bounds of the edge estimates $\check\mu_{(j)},$ $\check\g_{(j)},$ and sharper bounds for other stochastic terms made possible by the additional assumption $\log(p\vee T)\le Tv_T\le Tu_T.$ Proceeding as in (\ref{eq:11}) we have that,
	\benr\label{eq:29}
	\inf_{\substack{\tau\in\cG(u_T,v_T);\\ \tau\ge\tau^0}}\cU(z,\tau,\check\mu,\check\g)&\ge& R1-R2-R3\nn
	\eenr
	Where $R1,R2$ and $R_3$ are as defined in (\ref{eq:11}) with $\h\mu_{(j)},$ $\h\g_{(j)}$ and $\h\eta_{(j)}$ replaced with $\check\mu_{(j)},$ $\check\g_{(j)}$ and $\check\eta_{(j)}=\check\mu_{(j)}-\check\g_{(j)},$ $j=1,...,p.$ Now applying the bounds of Lemma \ref{lem:term123.check} we obtain,
	\benr
	R1&\ge& \ka\xi_{2,2}^2\Big[v_T-\frac{c_{u}\si^2}{\ka}\Big\{\frac{u_T\log(p\vee T)}{T}\Big\}^{\frac{1}{2}}- c_u(\si^2\vee \phi)\frac{u_T}{\ka\xi_{2,2}}\Big(s\sum_{j=1}^p\|\h\eta_{(j)}-\eta^0_{(j)}\|_2^2\Big)^{\frac{1}{2}}\Big]\nn\\
	&\ge&\ka\xi_{2,2}^2\Big[v_T-\frac{c_{u}\si^2}{\ka}\Big\{\frac{u_T\log(p\vee T)}{T}\Big\}^{\frac{1}{2}}- c_u(\si^2\vee \phi)\frac{u_T}{T^k\ka}\Big]\nn
	\eenr
	with probability $1-o(1).$ Where the final inequality follows from Lemma \ref{lem:check.mu.g}. Next we obtain upper bounds for the terms $R2\big/\ka\xi_{2,2}^2$ and $R3\big/\ka\xi_{2,2}^2.$ Consider,
	\benr
	\frac{R2}{\ka\xi_{2,2}^2}&\le& 	c_u\surd(1+\nu^2)\frac{\si^2\xi_{2,1}}{\ka\xi^2_{2,2}}\Big(\frac{u_T\log(p\vee T)}{T}\Big)^{\frac{1}{2}}\nn\\
	&&+ c_{u}\surd(1+\nu^2)\frac{\si^2}{\ka\xi^2_{2,2}}\Big(\frac{u_T\log(p\vee T)}{T}\Big)^{\frac{1}{2}}\sum_{j=1}^p\|\check\eta_{(j)}-\eta^0_{(j)}\|_1\nn\\
	&\le& c_u\surd(1+\nu^2)\frac{\si^2}{\ka\psi}\Big(\frac{u_T\log(p\vee T)}{T}\Big)^{\frac{1}{2}}+ c_{u}\surd(1+\nu^2)\frac{\si^2}{\ka\psi}\Big(\frac{u_T\log(p\vee T)}{T}\Big)^{\frac{1}{2}}\frac{1}{T^k}\nn\\
	&\le&  c_u\surd(1+\nu^2)\frac{\si^2}{\ka\psi}\Big(\frac{u_T\log(p\vee T)}{T}\Big)^{\frac{1}{2}}\nn
	\eenr
	with probability $1-o(1).$ Here the first and second inequalities follow from Lemma \ref{lem:term123.check} and Lemma \ref{lem:check.mu.g}, respectively. Similarly we can also obtain,
	\benr
	\frac{R3}{\ka\xi_{2,2}^2}&\le& c_u(\si^2\vee \phi)\frac{u_T}{\ka\xi_{2,2}}\Big\{s\sum_{j=1}^p\|\check\g_{(j)}-\g^0_{(j)}\|_2^2\Big\}^{\frac{1}{2}}\Big[1+\frac{1}{\xi_{2,2}}\Big\{s\sum_{j=1}^p\|\check\eta_{(j)}-\eta^0_{(j)}\|_2^2\Big\}^{\frac{1}{2}}\Big]\nn\\
	&\le& c_{u1}(\si^2\vee \phi)\frac{u_T}{\ka T^{k}}\nn
	\eenr
	with probability $1-o(1).$ Substituting these bounds in (\ref{eq:29}) and applying a union bound over these three events yields the bound of the statement of this lemma uniformly over the set $\{\cG(u_T,v_T);\,\tau\ge\tau^0\}.$ The mirroring case of $\tau\le\tau^0$ can be obtained by similar arguments.
\end{proof}

\bc$\rule{3.5in}{0.1mm}$\ec

Following is the proof of the main result of Section \ref{sec:nuisance}.

\begin{proof}[Proof of Theorem \ref{thm:alg1.nearoptimal}] This proof relies on the same recursive argument as that of Theorem \ref{thm:optimalapprox}, the distinction being that recursions are made on the bound of Lemma \ref{lem:lower.b.near.optimal} instead of Lemma \ref{lem:mainlowerb}. Consider any $Tv_T>\log(p\vee T),$ and apply Lemma \ref{lem:lower.b.near.optimal} on the set $\cG(u_T,v_T)$ to obtain,
	\benr
	\inf_{\tau\in\cG(1,v_T)} \cU(z,\tau,\check\mu,\check\g)&\ge& \ka\xi^2_{2,2}\Big[v_T-c_m\max\Big\{\Big(\frac{u_T\log(p\vee T)}{T}\Big)^{\frac{1}{2}},\frac{u_T}{T^{k}}\Big\}\Big]\nn\\
	&\ge&\ka\xi^2_{2,2}\Big[v_T-c_m\max\Big\{\Big(\frac{\log(p\vee T)}{T}\Big)^{\frac{1}{2}},\Big(u_T\frac{\log(p\vee T)}{T}\Big)^k\Big\}\Big]\nn
	\eenr
	with probability at least $1-o(1).$ Substituting $u_T=1,$ yields,
	\benr
	\inf_{\tau\in\cG(1,v_T)} \cU(z,\tau,\check\mu,\check\g)\ge \ka\xi^2_{2,2}\Big[v_T-c_m\max\Big\{\Big(\frac{\log(p\vee T)}{T}\Big)^{\frac{1}{2}},\Big(\frac{\log(p\vee T)}{T}\Big)^k\Big\}\Big]\nn
	\eenr	
	with probability at least $1-o(1).$ Recall that w.l.og $k<b<(1/2),$ and now choose any $v_T>v_T^*=c_{m}\big(\log(p\vee T)/T\big)^k.$ Then we have $\inf_{\tau\in\cG(1,v_T)}\cU(z,\tau,\check\mu,\check\g)>0,$ thus implying that $\h\tau\notin\cG(1,v_T),$ i.e., $\big|\lfloor T\check\tau\rfloor-\lfloor T\tau^0\rfloor\big|\le Tv_T^*,$ with probability at least $1-o(1).$ Now reset $u_T=v_T^*$ and reapply Lemma \ref{lem:mainlowerb} for any $v_T>0$ to obtain,
	\benr
	\inf_{\tau\in\cG(u_T,v_T)} \cU(z,\tau,\check\mu,\check\g)\ge \ka\xi^2_{2,2}\Big[v_T-c_{m}\max\Big\{c_{m}^{1/2}\Big(\frac{\log(p\vee T)}{T}\Big)^{\frac{1}{2}+\frac{k}{2}},c_{m}\Big(\frac{\log(p\vee T)}{T}\Big)^{k+k}\Big\}\Big]\nn
	\eenr
	Again choosing any,
	\benr
	v_T>v_T^*=\max\Big\{c_{m}^{g_2}\Big(\frac{\log(p\vee T)}{T}\Big)^{u_2},\,\,c_{m}^{2}\Big(\frac{\log(p\vee T)}{T}\Big)^{v_2}\Big\},
	\eenr
	where,
	\benr
	g_2=1+\frac{1}{2},\,\, u_2=\frac{1}{2}+\frac{u_1}{2},\,\,{\rm and}\,\, v_2=k+v_1\ge 2k,\,\,{\rm with}\,\,u_1=v_1=k,\nn
	\eenr
	we obtain $\inf_{\cG(u_T,v_T)}\cU(z,\tau,\check\mu,\check\g)>0,$ with probability at least $1-o(1).$ Consequently $\h\tau\notin\cG(u_T,v_T),$ i.e., $\big|\lfloor T\h\tau\rfloor-\lfloor T\tau^0\rfloor\big|\le Tv_T^*.$ Continuing these recursions by resetting $u_T$ to the bound of the previous recursion, and applying Lemma \ref{lem:mainlowerb}, we obtain for the $l^{th}$ recursion,
	\benr
	&&\big|\lfloor T\tilde\tau\rfloor-\lfloor T\tau^0\rfloor\big|\le T\max\Big\{c_{m}^{g_l}\Big(\frac{\log(p\vee T)}{T}\Big)^{u_l},\,\,c_{m}^{l}\Big(\frac{\log(p\vee T)}{T}\Big)^{v_l}\Big\}\nn\\
	&&\hspace{2.7cm}:=T\max\{R_{1l},R_{2l}\},\,\,\,{\rm where,}\nn\\
	&&g_l=\sum_{j=0}^{l-1}\frac{1}{2^j},\,\, u_l=\frac{1}{2}+\frac{u_{l-1}}{2}=\frac{k}{l}+\sum_{j=1}^{l}\frac{1}{2^j},\,\,{\rm and}\nn\\
	&& v_l=k+v_{l-1}\ge lk,\,\,{\rm with}\,\,u_1=v_1=k.\nn
	\eenr
	Next, it is straightforward to observe that for $l$ large enough, $R_{2l}\le R_{1l},$ for $T$ sufficiently large. Consequently for $l$ large enough we have $\big|\lfloor T\tilde\tau\rfloor-\lfloor T\tau^0\rfloor\big|\le T R_{1m},$ with probability at least $1-o(1).$ Finally, we continue these recursions an infinite number of times to obtain, $g_{\iny}=\sum_{j=0}^{\iny}1/2^{j},$ $u_\iny=\sum_{j=1}^{\iny}(1/2^j),$ thus yielding,
	\benr
	\big|\lfloor T\tilde\tau\rfloor-\lfloor T\tau^0\rfloor\big|\le T\frac{c_{m}^2\log(p\vee T)}{T}=c_{m}^2\log(p\vee T)\nn
	\eenr
	with probability at least $1-o(1).$ This completes the proof of this result.
\end{proof}

\bc$\rule{3.5in}{0.1mm}$\ec

\begin{proof}[Proof of Corollary \ref{cor:optimalmeans.step2}] Under the assumed conditions, we have from Theorem \ref{thm:alg1.nearoptimal} that $\h\tau\in\cG(u_T,0),$ with probability at least $1-o(1),$ where $u_T=c_m^2T^{-1}\log(p\vee T),$ where $c_m$ is as defined in Lemma \ref{lem:lower.b.near.optimal}. The relation of Part (i) follows directly from Theorem \ref{thm:est.nuisance.para}. To obtain Part (ii), substitute this choice of $u_T$ in $\la_{2j},$ $j=1,...p,$ of Theorem \ref{thm:est.nuisance.para} to obtain,
	\benr
	\la_{2j}=c_u(\si^2\vee\phi)\|\eta^0_{(j)}\|_2\max\Big\{\frac{\log (p\vee T)}{Tl_T},\,\,c_m^2\frac{\log(p\vee T)}{Tl_T}\Big\}	\le o(1)\Big\{\frac{\log(p\vee T)}{Tl_T}\Big\}^{\frac{1}{2}}\nn
	\eenr
	Here the final inequality follows since by Condition A$'$(i) we have $\log(p\vee T)=o(Tl_T),$ furthermore from Lemma \ref{lem:condnumberbound} we have $\|\eta^0_{(j)}\|_2\le 2\nu,$ $j=1,...,p.$ Consequently $\la_{2j}\le \la_{1j},$ $j=1,...,p,$ and thus applying Theorem \ref{thm:est.nuisance.para} we obtain,
	\benr
	\|\h\mu_{(j)}-\mu^0_{(j)}\|_2\le c_u\la_{j}\frac{\surd{s}}{\ka}\le c_u\surd(1+\nu^2)\frac{\si^2}{\ka}\Big\{\frac{s\log (p\vee T)}{Tl_T}\Big\}^{\frac{1}{2}}\nn
	\eenr
	for all $j=1,...,p,$ with probability at least $1-o(1).$ Corresponding bound for $\h\g_{(j)}-\g^0_{(j)},$ $j=1,...,p,$ can be obtained using symmetrical arguments. This completes the proof of this corollary.
\end{proof}

\bc$\rule{3.5in}{0.1mm}$\ec

\begin{proof}[Proof of Corollary \ref{cor:final}] Note that Corollary \ref{cor:optimalmeans.step2} has established that the edge estimates $\h\mu_{(j)}$ and $\h\g_{(j)},$ $j=1,...,p,$ satisfy the requirements of Condition C of Section \ref{sec:mainresults}. Thus, this result is now a direct consequence of Theorem \ref{thm:optimalapprox} and Theorem \ref{thm:limitingdist}.
\end{proof}
\bc$\rule{3.5in}{0.1mm}$\ec

\section{Deviation bounds used for proofs in Section 2}

\begin{lem}\label{lem:subez} Suppose Condition B holds and let  $\vep_{tj}$ be as in (\ref{def:epsilons}). Then, (i) the r.v. $\vep_{tj}z_{t,-j,k}$ is sub-exponential with parameter $\la_1=48\si^2\surd{(1+\nu^2)},$ for each $j=1,...,p,$ $k=1,...,p-1$ and $t=1,...,T.$ (ii) The r.v. $\z_t=\sum_{j=1}^p\vep_{tj}z_{t,-j}^T\eta^0_{(j)}$ is sub-exponential with parameter $\la_2=48\si^2\xi_{2,1}\surd{(1+\nu^2)},$ for each $t=1,...,T.$ (iii) $E\big[|\z_t|^k\big]\le 4\la_2^k k^k,$ for any $k> 0.$
\end{lem}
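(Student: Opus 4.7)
The plan is to prove the three parts in sequence, leveraging standard Orlicz-norm bookkeeping for sub-Gaussian and sub-exponential variables under Condition B.

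For part (i), I would first invoke Lemma \ref{lem:condnumberbound} to secure the uniform bound $\|\mu^0_{(j)}\|_2 \vee \|\g^0_{(j)}\|_2 \le \nu$ for every $j$. This uses the fact that Cauchy's interlacing theorem gives $\lambda_{\min}(\Si_{-j,-j}) \ge \kappa$, so $\|\Si_{-j,-j}^{-1}\|_2 \le 1/\kappa$, while $\|\Si_{-j,j}\|_2 \le \|\Si\|_2 \le \phi$, and analogously for $\D$. With this in hand, I rewrite $\vep_{tj} = v_{(j)}^T z_t$ where $v_{(j)}$ has $j$-th coordinate $1$ and the rest equal to $-\mu^0_{(j)}$ (or $-\g^0_{(j)}$), so that $\|v_{(j)}\|_2^2 \le 1+\nu^2$; combined with Condition B(i), this makes $\vep_{tj}$ sub-Gaussian with variance proxy $\sigma^2(1+\nu^2)$. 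Similarly $z_{t,-j,k}$ is sub-Gaussian with variance proxy $\sigma^2$. The product of two sub-Gaussians is sub-exponential with parameter controlled by a Cauchy--Schwarz-type bound on Orlicz norms, $\|XY\|_{\psi_1} \le \|X\|_{\psi_2}\|Y\|_{\psi_2}$, which does not require independence and so handles the correlation of $\vep_{tj}$ and $z_{t,-j,k}$. Tracking the explicit constants tied to Definition \ref{def:subg} then yields $\lambda_1 = 48\sigma^2\surd(1+\nu^2)$.

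For part (ii), I would decompose $\z_t = \sum_{j=1}^p \vep_{tj}\,\big(z_{t,-j}^T\eta^0_{(j)}\big)$. The factor $\vep_{tj}$ is sub-Gaussian with variance proxy $\sigma^2(1+\nu^2)$ as in part (i), while $z_{t,-j}^T\eta^0_{(j)}$, being a linear form in the sub-Gaussian vector $z_t$, is sub-Gaussian with variance proxy $\sigma^2\|\eta^0_{(j)}\|_2^2$. By the same product inequality, each summand is sub-exponential with parameter at most a universal constant times $\sigma^2\surd(1+\nu^2)\,\|\eta^0_{(j)}\|_2$. Since the $\psi_1$ (sub-exponential) Orlicz norm obeys a triangle inequality without any independence requirement across $j$, $\z_t$ is sub-exponential with parameter bounded by the sum, namely $\sigma^2\surd(1+\nu^2)\sum_{j=1}^p\|\eta^0_{(j)}\|_2 = \sigma^2\surd(1+\nu^2)\,\xi_{2,1}$, matching the constant of part (i) to give $\lambda_2 = 48\sigma^2\xi_{2,1}\surd(1+\nu^2)$.

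Part (iii) is then the standard moment bound for a sub-exponential random variable. Given the sub-exponential parameter $\lambda_2$, applying Markov's inequality to $\exp(|\z_t|/\lambda_2)$ produces a tail $P(|\z_t| > u) \le 2\exp(-u/\lambda_2)$, which upon integration gives $E[|\z_t|^k] \le 2\lambda_2^k k!\,$, and then using $k! \le k^k$ together with the prefactor yields the claimed $4\lambda_2^k k^k$.

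The main obstacle is not conceptual but bookkeeping: the constant $48$ is prescribed by whichever specific normalization Definition \ref{def:subg} fixes for sub-Gaussian/sub-exponential tails, so care is needed in propagating the precise constants through the interlacing step, the product-of-sub-Gaussians step, and the triangle inequality to land on $48$ rather than an abstract universal $c_u$. Everything else is a routine application of sub-Gaussian/sub-exponential calculus.
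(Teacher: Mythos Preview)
Your proposal is correct and follows essentially the same skeleton as the paper: bound $\|\mu^0_{(j)}\|_2,\|\g^0_{(j)}\|_2\le\nu$ via Lemma~\ref{lem:condnumberbound}, deduce that $\vep_{tj}$ and $z_{t,-j}^T\eta^0_{(j)}$ are sub-Gaussian with proxies $\si\surd(1+\nu^2)$ and $\si\|\eta^0_{(j)}\|_2$, upgrade the product to sub-exponential, then sum over $j$ via sub-additivity of the sub-exponential parameter (the paper's Lemma~\ref{lem:lcsubE}, proved by H\"older and not requiring independence, exactly matching your triangle-inequality step). Part (iii) is the paper's Lemma~\ref{lem:momentprop}(ii), obtained there via the elementary inequality $|x|^k\le k^k(e^x+e^{-x})$ rather than tail integration, but your route also lands on $4\la_2^k k^k$.

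The one genuine difference is how the product step is executed. You invoke the Orlicz-norm inequality $\|XY\|_{\psi_1}\le\|X\|_{\psi_2}\|Y\|_{\psi_2}$ directly. The paper instead uses the polarization identity: after rescaling to $X=\vep_{tj}/\si_1$ and $Y=z_{t,-j}^T\eta^0_{(j)}/\si_2$, it writes $XY=\tfrac12\big[(X+Y)^2-X^2-Y^2\big]$ (centered), applies Lemma~\ref{lem:sqsubGsubE} to each square to get sub-exponential parameters $64,16,16$, and then Lemma~\ref{lem:lcsubE} to obtain $\tfrac12(64+16+16)=48$. This is precisely where the constant $48$ comes from, so if you want to recover that exact value rather than a generic $c_u$, you should route through polarization and Lemma~\ref{lem:sqsubGsubE}; the Orlicz-norm product bound is cleaner conceptually but is not stated in the paper's auxiliary lemmas and will not by itself pin down $48$ under Definitions~\ref{def:subg}--\ref{def:sube}.
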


\begin{proof}[Proof of Lemma \ref{lem:subez}]
	Here we only prove Part (ii) of this lemma, Part (i) follows using similar arguments, and Part (iii) follows from properties of sub-exponential random variables, see, Lemma \ref{lem:momentprop}. We begin by noting that the following r.v's are mean zero,  $E(\vep_{tj})=0,$ $E(z_{t,-j}^T\eta^0_{(j)})=0$ and  $E\big(\vep_{tj}z_{t,-j}^T\eta^0_{(j)}\big)=0.$ Also note that for $t\le \lfloor T\tau^0\rfloor,$ we have,
	\benr
	\vep_{tj}=z_{tj}-z_{t,-j}^T\mu^0_{(j)}=\big(1, -\mu^{0T}_{(j)}\big)\big(z_{tj},z_{t,-j}^T\big)^T \nn
	\eenr
	Using Lemma \ref{lem:condnumberbound} and by properties of sub-gaussian distributions we have $\vep_{tj},$ $1\le j\le p\sim {\rm subG(\si_1)}$ with $\si_1=\si\surd(1+\nu^2).$ The same also holds for $\vep_{tj},$ for $t>\lfloor T\tau^0\rfloor.$ Similarly, $z_{t,-j}\eta^0_{(j)}\sim {\rm subG(\si_2)}$ with $\si_2=\si\|\eta_{(j)}^0\|_2.$ Recall that if $Z\sim {\rm subG}(\si),$ then the rescaled variable $Z/\si\sim{\rm subG}(1).$ Next observe that,
	\benr
	\frac{\vep_{tj}z_{t,-j}^T\eta^0_{(j)}}{\si_1\si_2}=\frac{1}{2}\Big\{\Phi\Big(\frac{\vep_{tj}}{\si_1}+\frac{z_{t,-j}^T\eta^0_{(j)}}{\si_2}\Big)-\Phi\Big(\frac{\vep_{tj}}{\si_1}\Big)-\Phi\Big(\frac{z_{t,-j}^T\eta^0_{(j)}}{\si_2}\Big)\Big\}=\frac{1}{2}[T1-T2-T3]\nn	
	\eenr
	where $\Phi(v)=\|v\|_2^2-E\big(\|v\|_2^2\big).$ Using Lemma \ref{lem:lcsubG} and Lemma \ref{lem:sqsubGsubE} we have that $T1\sim{\rm subE}(64),$ $T2\sim{\rm subE}(16),$ and $T3\sim{\rm subE}(16).$ Applying Lemma \ref{lem:lcsubE} and rescaling with $\si_1,$ and $\si_2$ we obtain that $\vep_{tj}z_{t,-j}^T\eta^0_{(j)}\sim {\rm subE}(48\si_1\si_2).$ Another application of Lemma \ref{lem:lcsubE} yields $\z_t=\sum_{j=1}^p \vep_{tj}z_{t,-j}^T\eta^0_{(j)}\sim {\rm subE}(\la_2)$ where
	\benr
	\la_2=48 \si^2\sum_{j=1}^p \|\eta_{(j)}^0\|_2\surd(1+\nu^2)=48 \si^2\xi_{2,1}\surd(1+\nu^2)\nn
	\eenr
	This completes the proof of Part (ii).
\end{proof}

\bc$\rule{3.5in}{0.1mm}$\ec

\begin{lem}\label{lem:optimalcross} Suppose Condition B holds and let  $\vep_{tj}$ be as defined in (\ref{def:epsilons}). Additionally, let $u_T,v_T$ be any non-negative sequences satisfying $0\le v_T\le u_T.$ Then for any $0<a<1,$ choosing $c_{a1}=4\cdotp 48c_{a2},$ with $c_{a2}\ge \surd{(1/a)},$ we have for $T\ge 2,$
	\benr
	\sup_{\substack{\tau\in\cG(u_T,v_T)\\\tau\ge \tau^0}}\frac{1}{T}\Big|\sum_{t=\lfloor T\tau^0\rfloor+1}^{\lfloor T\tau\rfloor}\sum_{j=1}^p\vep_{tj}z_{t,-j}^T\eta^0_{(j)}\Big|\le c_{a1} \si^2\xi_{2,1}\surd{(1+\nu^2)}\Big(\frac{u_T}{T}\Big)^{\frac{1}{2}},\nn
	\eenr	
	with probability at least $1-a.$
\end{lem}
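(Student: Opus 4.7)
The plan is to recognize the sum of interest as a partial-sum process and apply Kolmogorov's maximal inequality rather than a union bound over tail events. Let $\z_t = \sum_{j=1}^p \vep_{tj} z_{t,-j}^T \eta^0_{(j)}$. By Lemma \ref{lem:subez}(ii), the $\z_t$'s are independent mean-zero sub-exponential random variables with parameter $\la_2 = 48 \si^2 \xi_{2,1}\surd(1+\nu^2)$. Since $|\lfloor T\tau\rfloor - \lfloor T\tau^0\rfloor| \le Tu_T$ for $\tau \in \cG(u_T,v_T)$, the supremum under consideration is dominated by $\max_{1 \le m \le \lfloor Tu_T\rfloor} |S_m|$, where $S_m = \sum_{t=1}^m \z_{\lfloor T\tau^0\rfloor+t}$.

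Next, I would exploit the martingale structure of $\{S_m\}$ and apply Kolmogorov's inequality (Theorem \ref{thm:kolmogorov}). By Lemma \ref{lem:subez}(iii), $E[\z_t^2] \le 4\la_2^2 \cdot 2^2 = 16\la_2^2$, so $\mathrm{Var}(S_N) \le 16 N \la_2^2$ for $N = \lfloor Tu_T\rfloor \le Tu_T$. Hence
\benrr
P\Big(\max_{1 \le m \le N} |S_m| \ge \lambda\Big) \le \frac{16 N \la_2^2}{\lambda^2} \le \frac{16 Tu_T \la_2^2}{\lambda^2}.
\eenrr
Setting $\lambda = c_{a1}\,\si^2 \xi_{2,1} \surd(1+\nu^2)\,\surd(Tu_T) = (c_{a1}/48)\,\la_2 \surd(Tu_T)$ yields
\benrr
P\Big(\max_{1 \le m \le N} |S_m| \ge \lambda\Big) \le \frac{16 \cdot 48^2}{c_{a1}^2}.
\eenrr
The stated choice $c_{a1} = 4 \cdot 48 c_{a2}$ with $c_{a2} \ge \surd(1/a)$ makes this probability at most $a$. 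Dividing by $T$ delivers the stated bound.

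The step that matters, as articulated in Remark \ref{rem:kolmogorov}, is the use of Kolmogorov's inequality in place of a union bound combined with a sub-exponential tail bound over the at most $T$ distinct values $\lfloor T\tau\rfloor$. The latter approach would introduce an additional factor of $\log T$ and thereby preclude the sharp $O(\surd(u_T/T))$ control that drives the $O_p(\psi^{-2})$ rate and the existence of the limiting distribution. Beyond that, the proof is a routine computation: verify mean-zero and independence of the $\z_t$'s (immediate from their construction), invoke Lemma \ref{lem:subez} for the second-moment bound, and tune constants. I do not anticipate a substantive obstacle; the mirroring case $\tau \le \tau^0$ that will be needed elsewhere follows by an identical argument.
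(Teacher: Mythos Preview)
Your proposal is correct and follows essentially the same approach as the paper: define $\z_t=\sum_{j=1}^p\vep_{tj}z_{t,-j}^T\eta^0_{(j)}$, use Lemma \ref{lem:subez} to obtain the sub-exponential parameter and the variance bound $E\z_t^2\le 16\la_2^2$, and then apply Kolmogorov's inequality (Theorem \ref{thm:kolmogorov}) with the threshold $d=4c_{a2}\la_2\surd(Tu_T)$. The paper additionally remarks that one may assume $u_T\ge 1/T$ without loss of generality since otherwise the sum is empty, but your formulation via $N=\lfloor Tu_T\rfloor$ handles this implicitly.
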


\begin{proof}[Proof of Lemma \ref{lem:optimalcross}]
	First note that without loss of generality we can assume $u_T\ge (1/T).$ This is because when $u_T<(1/T),$ the set $\cG(u_T,0)$ contains only the singleton $\tau^0$ with a distinct value $\lfloor T\tau^0\rfloor.$ Consequently, the sum of interest is over indices $t$ in an empty set, and is thus trivially zero. Now, let $\z_t=\sum_{j=1}^p\vep_{tj}z_{t,-j}^T\eta^0_{(j)},$ then using Lemma \ref{lem:subez} we have that $\z_t\sim {\rm subE(\la)},$ where $\la=48\xi_{2,1}\surd{(1+\nu^2)}\si^2.$ Additionally, from part (iii) of Lemma \ref{lem:subez}, we have, ${\rm var(\z_t)}=E(\z_t)^2\le 16\la^2.$ Consider the set $\cG(u_T,v_T) \cap \{\tau\ge\tau^0\}$ and note that in this set, there are at most $Tu_T$ distinct values of $\lfloor T\tau\rfloor.$ Applying Kolmogorov's inequality (Theorem \ref{thm:kolmogorov}) with any $d>0,$
	\benr
	pr\Big(\sup_{\substack{\tau\in \cG(u_T,v_T)\\ \tau\ge\tau^0}}\Big|\sum_{t=\lfloor T\tau^0\rfloor+1}^{\lfloor T\tau \rfloor} \z_{t}\Big|> d\Big)\le
	\frac{1}{d^2} \sum_{\substack{t \in \cG(u_T,v_T)\\ t\ge\tau^0}} {\rm var (z_t)}\le \frac{16Tu_T\la^2}{d^2}\nn \eenr
	Choosing $d=4c_{a2}\la\surd{(Tu_T)},$ with $c_{a2}\ge \surd{(1/a)}$ yields the lemma. 	
\end{proof}

\bc$\rule{3.5in}{0.1mm}$\ec

\begin{lem}\label{lem:nearoptimalcross} Suppose Condition B holds and let  $\vep_{tj}$ be as defined in (\ref{def:epsilons}) and let $0\le v_T\le u_T$ be any non-negative sequences. Then for any $c_{u2}>3$ and $c_{u1}\ge 96c_{u2},$ we have for $T\ge 2,$
	\benr
	&(i)&\,\, \sup_{\substack{\tau\in\cG(u_T,v_T)\\\tau\ge \tau^0}}\frac{1}{T}\Big\|\sum_{t=\lfloor T\tau^0\rfloor+1}^{\lfloor T\tau\rfloor}\vep_{tj}z_{t,-j}^T\Big\|_{\iny}\le c_{u1}\si^2 \surd{(1+\nu^2)}\Big(\frac{u_T}{T}\Big)^{\frac{1}{2}}\log (p\vee T),\nn\\
	&(ii)&\,\,\sup_{\substack{\tau\in\cG(u_T,v_T)\\\tau\ge \tau^0}}\frac{1}{T}\Big|\sum_{t=\lfloor T\tau^0\rfloor+1}^{\lfloor T\tau\rfloor}\sum_{j=1}^p\vep_{tj}z_{t,-j}^T\big(\h\eta_{(j)}-\eta^0_{(j)}\big)\Big|\le\nn\\ &&\hspace{2cm}c_{u1}\si^2 \surd{(1+\nu^2)}\Big(\frac{u_T}{T}\Big)^{\frac{1}{2}}\log (p\vee T)\sum_{j=1}^p\|\h\eta_{(j)}-\eta^0_{(j)}\|_1,\nn
	\eenr
	each with probability at least $1-2\exp\big\{-(c_{u2}-3)\log(p\vee T)\big\}.$\footnote{Here $\Big\|\sum\vep_{tj}z_{t,-j}^T\Big\|_{\iny}= \max_{j,k}\Big|\sum\vep_{tj}z_{t,-j,k}\Big|.$}
\end{lem}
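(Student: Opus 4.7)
The plan is to reduce Part (ii) to Part (i) by a simple Hölder/dual-norm inequality, and to prove Part (i) by combining Bernstein's tail bound for sub-exponential partial sums with a crude union bound over the at most $p(p-1)\cdot Tu_T$ relevant triples $(j,k,\tau)$. The key ingredient already available is Lemma \ref{lem:subez}(i), which says that for every fixed $j\in\{1,\dots,p\}$ and $k\in\{1,\dots,p-1\}$ the random variables $X_t^{(j,k)}:=\vep_{tj}z_{t,-j,k}$ are independent, mean zero, and sub-exponential with parameter $\lambda_1=48\sigma^2\surd(1+\nu^2)$. Unlike in Lemma \ref{lem:optimalcross}, we cannot afford the merely polynomial decay that Kolmogorov's inequality would give after union-bounding over $p(p-1)$ coordinates, so the proof instead leverages the sub-exponential concentration directly.

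For Part (i), fix $(j,k)$ and any $\tau\in\cG(u_T,v_T)$ with $\tau\ge\tau^0$. Let $S_\tau^{(j,k)}=\sum_{t=\lfloor T\tau^0\rfloor+1}^{\lfloor T\tau\rfloor}X_t^{(j,k)}$ with $n_\tau=\lfloor T\tau\rfloor-\lfloor T\tau^0\rfloor\le Tu_T$. Bernstein's inequality gives
\[
P\bigl(|S_\tau^{(j,k)}|>d\bigr)\le 2\exp\!\Bigl(-c\min\Bigl\{\tfrac{d^{2}}{n_\tau\lambda_1^{2}},\tfrac{d}{\lambda_1}\Bigr\}\Bigr)
\]
for a universal constant $c>0$. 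Set $d=c_{u1}\sigma^2\surd(1+\nu^2)\,\surd(Tu_T)\,\log(p\vee T)$ with $c_{u1}\ge 96c_{u2}$. Then in the Gaussian regime (when $Tu_T\ge \log^{2}(p\vee T)$) the exponent is at least $c(c_{u1}/48)^{2}\log^{2}(p\vee T)\ge c_{u2}\log(p\vee T)$, while in the exponential regime one has $d/\lambda_1\ge (c_{u1}/48)\log(p\vee T)\ge c_{u2}\log(p\vee T)$ since $\surd(Tu_T)\ge 1$. In either case the tail for a single triple is bounded by $2\exp(-c_{u2}\log(p\vee T))$. Since the supremum in $\tau$ ranges over at most $Tu_T\le T$ distinct integer values $\lfloor T\tau\rfloor$ and $(j,k)$ varies over at most $p(p-1)$ pairs, a union bound yields
\[
P(\text{Part (i) fails})\le 2\,p(p-1)\,Tu_T\exp(-c_{u2}\log(p\vee T))\le 2\exp\!\bigl(-(c_{u2}-3)\log(p\vee T)\bigr),
\]
using $p(p-1)T\le(p\vee T)^{3}$. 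This is the advertised probability.

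For Part (ii), write
\[
\Bigl|\sum_{t=\lfloor T\tau^0\rfloor+1}^{\lfloor T\tau\rfloor}\sum_{j=1}^{p}\vep_{tj}z_{t,-j}^{T}(\h\eta_{(j)}-\eta^{0}_{(j)})\Bigr|\le \max_{1\le j\le p}\Bigl\|\sum_{t=\lfloor T\tau^0\rfloor+1}^{\lfloor T\tau\rfloor}\vep_{tj}z_{t,-j}\Bigr\|_{\iny}\cdot\sum_{j=1}^{p}\|\h\eta_{(j)}-\eta^{0}_{(j)}\|_{1},
\]
by the $\ell_{\iny}/\ell_{1}$ Hölder inequality. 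Taking the supremum in $\tau$ of the first factor and invoking the uniform bound in Part (i) (valid simultaneously over $j$ since the union bound above already runs over $j$) produces Part (ii) on the same event.

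The delicate point is the choice of the deviation level $d$. A direct Gaussian-regime Bernstein bound would naturally scale as $\lambda_1\surd(Tu_T\log(p\vee T))$; however, that rate is insufficient to absorb the $p^{2}T$ union-bound cost at the target probability $\exp(-(c_{u2}-3)\log(p\vee T))$. Inflating $d$ by an extra $\surd(\log(p\vee T))$ factor is what converts the union-bound budget from polynomial to exponential decay, and is precisely why the lemma pays an additional logarithm compared with Lemma \ref{lem:optimalcross}. Verifying that this inflation still controls both regimes of Bernstein (Gaussian when $Tu_T\gtrsim\log^{2}(p\vee T)$, exponential otherwise) under the single choice $c_{u1}\ge 96c_{u2}$ is the main calibration step.
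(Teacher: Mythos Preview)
Your argument is correct and is essentially the paper's own proof: apply Bernstein's inequality to the sub-exponential products $\vep_{tj}z_{t,-j,k}$ for each fixed $(j,k,\tau)$, then union-bound over the at most $p(p-1)T\le(p\vee T)^3$ triples, and deduce Part~(ii) from Part~(i) via the $\ell_\infty/\ell_1$ duality. The only differences are cosmetic: the paper parameterizes the deviation level as $d=2c_{u2}\lambda_1\log(p\vee T)/\sqrt{n_\tau}$ (depending on $\tau$) rather than fixing a single global threshold, and it makes explicit the harmless WLOG reduction to $Tu_T\ge1$ that your bound $\sqrt{Tu_T}\ge1$ uses tacitly; also, your ``Gaussian versus exponential regime'' split is mislabeled (the Bernstein branch is governed by $n_\tau$, not by a global condition on $Tu_T$), but since you check that both branches yield exponent at least $c_{u2}\log(p\vee T)$ this does not affect the conclusion.
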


\begin{proof}[Proof of Lemma \ref{lem:nearoptimalcross}] Part (ii) is a direct consequence of Part (i), thus we only prove Part (i). Without loss of generality, we can assume $v_T\ge (1/T).$ This follows since the only additional distinct element $\lfloor T\tau\rfloor$ in the set $\cG(u_T,0)$ in comparison to $\cG(u_T,(1/T))$ is $\lfloor T\tau^0\rfloor,$ and at this value, the sum of interest is over indices $t$ in an empty set and is thus trivially zero.
	
	Let $z_{t,-j}=(z_{t,-j,1},....,z_{t,-j,p-1})^T,$ then from Lemma \ref{lem:subez} we have $\vep_{tj}z_{t,-j,k}\sim {\rm subE}(\la_1),$ with $\la_1= 48\surd{(1+\nu^2)}\si^2.$ Now applying Bernstein's inequality (Lemma \ref{lem:subetail}) for any fixed $\tau\in\cG(u_T,v_T)$ satisfying $\tau\ge\tau^0,$ we have for any $d>0,$
	\benr\label{eq:3}
	pr\Big(\Big|\sum_{t=\lfloor T\tau^0\rfloor+1}^{\lfloor T\tau\rfloor}\vep_{tj}z_{t,-j,k}\Big|>d\big(\lfloor T\tau\rfloor-\lfloor T\tau^0\rfloor\big)\Big)\le 2\exp\Big\{-\frac{\big(\lfloor T\tau\rfloor-\lfloor T\tau^0\rfloor\big)}{2}\Big(\frac{d^2}{\la_1^2}\wedge \frac{d}{\la_1}\Big)\Big\}\nn
	\eenr	
	Choose $d=2c_{u2}\la_1\log (p\vee T)\big/\surd\big(\lfloor T\tau\rfloor-\lfloor T\tau^0\rfloor\big),$ then,
	\benr
	\big(\lfloor T\tau\rfloor-\lfloor T\tau^0\rfloor\big)\frac{d^2}{2\la_1^2}&=&2c_{u2}^2\log^2 (p\vee T),\quad{\rm and},\nn\\
	\big(\lfloor T\tau\rfloor-\lfloor T\tau^0\rfloor\big)\frac{d}{2\la_1}&=&c_{u2}\log(p\vee T),\nn
	\eenr
	where we have used $\big(\lfloor T\tau\rfloor-\lfloor T\tau^0\rfloor\big)\ge Tv_T\ge 1.$ A substitution back in the probability bound yields,
	\benr
	\Big|\sum_{t=\lfloor T\tau^0\rfloor+1}^{\lfloor T\tau\rfloor}\vep_{tj}z_{t,-j,k}\Big|\le 2c_{u2}\la_1\log(p\vee T)\big(\lfloor T\tau\rfloor-\lfloor T\tau^0\rfloor\big)^{1/2}\le 2c_{u2}\la_1\log (p\vee T) \big(Tu_T\big)^{\frac{1}{2}},\nn
	\eenr
	with probability at least $1-2\exp\{-c_{u2} \log (p\vee T)\}.$ Finally applying a union bound over $j=1,...,p,$ $k=1,...,p-1$ and over the at most $T$ distinct values of $\lfloor T\tau\rfloor$ for $\tau\in\cG(u_T,v_T),$ we obtain,
	\benr
	\sup_{\substack{\tau\in\cG(u_T,v_T)\\ \tau\ge \tau^0}}\Big\|\frac{1}{T}\sum_{t=\lfloor T\tau^0\rfloor+1}^{\lfloor T\tau\rfloor}\vep_{tj}z_{t,-j,k}\Big\|_{\iny}\le 2c_{u2}\la_1 \log (p\vee T)\Big(\frac{u_T}{T}\Big)^{\frac{1}{2}},\nn
	\eenr
	with probability at least $1-2\exp\{-(c_{u2}-3)\log (p\vee T)\}.$ This completes the proof of Part (i).
\end{proof}

\bc$\rule{3.5in}{0.1mm}$\ec

\begin{lem}\label{lem:optimalsqterm} Suppose Condition B holds and let $u_T,$ $v_T$ be any non-negative sequences satisfying $0\le v_T\le u_T.$ Then for any $0<a<1,$ choosing $c_{a1}=64c_{a2},$ with $c_{a2}\ge \surd{(1/a)},$ we have for $T\ge 2,$
	\benr
	(i)\,\,\inf_{\substack{\tau\in\cG(u_T,v_T);\\ \tau\ge \tau^0}}\frac{1}{T}\sum_{t=\lfloor T\tau^0\rfloor+1}^{\lfloor T\tau\rfloor} \sum_{j=1}^p \eta^{0T}_{(j)}z_{t,-j}z_{t,-j}^T\eta^{0}_{(j)}\ge v_T\ka\xi_{2,2}^2- c_{a1} \si^2\xi_{2,2}^2\Big(\frac{u_T}{T}\Big)^{\frac{1}{2}},\nn\\
	(ii)\,\,\sup_{\substack{\tau\in\cG(u_T,v_T);\\ \tau\ge \tau^0}}\frac{1}{T}\sum_{t=\lfloor T\tau^0\rfloor+1}^{\lfloor T\tau\rfloor} \sum_{j=1}^p \eta^{0T}_{(j)}z_{t,-j}z_{t,-j}^T\eta^{0}_{(j)}\le u_T\phi\xi_{2,2}^2+ c_{a1} \si^2\xi_{2,2}^2\Big(\frac{u_T}{T}\Big)^{\frac{1}{2}}\nn
	\eenr
	with probability at least $1-a.$
\end{lem}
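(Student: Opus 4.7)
The plan is to treat the quadratic form as a sum of independent mean-zero sub-exponential increments plus a deterministic mean, and then apply Kolmogorov's inequality (Theorem \ref{thm:kolmogorov}) to control the stochastic deviation uniformly over $\tau\in\cG(u_T,v_T)$, in the same spirit as the proof of Lemma \ref{lem:optimalcross}. Since $\tau\ge\tau^0$, for every $t$ appearing in the sum we have $z_t=x_t$ with $\mathrm{cov}(z_{t,-j})=\D_{-j,-j}$. Writing
\benr
\xi_t=\sum_{j=1}^{p}\bigl(\eta^{0T}_{(j)}z_{t,-j}\bigr)^{2}-\sum_{j=1}^{p}\eta^{0T}_{(j)}\D_{-j,-j}\eta^{0}_{(j)},\nn
\eenr
the r.v.'s $\xi_t$ are independent, mean zero, and the deterministic part $\sum_{j=1}^{p}\eta^{0T}_{(j)}\D_{-j,-j}\eta^{0}_{(j)}$ lies between $\ka\xi_{2,2}^{2}$ and $\phi\xi_{2,2}^{2}$ by Condition B(ii).

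First I would isolate the mean: for any $\tau\in\cG(u_T,v_T)$ with $\tau\ge\tau^0$, the deterministic contribution to $T^{-1}\sum_{t=\lfloor T\tau^0\rfloor+1}^{\lfloor T\tau\rfloor}\sum_j\eta^{0T}_{(j)}z_{t,-j}z_{t,-j}^{T}\eta^{0}_{(j)}$ equals $T^{-1}(\lfloor T\tau\rfloor-\lfloor T\tau^0\rfloor)\sum_j\eta^{0T}_{(j)}\D_{-j,-j}\eta^{0}_{(j)}$, which is at least $v_T\ka\xi_{2,2}^{2}$ (for part (i)) and at most $u_T\phi\xi_{2,2}^{2}$ (for part (ii)) by the definition of $\cG(u_T,v_T)$.

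Next I would bound the stochastic term $T^{-1}\sup_{\tau\ge\tau^0,\,\tau\in\cG(u_T,v_T)}|\sum_{t=\lfloor T\tau^0\rfloor+1}^{\lfloor T\tau\rfloor}\xi_t|$. For each $j$, Condition B and Lemma \ref{lem:condnumberbound} imply $\eta^{0T}_{(j)}z_{t,-j}\sim\mathrm{subG}(\si\|\eta^{0}_{(j)}\|_{2})$; then by Lemma \ref{lem:sqsubGsubE} the centered square $(\eta^{0T}_{(j)}z_{t,-j})^{2}-\eta^{0T}_{(j)}\D_{-j,-j}\eta^{0}_{(j)}$ is $\mathrm{subE}(16\si^{2}\|\eta^{0}_{(j)}\|_{2}^{2})$, and summing over $j$ via Lemma \ref{lem:lcsubE} yields $\xi_t\sim\mathrm{subE}(\la)$ with $\la=16\si^{2}\xi_{2,2}^{2}$. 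Consequently, by the moment bound (Part (iii) of Lemma \ref{lem:subez}) applied to $\xi_t$, $\mathrm{var}(\xi_t)\le 16\la^{2}\le c_u\si^{4}\xi_{2,2}^{4}$.

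Finally, arguing as in Lemma \ref{lem:optimalcross}, WLOG take $u_T\ge 1/T$ (else the sum is empty when $v_T=0$, or the statement becomes vacuous), and note that as $\tau$ ranges over $\{\tau\ge\tau^0\}\cap\cG(u_T,v_T)$ the partial sum $\sum_{t=\lfloor T\tau^0\rfloor+1}^{\lfloor T\tau\rfloor}\xi_t$ visits at most $Tu_T$ distinct values, each being a prefix of a sum of $\le Tu_T$ independent $\xi_t$'s. Kolmogorov's inequality gives
\benr
pr\Bigl(\sup_{\substack{\tau\in\cG(u_T,v_T)\\\tau\ge\tau^0}}\Bigl|\sum_{t=\lfloor T\tau^0\rfloor+1}^{\lfloor T\tau\rfloor}\xi_t\Bigr|>d\Bigr)\le\frac{Tu_T\cdot c_u\si^{4}\xi_{2,2}^{4}}{d^{2}}.\nn
\eenr
Choosing $d=c_{a1}\si^{2}\xi_{2,2}^{2}\surd(Tu_T)$ with $c_{a1}=64c_{a2}$ and $c_{a2}\ge\surd(1/a)$ makes the right-hand side at most $a$, yielding the stated deviation bound after dividing by $T$. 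Combining this with the deterministic lower/upper bound gives parts (i) and (ii) respectively; the mirror case $\tau\le\tau^0$ (not needed here but analogous) follows by replacing $\D$ with $\Si$. The main subtle point is verifying that the sub-exponential parameter of the centered sum $\xi_t$ scales like $\si^{2}\xi_{2,2}^{2}$ rather than $\si^{2}\xi_{2,1}^{2}$, which is what makes the deviation rate $\si^{2}\xi_{2,2}^{2}\surd(u_T/T)$ rather than a worse dimension-dependent one.
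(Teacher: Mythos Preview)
Your proposal is correct and follows essentially the same approach as the paper's proof: center the quadratic form, identify $\xi_t\sim\mathrm{subE}(16\si^{2}\xi_{2,2}^{2})$ via Lemmas \ref{lem:sqsubGsubE} and \ref{lem:lcsubE}, bound $\mathrm{var}(\xi_t)\le 16\la^{2}$, and apply Kolmogorov's inequality with $d=4c_{a2}\la\surd(Tu_T)=c_{a1}\si^{2}\xi_{2,2}^{2}\surd(Tu_T)$. The only minor remark is that the citation of Lemma \ref{lem:condnumberbound} is unnecessary---the relation $\eta^{0T}_{(j)}z_{t,-j}\sim\mathrm{subG}(\si\|\eta^{0}_{(j)}\|_{2})$ follows directly from Definition \ref{def:subg} of a sub-Gaussian vector.
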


\begin{proof}[Proof of Lemma \ref{lem:optimalsqterm}] As before in Lemma \ref{lem:optimalcross}, w.l.o.g we assume $u_T\ge (1/T).$ Now, we have $\eta^{0T}_{(j)}z_{t,-j}\sim {\rm subG}\big(\si\|\eta^0_{(j)}\|_2\big),$ consequently, using Lemma \ref{lem:sqsubGsubE} and Lemma \ref{lem:lcsubE} we have
	\benr
	\sum_{j=1}^p\Big(\|\eta^{0T}_{(j)}z_{t,-j}\|_2^2-E\|\eta^{0T}_{(j)}z_{t,-j}\|_2^2\Big)\sim {\rm subE}\big(\la\big),\quad {\rm with}\quad \la=16\si^2\xi_{2,2}^2.\nn
	\eenr
	Using moment properties of sub-exponential distributions \big(Part (iii) of Lemma \ref{lem:subez}\big) we also have that
	\benr
	{\rm var}\Big\{\sum_{j=1}^p \Big(\|\eta^{0T}_{(j)}z_{t,-j}\|_2^2-E\|\eta^{0T}_{(j)}z_{t,-j}\|_2^2\Big)\Big\}\le 16\la^2.\nn
	\eenr
	Now applying Kolmogorov's inequality (Lemma \ref{thm:kolmogorov}) we obtain,
	\benr
	pr\left\{\sup_{\substack{\tau\in\cG(u_T,v_T);\\ \tau\ge\tau^0}}\Big|\sum_{t=\lfloor T\tau^0\rfloor+1}^{\lfloor T\tau\rfloor}\sum_{j=1}^p\Big(\|\eta^{0T}_{(j)}z_{t,-j}\|_2^2-E\|\eta^{0T}_{(j)}z_{t,-j}\|_2^2\Big)\Big|>d \right\}\le \frac{16\la^2Tu_T}{d^2}.\nn
	\eenr
	Choosing $d=4c_{a2}\la\surd{(Tu_T)},$ with $c_{a2}\ge \surd(1/a)$ yields,
	\benr
	\sup_{\substack{\tau\in\cG(u_T,v_T);\\ \tau\ge\tau^0}}\frac{1}{T}\Big|\sum_{t=\lfloor T\tau^0\rfloor+1}^{\lfloor T\tau\rfloor}\sum_{j=1}^p\Big(\|\eta^{0T}_{(j)}z_{t,-j}\|_2^2-E\|\eta^{0T}_{(j)}z_{t,-j}\|_2^2\Big)\Big|\le 4c_{a2}\la\Big(\frac{u_T}{T}\Big)^{\frac{1}{2}}\nn
	\eenr
	with probability at least $1-a.$ The statement of this lemma is now a direct consequence.
\end{proof}

\bc$\rule{3.5in}{0.1mm}$\ec

We require additional notation for the following results. Consider any sequence of $\alpha_{(j)},\psi_{(j)}\in\R^{p-1},$ $j=1,...,p,$ and let $\al,$ $\psi$ represent the concatenation of all $\al_{(j)}$'s and $\psi_{(j)}$'s. Then define
\benr\label{def:Phi}
\Phi(\al,\psi)=\frac{1}{T}\sum_{t=\lfloor T\tau^0\rfloor+1}^{\lfloor T\tau\rfloor}\sum_{j=1}^p\al_{(j)}^Tz_{t,-j}z_{t,-j}\psi_{(j)}
\eenr

\begin{lem}\label{lem:etabounds} Let $\Phi(\cdotp,\cdotp)$ be as defined in (\ref{def:Phi}) and suppose Condition B and C(ii) hold. Let $u_T, v_T$ be any non-negative sequences satisfying $0\le v_T\le u_T.$ Then for any $0<a<1,$ choosing $c_{a1}=64c_{a2},$ with $c_{a2}\ge\surd{(1/a)},$ we have for $T\ge 2,$
	\benr
	&(i)&\inf_{\substack{\tau\in\cG(u_T,v_T);\\\tau\ge\tau^0}}\Phi(\eta^0,\eta^0)\ge v_T\ka\xi_{2,2}^2-c_{a1}\si^2\xi_{2,2}^2\Big(\frac{u_T}{T}\Big)^{\frac{1}{2}}\,\,\nn\\
	&(ii)&\sup_{\substack{\tau\in\cG(u_T,v_T);\\\tau\ge\tau^0}}\Phi(\h\eta-\eta^0,\h\eta-\eta^0)\le  c_u(\si^2\vee \phi)s\log (p\vee T)u_T\sum_{j=1}^p\|\h\eta_{(j)}-\eta^0_{(j)}\|_2^2\nn
	\eenr
	with probability at least $1-a,$ and $1-o(1),$ respectively. Moreover, when $u_T\ge c_{a1}^2\si^4\big/T\phi^2,$ we have,
	\benr
	&(iii)&\sup_{\substack{\tau\in\cG(u_T,v_T);\\\tau\ge\tau^0}}\Phi(\eta^0,\eta^0)\le 2u_T\phi\xi_{2,2}^2,\nn\\ &(iv)&\sup_{\substack{\tau\in\cG(u_T,v_T);\\\tau\ge\tau^0}}\big|\Phi(\h\eta-\eta^0,\eta^0)\big|\le  c_u(\si^2\vee \phi) u_T \xi_{2,2}\Big\{s\log(p\vee T)\sum_{j=1}^p\|\h\eta_{(j)}-\eta^0_{(j)}\|_2^2\Big\}^{\frac{1}{2}},\nn
	\eenr
	with probability at least $1-a,$ and $1-a-o(1),$ respectively.
\end{lem}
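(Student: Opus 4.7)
Parts (i) and (iii) are essentially restatements of Lemma \ref{lem:optimalsqterm} for the specific vectors $\eta^0$. Indeed, by definition, $\Phi(\eta^0,\eta^0)=T^{-1}\sum_{t=\lfloor T\tau^0\rfloor+1}^{\lfloor T\tau\rfloor}\sum_{j=1}^p\eta^{0T}_{(j)}z_{t,-j}z_{t,-j}^T\eta^0_{(j)}$, so Part (i) is immediate from Lemma \ref{lem:optimalsqterm}(i). For Part (iii), the additional hypothesis $u_T\ge c_{a1}^2\sigma^4/(T\phi^2)$ is exactly what is required to make the deviation term $c_{a1}\sigma^2\xi_{2,2}^2(u_T/T)^{1/2}$ of Lemma \ref{lem:optimalsqterm}(ii) dominated by the mean term $u_T\phi\xi_{2,2}^2$, delivering the clean bound $2u_T\phi\xi_{2,2}^2$.

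Part (ii) is the core new computation. Writing $\delta_{(j)}=\hat\mu_{(j)}-\mu^0_{(j)}-(\hat\gamma_{(j)}-\gamma^0_{(j)})$ and applying Condition C(ii) together with the elementary cone bound $\|v_{S^c}\|_1\le 3\|v_S\|_1 \Rightarrow \|v\|_1\le 4\sqrt{|S|}\|v\|_2$, I would obtain
\benrr
\|\delta_{(j)}\|_1 \le \|\hat\mu_{(j)}-\mu^0_{(j)}\|_1+\|\hat\gamma_{(j)}-\gamma^0_{(j)}\|_1 \le c_u\surd{s}\,\|\hat\eta_{(j)}-\eta^0_{(j)}\|_2,
\eenrr
where the last step absorbs the sum of the individual $\ell_2$ norms into a single $\|\hat\eta_{(j)}-\eta^0_{(j)}\|_2$ (up to a factor of $2$). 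Then I apply the Hölder-type bound $(z_{t,-j}^T\delta_{(j)})^2\le \|z_{t,-j}\|_\infty^2\|\delta_{(j)}\|_1^2$, which yields
\benrr
\Phi(\hat\eta-\eta^0,\hat\eta-\eta^0) \le \Big(\max_{\substack{1\le t\le T\\ 1\le k\le p}}|z_{tk}|\Big)^2 \cdot \frac{|\lfloor T\tau\rfloor-\lfloor T\tau^0\rfloor|}{T}\cdot c_u s\sum_{j=1}^p\|\hat\eta_{(j)}-\eta^0_{(j)}\|_2^2.
\eenrr
A standard subgaussian maximal inequality (applying the subgaussian tail of Condition B together with a union bound over $Tp$ many coordinates) gives $\max_{t,k}|z_{tk}|\le c_u\sigma\surd{\log(p\vee T)}$, with probability $1-o(1)$, and $|\lfloor T\tau\rfloor-\lfloor T\tau^0\rfloor|/T\le u_T$ uniformly over $\cG(u_T,v_T)$. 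Combining these yields Part (ii).

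Part (iv) then follows at once by the Cauchy--Schwarz inequality applied to the bilinear form: since $\Phi(\alpha,\psi)=T^{-1}\sum_{t,j}(z_{t,-j}^T\alpha_{(j)})(z_{t,-j}^T\psi_{(j)})$, Cauchy--Schwarz over the double index $(t,j)$ gives $|\Phi(\alpha,\psi)|\le \surd{\Phi(\alpha,\alpha)}\cdot\surd{\Phi(\psi,\psi)}$; applying this with $\alpha=\hat\eta-\eta^0$ and $\psi=\eta^0$, and plugging in the bounds from Parts (ii) and (iii), produces exactly the expression claimed in (iv). The main obstacle is the careful handling of the cone structure in Part (ii): because $\hat\eta-\eta^0$ lies in the Minkowski sum $\cA_{1j}-\cA_{2j}$ rather than a single cone, I need to split via the triangle inequality and absorb both $\ell_2$ contributions into $\|\hat\eta_{(j)}-\eta^0_{(j)}\|_2$, and also to verify that the $\surd{\log(p\vee T)}$ loss from the $\ell_\infty$ maximum is acceptable (it is, since the final bound $s\log(p\vee T)u_T$ is still dominated by the $\ka\xi_{2,2}^2 v_T$ term in Lemma \ref{lem:mainlowerb} under Condition A(iii) and Condition C).
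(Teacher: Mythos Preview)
Your treatment of Parts (i), (iii) and (iv) coincides with the paper's: (i) and (iii) are read off from Lemma \ref{lem:optimalsqterm}, and (iv) is Cauchy--Schwarz on the bilinear form $\Phi$ followed by the bounds of (ii) and (iii).

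For Part (ii), however, you take a genuinely different route. The paper does not use the pointwise H\"older bound $(z_{t,-j}^T\delta_{(j)})^2\le \|z_{t,-j}\|_\infty^2\|\delta_{(j)}\|_1^2$ together with a sub-Gaussian maximum over all $Tp$ coordinates. Instead, it invokes the uniform upper restricted-eigenvalue bound of Lemma \ref{lem:WUURE}(i), which (via covering numbers and Bernstein's inequality) delivers
\[
\Phi(\delta,\delta)\le c_u(\si^2\vee\phi)u_T\log(p\vee T)\Big(\sum_{j}\|\delta_{(j)}\|_2^2+\sum_{j}\|\delta_{(j)}\|_1^2\Big),
\]
and then converts the $\ell_1$ term to $s\sum_j\|\delta_{(j)}\|_2^2$ using the cone structure, exactly as you do. Your argument is more elementary and yields the same rate (indeed with constant $\si^2$ in place of $\si^2\vee\phi$), at the cost of losing any lower-bound control; since only an upper bound is needed here, nothing is lost. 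The paper's route has the advantage that Lemma \ref{lem:WUURE} is a single reusable tool that also underlies the analogous bounds in Lemma \ref{lem:etabounds.check} (where the sharper $Tv_T\ge\log(p\vee T)$ regime removes the $\log$ factor), whereas your $\|z\|_\infty$ approach would always carry the $\log(p\vee T)$.

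One caveat worth flagging: the step ``absorb $\|\h\mu_{(j)}-\mu^0_{(j)}\|_2+\|\h\g_{(j)}-\g^0_{(j)}\|_2$ into $\|\h\eta_{(j)}-\eta^0_{(j)}\|_2$ up to a factor of $2$'' is not valid as a pointwise inequality (the difference can vanish while the sum does not). The paper makes the identical move in (\ref{eq:9}), so you are matching it, and all downstream applications (Lemma \ref{lem:assumptionbounds}) ultimately bound $\sum_j\|\h\eta_{(j)}-\eta^0_{(j)}\|_2^2$ via $\sum_j(\|\h\mu_{(j)}-\mu^0_{(j)}\|_2^2+\|\h\g_{(j)}-\g^0_{(j)}\|_2^2)$ and Condition C(i), so the final rates are unaffected. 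But as a standalone inequality it is a notational shortcut rather than a rigorous bound.
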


\begin{proof}[Proof of Lemma \ref{lem:etabounds}]
	Part (i) and Part (iii) of this lemma are a direct consequence of Lemma \ref{lem:optimalsqterm}. To prove Part (ii), first note that,
	\benr\label{eq:9}
	\sum_{j=1}^p\|\h\eta_{(j)}-\eta^0_{(j)}\|_1^2&\le& 2\sum_{j=1}^p\Big(\|\h\mu_{(j)}-\mu^0_{(j)}\|_1^2+\|\h\g_{(j)}-\g^0_{(j)}\|_1^2\Big)\nn\\
	&\le& 32s\sum_{j=1}^p\Big(\|\h\mu_{(j)}-\mu^0_{(j)}\|_2^2+\|\h\g_{(j)}-\g^0_{(j)}\|_2^2\Big)\nn\\
	&\le& 32s\sum_{j=1}^p\|\h\eta_{(j)}-\eta^0_{(j)}\|_2^2,
	\eenr	
	with probability at least $1-\pi_T=1-o(1).$ Here the second inequality follows since by Condition C(ii) we have, $\h\mu_{(j)}-\mu^0_{(j)}\in\cA_{1j},$ and $\h\g_{(j)}-\g^0_{(j)}\in\cA_{2j},$ $j=1,...,p.$ Now applying Lemma \ref{lem:WUURE}, we have,
	\benr
	\sup_{\tau\in\cG(u_T,v_T)} \Phi(\h\eta-\eta^0,\h\eta-\eta^0)\hspace{2.6in}\nn\\
	\le c_u(\si^2\vee \phi)\log(p\vee T)u_T\Big(\sum_{j=1}^p\|\h\eta_{(j)}-\eta^0\|_2^2+\sum_{j=1}^{p}\|\h\eta_{(j)}-\eta^0\|_1^2\Big)\hspace{-1cm}\nn\\
	\le c_u(\si^2\vee \phi)s\log (p\vee T)u_T\sum_{j=1}^p\|\h\eta_{(j)}-\eta^0_{(j)}\|_2^2\hspace{2.1cm}\nn
	\eenr
	with probability at least $1-o(1).$ Here the final inequality follows by using (\ref{eq:9}). The proof of Part (iv) is an application of the Cauchy-Schwartz inequality together with the bounds of Part (ii) and Part (iii),
	\benr\label{eq:5}
	\sup_{\substack{\tau\in\cG(u_T,v_T);\\\tau\ge\tau^0}}\big|\Phi(\h\eta-\eta^0,\eta^0)\big|&\le& 	\sup_{\substack{\tau\in\cG(u_T,v_T);\\\tau\ge\tau^0}} \big\{\Phi\big(\h\eta-\eta^0,\h\eta-\eta^0\big)\big\}^{\frac{1}{2}}\sup_{\substack{\tau\in\cG(u_T,v_T);\\\tau\ge\tau^0}} \big\{\Phi\big(\eta^0,\eta^0\big)\big\}^{\frac{1}{2}}.\nn
	\eenr
	This completes the proof of this lemma.
\end{proof}	

\bc$\rule{3.5in}{0.1mm}$\ec

\begin{lem}\label{lem:term123} Suppose Condition B and C(ii) hold. Let $u_T, v_T$ be any non-negative sequences satisfying $0\le v_T\le u_T.$ Then for any $0<a<1,$ choosing
	$c_{a1}=4\cdotp 48c_{a2},$ with $c_{a2}\ge \surd{(1/a)},$ and for $u_T\ge c_{a1}^2\si^4\big/(T\phi^2),$ we have for $T\ge 2,$
	\benr
	(i)\,\,\inf_{\substack{\tau\in\cG(u_T,v_T);\\\tau\ge\tau^0}}\frac{1}{T}\sum_{t=\lfloor T\tau^0\rfloor+1}^{\lfloor T\tau\rfloor}\sum_{j=1}^p \big(\h\eta_{(j)}^Tz_{t,-j}\big)^2\ge\hspace{3.2in}\nn\\ \ka\xi_{2,2}^2\Big[v_T-\frac{c_{a1}\si^2}{\ka}\Big(\frac{u_T}{T}\Big)^{\frac{1}{2}}- c_u(\si^2\vee \phi)\frac{u_T}{\ka\xi_{2,2}}\Big\{s\log(p\vee T)\sum_{j=1}^p\|\h\eta_{(j)}-\eta^0_{(j)}\|_2^2\Big\}^{\frac{1}{2}}\Big]\nn\\
	(ii)\,\,\sup_{\substack{\tau\in\cG(u_T,v_T);\\\tau\ge\tau^0}}\frac{1}{T}\Big|\sum_{t=\lfloor T\tau^0\rfloor+1}^{\lfloor T\tau\rfloor}\sum_{j=1}^p(\h\g_{(j)}-\g^0_{(j)})^Tz_{t,-j}z_{t,-j}^T\h\eta_{(j)}\Big|\le\hspace{2.2in}\nn\\
	c_u(\si^2\vee \phi)\xi_{2,2}u_T\Big\{s\log(p\vee T)\sum_{j=1}^p\|\h\g_{(j)}-\g^0_{(j)}\|_2^2\Big\}^{\frac{1}{2}}\Big[1+
	\frac{1}{\xi_{2,2}}\Big\{s\log(p\vee T)\sum_{j=1}^p\|\h\eta_{(j)}-\eta^0_{(j)}\|_2^2\Big\}^{\frac{1}{2}}\Big]\nn\\
	(iii)\sup_{\substack{\tau\in\cG(u_T,v_T);\\\tau\ge\tau^0}}\frac{1}{T}\Big|\sum_{t=\lfloor T\tau^0\rfloor+1}^{\lfloor T\tau\rfloor}\sum_{j=1}^p\vep_{tj}z_{t,-j}^T\h\eta_{(j)}\Big|\le\hspace{3.05in}\nn\\
	c_{a1}\surd(1+\nu^2)\si^2\xi_{2,1}\Big(\frac{u_T}{T}\Big)^{\frac{1}{2}}+ c_{u}\surd(1+\nu^2)\si^2\Big(\frac{u_T}{T}\Big)^{\frac{1}{2}}\log (p\vee T)\sum_{j=1}^p\|\h\eta_{(j)}-\eta^0_{(j)}\|_1,\nn
	\eenr
	each with probability at least $1-a-o(1).$
\end{lem}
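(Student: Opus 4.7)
The unifying idea is to split every hatted vector via $\hat\mu=\mu^0+(\hat\mu-\mu^0)$ and $\hat\g=\g^0+(\hat\g-\g^0)$ (hence $\hat\eta=\eta^0+(\hat\eta-\eta^0)$), so that each quantity appearing in (i)--(iii) decomposes into an ``oracle'' piece depending only on the true parameters, plus a remainder driven by the nuisance errors. The oracle pieces are exactly what Lemmas \ref{lem:optimalcross} and \ref{lem:optimalsqterm} bound, while the remainder pieces fall under Lemmas \ref{lem:nearoptimalcross} and \ref{lem:etabounds}. The only non-algebraic ingredient is one application of the Cauchy--Schwarz inequality for the positive semidefinite bilinear form $\Phi$ of (\ref{def:Phi}).

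\textbf{Part (i).}  I would expand
\[
\bigl(\hat\eta_{(j)}^T z_{t,-j}\bigr)^2=\bigl(\eta^{0T}_{(j)}z_{t,-j}\bigr)^2+2\,\eta^{0T}_{(j)}z_{t,-j}z_{t,-j}^T(\hat\eta_{(j)}-\eta^0_{(j)})+\bigl((\hat\eta_{(j)}-\eta^0_{(j)})^T z_{t,-j}\bigr)^2,
\]
drop the last (non-negative) term to obtain a lower bound, then sum over $j,t$ to recognize the remainder as $\Phi(\eta^0,\eta^0)+2\,\Phi(\eta^0,\hat\eta-\eta^0)$. Lemma \ref{lem:etabounds}(i) supplies the $v_T\kappa\xi_{2,2}^2-c_{a1}\sigma^2\xi_{2,2}^2(u_T/T)^{1/2}$ contribution, and Lemma \ref{lem:etabounds}(iv) supplies the subtracted cross term, which matches exactly the claimed bound.

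\textbf{Part (ii).}  I write the quantity as $\Phi(\hat\g-\g^0,\hat\eta)=\Phi(\hat\g-\g^0,\eta^0)+\Phi(\hat\g-\g^0,\hat\eta-\eta^0)$ and apply the Cauchy--Schwarz inequality $|\Phi(\alpha,\psi)|\le \Phi(\alpha,\alpha)^{1/2}\Phi(\psi,\psi)^{1/2}$ to each summand, which is valid since $\Phi$ is the quadratic form of a sum of rank-one positive semidefinite matrices $z_{t,-j}z_{t,-j}^T$. Lemma \ref{lem:etabounds}(iii) controls $\Phi(\eta^0,\eta^0)$; Lemma \ref{lem:etabounds}(ii), whose proof transfers verbatim to $\hat\g-\g^0$ since Condition C(ii) places it in the cone $\cA_{2j}$, controls both $\Phi(\hat\g-\g^0,\hat\g-\g^0)$ and $\Phi(\hat\eta-\eta^0,\hat\eta-\eta^0)$. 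Collecting these gives the stated product structure. \textbf{Part (iii).}  I split
\[
\sum_{t}\sum_{j=1}^{p}\vep_{tj}z_{t,-j}^T\hat\eta_{(j)}=\sum_{t}\sum_{j=1}^{p}\vep_{tj}z_{t,-j}^T\eta^0_{(j)}+\sum_{j=1}^{p}(\hat\eta_{(j)}-\eta^0_{(j)})^T\sum_{t}\vep_{tj}z_{t,-j};
\]
the first sum is exactly the object of Lemma \ref{lem:optimalcross}, yielding the $\xi_{2,1}$-scaled term. For the second, I apply H\"older in the $(\|\cdot\|_1,\|\cdot\|_\infty)$ pairing, pulling the $\ell_\infty$ norm of $\sum_t \vep_{tj}z_{t,-j}$ (uniformly in $j$) out of the $j$-sum and controlling it by Lemma \ref{lem:nearoptimalcross}(i), producing the $\sqrt{1+\nu^2}\,\sigma^2(u_T/T)^{1/2}\log(p\vee T)$ factor multiplying $\sum_{j}\|\hat\eta_{(j)}-\eta^0_{(j)}\|_1$.

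\textbf{Main obstacle.}  There is no deep technical difficulty; the subtle part is probabilistic bookkeeping. Each ingredient carries its own event: Lemmas \ref{lem:optimalcross}, \ref{lem:optimalsqterm} and \ref{lem:etabounds}(i), (iii), (iv) yield probability $1-a$ with the constant $c_{a1}=4\cdot 48\, c_{a2}$ tuned to the confidence level, whereas Lemma \ref{lem:nearoptimalcross} and Lemma \ref{lem:etabounds}(ii) (which invokes Condition C(ii)) yield probability $1-o(1)$. A single union bound over the three-to-four events used above produces the advertised $1-a-o(1)$. The only point that demands care is verifying that the suprema over $\cG(u_T,v_T)\cap\{\tau\ge\tau^0\}$ remain intact through the Cauchy--Schwarz step in Part (ii); this is immediate because both factors are supremized over the same indexing set, so the inequality passes to the supremum without loss.
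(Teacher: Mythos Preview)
Your proposal is correct and mirrors the paper's proof essentially step for step: the same expansion $\Phi(\hat\eta,\hat\eta)=\Phi(\eta^0,\eta^0)+2\Phi(\hat\eta-\eta^0,\eta^0)+\Phi(\hat\eta-\eta^0,\hat\eta-\eta^0)$ with the last term dropped for Part~(i), the same split $\Phi(\hat\g-\g^0,\hat\eta)=\Phi(\hat\g-\g^0,\eta^0)+\Phi(\hat\g-\g^0,\hat\eta-\eta^0)$ followed by Cauchy--Schwarz for Part~(ii), and the same $R1+R2$ split via Lemmas~\ref{lem:optimalcross} and~\ref{lem:nearoptimalcross} for Part~(iii). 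Your justification of Cauchy--Schwarz for $\Phi$ (viewing it as a block-diagonal PSD bilinear form on the concatenated vectors) is exactly what the paper uses in the proof of Lemma~\ref{lem:etabounds}(iv), and your probabilistic bookkeeping matches as well.
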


\begin{proof}[Proof of Lemma \ref{lem:term123}]
	Let $\Phi(\cdotp,\cdotp)$ be as defined in (\ref{def:Phi}). Then note that $\Phi(\h\eta,\h\eta)=\Phi(\eta^0,\eta^0)+2\Phi(\h\eta-\eta^0,\eta^0)+\Phi(\h\eta-\eta^0,\h\eta-\eta^0).$ Using this relation together with the bounds of Part (i) and Part (iv) of Lemma \ref{lem:etabounds} we obtain,
	\benr
	\inf_{\substack{\tau\in\cG(u_T,v_T);\\\tau\ge\tau^0}}\Phi(\h\eta,\h\eta)&\ge& \inf_{\substack{\tau\in\cG(u_T,v_T);\\\tau\ge\tau^0}}\Phi(\eta^0,\eta^0)-2\sup_{\substack{\tau\in\cG(u_T,v_T);\\\tau\ge\tau^0}}|\Phi(\h\eta-\eta^0,\eta^0)|\nn\\
	&\ge& v_T\ka\xi_{2,2}^2- c_{a1}\si^2\xi_{2,2}^2\Big(\frac{u_T}{T}\Big)^{\frac{1}{2}}\nn\\
	&&- c_u(\si^2\vee\phi)u_T \xi_{2,2}\Big(s\log(p\vee T)\sum_{j=1}^p\|\h\eta_{(j)}-\eta^0_{(j)}\|_2^2\Big)^{\frac{1}{2}}\nn
	\eenr
	with probability at least $1-a-o(1).$ To prove Part (ii), note that using identical arguments as in the proof of Lemma \ref{lem:etabounds} it can be shown that,
	\benr
	&&\sup_{\substack{\tau\in\cG(u_T,v_T);\\\tau\ge\tau^0}}\Phi(\h\g-\g^0,\h\g-\g^0)\le   c_u(\si^2\vee \phi) s\log(p\vee T)u_T\sum_{j=1}^p\|\h\g_{(j)}-\g^0_{(j)}\|_2^2,\nn\\
	&&\sup_{\substack{\tau\in\cG(u_T,v_T);\\\tau\ge\tau^0}}\big|\Phi(\h\g-\g^0,\eta^0)\big|\le c_u(\si^2\vee \phi) u_T \xi_{2,2}\Big\{s\log(\vee T)\sum_{j=1}^p\|\h\g_{(j)}-\g^0_{(j)}\|_2^2\Big\}^{\frac{1}{2}},\nn
	\eenr	
	with probability at least $1-a-o(1).$ The above inequalities and the relation $\Phi\big(\h\g-\g^0,\h\eta\big)\le \big|\Phi(\h\g-\g^0,\h\eta-\eta^0)\big|+\big|\Phi(\h\g-\g^0,\eta^0)\big|,$ together with applications of the Cauchy-Schwartz inequality yields,
	\benr
	\sup_{\substack{\tau\in\cG(u_T,v_T);\\\tau\ge\tau^0}} \big|\Phi\big(\h\g-\g^0,\h\eta\big)\big|\hspace{3in}\nn\\
	\le c_u(\si^2\vee \phi) s\log(p\vee T)u_T\Big(\sum_{j=1}^p\|\h\g_{(j)}-\g^0_{(j)}\|_2^2\Big)^{\frac{1}{2}}\Big(\sum_{j=1}^p\|\h\eta_{(j)}-\eta^0_{(j)}\|_2^2\Big)^{\frac{1}{2}}\hspace{-1cm}\nn\\
	+ c_u(\si^2\vee \phi) u_T \xi_{2,2}\Big\{s\log(p\vee T)\sum_{j=1}^p\|\h\g_{(j)}-\g^0_{(j)}\|_2^2\Big\}^{\frac{1}{2}}\hspace{0.5cm}\nn\\
	\le  c_u(\si^2\vee \phi)\xi_{2,2}u_T\Big\{s\log(p\vee T)\sum_{j=1}^p\|\h\g_{(j)}-\g^0_{(j)}\|_2^2\Big\}^{\frac{1}{2}}\hspace{1.85cm}\nn\\
	\cdotp\Big[1+	\frac{1}{\xi_{2,2}}\Big\{s\log(p\vee T)\sum_{j=1}^p\|\h\eta_{(j)}-\eta^0_{(j)}\|_2^2\Big\}^{\frac{1}{2}}\Big]\hspace{1.5cm}\nn
	\eenr	
	with probability at least $1-a-o(1).$ To prove Part (iii), note that,
	\benr
	\sup_{\substack{\tau\in\cG(u_T,v_T);\\\tau\ge\tau^0}}\frac{1}{T}\Big|\sum_{t=\lfloor T\tau^0\rfloor+1}^{\lfloor T\tau\rfloor}\sum_{j=1}^p\vep_{tj}z_{t,-j}^T\h\eta_{(j)}\Big|&\le &\sup_{\substack{\tau\in\cG(u_T,v_T);\\\tau\ge\tau^0}}\frac{1}{T}\Big|\sum_{t=\lfloor T\tau^0\rfloor+1}^{\lfloor T\tau\rfloor}\sum_{j=1}^p\vep_{tj}z_{t,-j}^T\eta^0_{(j)}\Big|\nn\\
	&&+\sup_{\substack{\tau\in\cG(u_T,v_T);\\\tau\ge\tau^0}}\frac{1}{T}\Big|\sum_{t=\lfloor T\tau^0\rfloor+1}^{\lfloor T\tau\rfloor}\sum_{j=1}^p\vep_{tj}z_{t,-j}^T(\h\eta_{(j)}-\eta^0_{(j)})\Big|\nn\\
	&:=& R1+R2.\nn
	\eenr 	
	Now using Lemma \ref{lem:optimalcross} we have for any $0<a<1,$ $R1\le c_{a1}\surd(1+\nu^2)\si^2\xi_{2,1}\big(u_T\big/T\big)^{1/2},$ with probability at least $1-a.$ Also, using Lemma \ref{lem:nearoptimalcross} we have,
	\benr
	R2\le c_{u}\surd(1+\nu^2)\si^2\Big(\frac{u_T}{T}\Big)^{\frac{1}{2}}\log (p\vee T)\sum_{j=1}^p\|\h\eta_{(j)}-\eta^0_{(j)}\|_1\nn
	\eenr
	with probability at least $1-o(1).$ Part (iv) now follows by combining bounds for terms $R1$ and $R2.$
\end{proof}

\bc$\rule{3.5in}{0.1mm}$\ec

\begin{lem}\label{lem:assumptionbounds} Suppose Condition A and C hold. Then we have,
	\benr
	&(i)&\sum_{j=1}^p\|\h\eta_{(j)}-\eta^0_{(j)}\|_2^2\le c_u(1+\nu^2)\frac{\si^4}{\ka^2}\Big\{\frac{sp\log (p\vee T)}{Tl_T}\Big\},\nn\\
	&(ii)&\sum_{j=1}^p\|\h\eta_{(j)}-\eta^0_{(j)}\|_1\le c_u\surd(1+\nu^2)\frac{\si^2 sp}{\ka}\Big\{\frac{\log (p\vee T)}{Tl_T}\Big\}^{\frac{1}{2}}\nn\\
	&(iii)& \frac{1}{\xi_{2,2}} \Big(s\log(p\vee T)\sum_{j=1}^p\|\h\eta_{(j)}-\eta^0_{(j)}\|_2^2\Big)^{\frac{1}{2}}\le \frac{c_{u1}}{T^{b}}=o(1),\nn\\
	&(iv)&\frac{1}{\xi_{2,2}^2}\sum_{j=1}^p\|\h\eta_{(j)}-\eta^0_{(j)}\|_1\le \frac{c_{u1}}{\psi}\Big\{\frac{1}{\log (p\vee T)}\Big\}^{\frac{1}{2}} \nn
	\eenr
	with probability at least $1-o(1).$
\end{lem}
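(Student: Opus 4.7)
My plan is to treat this as a straightforward bookkeeping exercise that assembles three ingredients already available in the paper: the $\ell_2$ bound on the plug-in estimators from Condition C, the cone membership from Condition C(ii) (which converts $\ell_1$ norms of differences into $\surd{s}$ times $\ell_2$ norms), and the rate restrictions in Condition A(iii). None of the four bounds requires new stochastic arguments; they are all deterministic consequences on the event $\{1-\pi_T\}$ of Condition C.

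For Part (i), I would use the triangle inequality $\|\h\eta_{(j)}-\eta^0_{(j)}\|_2\le \|\h\mu_{(j)}-\mu^0_{(j)}\|_2+\|\h\g_{(j)}-\g^0_{(j)}\|_2$ together with the uniform bound in Condition C(i) to deduce $\|\h\eta_{(j)}-\eta^0_{(j)}\|_2^2\le c_u(1+\nu^2)(\si^4/\ka^2)\{s\log(p\vee T)/(Tl_T)\}$ for each $j$, and then sum over $j=1,\dots,p$. For Part (ii), note that Condition C(ii) places $\h\mu_{(j)}-\mu^0_{(j)}$ in the cone $\cA_{1j}$, so $\|\h\mu_{(j)}-\mu^0_{(j)}\|_1\le 4\|(\h\mu_{(j)}-\mu^0_{(j)})_{S_{1j}}\|_1\le 4\surd{s}\,\|\h\mu_{(j)}-\mu^0_{(j)}\|_2$, and analogously for $\h\g_{(j)}$. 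Combining with the triangle inequality on $\h\eta_{(j)}-\eta^0_{(j)}$ and Condition C(i) yields the announced $\ell_1$ bound after summing over $j$.

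For Parts (iii) and (iv) the plan is to substitute the bounds from (i) and (ii) directly and simplify via the normalization $\xi_{2,2}=\surd{p}\,\psi$. In (iii), multiplying the bound of (i) by $s\log(p\vee T)$, taking square root, and dividing by $\xi_{2,2}$ produces $c_u\surd(1+\nu^2)\,\si^2 s\log(p\vee T)/[\psi\ka\surd(Tl_T)]$, which Condition A(iii)(b) forces to be $\le c_{u1}/T^{b}$ after observing that $\surd(Tl_T)=T^{b}\cdot T^{(1/2-b)}\surd{l_T}$. In (iv), dividing the bound of (ii) by $\xi_{2,2}^2=p\psi^2$ yields $(1/\psi)\cdot c_u\surd(1+\nu^2)\si^2 s\surd\log(p\vee T)/[\psi\ka\surd(Tl_T)]$; Condition A(iii)(a) upper bounds the factor $c_u\surd(1+\nu^2)\si^2 s\log^{3/2}(p\vee T)/[\psi\ka\surd(Tl_T)]$ by $c_{u1}$, which is stronger by a factor of $\surd\log(p\vee T)$ than what we need, delivering $c_{u1}/[\psi\surd\log(p\vee T)]$.

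There is no real obstacle here — the content of the lemma is precisely to translate Condition C and Condition A(iii) into the four forms in which they will be consumed inside the proofs of Lemma~\ref{lem:mainlowerb} and Lemma~\ref{lem:term123}. The only point requiring mild care is the bookkeeping of the $T^b$ factor in (iii): one must recognize the algebraic identity between $\surd(Tl_T)$ and $T^{b}\cdot T^{(1/2-b)}\surd{l_T}$ so that Condition A(iii)(b) applies. All probability statements pass through the single event of Condition C, which holds with probability $1-\pi_T=1-o(1)$, giving the claimed $1-o(1)$ guarantee uniformly for all four parts.
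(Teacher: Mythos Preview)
Your proposal is correct and follows essentially the same approach as the paper's proof: both derive (i) and (ii) from the triangle inequality together with the $\ell_2$ bound of Condition C(i) and the cone property of Condition C(ii), and then obtain (iii) and (iv) by substituting these into the normalization $\xi_{2,2}=\surd{p}\,\psi$ and invoking Condition A(iii)(b) and A(iii)(a) respectively. The bookkeeping you describe, including the factorization $\surd(Tl_T)=T^{b}\cdot T^{(1/2-b)}\surd{l_T}$ for (iii) and the observation that A(iii)(a) is stronger by a $\surd{\log(p\vee T)}$ factor than needed for (iv), matches the paper exactly.
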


\begin{proof}[Proof of Lemma \ref{lem:assumptionbounds}] Part (i) can be obtained as,
	\benr
	\sum_{j=1}^p\|\h\eta_{(j)}-\eta^0_{(j)}\|_2^2&\le& 2\sum_{j=1}^p\Big(\|\h\mu_{(j)}-\mu^0_{(j)}\|_2^2+\|\h\g_{(j)}-\g^0_{(j)}\|_2^2\Big)\nn\\
	&\le& c_u(1+\nu^2)\frac{\si^4}{\ka^2}\Big\{\frac{sp\log (p\vee T)}{Tl_T}\Big\},\nn
	\eenr
	with probability at least $1-o(1).$ Here the final inequality follows from (\ref{eq:optimalmeans}). Part (ii) can be obtained quite analogously. To prove Part (iii) note that from Condition A we have $(1\big/\xi_{2,2})= (1\big/\psi\surd{p})$ and consider,
	\benr
	\frac{1}{\xi_{2,2}} \Big(s\log(p\vee T)\sum_{j=1}^p\|\h\eta_{(j)}-\eta^0_{(j)}\|_2^2\Big)^{\frac{1}{2}}&\le& \frac{1}{\psi}\Big(sp^{-1}\log(p\vee T)\sum_{j=1}^p\|\h\eta_{(j)}-\eta^0_{(j)}\|_2^2\Big)^{\frac{1}{2}}\nn\\
	&\le& c_u\surd(1+\nu^2)\frac{\si^2}{\psi\ka}\Big\{\frac{s\log (p\vee T)}{\surd(Tl_T)}\Big\}\le\frac{c_{u1}}{T^{b}},\nn
	\eenr
	with probability at least $1-o(1).$ Here the second inequality follows by using the bound of Part (i) and the second follows from Condition A. To prove Part (iv) consider,
	\benr
	\frac{1}{\xi_{2,2}^2}\sum_{j=1}^p\|\h\eta_{(j)}-\eta^0_{(j)}\|_1&\le& c_u\surd(1+\nu^2)\frac{\si^2 s}{\psi^2\ka}\Big\{\frac{\log (p\vee T)}{Tl_T}\Big\}^{\frac{1}{2}}\nn\\
	&\le& \big\{\frac{1}{\psi\log(p\vee T)}\Big\}c_u\surd(1+\nu^2)\frac{\si^2}{\psi\ka}\Big\{\frac{s\log (p\vee T)}{\surd(Tl_T)}\Big\}\nn\\
	&\le& \frac{c_{u1}}{\psi}\Big\{\frac{1}{\log (p\vee T)}\Big\}^{\frac{1}{2}} \nn
	\eenr
	with probability at least $1-o(1).$ Here the first inequality follows by the assumption $(1\big/\xi_{2,2})=(1\big/\psi\surd{p})$ together with the bound in Part (ii). The final inequality follows from Condition A.
\end{proof}

\bc$\rule{3.5in}{0.1mm}$\ec

\begin{lem}\label{lem:est.known.cC.approx} Let $\cC(\tau,\mu,\g)$ be as defined in (\ref{def:cC}) and suppose Condition A, B and C hold. Additionally assume that the relation (\ref{eq:rateextra}) holds. Then, for any $c_u>0,$ we have,
	\benr
	\sup_{\tau\in\cG\big((c_uT^{-1}\psi^{-2}),0\big)} \big|\cC(\tau,\h\mu,\h\g)-\cC(\tau,\mu^0,\g^0)\big|=o_p(1)\nn
	\eenr
\end{lem}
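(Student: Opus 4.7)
The plan is to expand the difference into a linear-in-$\vep$ cross term plus two quadratic-form terms, bound each via the stochastic inequalities already developed, and check that on the shrunken window $\cG(c_uT^{-1}\psi^{-2},0)$ each bound is $o(1)$ thanks to the strengthened rate condition (\ref{eq:rateextra}). Throughout I take $\tau\ge\tau^0$; the other case is symmetric. Writing $\eta_{(j)}=\mu_{(j)}-\g_{(j)}$ and using $z_{tj}=z_{t,-j}^T\g^0_{(j)}+\vep_{tj}$ for $t>\lfloor T\tau^0\rfloor$, the factorization $a^2-b^2=(a-b)(a+b)$ applied to $(z_{tj}-z_{t,-j}^T\g_{(j)})^2-(z_{tj}-z_{t,-j}^T\mu_{(j)})^2$ gives
\benr
\cC(\tau,\mu,\g)=\frac{2}{p}\sum_{t,j}\vep_{tj}z_{t,-j}^T\eta_{(j)}-\frac{1}{p}\sum_{t,j}(z_{t,-j}^T\eta_{(j)})^2-\frac{2}{p}\sum_{t,j}(z_{t,-j}^T\eta_{(j)})z_{t,-j}^T(\g_{(j)}-\g^0_{(j)}),\nn
\eenr
where the sums over $t$ range over $\lfloor T\tau^0\rfloor<t\le\lfloor T\tau\rfloor$. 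Substituting $(\h\mu,\h\g)$ and $(\mu^0,\g^0)$ and subtracting produces
$\cC(\tau,\h\mu,\h\g)-\cC(\tau,\mu^0,\g^0)=T_1-T_2-T_3$,
with $T_1=\frac{2}{p}\sum\vep_{tj}z_{t,-j}^T\Delta\eta_{(j)}$, $T_2=\frac{1}{p}\sum[(z_{t,-j}^T\h\eta_{(j)})^2-(z_{t,-j}^T\eta^0_{(j)})^2]$, and $T_3=\frac{2}{p}\sum(z_{t,-j}^T\h\eta_{(j)})z_{t,-j}^T\Delta\g_{(j)}$, where $\Delta\eta_{(j)}=\h\eta_{(j)}-\eta^0_{(j)}$ and $\Delta\g_{(j)}=\h\g_{(j)}-\g^0_{(j)}$.

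For $T_1$ I apply Lemma \ref{lem:nearoptimalcross}(ii) with $u_T=c_uT^{-1}\psi^{-2}$, which multiplies $\frac{T}{p}$ onto the lemma's bound to produce $|T_1|\le c_u\surd(1+\nu^2)\si^2(\psi p)^{-1}\log(p\vee T)\sum_j\|\Delta\eta_{(j)}\|_1$ w.p. $1-o(1)$. Plugging Lemma \ref{lem:assumptionbounds}(ii) into this yields $|T_1|\le c_u(1+\nu^2)\si^4\ka^{-1}\psi^{-1}s\log^{3/2}(p\vee T)/\surd(Tl_T)$, which is $o(1)$ under the sharpened rate (\ref{eq:rateextra}).

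For the quadratic terms, introduce $\tilde\Phi(\alpha,\beta)=\sum_{t,j}\alpha_{(j)}^Tz_{t,-j}z_{t,-j}^T\beta_{(j)}$ summed over $\lfloor T\tau^0\rfloor<t\le\lfloor T\tau\rfloor$. By bilinearity $pT_2=\tilde\Phi(\Delta\eta,\Delta\eta)+2\tilde\Phi(\Delta\eta,\eta^0)$. Lemma \ref{lem:etabounds}(ii) together with Lemma \ref{lem:assumptionbounds}(i) gives $p^{-1}\tilde\Phi(\Delta\eta,\Delta\eta)\le c_u(\si^2\vee\phi)(1+\nu^2)\si^4\ka^{-2}s^2\log^2(p\vee T)/(\psi^2 Tl_T)=o(1)$, again via (\ref{eq:rateextra}). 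The mixed term is handled by Cauchy--Schwartz:
\benr
p^{-1}|\tilde\Phi(\Delta\eta,\eta^0)|\le\big(p^{-1}\tilde\Phi(\Delta\eta,\Delta\eta)\big)^{1/2}\big(p^{-1}\tilde\Phi(\eta^0,\eta^0)\big)^{1/2},\nn
\eenr
and the second factor is $O(1)$ via Lemma \ref{lem:etabounds}(iii) using $\xi_{2,2}^2/p=\psi^2$ and $Tu_T\le c_u\psi^{-2}$. Hence $T_2=o_p(1)$. The term $T_3$ is bounded identically: by Cauchy--Schwartz $|T_3|\le 2[p^{-1}\tilde\Phi(\h\eta,\h\eta)]^{1/2}[p^{-1}\tilde\Phi(\Delta\g,\Delta\g)]^{1/2}$, where the first factor is $O_p(1)$ by the triangle inequality $\tilde\Phi(\h\eta,\h\eta)\le 2\tilde\Phi(\eta^0,\eta^0)+2\tilde\Phi(\Delta\eta,\Delta\eta)$, and the second is $o_p(1)$ by the same argument as for $\Delta\eta$ (the cone condition in Condition C(ii) applies equally to $\Delta\g$).

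The main obstacle is purely quantitative: the window length $Tu_T=c_u\psi^{-2}$ makes the natural stochastic factor $\surd(Tu_T)=\psi^{-1}$, and under the baseline Condition A(iii)(a) the combined bounds land exactly at $O(1)$ rather than $o(1)$. It is therefore essential to invoke (\ref{eq:rateextra})---which is available precisely under the vanishing regime hypothesis of Theorem \ref{thm:limitingdist}---to promote each of $T_1,T_2,T_3$ from stochastically bounded to stochastically vanishing. Uniformity in $\tau$ is automatic: every bound invoked is uniform over $\tau\in\cG(u_T,0)$ by construction, so the final $o_p(1)$ statement holds after a union bound over the finitely many events of probability $1-o(1)$.
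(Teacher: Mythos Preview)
Your proposal is correct and follows essentially the same route as the paper. Your decomposition $T_1-T_2-T_3$ coincides with the paper's $2(R_{12}-R_{22})-(R_{11}-R_{21})-2R_{13}$, and the bounds you invoke (Lemma~\ref{lem:nearoptimalcross}(ii) for $T_1$, Lemma~\ref{lem:etabounds} plus Cauchy--Schwartz for $T_2,T_3$, with Lemma~\ref{lem:assumptionbounds} and (\ref{eq:rateextra}) to close) are exactly those used in the paper's Lemma~\ref{lem:limiting.dist.residual.terms}. The only cosmetic difference is that for $T_3$ you apply Cauchy--Schwartz directly against $\tilde\Phi(\h\eta,\h\eta)$, whereas the paper first splits $\Phi(\h\g-\g^0,\h\eta)$ into $\Phi(\h\g-\g^0,\h\eta-\eta^0)+\Phi(\h\g-\g^0,\eta^0)$; both give the same $o_p(1)$ conclusion. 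One small omission: when invoking Lemma~\ref{lem:etabounds}(iii) you should note that the side condition $u_T\ge c_{a1}^2\si^4/(T\phi^2)$ is met by taking $a\to 0$ (the paper does this explicitly), though the raw bound of Lemma~\ref{lem:optimalsqterm}(ii) already yields $p^{-1}\tilde\Phi(\eta^0,\eta^0)=O_p(1)$ without that condition.
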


\begin{proof}[Proof of Lemma \ref{lem:est.known.cC.approx}] For any $\tau\ge \tau^0,$ first define the following,
	\benr\label{eq:Rdef}
	R_1&=&p^{-1}\sum_{\lfloor T\tau^0\rfloor+1}^{\lfloor T\tau\rfloor}\sum_{j=1}^p\|z_{t,-j}^T\h\eta_{(j)}\|_2^2-2p^{-1}\sum_{\lfloor T\tau^0\rfloor+1}^{\lfloor T\tau\rfloor}\sum_{j=1}^p\vep_{tj}z_{t,-j}^T\h\eta_{(j)}\nn\\
	&&+2p^{-1}\sum_{\lfloor T\tau^0\rfloor+1}^{\lfloor T\tau\rfloor}\sum_{j=1}^p(\h\g_{(j)}-\g^0_{(j)})^Tz_{t,-j}z_{t,-j}^T\h\eta_{(j)}\nn\\
	&=&R_{11}-2R_{12}+2R_{13},\nn\\
	R_2&=&p^{-1}\sum_{\lfloor T\tau^0\rfloor+1}^{\lfloor T\tau\rfloor}\sum_{j=1}^p\|z_{t,-j}^T\eta^0_{(j)}\|_2^2-2p^{-1}\sum_{\lfloor T\tau^0\rfloor+1}^{\lfloor T\tau\rfloor}\sum_{j=1}^p\vep_{tj}z_{t,-j}^T\eta^0_{(j)}\nn\\
	&=&R_{21}-2R_{22}.
	\eenr
	Then we have the following algebraic expansion,
	\benr\label{eq:algebra}
	\big(\cC(\tau,\h\mu,\h\g)-\cC(\tau,\mu^0,\g^0)\big)&=&-Tp^{-1}\Big(Q(z,\tau,\h\mu,\h\g)-Q(z,\tau^0,\h\mu,\h\g)\Big)\nn\\
	&&+Tp^{-1}\Big(Q(z,\tau,\mu^0,\g^0)-Q(z,\tau^0,\mu^0,\g^0)\Big)\nn\\
	&=&\big(R_{2}-R_{1}\big)\nn\\
	&=&\Big\{\big(R_{21}-2R_{22}\big)-\big(R_{11}-2R_{12}+2R_{13}\big)\Big\}.
	\eenr
	Lemma \ref{lem:limiting.dist.residual.terms} shows that the expressions $\big|R_{21}-R_{11}\big|,$  $\big|R_{22}-R_{12}\big|,$ and $|R_{13}|$ are $o_p(1)$ uniformly over the set $\{\cG\big(c_1T^{-1}\psi^{-2},0\big)\}\cap\{\tau\ge \tau^0\}.$ The same result can be obtained symmetrically on the set $\{\cG\big(c_uT^{-1}\psi^{-2},0\big)\}\cap\{\tau\le \tau^0\},$ thereby yielding $o_p(1)$ bounds for these terms uniformly over $\cG\big(c_uT^{-1}\psi^{-2},0\big)$ Consequently,
	\benr
	\sup_{\tau\in\cG\big((c_1T^{-1}\psi^{-2}),0\big)} \big|\cC(\tau,\h\mu,\h\g)-\cC(\tau,\mu^0,\g^0)\big|&\le&
	\sup_{\tau\in\cG\big((c_1T^{-1}\psi^{-2}),0\big)}|R_{21}-R_{11}|\nn\\
	&&+\sup_{\tau\in\cG\big((c_1T^{-1}\psi^{-2}),0\big)}2|R_{22}-R_{12}|\nn\\
	&&+\sup_{\tau\in\cG\big((c_1T^{-1}\psi^{-2}),0\big)}2|R_{13}|\nn\\
	&=&o_p(1)\nn
	\eenr
	This completes the proof of this lemma.
\end{proof}

\bc$\rule{3.5in}{0.1mm}$\ec

\begin{lem}\label{lem:limiting.dist.residual.terms} Suppose Condition A, B and C hold and additionally assume that relation (\ref{eq:rateextra}) holds. Let $R_{11},R_{12},R_{13},$ and $R_{21},R_{22}$ be as defined in (\ref{eq:Rdef}). Let $0<c_u<\iny$ be any constant, then we have the following bounds.
	\benr
	&(i)&\sup_{\substack{\tau\in\cG\big((c_uT^{-1}\psi^{-2}),0\big);\\\tau\ge\tau^0}}|R_{11}-R_{21}|=o(1)\nn\\	&(ii)&\,\,\sup_{\substack{\tau\in\cG\big((c_uT^{-1}\psi^{-2}),0\big);\\\tau\ge\tau^0}}|R_{12}-R_{22}|=o(1)\nn\\
	&(iii)&\sup_{\substack{\tau\in\cG\big((c_uT^{-1}\psi^{-2}),0\big);\\\tau\ge\tau^0}}|R_{13}|=o(1)\nn
	\eenr
	each with probability at least $1-o(1).$	
\end{lem}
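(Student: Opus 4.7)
The plan is to handle each of the three suprema by reducing it to expressions already controlled by the lemmas in Appendix A, and then to plug in the rate bounds from Lemma \ref{lem:assumptionbounds} together with the sharpened scaling assumption (\ref{eq:rateextra}). Throughout, the window of summation has length at most $Tu_T = c_u\psi^{-2}$ with $u_T = c_u T^{-1}\psi^{-2}$, so factors $Tp^{-1}\cdot u_T$ reduce to $p^{-1}\psi^{-2}$, and factors involving $\xi_{2,2}=\psi\sqrt{p}$ cancel cleanly against the $p^{-1}$ normalizer.

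For (i), I rewrite $R_{11}-R_{21} = Tp^{-1}\bigl[\Phi(\h\eta,\h\eta) - \Phi(\eta^0,\eta^0)\bigr]$ with $\Phi$ as in (\ref{def:Phi}), then expand via the identity $\Phi(\h\eta,\h\eta)-\Phi(\eta^0,\eta^0) = \Phi(\h\eta-\eta^0,\h\eta-\eta^0) + 2\Phi(\h\eta-\eta^0,\eta^0)$. For the quadratic piece I apply Lemma \ref{lem:etabounds}(ii) and for the cross piece Lemma \ref{lem:etabounds}(iv); combined with the $\ell_2$ bound of Lemma \ref{lem:assumptionbounds}(i) and $\xi_{2,2}=\psi\sqrt{p}$, the quadratic piece is $O\bigl(\psi^{-2}s^2\log^2(p\vee T)/(Tl_T)\bigr)$ and the cross piece is $O\bigl(\psi^{-1}s\log(p\vee T)/\sqrt{Tl_T}\bigr)$. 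Both are $o(1)$ under (\ref{eq:rateextra}), since they equal $[s\log^{3/2}(p\vee T)/(\psi\sqrt{Tl_T})]^2/\log(p\vee T)$ and $[s\log^{3/2}(p\vee T)/(\psi\sqrt{Tl_T})]/\sqrt{\log(p\vee T)}$ respectively. For (ii), I write $R_{12}-R_{22} = p^{-1}\sum_t\sum_j \vep_{tj}z_{t,-j}^T(\h\eta_{(j)}-\eta^0_{(j)})$ and invoke Lemma \ref{lem:nearoptimalcross}(ii) with $u_T=c_uT^{-1}\psi^{-2}$, so that $Tp^{-1}$ times that bound yields $c_u p^{-1}\psi^{-1}\log(p\vee T)\sum_j\|\h\eta_{(j)}-\eta^0_{(j)}\|_1$; applying Lemma \ref{lem:assumptionbounds}(ii) collapses this to $c_u s\log^{3/2}(p\vee T)/(\psi\sqrt{Tl_T})$, which is $o(1)$ by (\ref{eq:rateextra}). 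For (iii), the quantity $\sup\frac{1}{T}\bigl|\sum_t\sum_j(\h\g_{(j)}-\g^0_{(j)})^T z_{t,-j}z_{t,-j}^T\h\eta_{(j)}\bigr|$ is exactly controlled by Lemma \ref{lem:term123}(ii); after multiplying by $Tp^{-1}$, using $\xi_{2,2}=\psi\sqrt{p}$, $Tu_T=c_u\psi^{-2}$, the bound of Lemma \ref{lem:assumptionbounds}(i) for $\sum\|\h\g_{(j)}-\g^0_{(j)}\|_2^2$, and the fact that the bracketed factor $[1+\xi_{2,2}^{-1}\{s\log(p\vee T)\sum\|\h\eta-\eta^0\|_2^2\}^{1/2}]$ is $O(1)$ by Lemma \ref{lem:assumptionbounds}(iii), the overall bound is again $c_u s\log(p\vee T)/(\psi\sqrt{Tl_T}) = o(1)$.

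The only real obstacle is bookkeeping: the bilinear forms $\Phi$ and the cross-product bounds in Lemmas \ref{lem:nearoptimalcross} and \ref{lem:term123} carry a $1/T$ normalizer, whereas $R_{ij}$ carry only $1/p$, so each invocation produces a factor $Tp^{-1}$ that has to be absorbed correctly. The strategic point is that the shift from Condition A(iii)(a), which reads $s\log^{3/2}(p\vee T)/(\psi\sqrt{Tl_T}) = O(1)$ and yields the $O_p(\psi^{-2})$ rate of Theorem \ref{thm:optimalapprox}, to the strengthened assumption (\ref{eq:rateextra}) with $o(1)$, is precisely what converts each of these residual bounds from $O_p(1)$ to $o_p(1)$; symmetric arguments handle the mirror case $\tau\le\tau^0$, and a union bound over the finitely many events on which the cited lemmas fail preserves the $1-o(1)$ probability.
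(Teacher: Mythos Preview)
Your approach is essentially the same as the paper's: the same decomposition of $R_{11}-R_{21}$ via $\Phi(\h\eta,\h\eta)-\Phi(\eta^0,\eta^0)=\Phi(\h\eta-\eta^0,\h\eta-\eta^0)+2\Phi(\h\eta-\eta^0,\eta^0)$, the same invocation of Lemma~\ref{lem:etabounds}(ii),(iv) and Lemma~\ref{lem:nearoptimalcross}(ii), and the same rate arithmetic against~(\ref{eq:rateextra}). For Part~(iii) you cite Lemma~\ref{lem:term123}(ii) directly, whereas the paper re-derives the same bound by splitting $\Phi(\h\g-\g^0,\h\eta)$ into $\Phi(\h\g-\g^0,\h\eta-\eta^0)+\Phi(\h\g-\g^0,\eta^0)$ and applying Cauchy--Schwarz; these are equivalent.

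One point you gloss over: Lemma~\ref{lem:etabounds}(iv) and Lemma~\ref{lem:term123}(ii) carry the side condition $u_T\ge c_{a1}^2\si^4/(T\phi^2)$ and hold only with probability $1-a-o(1)$ for a fixed $a\in(0,1)$, not $1-o(1)$. With $u_T=c_uT^{-1}\psi^{-2}$ this side condition forces $c_{a1}^2\le c_u\phi^2\psi^{-2}/\si^4$, i.e.\ $a\ge 64^2\psi^2\si^4/(c_u\phi^2)$. The paper handles this explicitly by taking $a=64^2\psi^2\si^4/(c_1\phi^2)$, which tends to zero in the vanishing regime $\psi\to 0$, so that $1-a-o(1)=1-o(1)$; in the non-vanishing regime the window length is bounded and the issue is moot. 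Your closing sentence about a union bound over ``finitely many events on which the cited lemmas fail'' does not quite cover this, since those events have probability $a$, not $o(1)$, until you let $a=a_T\to 0$.
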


\begin{proof}[Proof of Lemma \ref{lem:limiting.dist.residual.terms}] Let $\Phi(\cdotp,\cdotp)$ be as defined in (\ref{def:Phi}) and consider,
	\benr\label{eq:unifpart1}
	\sup_{\substack{\tau\in\cG\big((c_1T^{-1}\psi^{-2}),0\big);\\\tau\ge\tau^0}}|R_{11}-R_{21}|\hspace{2.75in}\nn\\
	=\sup_{\substack{\tau\in\cG\big((c_1T^{-1}\psi^{-2}),0\big);\\\tau\ge\tau^0}}p^{-1}\Big|\sum_{\lfloor T\tau^0\rfloor+1}^{\lfloor T\tau\rfloor}\sum_{j=1}^p\Big(\|z_{t,-j}^T\h\eta_{(j)}\|_2^2-\|z_{t,-j}^T\eta^0_{(j)}\|_2^2\Big)\Big|\nn\\
	=\sup_{\substack{\tau\in\cG\big((c_1T^{-1}\psi^{-2}),0\big);\\\tau\ge\tau^0}} p^{-1}\Big|\sum_{\lfloor T\tau^0\rfloor+1}^{\lfloor T\tau\rfloor}\sum_{j=1}^p(\h\eta_{(j)}-\eta^0_{(j)})^Tz_{t,-j}z_{t,-j}^T(\h\eta_{(j)}+\eta^0_{(j)})\Big|\hspace{-0.85cm}\nn\\
	=\sup_{\substack{\tau\in\cG\big((c_1T^{-1}\psi^{-2}),0\big);\\\tau\ge\tau^0}}\Big| Tp^{-1}\Phi(\h\eta-\eta^0,\h\eta-\eta^0)+2Tp^{-1}\Phi(\h\eta-\eta^0,\eta^0)\Big|.\hspace{-0.35cm}
	\eenr
	Now from Part (ii) of Lemma \ref{lem:etabounds} we have
	\benr\label{eq:15}
	\sup_{\substack{\tau\in\cG\big((c_1T^{-1}\psi^{-2}),0\big);\\\tau\ge\tau^0}} Tp^{-1}\Phi(\h\eta-\eta^0,\h\eta-\eta^0)\hspace{2in}\nn\\
	\le c_uc_1(\si^2\vee\phi)\psi^{-2}p^{-1}s\log(p\vee T)\sum_{j=1}^p\|\h\eta_{(j)}-\eta^0_{(j)}\|_2^2\nn\\
	= O\Big(\frac{s^2\log^2 (p\vee T)}{\psi^{-2}Tl_T}\Big)=o(1),\hspace{3.7cm}
	\eenr
	with probability at least $1-o(1).$ Also, from Part (iv) of Lemma \ref{lem:etabounds}, we have for $u_T\ge c_{a1}^2\si_x^4\big/(T\phi^2),$
	\benr\label{eq:18}
	\sup_{\substack{\tau\in\cG(u_T,0\big);\\\tau\ge\tau^0}} 2Tp^{-1}\big|\Phi(\h\eta-\eta^0,\eta^0)\big|\hspace{2.25in}\nn\\
	\le c_u(\si^2\vee\phi) Tu_Tp^{-1}\xi_{2,2}\Big(s\log(p\vee T)\sum_{j=1}^p\|\h\eta_{(j)}-\eta^0_{(j)}\|_2^2\Big)^{\frac{1}{2}}\hspace{-0.5cm}
	\eenr
	with probability at least $1-a-o(1).$ Upon choosing $a=\big(64^2 \psi^2\si^4\big)\big/(c_1\phi^2)\to 0,$ we have $c_1T^{-1}\psi^{-2}= c_{a1}^2\si_x^4\big/(T\phi^2),$ consequently from (\ref{eq:18}) we have,
	\benr\label{eq:19}
	\sup_{\substack{\tau\in\cG\big((c_1T^{-1}\psi^{-2}),0\big);\\\tau\ge\tau^0}} 2T\big|\Phi(\h\eta-\eta^0,\eta^0)\big|\hspace{2in}\nn\\
	\le c_uc_1(\si^2\vee\phi) \frac{\xi_{2,2}}{p\psi^2}
	\Big(s\log(p\vee T)\sum_{j=1}^p\|\h\eta_{(j)}-\eta^0_{(j)}\|_2^2\Big)^{\frac{1}{2}}\nn\\
	=c_uc_1(\si^2\vee\phi)\frac{1}{\xi_{2,2}}\Big(s\log(p\vee T)\sum_{j=1}^p\|\h\eta_{(j)}-\eta^0_{(j)}\|_2^2\Big)^{\frac{1}{2}}\nn\\
	\le O\Big(\frac{1}{\psi}\frac{s\log (p\vee T)}{\surd(Tl_T)}\Big)=o(1)\hspace{1.5in}
	\eenr
	with probability at least $1-a-o(1)=1-o(1).$ Substituting this uniform bound together with (\ref{eq:15}) back in (\ref{eq:unifpart1}) yields Part (i) of this lemma. To prove Part (ii), note that
	\benr
	\sup_{\substack{\tau\in\cG\big((c_1T^{-1}\psi^{-2}),0\big);\\\tau\ge\tau^0}}|R_{12}-R_{22}|&=&
	\sup_{\substack{\tau\in\cG\big((c_1T^{-1}\psi^{-2}),0\big);\\\tau\ge\tau^0}}p^{-1}\Big|\sum_{\lfloor T\tau^0\rfloor+1}^{\lfloor T\tau\rfloor}\sum_{j=1}^p \vep_{tj}z_{t,-j}^T(\h\eta_{(j)}-\eta^0_{(j)})\Big|\nn\\
	&=& O\Big(p^{-1}\psi^{-1}\log(p\vee T) \sum_{j=1}^p\|\h\eta_{(j)}-\eta^0_{(j)}\|_1\Big)\nn\\
	&\le& O\Big(\frac{s\log^{3/2}(p\vee T)}{\psi \surd(Tl_T)}\Big)=o(1),\nn
	\eenr
	with probability at least $1-o(1).$ Here the second equality follows from Part (ii) of Lemma \ref{lem:nearoptimalcross}. To prove Part (iii) we first note that the expressions $\Phi\big(\h\g-\g^0,\h\eta-\eta^0\big),$ and $\Phi(\h\g-\g^0,\eta^0)$ can be bounded above with probability at least $1-o(1),$ by the same bounds as in (\ref{eq:15}) and (\ref{eq:19}), respectively. Now applications of the Cauchy-Schwartz inequality yields the following bound for the term $|R_{13}|.$
	\benr
	\sup_{\substack{\tau\in\cG\big((c_1T^{-1}\psi^{-2}),0\big);\\\tau\ge\tau^0}}|R_{13}|&=&\sup_{\substack{\tau\in\cG\big((c_1T^{-1}\psi^{-2}),0\big);\\\tau\ge\tau^0}}\Big|\sum_{\lfloor T\tau^0\rfloor+1}^{\lfloor T\tau\rfloor}\sum_{j=1}^p (\h\g_{(j)}-\g^0_{(j)})^Tz_{t,-j}z_{t,-j}^T\h\eta_{(j)}\Big|\nn\\
	&\le& \sup_{\substack{\tau\in\cG\big((c_1T^{-1}\psi^{-2}),0\big);\\\tau\ge\tau^0}}T\Big\{\big|\Phi(\h\g-\g^0,\h\eta-\eta^0)\big|+\big|\Phi(\h\g-\g^0,\eta^0)\big|\Big\}=o(1),\nn
	\eenr
	with probability at least $1-o(1),$ thus completing the proof of the lemma.
\end{proof}

\bc$\rule{3.5in}{0.1mm}$\ec

\begin{lem}\label{lem:design.convergence.for.limiting.dist} Suppose Condition B holds and that $\psi\to 0.$ Then for any constant $r>0,$ we have,
	\benr
	p^{-1}\Big|\sum_{\lfloor T\tau^0\rfloor+1}^{\lfloor T\tau^0+r\psi^{-2}\rfloor}\sum_{j=1}^p\Big(\|z_{t,-j}^T\eta^0_{(j)}\|_2^2-E\|z_{t,-j}^T\eta^0_{(j)}\|_2^2\Big)\Big|=o_p(1) \nn
	\eenr
	Additionally, if $\xi^{-2}_{2,2}\sum_{j=1}^pE\|z_{t,-j}^T\eta^0_{(j)}\|_2^2\to \si^*,$ then,
	\benr
	p^{-1}\sum_{\lfloor T\tau^0\rfloor+1}^{\lfloor T\tau^0+r\psi^{-2}\rfloor}\sum_{j=1}^p\big\|z_{t,-j}^T\eta^0_{(j)}\big\|_2^2 \to_p r\si^*. \nn
	\eenr
\end{lem}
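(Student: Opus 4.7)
The plan is to prove the first claim by Chebyshev's inequality applied to the centered i.i.d.\ sum, using a variance bound on each summand that already appears in the paper's sub-exponential toolkit; the second claim will then follow by adding and subtracting the deterministic mean and invoking the assumed convergence.

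Writing $U_t := \sum_{j=1}^p \|z_{t,-j}^T \eta^0_{(j)}\|_2^2$, I would first observe that since the summation indices all lie strictly above $\lfloor T\tau^0\rfloor$, the $U_t$ are i.i.d.\ by Condition B(i). The key variance estimate, essentially the one established in the proof of Lemma \ref{lem:optimalsqterm}, is ${\rm var}(U_t) \le c_u \si^4 \xi_{2,2}^4$: each $z_{t,-j}^T \eta^0_{(j)}$ is sub-Gaussian with parameter $\si \|\eta^0_{(j)}\|_2$ by Condition B(ii) together with Lemma \ref{lem:condnumberbound}, so Lemma \ref{lem:sqsubGsubE} renders the centered square sub-exponential with parameter $16\si^2\|\eta^0_{(j)}\|_2^2$, and Lemma \ref{lem:lcsubE} combined with Lemma \ref{lem:subez}(iii) sums these up and converts the sub-exponential parameter into the stated second-moment bound. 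The number of summands is $N = \lfloor T\tau^0 + r\psi^{-2}\rfloor - \lfloor T\tau^0\rfloor$, and plainly $N \le r\psi^{-2} + 1$.

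Chebyshev applied to the centered i.i.d.\ sum then yields, using $\xi_{2,2}^2 = p\psi^2$, a bound of the form $P\bigl(p^{-1}\bigl|\sum_{t}(U_t - EU_t)\bigr| > \epsilon\bigr) \le N c_u \si^4 \xi_{2,2}^4 / (p^2\epsilon^2) \le c_u r \si^4 \psi^2 / \epsilon^2$, which tends to zero since $\psi \to 0$; this is the first claim. For the second claim, I decompose the sum of squares as the sum of means plus the centered sum: the centered piece is $o_p(1)$ by what was just shown, while the mean piece equals $N\psi^2 \cdot \bigl(\xi_{2,2}^{-2}\sum_{j=1}^p E\|z_{t,-j}^T\eta^0_{(j)}\|_2^2\bigr)$, which converges to $r\si^*$ because $N\psi^2 \to r$ and the parenthesized quantity converges to $\si^*$ by hypothesis. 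No real obstacle is anticipated: the variance bound is a direct import from existing arguments in the paper, and the scaling $\xi_{2,2}^2 = p\psi^2$ together with the vanishing regime $\psi \to 0$ makes the Chebyshev estimate decay with room to spare.
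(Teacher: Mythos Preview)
Your argument is correct. Both parts are proved cleanly: the Chebyshev bound with ${\rm var}(U_t)\le c_u\si^4\xi_{2,2}^4$ and the scaling $\xi_{2,2}^4/p^2=\psi^4$ indeed gives a probability bound of order $r\psi^2/\epsilon^2\to 0$, and the second claim follows exactly as you say from $N\psi^2\to r$ combined with the assumed convergence.

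The paper takes a slightly different route. Rather than Chebyshev, it rescales the summand so that $p^{-1}\psi^{-2}\sum_{j=1}^p\big(\|z_{t,-j}^T\eta^0_{(j)}\|_2^2-E\|z_{t,-j}^T\eta^0_{(j)}\|_2^2\big)\sim{\rm subE}(16\si^2)$ and then applies Bernstein's inequality directly, choosing the deviation threshold as $d=\psi^{1-b}$ for some $0<b<1$ so that the exponential tail bound decays. Your approach is more elementary: you only need the second moment implied by the sub-exponential property, not the full exponential tail, and Chebyshev is enough because all that is required is an $o_p(1)$ conclusion. The paper's route would in principle yield sharper (exponentially decaying) probability bounds, but these are not needed for the lemma as stated, so your simplification is a net gain here. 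One incidental remark: the sub-Gaussian parameter $\si\|\eta^0_{(j)}\|_2$ for $z_{t,-j}^T\eta^0_{(j)}$ follows directly from Condition B(i) and the definition of a sub-Gaussian vector; Lemma \ref{lem:condnumberbound} is not actually needed at that step.
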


\begin{proof}[Proof of Lemma \ref{lem:design.convergence.for.limiting.dist}]
	We begin with the following observation. For any $\tau\ge \tau^0,$ we have the deterministic inequality  $T(\tau-\tau^0)-1\le \big(\lfloor T\tau\rfloor-\lfloor T\tau^0\rfloor \big)\le T(\tau-\tau^0)+1.$ It is straightforward to verify that under the assumption $\psi\to 0,$ this inequality directly yields $c_{u1}r\psi^{-2}\le\big(\lfloor T\tau^0+ r\xi^{-2}\rfloor-\lfloor T\tau^0\rfloor \big)\le c_{u2}r\psi^{-2}.$ Also, note that from Lemma \ref{lem:lcsubE} and Lemma \ref{lem:sqsubGsubE} we have,
	\benr
	\hspace{1.25cm} p^{-1}\psi^{-2}\sum_{j=1}^p\Big(\big\|z_{t,-j}^T\eta^0_{(j)}\big\|_2^2-E\big\|z_{t,-j}^T\eta^0_{(j)}\big\|_2^2\Big)\sim {\rm subE}(\la),\,\, \la=16\si^2.
	\eenr
	Now upon applying Bernstein's inequality (Lemma \ref{lem:subetail}) together with the above observations, we obtain for any $d>0,$
	\benr
	pr\Big\{p^{-1}\Big|\sum_{\lfloor T\tau^0\rfloor+1}^{\lfloor T\tau^0+r\psi^{-2}\rfloor}\sum_{j=1}^p\Big(\big\|z_{t,-j}^T\eta^0_{(j)}\big\|_2^2-E\big\|z_{t,-j}^T\eta^0_{(j)}\big\|_2^2\Big)\Big|> c_{u2}dr\Big\}\nn\\
	\le 2\exp\Big\{-\frac{c_{u1}r\psi^{-2}}{2}\Big(\frac{d^2}{\la^2}\wedge\frac{d}{\la}\Big)\Big\}\hspace{-1.25cm}.\nn
	\eenr
	Choosing $d$ as any sequence converging to zero slower than $\psi,$ say $d=\psi^{1-b},$ for any $0<b<1,$ and noting that in this case $(d\big/\la) \le 1$ for $T$ large, we obtain,
	\benr
	p^{-1}\Big|\sum_{\lfloor T\tau^0\rfloor+1}^{\lfloor T\tau^0+r\psi^{-2}\rfloor}\sum_{j=1}^p\Big(\big\|z_{t,-j}^T\eta^0_{(j)}\big\|_2^2-E\big\|z_{t,-j}^T\eta^0_{(j)}\big\|_2^2\Big)\Big|=o_p(1), \nn
	\eenr
	This completes the proof of the first part of this lemma, the second part can be obtained as a direct consequence of Part (i).
\end{proof}

\bc$\rule{3.5in}{0.1mm}$\ec

\section{Deviation bounds used for proofs in Section 3}

\begin{lem}\label{lem:bounds.for.nuis.thm} Suppose Condition A$'$(i), A$'$(ii) and B holds, and $c_{u1}>0$ be any constant. Then uniformly over $j=1,...,p,$ we have,
	\benr
	\sup_{\substack{\tau\in(0,1);\\ \lfloor T\tau\rfloor \ge c_{u1}Tl_T}}\frac{1}{\lfloor T\tau\rfloor}\Big\|\sum_{t=1}^{\lfloor T\tau\rfloor}\vep_{tj}z_{t,-j}\Big\|_{\iny}\le 48\si^2(c_u/\surd c_{u1})\surd(1+\nu^2)\Big\{\frac{\log(p\vee T)}{Tl_T}\Big\}^{\frac{1}{2}}\nn
	\eenr
	with probability at least $1-2\exp\big[-\{(c_u^2/2)-3\}\log(p\vee T)\big].$ Additionally, let $u_T\ge 0,$ be any sequence and $c_u>0$ any constant, then uniformly over $j=1,...,p,$ we have,
	\benr
	\sup_{\substack{\tau\in\cG(u_T,0);\\\lfloor T\tau\rfloor \ge c_{u1}Tl_T}}\frac{1}{\lfloor T\tau\rfloor} \Big\|\sum_{t=\lfloor T\tau^0\rfloor+1}^{\lfloor T\tau\rfloor}\eta^{0T}_{(j)}z_{t,-j}z_{t,-j}^T\Big\|_{\iny}&\le& c_{u2}(\si^2\vee\phi)\|\eta^0_{(j)}\|_2\max\Big\{\frac{\log (p\vee T)}{Tl_T},\,\,\frac{u_T}{l_T}\Big\},\nn
	\eenr
	with probability $1-2\exp\big\{-c_{u3}\log (p\vee T)\big\},$ with $c_{u2}=(1+48c_u)/c_{u1},$ $c_{u3}=\{(c_u\wedge c_u^2)/2\}-3.$
\end{lem}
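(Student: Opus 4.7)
The plan is to obtain both inequalities from Bernstein's inequality (Lemma \ref{lem:subetail}) applied coordinatewise to sub-exponential random variables, followed by a union bound over the $p(p-1)\le(p\vee T)^2$ index pairs $(j,k)$ and the at most $T$ distinct values of $\lfloor T\tau\rfloor$ admitted by the supremum. The sub-exponential structure is supplied by Lemma \ref{lem:subez}, while Condition B(ii) controls the deterministic means that arise in the second inequality.

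For the first inequality, fix $\tau$, $j$, and $k\in\{1,\ldots,p-1\}$. Lemma \ref{lem:subez}(i) gives $\vep_{tj}z_{t,-j,k}\sim{\rm subE}(\la_1)$ with $\la_1=48\si^2\surd(1+\nu^2)$, and these variables are mean zero both for $t\le\lfloor T\tau^0\rfloor$ and $t>\lfloor T\tau^0\rfloor$ by the normal-equation characterization of $\mu^0_{(j)}$ and $\g^0_{(j)}$ in (\ref{def:muga}). Applying Bernstein's inequality to $\sum_{t=1}^{\lfloor T\tau\rfloor}\vep_{tj}z_{t,-j,k}$ at deviation $c_u\la_1\surd{\lfloor T\tau\rfloor\log(p\vee T)}$, and noting that $\log(p\vee T)/(Tl_T)=o(1)$ by Condition A$'$(ii) places us in the Gaussian branch of Bernstein, yields failure probability at most $2\exp\{-(c_u^2/2)\log(p\vee T)\}$. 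Dividing by $\lfloor T\tau\rfloor\ge c_{u1}Tl_T$ produces the coordinatewise bound $(c_u\la_1/\surd{c_{u1}})\surd(\log(p\vee T)/(Tl_T))$, and a union bound over at most $(p\vee T)^3$ triples $(\tau,j,k)$ inflates the failure probability by this factor, producing the stated bound.

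For the second inequality, decompose each summand into its mean plus a centered fluctuation. For $\tau\ge\tau^0$ the mean is $\eta^{0T}_{(j)}\D_{-j,k}$, whose modulus is at most $\|\eta^0_{(j)}\|_2\|\D_{-j,k}\|_2\le\phi\|\eta^0_{(j)}\|_2$ by Condition B(ii). Summing, the deterministic contribution is at most $(\lfloor T\tau\rfloor-\lfloor T\tau^0\rfloor)\phi\|\eta^0_{(j)}\|_2\le Tu_T\phi\|\eta^0_{(j)}\|_2$, which after division by $\lfloor T\tau\rfloor\ge c_{u1}Tl_T$ yields the $u_T/l_T$ branch of the claimed maximum. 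For the centered piece, the polarization argument used in Lemma \ref{lem:subez} shows $\eta^{0T}_{(j)}z_{t,-j}z_{t,-j,k}-E[\,\cdot\,]\sim{\rm subE}(c_u\si^2\|\eta^0_{(j)}\|_2)$, since $\eta^{0T}_{(j)}z_{t,-j}\sim{\rm subG}(\si\|\eta^0_{(j)}\|_2)$ and $z_{t,-j,k}\sim{\rm subG}(\si)$. Apply Bernstein to the $n=\lfloor T\tau\rfloor-\lfloor T\tau^0\rfloor\le Tu_T$ centered increments with the hybrid deviation
\[
d=c_u\si^2\|\eta^0_{(j)}\|_2\max\{\surd(n\log(p\vee T)),\,\log(p\vee T)\},
\]
which activates both arms of the Bernstein minimum at level $((c_u\wedge c_u^2)/2)\log(p\vee T)$. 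Using $n\le Tu_T$, $\lfloor T\tau\rfloor\ge c_{u1}Tl_T$, and the AM-GM identity $\surd(Tu_T\log(p\vee T))/(Tl_T)=\surd{(u_T/l_T)(\log(p\vee T)/(Tl_T))}\le\tfrac{1}{2}(u_T/l_T+\log(p\vee T)/(Tl_T))$, the ratio $d/\lfloor T\tau\rfloor$ is bounded by a constant multiple of $\si^2\|\eta^0_{(j)}\|_2\max\{\log(p\vee T)/(Tl_T),\,u_T/l_T\}$. Adding the mean and centered bounds, and taking a union bound over at most $(p\vee T)^3$ triples $(\tau,j,k)$, delivers the claim with $c_{u2}=(1+48c_u)/c_{u1}$ and $c_{u3}=((c_u\wedge c_u^2)/2)-3$. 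The mirror case $\tau<\tau^0$ is identical with $\Si$ in place of $\D$.

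The principal subtlety is producing the $\max\{\log(p\vee T)/(Tl_T),\,u_T/l_T\}$ form uniformly in $\tau$: the number of summands $n$ varies from $0$ to $Tu_T$, so Bernstein transitions between its sub-Gaussian and Poisson arms as $\tau$ moves across $\cG(u_T,0)$. Choosing $d$ as the stated hybrid maximum handles this transition in a single estimate, and the AM-GM step absorbs the sub-Gaussian scale into the two branches of the final bound, thereby linking the two terms of the claimed inequality to the two natural scales ($\log(p\vee T)$ and $Tu_T$) arising from Bernstein's inequality.
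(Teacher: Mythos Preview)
Your proposal is correct and follows essentially the same route as the paper: sub-exponential tails from Lemma \ref{lem:subez}, a coordinatewise Bernstein bound, and a union bound over $(j,k,\tau)$. The paper likewise splits Part (ii) into mean plus centered fluctuation, uses the same hybrid deviation level (referring the resulting case analysis to a remark rather than writing out AM--GM explicitly), and bounds the mean via Condition B(ii). The only cosmetic difference is that the paper cites Condition A$'$(i) rather than A$'$(ii) for $\log(p\vee T)=o(Tl_T)$, but either suffices.
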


\begin{proof}[Proof of Lemma \ref{lem:bounds.for.nuis.thm}]
	We begin with proving Part (i). Using Lemma \ref{lem:subez} we have that $\vep_{tj}z_{t,-j,k}\sim {\rm subE}(\la_1),$ with $\la_1=48\si^2\surd (1+\nu^2).$ For any $\tau\in(0,1)$ satisfying $\lfloor T\tau\rfloor\ge c_{u1}Tl_T,$ applying Lemma \ref{lem:subetail} we have for $d>0,$
	\benr
	pr\Big(\Big|\sum_{t=1}^{\lfloor T\tau\rfloor}\vep_{tj}z_{t,-j,k}\Big|>d\lfloor T\tau\rfloor\Big)\le 2\exp\Big\{-\frac{\lfloor T\tau\rfloor}{2}\Big(\frac{d^2}{\la_1^2}\wedge\frac{d}{\la_1}\Big)\Big\}.\nn
	\eenr	
	Choose $d=c_{u}\la_1\surd\{\log (p\vee T)\big/\lfloor T\tau\rfloor\},$ and recall that by choice we have $\lfloor T\tau\rfloor\ge c_{u1}Tl_T,$ and from Condition A$'$(i) we have $\log(p\vee T)\le c_{u1}Tl_T.$ Thus, $d/\la_1\le 1,$ and consequently $(d^2/\la_1^2)\le (d/\la_1).$ Using these relations the above probability bound yields,
	\benr
	\frac{1}{\lfloor T\tau\rfloor}\Big|\sum_{t=1}^{\lfloor T\tau\rfloor}\vep_{tj}z_{t,-j,k}\Big|\le (c_u/\surd c_{u1})\la_1 \Big\{\frac{\log(p\vee T)}{Tl_T}\Big\}^{\frac{1}{2}}\nn
	\eenr
	with probability at least $1-2\exp\big\{-(c_u^2/2)\log(p\vee T)\big\}.$ Part (i) now follows by applying a union bound over $k=1,...,(p-1),$ $j=1,...,p$ and over the at most $T$ distinct values of $\lfloor T\tau\rfloor.$
	
	To prove Part (ii), first note that using similar arguments as in Lemma \ref{lem:subez} we have that $\eta^{0T}_{(j)}z_{t,-j}z_{t,-j,k}-E\big(\eta^{0T}_{(j)}z_{t,-j}z_{t,-j,k}\big)\sim {\rm subE}(\la_1),$ with $\la_1= 48\si^2\|\eta^0_{(j)}\|_2.$ For any $\tau\in\cG(u_T,0),$ satisfying $\lfloor T\tau\rfloor\ge c_{u1}Tl_T,$ applying a union bound over $k=1,...,p-1,$ on the Bernstein's inequality (Lemma \ref{lem:subez}) yields the following probability bound,
	\benr\label{eq:22}
	pr\Big\{\Big\|\sum_{t=\lfloor T\tau^0\rfloor+1}^{\lfloor T\tau\rfloor}\big(\eta^{0T}_{(j)}z_{t,-j}z_{t,-j}-\eta^{0T}_{(j)}\D_{-j,-j}\big)\Big\|_{\iny}>d(\lfloor T\tau\rfloor-\lfloor T\tau^0\rfloor)\Big\}\nn\\
	\le 2p\exp\Big\{-\frac{(\lfloor T\tau\rfloor-\lfloor T\tau^0\rfloor)}{2}\Big(\frac{d^2}{\la_1^2}\wedge\frac{d}{\la_1}\Big)\Big\}\hspace{-1cm}
	\eenr
	Now upon choosing,
	\benr
	d=c_u\la_1\max\Big[\Big\{\frac{\log (p\vee T)}{(\lfloor T\tau\rfloor-\lfloor T\tau^0\rfloor)}\Big\}^{\frac{1}{2}},\,\frac{\log(p\vee T)}{(\lfloor T\tau\rfloor-\lfloor T\tau^0\rfloor)}\Big],\nn
	\eenr
	it can be verified that	\footnote{See, Remark \ref{rem:calculation}},
	\benr\label{eq:21}
	&&d\frac{(\lfloor T\tau\rfloor-\lfloor T\tau^0\rfloor)}{\lfloor T\tau\rfloor}\le \frac{c_u}{c_{u1}}\la_1\max\Big\{\frac{\log(p\vee T)}{Tl_T},\,\,\frac{u_T}{l_T}\Big\},\quad {\rm and},\nn\\
	&&\frac{(\lfloor T\tau\rfloor-\lfloor T\tau^0\rfloor)}{2}\Big(\frac{d^2}{\la_1^2}\wedge\frac{d}{\la_1}\Big)=\frac{(c_u\wedge c_u^2)}{2}\log(p\vee T)
	\eenr
	Substituting the relations of (\ref{eq:21}) in the probability bound (\ref{eq:22}) we obtain,
	\benr
	\frac{1}{\lfloor T\tau\rfloor}\Big\|\sum_{t=\lfloor T\tau^0\rfloor+1}^{\lfloor T\tau\rfloor}\big(\eta^{0T}_{(j)}z_{t,-j}z_{t,-j}-\eta^{0T}_{(j)}\D_{-j,-j}\big)\Big\|_{\iny}\le \frac{c_u}{c_{u1}}\la_1\max\Big\{\frac{\log(p\vee T)}{Tl_T},\,\,\frac{u_T}{l_T}\Big\} \nn
	\eenr
	with probability at least $1-2p\exp\big[\{(c_u\wedge c_u^2)/2\}\log (p\vee T)\big].$ Next, using the bounded eigenvalue assumption of Condition B we have that,
	\benr
	\frac{1}{\lfloor T\tau\rfloor}\sum_{t=\lfloor T\tau^0\rfloor+1}^{\lfloor T\tau\rfloor}\eta^{0T}_{(j)}\D_{-j,-j}\le \|\eta^0_{(j)}\|_2\phi\frac{u_T}{c_{u1}l_T}\nn
	\eenr
	Using this relation in the probability bound now yields,
	\benr
	\frac{1}{\lfloor T\tau\rfloor}\Big\|\sum_{t=\lfloor T\tau^0\rfloor+1}^{\lfloor T\tau\rfloor}\eta^{0T}_{(j)}z_{t,-j}z_{t,-j}\Big\|_{\iny}\le c_{u2}\phi\|\eta^0_{(j)}\|_2\frac{u_T}{l_T}+c_{u3}\si^2\|\eta^0_{(j)}\|_2\max\Big\{\frac{\log(p\vee T)}{Tl_T},\,\,\frac{u_T}{l_T}\Big\}, \nn
	\eenr
	with probability at least $1-2p\exp\big[\{(c_u\wedge c_u^2)/2\}\log (p\vee T)\big],$ where $c_{u2}=1/c_{u1},$ and $c_{u3}=48c_u/c_{u1}.$  Uniformity over $\tau$ can be obtained by a union bound over the at most $T$ values of $\lfloor T\tau\rfloor,$ and similarly over $j=1,...,p,$ by using another union bound. This completes the proof of the lemma.
\end{proof}

\bc$\rule{3.5in}{0.1mm}$\ec

\begin{rem}\label{rem:calculation} Consider,
	\benr\label{eq:24}
	d=c_u\la_1\max\Big[\Big\{\frac{\log (p\vee T)}{(\lfloor T\tau\rfloor-\lfloor T\tau^0\rfloor)}\Big\}^{\frac{1}{2}},\,\frac{\log(p\vee T)}{(\lfloor T\tau\rfloor-\lfloor T\tau^0\rfloor)}\Big],
	\eenr	
	observe that when $\log (p\vee T)\big/(\lfloor T\tau\rfloor-\lfloor T\tau^0\rfloor)\ge 1,$ then the maximum of the two terms in the expression (\ref{eq:24}) is $\log (p\vee T)\big/(\lfloor T\tau\rfloor-\lfloor T\tau^0\rfloor).$ In this case,
	\benr\label{eq:25}
	\Big(\frac{d^2}{\la_1^2}\wedge\frac{d}{\la_1}\Big) = (c_u^2\wedge c_u) \frac{\log(p\vee T)}{(\lfloor T\tau\rfloor-\lfloor T\tau^0\rfloor)}.
	\eenr
	In the case where $\log (p\vee T)\big/(\lfloor T\tau\rfloor-\lfloor T\tau^0\rfloor)< 1,$ the maximum in the expression $(\ref{eq:24})$ becomes $\surd\{\log (p\vee T)\big/(\lfloor T\tau\rfloor-\lfloor T\tau^0\rfloor)\},$ however the minimum in the expression (\ref{eq:25}) remains the same.
\end{rem}

\bc$\rule{3.5in}{0.1mm}$\ec

\begin{lem}\label{lem:nearoptimalcross.check} Suppose Condition B holds and let  $\vep_{tj}$ be as defined in (\ref{def:epsilons}). Let $T\ge\log(p\vee T)$ and $\log(p\vee T)\le Tv_T\le Tu_T$ be any non-negative sequences. Then for any $c_{u}>0,$ we have,
	\benr
	\sup_{\substack{\tau\in\cG(u_T,v_T)\\\tau\ge \tau^0}}\frac{1}{T}\Big\|\sum_{t=\lfloor T\tau^0\rfloor+1}^{\lfloor T\tau\rfloor}\vep_{tj}z_{t,-j}^T\Big\|_{\iny}\le 48\surd(2c_u)\si^2 \surd{(1+\nu^2)} \Big(\frac{u_T\log (p\vee T)}{T}\Big)^{\frac{1}{2}},\nn
	\eenr
	with probability at least $1-2\exp\big\{-(c_{u1}-3)\log(p\vee T)\big\},$ with $c_{u1}=c_u\wedge\surd(c_u/2).$
\end{lem}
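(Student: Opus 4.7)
The plan is to mimic the argument of Lemma \ref{lem:nearoptimalcross}, but exploit the additional assumption $Tv_T\ge \log(p\vee T)$, which guarantees a minimum sample size on each partial sum large enough to place us inside the sub-Gaussian regime of Bernstein's inequality. The payoff is that the $\log(p\vee T)$ factor in the upper bound of Lemma \ref{lem:nearoptimalcross} is replaced by $\surd{\log(p\vee T)}$, exactly as the statement requires.

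First, from part (i) of Lemma \ref{lem:subez}, each component $\vep_{tj}z_{t,-j,k}$ is sub-exponential with parameter $\la_1=48\si^2\surd(1+\nu^2)$, and these are independent over $t$. For a fixed $\tau\in\cG(u_T,v_T)$ with $\tau\ge\tau^0$, Bernstein's inequality (Lemma \ref{lem:subetail}) gives, for any $d>0$,
\benrr
pr\Big\{\Big|\sum_{t=\lfloor T\tau^0\rfloor+1}^{\lfloor T\tau\rfloor}\vep_{tj}z_{t,-j,k}\Big|>d\big(\lfloor T\tau\rfloor-\lfloor T\tau^0\rfloor\big)\Big\}\le 2\exp\Big\{-\frac{\lfloor T\tau\rfloor-\lfloor T\tau^0\rfloor}{2}\Big(\frac{d^2}{\la_1^2}\wedge\frac{d}{\la_1}\Big)\Big\}.
\eenrr
The critical step is to choose $d=\surd{(2c_u)}\,\la_1\,\surd\{\log(p\vee T)/(\lfloor T\tau\rfloor-\lfloor T\tau^0\rfloor)\}$. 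With this choice one has $d/\la_1=\surd\{2c_u\log(p\vee T)/(\lfloor T\tau\rfloor-\lfloor T\tau^0\rfloor)\}$ and $d^2/\la_1^2=2c_u\log(p\vee T)/(\lfloor T\tau\rfloor-\lfloor T\tau^0\rfloor)$. If $\lfloor T\tau\rfloor-\lfloor T\tau^0\rfloor\ge 2c_u\log(p\vee T)$, then $d^2/\la_1^2\le d/\la_1$, and the exponent simplifies to $c_u\log(p\vee T)$. Otherwise, $d/\la_1\le d^2/\la_1^2$, and the exponent equals $\tfrac12\surd\{2c_u\log(p\vee T)(\lfloor T\tau\rfloor-\lfloor T\tau^0\rfloor)\}\ge \surd(c_u/2)\log(p\vee T)$, where the last inequality uses $\lfloor T\tau\rfloor-\lfloor T\tau^0\rfloor\ge Tv_T\ge \log(p\vee T)$, which is exactly where the strengthened hypothesis enters. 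In either case the exponent is at least $c_{u1}\log(p\vee T)$ with $c_{u1}=c_u\wedge\surd(c_u/2)$.

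Multiplying the chosen $d$ by $(\lfloor T\tau\rfloor-\lfloor T\tau^0\rfloor)/T$ yields
\benrr
\frac{1}{T}\Big|\sum_{t=\lfloor T\tau^0\rfloor+1}^{\lfloor T\tau\rfloor}\vep_{tj}z_{t,-j,k}\Big|\le \surd{(2c_u)}\,\la_1\,\frac{1}{T}\surd\big\{(\lfloor T\tau\rfloor-\lfloor T\tau^0\rfloor)\log(p\vee T)\big\}\le 48\surd{(2c_u)}\,\si^2\surd{(1+\nu^2)}\Big(\frac{u_T\log(p\vee T)}{T}\Big)^{\frac{1}{2}},
\eenrr
using $\lfloor T\tau\rfloor-\lfloor T\tau^0\rfloor\le Tu_T$ and the value of $\la_1$. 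Finally, take a union bound over $j=1,\dots,p$, $k=1,\dots,p-1$, and the at most $T$ distinct integer values of $\lfloor T\tau\rfloor$ arising as $\tau$ ranges over $\cG(u_T,v_T)\cap\{\tau\ge\tau^0\}$. Since $\log T\le \log(p\vee T)$ and $\log p\le \log(p\vee T)$, these three factors contribute an overall $e^{3\log(p\vee T)}$, which transforms the exponent in the probability bound to $-(c_{u1}-3)\log(p\vee T)$ as claimed.

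This argument is essentially routine once the right choice of $d$ is made; the only subtle point is the case split in controlling the Bernstein exponent, but both cases are handled cleanly by the hypothesis $Tv_T\ge\log(p\vee T)$ and this is precisely the reason one cannot prove the sharper bound under the assumptions of Lemma \ref{lem:nearoptimalcross}. The mirror case $\tau\le\tau^0$ follows by symmetric arguments, though for the statement as given (with $\tau\ge\tau^0$) the argument above already suffices.
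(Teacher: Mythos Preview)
Your proof is correct and follows essentially the same approach as the paper's: the same sub-exponential bound from Lemma \ref{lem:subez}, the same choice $d=\la_1\{2c_u\log(p\vee T)/(\lfloor T\tau\rfloor-\lfloor T\tau^0\rfloor)\}^{1/2}$ in Bernstein's inequality, the same use of $Tv_T\ge\log(p\vee T)$ to lower bound the linear branch of the exponent, and the same union bound over $j,k,\tau$. The only cosmetic difference is that you frame the two Bernstein branches as an explicit case split, whereas the paper simply computes both exponents and takes the minimum; the substance is identical.
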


\begin{proof}[Proof of Lemma \ref{lem:nearoptimalcross.check}] The proof of this result is very similar to that of Lemma \ref{lem:nearoptimalcross}, the difference being utilization of the additional assumption $Tv_T\ge \log(p\vee T),$ in order to obtain this sharper bound. Proceeding as in (\ref{eq:3}) we have,
	\benr
	pr\Big(\Big|\sum_{t=\lfloor T\tau^0\rfloor+1}^{\lfloor T\tau\rfloor}\vep_{tj}z_{t,-j,k}\Big|>d\big(\lfloor T\tau\rfloor-\lfloor T\tau^0\rfloor\big)\Big)\le 2\exp\Big\{-\frac{\big(\lfloor T\tau\rfloor-\lfloor T\tau^0\rfloor\big)}{2}\Big(\frac{d^2}{\la_1^2}\wedge \frac{d}{\la_1}\Big)\Big\},\nn
	\eenr	
	where $\la_1=48\si^2\surd(1+\nu^2).$ Choose $d=\la_1\{2c_{u}\log (p\vee T)\big/(\lfloor T\tau\rfloor-\lfloor T\tau^0\rfloor)\}^{1/2},$ then,
	\benr
	\big(\lfloor T\tau\rfloor-\lfloor T\tau^0\rfloor\big)\frac{d^2}{2\la_1^2}&=&c_{u}\log (p\vee T),\quad{\rm and},\nn\\
	\big(\lfloor T\tau\rfloor-\lfloor T\tau^0\rfloor\big)\frac{d}{2\la_1}&=&\surd(c_{u}/2)\{\log(p\vee T)(\lfloor T\tau\rfloor-\lfloor T\tau^0\rfloor)\}^{1/2}\ge\surd(c_{u}/2) \log(p\vee T) ,\nn
	\eenr
	where we used $\big(\lfloor T\tau\rfloor-\lfloor T\tau^0\rfloor\big)\ge Tv_T\ge \log(p\vee T).$ Substituting back in the probability bound yields,
	\benr
	\frac{1}{T}\Big|\sum_{t=\lfloor T\tau^0\rfloor+1}^{\lfloor T\tau\rfloor}\vep_{tj}z_{t,-j,k}\Big|\le \la_1\Big\{\frac{2c_u u_T\log(p\vee T)}{T}\Big\}^{1/2},\nn
	\eenr
	with probability $1-2\exp\{-c_{u1} \log (p\vee T)\},$ with $c_{u1}=c_u\wedge \surd(c_u/2).$ Finally applying a union bound over $j=1,...,p,$ $k=1,...,p-1$ and over at most $T$ values of $\lfloor T\tau\rfloor$ for $\tau\in\cG(u_T,v_T),$ yields the statement of the lemma.
\end{proof}

\bc$\rule{3.5in}{0.1mm}$\ec

\begin{lem}\label{lem:nearoptimal.sqterm} Let $\Phi(\cdotp,\cdotp)$ be as defined in (\ref{def:Phi}) and suppose Condition B holds and $T\ge \log(p\vee T).$ Additionally, let $u_T,$ $v_T$ be non-negative sequences satisfying $\log (p\vee T)\le Tv_T\le Tu_T.$ Then for any constant $c_u>0,$ we have,
	\benr
	&(i)&\inf_{\substack{\tau\in\cG(u_T,v_T);\\ \tau\ge\tau^0}}\Phi(\eta^0,\eta^0)\ge v_T\ka\xi_{2,2}^2-16\surd(2c_u)\si^2\xi_{2,2}^2\Big(\frac{u_T\log(p\vee T)}{T}\Big)^{\frac{1}{2}},\nn\\
	&(ii)&\sup_{\substack{\tau\in\cG(u_T,v_T);\\ \tau\ge\tau^0}}\Phi(\eta^0,\eta^0)\le u_T\phi\xi_{2,2}^2+16\surd(2c_u)\si^2\xi_{2,2}^2\Big(\frac{u_T\log(p\vee T)}{T}\Big)^{\frac{1}{2}}\nn
	\eenr
	with probability at least $1-2\exp\{-(c_{u1}-1)\log(p\vee T)\},$ where $c_{u1}=c_u\wedge \surd(c_u/2).$
\end{lem}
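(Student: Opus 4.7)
The plan is to decompose $\Phi(\eta^0,\eta^0)$ as an expected value plus a deviation term, bound each separately, and then take a union bound over $\tau$. For $\tau\ge\tau^0$, independence of the $z_t$'s gives
\benrr
E\big[\Phi(\eta^0,\eta^0)\big]=\frac{\lfloor T\tau\rfloor-\lfloor T\tau^0\rfloor}{T}\sum_{j=1}^p\eta^{0T}_{(j)}\D_{-j,-j}\eta^0_{(j)},
\eenrr
and Condition B(ii) sandwiches $\sum_j\eta^{0T}_{(j)}\D_{-j,-j}\eta^0_{(j)}$ between $\ka\xi_{2,2}^2$ and $\phi\xi_{2,2}^2$, while the definition of $\cG(u_T,v_T)$ forces $v_T\le(\lfloor T\tau\rfloor-\lfloor T\tau^0\rfloor)/T\le u_T$. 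So the expected value sits in $[\ka v_T\xi_{2,2}^2,\,\phi u_T\xi_{2,2}^2]$ uniformly over the set of interest, which supplies the deterministic parts of (i) and (ii).

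For the stochastic part, set $X_t=\sum_{j=1}^p\big(\eta^{0T}_{(j)}z_{t,-j}z_{t,-j}^T\eta^0_{(j)}-E[\eta^{0T}_{(j)}z_{t,-j}z_{t,-j}^T\eta^0_{(j)}]\big)$. Since each $\eta^{0T}_{(j)}z_{t,-j}\sim\mathrm{subG}(\si\|\eta^0_{(j)}\|_2)$ by Condition B, Lemma \ref{lem:sqsubGsubE} makes each centered summand $\mathrm{subE}(16\si^2\|\eta^0_{(j)}\|_2^2)$, and Lemma \ref{lem:lcsubE} combines these into $X_t\sim\mathrm{subE}(\la)$ with $\la=16\si^2\xi_{2,2}^2$. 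The $X_t$'s are independent across $t$, so for fixed $\tau$, Bernstein (Lemma \ref{lem:subetail}) controls $|\sum_t X_t|$.

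The key calibration, mirroring the one in the proof of Lemma \ref{lem:nearoptimalcross.check}, is to choose the threshold $d=\la\{2c_u\log(p\vee T)/(\lfloor T\tau\rfloor-\lfloor T\tau^0\rfloor)\}^{1/2}$. The assumption $Tv_T\ge\log(p\vee T)$ guarantees $(\lfloor T\tau\rfloor-\lfloor T\tau^0\rfloor)\ge\log(p\vee T)$, which in turn forces the sub-Gaussian branch to be the binding term in the minimum $d^2/\la^2\wedge d/\la$ of the Bernstein exponent. Concretely, $(\lfloor T\tau\rfloor-\lfloor T\tau^0\rfloor)d^2/(2\la^2)=c_u\log(p\vee T)$ and $(\lfloor T\tau\rfloor-\lfloor T\tau^0\rfloor)d/(2\la)\ge\surd(c_u/2)\log(p\vee T)$. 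Meanwhile $d(\lfloor T\tau\rfloor-\lfloor T\tau^0\rfloor)/T\le16\surd(2c_u)\si^2\xi_{2,2}^2\{u_T\log(p\vee T)/T\}^{1/2}$, producing the desired deviation rate at a single $\tau$ with probability at least $1-2\exp\{-c_{u1}\log(p\vee T)\}$, where $c_{u1}=c_u\wedge\surd(c_u/2)$.

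Uniformity is obtained by a union bound over the at most $T$ distinct values of $\lfloor T\tau\rfloor$ induced by $\tau\in\cG(u_T,v_T)$; bounding $T\le \exp\{\log(p\vee T)\}$ only inflates the exponent by one, yielding the stated probability $1-2\exp\{-(c_{u1}-1)\log(p\vee T)\}$. The case $\tau\le\tau^0$ is handled by exactly the same argument with $\Si_{-j,-j}$ replacing $\D_{-j,-j}$ in the expectation, and the two sides are combined by another union bound. The one place where care is genuinely needed is the branch-selection in Bernstein: without the standing assumption $Tv_T\ge\log(p\vee T)$, the linear tail would dominate and the resulting rate would degrade to $u_T\log(p\vee T)/T$ rather than the $\surd{}$-rate asserted in the lemma; the rest of the argument is routine verification.
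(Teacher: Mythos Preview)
Your proposal is correct and follows essentially the same route as the paper: decompose $\Phi(\eta^0,\eta^0)$ into its mean (controlled by the eigenvalue bounds of Condition B(ii) and the definition of $\cG(u_T,v_T)$) plus a centered sum of independent $\mathrm{subE}(16\si^2\xi_{2,2}^2)$ increments, apply Bernstein with the calibration $d=\la\{2c_u\log(p\vee T)/(\lfloor T\tau\rfloor-\lfloor T\tau^0\rfloor)\}^{1/2}$, use $Tv_T\ge\log(p\vee T)$ to ensure the sub-Gaussian branch governs, and finish with a union bound over at most $T$ values of $\lfloor T\tau\rfloor$. The only superfluous step is your treatment of the case $\tau\le\tau^0$, since the lemma as stated restricts to $\tau\ge\tau^0$.
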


\begin{proof}[Proof of Lemma \ref{lem:nearoptimal.sqterm}] Note that  $\sum_{j=1}^p\big(\|z_{t,-j}^T\eta^0_{(j)}\|_2^2-E\|z_{t,-j}^T\eta^0_{(j)}\|_2^2\big)\sim {\rm subE}(\la),$ where $\la=16\si^2\xi_{2,2}^2.$ For any fixed $\tau \in\cG(u_T,v_T),$ applying the Bernstein's inequality (Lemma \ref{lem:subetail}) we obtain,
	\benr
	pr\Big\{\Big|\sum_{t=\lfloor T\tau^0\rfloor+1}^{\lfloor T\tau\rfloor}\sum_{j=1}^p\big(\|z_{t,-j}^T\eta^0_{(j)}\|_2^2-E\|z_{t,-j}^T\eta^0_{(j)}\|_2^2\big)\Big|\ge d(\lfloor T\tau\rfloor-\lfloor T\tau^0\rfloor)\Big\}\hspace{0.5in}\nn\\
	\le 2\exp\Big\{-\frac{(\lfloor T\tau\rfloor-\lfloor T\tau^0\rfloor)}{2}\Big(\frac{d^2}{\la^2}\wedge \frac{d}{\la}\Big)\Big\}\nn
	\eenr
	Choose $d=\la\{2c_u\log(p\vee T)\big/(\lfloor T\tau\rfloor-\lfloor T\tau^0\rfloor)\}^{1/2}$ and observe that,
	\benr
	(\lfloor T\tau\rfloor-\lfloor T\tau^0\rfloor)\frac{d^2}{2\la^2}&=&c_u\log(p\vee T)\nn\\
	(\lfloor T\tau\rfloor-\lfloor T\tau^0\rfloor)\frac{d}{2\la}&=& \surd(c_u/2)\{Tv_T\log(p\vee T)\}^{1/2}\ge \surd(c_u/2)\log(p\vee T)\nn
	\eenr
	where the inequality follows from the assumption $Tv_T\ge \log(p\vee T).$ A substitution back in the above probability bound yields,
	\benr\label{eq:30}
	\frac{1}{T}\Big|\sum_{t=\lfloor T\tau^0\rfloor+1}^{\lfloor T\tau\rfloor}\sum_{j=1}^p\big(\|z_{t,-j}^T\eta^0_{(j)}\|_2^2-E\|z_{t,-j}^T\eta^0_{(j)}\|_2^2\big)\Big|\nn\\
	\le \surd(2c_u)\la \Big\{\frac{u_T\log(p\vee T)}{T}\Big\}^{\frac{1}{2}}\hspace{-1cm}
	\eenr	
	with probability at least $1-2\exp\big(-c_{u1}\log(p\vee T)\big),$ $c_{u1}=c_u\wedge \surd(c_u/2).$ Applying a union bound over at most $T$ distinct values of $\lfloor T\tau\rfloor,$ yields the bound (\ref{eq:30}) uniformly over $\tau.$ The statements of this lemma are now a direct consequence.
\end{proof}	

\bc$\rule{3.5in}{0.1mm}$\ec

\begin{lem}\label{lem:etabounds.check} Let $\Phi(\cdotp,\cdotp)$ be as defined in (\ref{def:Phi}) and suppose Condition B holds and $T\ge \log(p\vee T).$ Let $\check\mu_{(j)}$ and $\check\g_{(j)},$ $j=1,...,p$ be Step 1 edge estimates of Algorithm 1, and $u_T, v_T$ be any non-negative sequences satisfying $\log (p\vee T)\le Tv_T\le Tu_T.$ Then,
	\benr
	&(i)&\inf_{\substack{\tau\in\cG(u_T,v_T);\\ \tau\ge\tau^0}}\Phi(\eta^0,\eta^0)\ge v_T\ka\xi_{2,2}^2-c_u\si^2\xi_{2,2}^2\Big(\frac{u_T\log(p\vee T)}{T}\Big)^{\frac{1}{2}},\nn\\
	&(ii)&\sup_{\substack{\tau\in\cG(u_T,v_T);\\\tau\ge\tau^0}} \Phi(\check\eta-\eta^0,\check\eta-\eta^0)\le c_u(\si^2\vee\phi) u_T\Big(s\sum_{j=1}^p\|\check\eta_{(j)}-\eta^0_{(j)}\|_2^2\Big)\nn
	\eenr
	with probability $1-o(1).$ Furthermore, when $u_T\ge c_u\si^4\log (p\vee T)\big/T\phi^2,$ we have,
	\benr
	&(iii)&\sup_{\substack{\tau\in\cG(u_T,v_T);\\\tau\ge\tau^0}}\Phi(\eta^0,\eta^0)\le 2u_T\phi\xi_{2,2}^2,\nn\\ &(iv)&\sup_{\substack{\tau\in\cG(u_T,v_T);\\\tau\ge\tau^0}}\big|\Phi(\check\eta-\eta^0,\eta^0)\big|\le  c_u(\si^2\vee \phi) u_T \xi_{2,2}\Big\{s\sum_{j=1}^p\|\check\eta_{(j)}-\eta^0_{(j)}\|_2^2\Big\}^{\frac{1}{2}},\nn
	\eenr
	with probability at least $1-o(1).$
\end{lem}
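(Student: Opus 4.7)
The plan is to mirror the proof of Lemma \ref{lem:etabounds}, replacing the Kolmogorov-based bounds of Lemma \ref{lem:optimalsqterm} by the sharper Bernstein-based bounds of Lemma \ref{lem:nearoptimal.sqterm}, which are available precisely because we now assume $Tv_T\ge \log(p\vee T)$. Parts (i) and (iii) are immediate: apply Lemma \ref{lem:nearoptimal.sqterm} directly to $\Phi(\eta^0,\eta^0)=T^{-1}\sum_{t=\lfloor T\tau^0\rfloor+1}^{\lfloor T\tau\rfloor}\sum_{j=1}^p\|z_{t,-j}^T\eta^0_{(j)}\|_2^2$, noting that $\ka\xi_{2,2}^2\le \sum_j E\|z_{t,-j}^T\eta^0_{(j)}\|_2^2\le \phi\xi_{2,2}^2$ by Condition B(ii). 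For (iii), the additional restriction $u_T\ge c_u\sigma^4\log(p\vee T)/(T\phi^2)$ is exactly what is needed to absorb the stochastic remainder term $\sigma^2\xi_{2,2}^2\{u_T\log(p\vee T)/T\}^{1/2}$ into the deterministic main term $u_T\phi\xi_{2,2}^2,$ giving the clean $2u_T\phi\xi_{2,2}^2$ bound.

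For (ii), the strategy is to reduce the quadratic form in $\check\eta-\eta^0$ to a pair of quadratic forms living in the cones $\cA_{1j}$ and $\cA_{2j}$, so that a weighted uniform upper restricted eigenvalue (WUURE) bound (Lemma \ref{lem:WUURE}) becomes applicable. Theorem \ref{thm:est.nuisance.para} furnishes $\check\mu_{(j)}-\mu^0_{(j)}\in\cA_{1j}$ and $\check\g_{(j)}-\g^0_{(j)}\in\cA_{2j}$ with probability $1-o(1)$, so writing $\check\eta_{(j)}-\eta^0_{(j)}=(\check\mu_{(j)}-\mu^0_{(j)})-(\check\g_{(j)}-\g^0_{(j)})$ and using the parallelogram-type inequality,
\benr
\Phi(\check\eta-\eta^0,\check\eta-\eta^0)\le 2\Phi(\check\mu-\mu^0,\check\mu-\mu^0)+2\Phi(\check\g-\g^0,\check\g-\g^0),\nn
\eenr
the cone membership then yields $\|\check\mu_{(j)}-\mu^0_{(j)}\|_1^2\le 16s\|\check\mu_{(j)}-\mu^0_{(j)}\|_2^2$ and analogously for $\check\g$. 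Applying Lemma \ref{lem:WUURE} uniformly in $\tau\in\cG(u_T,v_T)$ to each of the two quadratic forms, the $(\ell_1)^2$ contribution is controlled by the $(\ell_2)^2$ contribution up to the factor $s$, yielding the desired $c_u(\sigma^2\vee\phi)u_T\cdotp s\sum_{j=1}^p\|\check\eta_{(j)}-\eta^0_{(j)}\|_2^2$ bound. Crucially, no additional $\log(p\vee T)$ factor appears here (contrast Lemma \ref{lem:etabounds}(ii)), again owing to the $Tv_T\ge\log(p\vee T)$ regime.

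Finally, (iv) follows by the Cauchy--Schwarz inequality applied to $\Phi(\cdotp,\cdotp)$ viewed as a nonnegative bilinear form on the sequence of design Gram matrices $\sum z_{t,-j}z_{t,-j}^T$:
\benr
\sup_\tau|\Phi(\check\eta-\eta^0,\eta^0)|\le \Big(\sup_\tau\Phi(\check\eta-\eta^0,\check\eta-\eta^0)\Big)^{1/2}\Big(\sup_\tau\Phi(\eta^0,\eta^0)\Big)^{1/2},\nn
\eenr
combined with the bounds from (ii) and (iii). The union of the three controlling events keeps the overall probability at $1-o(1)$. The only non-routine point, and likely the one requiring the most care, is the WUURE step for (ii): one must verify that Lemma \ref{lem:WUURE} applies uniformly over the collection $\cG(u_T,v_T)$ at the required scale $u_T$, and in particular that the $\log(p\vee T)$ factor that would arise in a naive Bernstein-plus-union-bound argument can be suppressed when the cone inclusion is exploited together with the sub-Gaussian design moment inequality. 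Everything else is bookkeeping on constants and symmetric treatment of the $\tau\le\tau^0$ case.
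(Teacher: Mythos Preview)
Your proposal is correct and follows essentially the same route as the paper. Parts (i) and (iii) are indeed direct from Lemma \ref{lem:nearoptimal.sqterm}; Part (iv) is Cauchy--Schwarz combined with (ii) and (iii); and for Part (ii) the paper also invokes Theorem \ref{thm:est.nuisance.para} for the cone membership and then Lemma \ref{lem:WUURE}. One cosmetic difference: the paper does not use the parallelogram split into $\check\mu-\mu^0$ and $\check\g-\g^0$ but instead applies Lemma \ref{lem:WUURE}(ii) directly to $\delta_{(j)}=\check\eta_{(j)}-\eta^0_{(j)}$, after first deriving $\sum_j\|\check\eta_{(j)}-\eta^0_{(j)}\|_1^2\le c_u s\sum_j\|\check\eta_{(j)}-\eta^0_{(j)}\|_2^2$ from the cone inclusions of the two pieces (see (\ref{eq:9})). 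Your concern about the $\log(p\vee T)$ factor is already handled: the assumption $Tv_T\ge\log(p\vee T)$ is exactly the hypothesis of Part (ii) of Lemma \ref{lem:WUURE}, which delivers the bound without the extra logarithm, so no additional argument is needed there.
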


\begin{proof}[Proof of Lemma \ref{lem:etabounds.check}] Part (i) and Part (iii) are a direct consequence of Lemma \ref{lem:nearoptimal.sqterm}. To prove Part (ii), first note from Theorem \ref{thm:est.nuisance.para} we have that $\check\mu_{(j)}-\mu^0_{(j)}\in\cA_{1j},$ and $\check\g_{(j)}-\g^0_{(j)}\in\cA_{2j},$ $j=1,...,p,$ with probability at least $1-o(1).$ It can be verified that this property yields $\|\check\eta_{(j)}-\eta^0_{(j)}\|_1\le c_u \surd s\|\check\eta_{(j)}-\eta^0_{(j)}\|_2.$ \big(see, e.g. (\ref{eq:9})\big). Now applying Part (ii) of \ref{lem:WUURE} yields,
	\benr
	\sup_{\substack{\tau\in\cG(u_T,v_T);\\\tau\ge\tau^0}} \Phi(\check\eta-\eta^0,\check\eta-\eta^0)\le c_u(\si^2\vee\phi) u_T\Big(s\sum_{j=1}^p\|\check\eta_{(j)}-\eta^0_{(j)}\|_2^2\Big)\nn
	\eenr
	with probability at least $1-o(1).$ Part (iv) follows by an application of the Cauchy-Schwartz inequality together with the bounds of Part (ii) and Part (iii) (see, (\ref{eq:5})). This completes the proof of this lemma.
\end{proof}	

\bc$\rule{3.5in}{0.1mm}$\ec

\begin{lem}\label{lem:term123.check} Suppose Condition B holds and $T\ge \log(p\vee T).$ Let $\check\mu_{(j)},$ $\check\g_{(j)},$ $j=1,...,p$ be Step 1 estimates of Algorithm 1, and assume $u_T, v_T$ satisfy $\log (p\vee T)\le Tv_T\le Tu_T.$ Then,
	\benr
	(i)\,\,\inf_{\substack{\tau\in\cG(u_T,v_T);\\\tau\ge\tau^0}}\frac{1}{T}\sum_{t=\lfloor T\tau^0\rfloor+1}^{\lfloor T\tau\rfloor}\sum_{j=1}^p \|\check\eta_{(j)}^Tz_{t,-j}\|_2^2\ge\hspace{3.3in}\nn\\ \ka\xi_{2,2}^2\Big[v_T-\frac{c_{u}\si^2}{\ka}\Big\{\frac{u_T\log(p\vee T)}{T}\Big\}^{\frac{1}{2}}- c_u(\si^2\vee \phi)\frac{u_T}{\ka\xi_{2,2}}\Big(s\sum_{j=1}^p\|\h\eta_{(j)}-\eta^0_{(j)}\|_2^2\Big)^{\frac{1}{2}}\Big]\hspace{0.3in}\nn\\
	(ii)\,\,\sup_{\substack{\tau\in\cG(u_T,v_T);\\\tau\ge\tau^0}}\frac{1}{T}\Big|\sum_{t=\lfloor T\tau^0\rfloor+1}^{\lfloor T\tau\rfloor}\sum_{j=1}^p(\check\g_{(j)}-\g^0_{(j)})^Tz_{t,-j}z_{t,-j}^T\check\eta_{(j)}\Big|\le\hspace{2.2in}\nn\\
	c_u(\si^2\vee \phi)\xi_{2,2}u_T\Big\{s\sum_{j=1}^p\|\check\g_{(j)}-\g^0_{(j)}\|_2^2\Big\}^{\frac{1}{2}}\Big[1+
	\frac{1}{\xi_{2,2}}\Big\{s\sum_{j=1}^p\|\check\eta_{(j)}-\eta^0_{(j)}\|_2^2\Big\}^{\frac{1}{2}}\Big]\hspace{0.5in}\nn\\
	(iii)\sup_{\substack{\tau\in\cG(u_T,v_T);\\\tau\ge\tau^0}}\frac{1}{T}\Big|\sum_{t=\lfloor T\tau^0\rfloor+1}^{\lfloor T\tau\rfloor}\sum_{j=1}^p\vep_{tj}z_{t,-j}^T\check\eta_{(j)}\Big|\le\hspace{3.2in}\nn\\
	c_u\surd(1+\nu^2)\si^2\xi_{2,1}\Big(\frac{u_T\log(p\vee T)}{T}\Big)^{\frac{1}{2}}+ c_{u}\surd(1+\nu^2)\si^2\Big(\frac{u_T\log(p\vee T)}{T}\Big)^{\frac{1}{2}}\sum_{j=1}^p\|\check\eta_{(j)}-\eta^0_{(j)}\|_1,\nn
	\eenr
	each with probability at least $1-o(1).$
\end{lem}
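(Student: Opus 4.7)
The plan is to mirror the proof of Lemma \ref{lem:term123}, but substituting in the sharper ``Bernstein-with-log'' bounds that become available when $Tv_T \ge \log(p \vee T)$, so as to obtain the statements with probability $1 - o(1)$ rather than the $1 - a$ guarantees appropriate in the $O_p(\psi^{-2})$ regime. Specifically, the role of Lemma \ref{lem:optimalcross}, Lemma \ref{lem:optimalsqterm}, and Lemma \ref{lem:etabounds} in the proof of Lemma \ref{lem:term123} will be played here by Lemma \ref{lem:nearoptimalcross.check}, Lemma \ref{lem:nearoptimal.sqterm}, and Lemma \ref{lem:etabounds.check}, respectively. Throughout I will use the shorthand $\Phi(\cdot,\cdot)$ from (\ref{def:Phi}).

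For Part (i), I would expand
\benrr
\Phi(\check\eta,\check\eta) = \Phi(\eta^0,\eta^0) + 2\Phi(\check\eta - \eta^0,\eta^0) + \Phi(\check\eta - \eta^0, \check\eta - \eta^0),
\eenrr
lower bound $\Phi(\eta^0,\eta^0)$ by Part (i) of Lemma \ref{lem:etabounds.check}, discard the non-negative quadratic remainder $\Phi(\check\eta-\eta^0,\check\eta-\eta^0)\ge 0$, and upper bound the cross term $|\Phi(\check\eta - \eta^0, \eta^0)|$ via Part (iv) of Lemma \ref{lem:etabounds.check}, after verifying that the assumption $Tv_T \ge \log(p\vee T)$ forces $u_T \ge c_u\si^4\log(p\vee T)/T\phi^2$ (at least for sufficiently large $T$, possibly absorbing an extra constant) so that Part (iv) is applicable. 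A union bound over the two events yields Part (i) with probability $1-o(1)$.

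For Part (ii), I would write $\Phi(\check\g - \g^0, \check\eta) = \Phi(\check\g-\g^0, \check\eta - \eta^0) + \Phi(\check\g-\g^0, \eta^0)$. Apply the Cauchy--Schwarz inequality to the first piece, using Part (ii) of Lemma \ref{lem:etabounds.check} (applied once to $\check\g-\g^0$ and once to $\check\eta-\eta^0$) to bound $\Phi(\check\g - \g^0, \check\eta - \eta^0)$ by the product $c_u(\si^2\vee\phi)u_T \{s\sum\|\check\g_{(j)}-\g^0_{(j)}\|_2^2\}^{1/2}\{s\sum\|\check\eta_{(j)}-\eta^0_{(j)}\|_2^2\}^{1/2}$. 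The second piece $|\Phi(\check\g-\g^0,\eta^0)|$ is bounded directly by Part (iv) of Lemma \ref{lem:etabounds.check} applied with $\check\g - \g^0$ in place of $\check\eta - \eta^0$. Combining gives the factored form in Part (ii), again with probability $1 - o(1)$ via union bound.

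For Part (iii), I would split $\check\eta_{(j)} = \eta^0_{(j)} + (\check\eta_{(j)} - \eta^0_{(j)})$ to obtain
\benrr
\sup_{\tau\in\cG(u_T,v_T);\tau\ge\tau^0}\frac{1}{T}\Big|\sum_{t=\lfloor T\tau^0\rfloor+1}^{\lfloor T\tau\rfloor}\sum_{j=1}^p \vep_{tj} z_{t,-j}^T\check\eta_{(j)}\Big| \le R_1 + R_2,
\eenrr
where $R_1$ contains $\eta^0$ and $R_2$ the remainder. For $R_1$, set $\z_t = \sum_j \vep_{tj} z_{t,-j}^T\eta^0_{(j)}$, so by Lemma \ref{lem:subez}(ii) $\z_t \sim \mathrm{subE}(\la)$ with $\la = 48\si^2\xi_{2,1}\surd(1+\nu^2)$; then apply Bernstein's inequality (Lemma \ref{lem:subetail}) exactly as in the proof of Lemma \ref{lem:nearoptimalcross.check}, with $d = \la\{2c_u\log(p\vee T)/(\lfloor T\tau\rfloor-\lfloor T\tau^0\rfloor)\}^{1/2}$, using $Tv_T \ge \log(p\vee T)$ to ensure $(d/\la)\wedge 1 = d/\la$ and so that $(\lfloor T\tau\rfloor-\lfloor T\tau^0\rfloor)(d/\la)\ge\surd(c_u/2)\log(p\vee T)$. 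A union bound over the at most $T$ distinct values of $\lfloor T\tau\rfloor$ then delivers $R_1\le c_u\surd(1+\nu^2)\si^2\xi_{2,1}\{u_T\log(p\vee T)/T\}^{1/2}$ with probability $1-o(1)$. For $R_2$, bound it by $T^{-1}\|\sum_t\vep_{tj}z_{t,-j}^T\|_\infty\sum_j\|\check\eta_{(j)}-\eta^0_{(j)}\|_1$ using Hölder, then apply the sharper sup-norm control of Lemma \ref{lem:nearoptimalcross.check}.

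The main obstacle is bookkeeping for Part (iii), specifically ensuring that when Bernstein's inequality is applied to the partial sums $\sum_t \z_t$, the condition $Tv_T \ge \log(p\vee T)$ is strong enough to keep the tail probability at $(p\vee T)^{-c_u}$ uniformly over $\tau \in \cG(u_T, v_T)$, so that the usual union bound over the $T$ distinct endpoints yields a $1-o(1)$ probability statement rather than the $1 - a$ bound produced by the Kolmogorov-inequality approach in Lemma \ref{lem:optimalcross}. Once this sharpening is in place, all three statements follow by routine combination of the lemmas already established.
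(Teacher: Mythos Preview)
Your proposal is correct and follows essentially the same approach as the paper: the same $\Phi$-expansion for Part (i), the same splitting and Cauchy--Schwarz argument for Part (ii), and the same $R_1+R_2$ decomposition for Part (iii), invoking Lemma \ref{lem:etabounds.check} and Lemma \ref{lem:nearoptimalcross.check} in place of their ``optimal'' counterparts. Your treatment of $R_1$ via a direct Bernstein argument on $\z_t=\sum_j\vep_{tj}z_{t,-j}^T\eta^0_{(j)}$ is exactly what underlies the paper's (slightly informal) citation of Lemma \ref{lem:nearoptimalcross.check} for that term, and your verification that $Tv_T\ge\log(p\vee T)$ forces $u_T\ge c_u\si^4\log(p\vee T)/(T\phi^2)$ so that Part (iv) of Lemma \ref{lem:etabounds.check} applies is a detail the paper leaves implicit.
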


\begin{proof}[Proof of Lemma \ref{lem:term123.check}]
	Let $\Phi(\cdotp,\cdotp)$ be as defined in (\ref{def:Phi}). Then note that $\Phi(\check\eta,\check\eta)=\Phi(\eta^0,\eta^0)+2\Phi(\check\eta-\eta^0,\eta^0)+\Phi(\check\eta-\eta^0,\check\eta-\eta^0).$ Using this relation together with the bounds of Part (i) and Part (iv) of Lemma \ref{lem:etabounds.check} we obtain,
	\benr
	\inf_{\substack{\tau\in\cG(u_T,v_T);\\\tau\ge\tau^0}}\Phi(\check\eta,\check\eta)&\ge& \inf_{\substack{\tau\in\cG(u_T,v_T);\\\tau\ge\tau^0}}\Phi(\eta^0,\eta^0)-2\sup_{\substack{\tau\in\cG(u_T,v_T);\\\tau\ge\tau^0}}|\Phi(\check\eta-\eta^0,\eta^0)|\nn\\
	&\ge& v_T\ka\xi_{2,2}^2- c_u\si^2\xi_{2,2}^2\Big\{\frac{u_T\log(p\vee T)}{T}\Big\}^{\frac{1}{2}}\nn\\
	&&- c_u(\si^2\vee\phi)u_T \xi_{2,2}\Big(s\sum_{j=1}^p\|\check\eta_{(j)}-\eta^0_{(j)}\|_2^2\Big)^{\frac{1}{2}}\nn
	\eenr
	with probability at least $1-o(1).$ To prove Part (ii), note that using identical arguments as in the proof of Lemma \ref{lem:etabounds.check} it can be shown that,
	\benr
	&&\sup_{\substack{\tau\in\cG(u_T,v_T);\\\tau\ge\tau^0}}\Phi(\check\g-\g^0,\check\g-\g^0)\le   c_u(\si^2\vee \phi)u_Ts\sum_{j=1}^p\|\check\g_{(j)}-\g^0_{(j)}\|_2^2,\nn\\
	&&\sup_{\substack{\tau\in\cG(u_T,v_T);\\\tau\ge\tau^0}}\big|\Phi(\check\g-\g^0,\eta^0)\big|\le c_u(\si^2\vee \phi) u_T \xi_{2,2}\Big\{s\sum_{j=1}^p\|\check\g_{(j)}-\g^0_{(j)}\|_2^2\Big\}^{\frac{1}{2}},\nn
	\eenr	
	with probability at least $1-o(1).$ The above inequalities and the relation $\Phi\big(\check\g-\g^0,\check\eta\big)\le \big|\Phi(\check\g-\g^0,\check\eta-\eta^0)\big|+\big|\Phi(\check\g-\g^0,\eta^0)\big|,$ together with applications of the Cauchy-Schwartz inequality yields,
	\benr
	\sup_{\substack{\tau\in\cG(u_T,v_T);\\\tau\ge\tau^0}} \big|\Phi\big(\check\g-\g^0,\check\eta\big)\big|\le
	c_u(\si^2\vee \phi) u_T\Big(s\sum_{j=1}^p\|\check\g_{(j)}-\g^0_{(j)}\|_2^2\Big)^{\frac{1}{2}}\Big(s\sum_{j=1}^p\|\check\eta_{(j)}-\eta^0_{(j)}\|_2^2\Big)^{\frac{1}{2}}\nn\\
	+ c_u(\si^2\vee \phi) u_T \xi_{2,2}\Big\{s\sum_{j=1}^p\|\check\g_{(j)}-\g^0_{(j)}\|_2^2\Big\}^{\frac{1}{2}}\hspace{1in}\nn\\
	\le  c_u(\si^2\vee \phi)\xi_{2,2}u_T\Big\{s\sum_{j=1}^p\|\check\g_{(j)}-\g^0_{(j)}\|_2^2\Big\}^{\frac{1}{2}}\Big[1+
	\frac{1}{\xi_{2,2}}\Big\{s\sum_{j=1}^p\|\check\eta_{(j)}-\eta^0_{(j)}\|_2^2\Big\}^{\frac{1}{2}}\Big]\nn
	\eenr	
	with probability at least $1-o(1).$ To prove Part (iii), note that,
	\benr
	\sup_{\substack{\tau\in\cG(u_T,v_T);\\\tau\ge\tau^0}}\frac{1}{T}\Big|\sum_{t=\lfloor T\tau^0\rfloor+1}^{\lfloor T\tau\rfloor}\sum_{j=1}^p\vep_{tj}z_{t,-j}^T\check\eta_{(j)}\Big|&\le &\sup_{\substack{\tau\in\cG(u_T,v_T);\\\tau\ge\tau^0}}\frac{1}{T}\Big|\sum_{t=\lfloor T\tau^0\rfloor+1}^{\lfloor T\tau\rfloor}\sum_{j=1}^p\vep_{tj}z_{t,-j}^T\eta^0_{(j)}\Big|\nn\\
	&&+\sup_{\substack{\tau\in\cG(u_T,v_T);\\\tau\ge\tau^0}}\frac{1}{T}\Big|\sum_{t=\lfloor T\tau^0\rfloor+1}^{\lfloor T\tau\rfloor}\sum_{j=1}^p\vep_{tj}z_{t,-j}^T(\check\eta_{(j)}-\eta^0_{(j)})\Big|\nn\\
	&:=& R1+R2.\nn
	\eenr 	
	Now using Lemma \ref{lem:nearoptimalcross.check}, we have
	\benr
	&R1&\le c_{u}\surd(1+\nu^2)\si^2\xi_{2,1}\Big(\frac{u_T\log(p\vee T)}{T}\Big)^{\frac{1}{2}},\,\,{\rm and}\nn\\
	&R2&\le c_{u}\surd(1+\nu^2)\si^2\Big(\frac{u_T\log(p\vee T)}{T}\Big)^{\frac{1}{2}}\sum_{j=1}^p\|\check\eta_{(j)}-\eta^0_{(j)}\|_1
	\eenr
	with probability at least $1-o(1).$ Part (iv) now follows by combining bounds for terms $R1$ and $R2.$
\end{proof}

\bc$\rule{3.5in}{0.1mm}$\ec
\section{Uniform (over $\tau$) Restricted Eigenvalue Condition}
\begin{lem}\label{lem:UURElem1} Let $z_t\in\R^p,$ $t=1,...,n$ be independent ${\rm subG}(\si)$ r.v's and $\la=16\si^2.$ Additionally, for any $s\ge 1,$ let $\cK_p(s)=\{\delta\in\R^{p};\,\,\|\delta\|_1\le 1,\,\|\delta\|_0\le s\}.$ Then for non-negative $0\le v_T\le u_T,$ and any $d_1>0,$ we have $T\ge 2,$
	\benr
	pr\left[\sup_{\substack{\tau\in\cG(u_T,v_T);\\\tau\ge\tau^0}}\sup_{\delta\in \cK_p(2s)}\frac{1}{T}\Big|\sum_{t=\lfloor T\tau^0\rfloor+1}^{\lfloor T\tau\rfloor}\big\{\|z_t^T\delta\|_2^2-E\|z_t^T\delta\|_2^2\big\}\Big|\ge d_1u_T \right]\le\hspace{1in}\nn\\
	2\exp\Big\{-\frac{Tv_T}{2}\Big(\frac{d_1^2}{\la^2}\wedge \frac{d_1}{\la}\Big)+3s\log(p\vee T)\Big\}\nn
	\eenr	
\end{lem}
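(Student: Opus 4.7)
The plan is to combine a Bernstein tail bound for sums of centered sub-exponential random variables, applied at a fixed pair $(\tau,\delta)$, with a two-level union bound: over an $\ell_2$-net discretization of $\cK_p(2s)$ and over the at most $T$ integer positions $\lfloor T\tau\rfloor$ admissible in $\cG(u_T,v_T) \cap \{\tau \ge \tau^0\}$.

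First, for $\delta \in \cK_p(2s)$ the elementary embedding $\|\delta\|_2 \le \|\delta\|_1 \le 1$ gives $z_t^T\delta \sim \mathrm{subG}(\sigma)$, so Lemma \ref{lem:sqsubGsubE} makes each $(z_t^T\delta)^2 - E(z_t^T\delta)^2$ centered sub-exponential with parameter $\lambda = 16\sigma^2$. For fixed $\tau$ in the sup set and fixed $\delta \in \cK_p(2s)$, Bernstein's inequality (Lemma \ref{lem:subetail}) applied to the sum of $n := \lfloor T\tau\rfloor - \lfloor T\tau^0\rfloor \le Tu_T$ such variables with threshold $d_1 u_T T$, combined with the bound $u_T \ge v_T$, yields
\begin{equation*}
pr\!\left(\frac{1}{T}\Big|\sum_{t=\lfloor T\tau^0\rfloor+1}^{\lfloor T\tau\rfloor}\{(z_t^T\delta)^2 - E(z_t^T\delta)^2\}\Big| \ge d_1 u_T\right) \le 2\exp\!\left\{-\frac{Tv_T}{2}\Big(\frac{d_1^2}{\lambda^2}\wedge\frac{d_1}{\lambda}\Big)\right\}.
\end{equation*}
I then discretize $\cK_p(2s) \subseteq \{\delta:\|\delta\|_0 \le 2s,\ \|\delta\|_2 \le 1\}$ support-by-support: for each of the $\binom{p}{2s} \le (ep/(2s))^{2s}$ coordinate subsets of size $2s$, pick a $(1/4)$-net of the corresponding unit $\ell_2$ ball of cardinality at most $9^{2s}$. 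The resulting net $\mathcal{N}$ satisfies $\log|\mathcal{N}| \le 2s\log(9ep/(2s))$, which together with the $\log T$ overhead from the union bound over admissible $\lfloor T\tau\rfloor$ fits inside the budget $3s\log(p\vee T)$ for $p$ or $T$ large enough. A union bound of the fixed-$(\tau,\delta)$ estimate over $\mathcal{N}$ and admissible $\lfloor T\tau\rfloor$ then delivers the claimed tail bound restricted to the net.

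The decisive remaining step is to pass from the supremum over $\mathcal{N}$ to the supremum over all of $\cK_p(2s)$ without enlarging the effective sparsity, and this is the main technical hurdle. The strategy is the standard support-by-support quadratic-form conversion: writing $M_\tau := \sum_{t=\lfloor T\tau^0\rfloor+1}^{\lfloor T\tau\rfloor}(z_tz_t^T - E z_tz_t^T)$ and fixing a coordinate subset $S$ of size $2s$, the polarization $\delta^T M_\tau \delta - \delta'^T M_\tau \delta' = (\delta - \delta')^T M_\tau(\delta + \delta')$ keeps both $\delta \pm \delta'$ supported in $S$ whenever $\delta,\delta' \in \mathrm{span}(e_i:i\in S)$, so the deterministic bound $\sup_{\|\delta\|_2\le 1,\mathrm{supp}(\delta)\subseteq S}|\delta^T M_\tau\delta| \le 2\sup_{\delta\in\mathcal{N}_S}|\delta^T M_\tau\delta|$ holds without any recursion on the sparsity level. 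Taking a final union over supports $S$, and absorbing the resulting universal factor of $2$ into the free parameter $d_1$, completes the argument. A naive polarization performed without first fixing the support would instead force $\|\delta-\delta'\|_0$ up to $4s$ and trigger an uncontrolled cascade on the sparsity parameter; avoiding this cascade through the support-first covering is the point on which the proof hinges.
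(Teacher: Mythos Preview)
Your proposal is correct and follows essentially the same route as the paper's proof: a Bernstein bound at a fixed $(\tau,\delta)$ pair, followed by a covering argument over $\cK_p(2s)$ and a union bound over the at most $T$ admissible values of $\lfloor T\tau\rfloor$. The only difference is that you spell out the covering step explicitly (support-by-support $(1/4)$-nets plus polarization within each fixed support), whereas the paper simply defers this step to Lemma~15 of the supplement of \cite{loh2012}; both treatments are equally informal about the universal factor of~$2$ that the net-to-full passage introduces in the threshold.
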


\begin{proof}[Proof of Lemma \ref{lem:UURElem1}] Consider any fixed $\delta\in\R^p,$ with $\|\delta\|_2\le 1,$ then from Lemma \ref{lem:sqsubGsubE} we have $\|z_t^T\delta\|_2^2-E\|z_t^T\delta\|_2^2\sim{\rm subE}(\la),$ with $\la=16\si^2.$ Now, for any fixed $\tau\in\cG(u_T,v_T),$ $\tau\ge\tau^0$ applying Lemma \ref{lem:subetail} (Bernstein's inequality) we have,
	\benr	
	pr\Big(\Big|\sum_{t=\lfloor T\tau^0\rfloor+1}^{\lfloor T\tau\rfloor}\|z_t^T\delta\|_2^2-E\|z_t^T\delta\|_2^2\Big|>d(\lfloor T\tau\rfloor-\lfloor T\tau^0\rfloor)\Big)\nn\\
	\le 2\exp\Big\{-\frac{(\lfloor T\tau\rfloor-\lfloor T\tau^0\rfloor)}{2}\Big(\frac{d^2}{\la^2}\wedge \frac{d}{\la}\Big)\Big\}\hspace{-1cm}\nn
	\eenr	
	Choose $d=d_1Tu_T/(\lfloor T\tau\rfloor-\lfloor T\tau^0\rfloor)$ and observe that by definition of the set $\cG(u_T,v_T),$ we have $Tv_T\le (\lfloor T\tau\rfloor-\lfloor T\tau^0\rfloor)\le Tu_T,$ this in turn yields $d_1\le d,$ and consequently,
	\benr\label{eq:6}
	pr\Big(\frac{1}{T}\Big|\sum_{t=\lfloor T\tau^0\rfloor+1}^{\lfloor T\tau\rfloor}\|z_t^T\delta\|_2^2-E\|z_t^T\delta\|_2^2\Big|\ge d_1u_T\Big)\nn\\
	\le 2\exp\Big\{-\frac{Tv_T}{2}\Big(\frac{d_1^2}{\la^2}\wedge \frac{d_1}{\la}\Big)\Big\}\hspace{-1cm}
	\eenr
	Using the inequality (\ref{eq:6}) and a covering number argument, it can be shown that (see, Lemma 15 of the supplementary materials of \cite{loh2012}) for any $s\ge 1,$
	\benr
	pr\Big(\sup_{\delta\in\cK_p(2s)}\frac{1}{T}\Big|\sum_{t=\lfloor T\tau^0\rfloor+1}^{\lfloor T\tau\rfloor}\|z_t^T\delta\|_2^2-E\|z_t^T\delta\|_2^2\Big|\ge d_1u_T\Big)\nn\\
	\le 2\exp\Big\{-\frac{Tv_T}{2}\Big(\frac{d_1^2}{\la^2}\wedge \frac{d_1}{\la}\Big)+2s\log (p\vee T)\Big\}.\hspace{-1cm}\nn
	\eenr
	Finally, uniformity over the set $\cG(u_T,v_T)$ can be obtained by applying a union bound over the at most $T$ distinct values of $\lfloor T\tau\rfloor$ for $\tau\in\cG(u_T,v_T),$ thus yielding the statement of this lemma.
\end{proof}

\bc$\rule{3.5in}{0.1mm}$\ec

\begin{lem}\label{lem:WUURE} Suppose Condition B holds and let $0\le v_T\le u_T$ be any non-negative sequences. Then for all $\delta_{(j)}\in\R^{p-1},$ $j=1,...,p,$ and $T\ge 2,$ we have,
	\benr
	(i)\,\,\sup_{\substack{\tau\in\cG(u_T,v_T);\\\tau\ge\tau^0}}\frac{1}{T}\sum_{t=\lfloor T\tau^0\rfloor+1}^{\lfloor T\tau\rfloor}\sum_{j=1}^p \delta_{(j)}^T z_{t,-j}z_{t,-j}^T\delta_{(j)}\le\hspace{1.5in}\nn\\
	 c_u (\si^2\vee\phi) u_T\log(p\vee T)\Big(\sum_{j=1}^p\|\delta_{(j)}\|_2^2+\sum_{j=1}^p\|\delta_{(j)}\|_1^2\Big)\hspace{-1cm}\nn
	\eenr
	with probability at least $1-2\exp\big\{-\log (p\vee T)\big\}.$ Additionally assuming that $T\ge \log(p\vee T)$ and $v_T$ satisfies $Tv_T\ge \log(p\vee T),$ then for all $\delta_{(j)}\in\R^{p-1},$ $j=1,...,p,$
	\benr
	(ii) \,\,\sup_{\substack{\tau\in\cG(u_T,v_T);\\\tau\ge\tau^0}}\frac{1}{T}\sum_{t=\lfloor T\tau^0\rfloor+1}^{\lfloor T\tau\rfloor}\sum_{j=1}^p \delta_{(j)}^T z_{t,-j}z_{t,-j}^T\delta_{(j)}\le\hspace{1.5in}\nn\\
	 c_u (\si^2\vee\phi) u_T\Big(\sum_{j=1}^p\|\delta_{(j)}\|_2^2+\sum_{j=1}^p\|\delta_{(j)}\|_1^2\Big)\hspace{1cm}\nn
	\eenr
	with probability at least $1-2\exp\big\{-\log (p\vee T)\big\}.$
\end{lem}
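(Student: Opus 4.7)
My plan is to reduce each part to a uniform deviation bound on $2$-sparse quadratic forms in the empirical covariance, lift that bound to general $\delta_{(j)}$ via a Rudelson--Zhou / Loh--Wainwright type extension, and aggregate across $j$ by a union bound. With $\hat M_{j}(\tau) = (1/T)\sum_{t=\lfloor T\tau^{0}\rfloor+1}^{\lfloor T\tau\rfloor}z_{t,-j}z_{t,-j}^{T}$ and $\bar\Sigma_{j} = Ez_{t,-j}z_{t,-j}^{T}\in\{\Si_{-j,-j},\D_{-j,-j}\}$, I will split
\[
\delta_{(j)}^{T}\hat M_{j}(\tau)\delta_{(j)} = \frac{\lfloor T\tau\rfloor-\lfloor T\tau^{0}\rfloor}{T}\,\delta_{(j)}^{T}\bar\Sigma_{j}\delta_{(j)} + \delta_{(j)}^{T}\bigl(\hat M_{j}(\tau)-E\hat M_{j}(\tau)\bigr)\delta_{(j)},
\]
so that Condition B(ii) bounds the mean term by $u_{T}\phi\|\delta_{(j)}\|_{2}^{2}$ and the centered piece is what remains to control.

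The extension tool I will use is the following consequence of Lemma 12 of Loh--Wainwright (2012), already invoked in the proof of Lemma \ref{lem:UURElem1}: if a symmetric $M$ satisfies $|v^{T}Mv|\le\alpha$ for every $v\in\cK_{p-1}(2)$, then since every $2$-sparse $v$ with $\|v\|_{2}\le 1$ has $v/\sqrt{2}\in\cK_{p-1}(2)$, there is a universal $c_{0}$ with $|v^{T}Mv|\le c_{0}\alpha(\|v\|_{2}^{2}+\|v\|_{1}^{2})$ for all $v\in\R^{p-1}$. The whole lemma therefore reduces to producing a uniform-in-$\tau$ bound on
\[
\Psi_{j}(\tau):=\sup_{v\in\cK_{p-1}(2)}\frac{1}{T}\Bigl|\sum_{t=\lfloor T\tau^{0}\rfloor+1}^{\lfloor T\tau\rfloor}\bigl\{\|z_{t,-j}^{T}v\|_{2}^{2}-E\|z_{t,-j}^{T}v\|_{2}^{2}\bigr\}\Bigr|,
\]
and then taking a union bound over $j=1,\dots,p$.

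For Part (ii) I will invoke Lemma \ref{lem:UURElem1} with $s=1$ and $d_{1}=c\lambda$ for a universal constant $c$ chosen so that $(Tv_{T}/2)(d_{1}/\lambda\wedge d_{1}^{2}/\lambda^{2})\ge 5\log(p\vee T)$, which is permitted by the hypothesis $Tv_{T}\ge\log(p\vee T)$; this gives $\Psi_{j}(\tau)\le c\sigma^{2}u_{T}$ per $j$ with probability at least $1-2\exp\{-2\log(p\vee T)\}$, and the union over $j$ leaves overall probability $1-2\exp\{-\log(p\vee T)\}$. For Part (i) that hypothesis is absent, so instead I will apply Bernstein's inequality (Lemma \ref{lem:subetail}) directly to each entry $X_{t}=z_{t,-j,i}z_{t,-j,k}-E[z_{t,-j,i}z_{t,-j,k}]\sim\mathrm{subE}(c\sigma^{2})$ with $n=\lfloor T\tau\rfloor-\lfloor T\tau^{0}\rfloor\le Tu_{T}$: the choice $d=c\lambda\max\{\sqrt{n\log(p\vee T)},\,\log(p\vee T)\}$ activates whichever branch of Bernstein is tight and forces the exponent past $c\log(p\vee T)/2$, so taking $c\ge 10$ absorbs the union bound over the $Tp^{3}\le(p\vee T)^{4}$ values of $(\tau,j,i,k)$. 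The per-entry rate $d/T\le c\sigma^{2}\max\{\sqrt{u_{T}\log(p\vee T)/T},\,\log(p\vee T)/T\}$ is dominated by $c\sigma^{2}u_{T}\log(p\vee T)$ as long as $u_{T}T\ge 1$, which is WLOG since $\lfloor Tu_{T}\rfloor=0$ otherwise; the extension and mean bound then yield Part (i).

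The part I expect to be delicate is the regime $Tu_{T}<\log(p\vee T)$ in Part (i), where the sub-Gaussian branch of Bernstein is not sharp enough and only the sub-exponential (linear) branch keeps the $u_{T}\log(p\vee T)$ rate consistent after the union bound. Handling this cleanly forces the $\max\{\sqrt{\cdot},\cdot\}$ choice of $d$ and the elementary comparison $\log(p\vee T)/T\le u_{T}\log(p\vee T)$ after reducing to $u_{T}\ge 1/T$; once those are in place, the rest of the argument is routine.
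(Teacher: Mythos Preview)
Your proposal is correct. Part (ii) matches the paper exactly: invoke Lemma \ref{lem:UURElem1} with $s=1$ and $d_1=O(\lambda)$, use $Tv_T\ge\log(p\vee T)$ to make the Bernstein exponent beat the union bound, then extend from $\cK_{p-1}(2)$ to all of $\R^{p-1}$ via Lemma 12 of \cite{loh2012}.

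For Part (i) your route diverges mildly from the paper's. The paper does not pass to an entry-wise Bernstein argument; instead it reuses Lemma \ref{lem:UURElem1} with $s=1$ and simply inflates the threshold to $d_1=10\lambda\log(p\vee T)$. After the same WLOG reduction to $Tv_T\ge 1$, one has $d_1/\lambda\ge 1$, so the linear branch of the exponent is active and $(Tv_T/2)(d_1/\lambda)\ge 5\log(p\vee T)$ absorbs the covering-plus-$j$ union bound in one stroke. This is more economical than your entry-wise bound with the $\max\{\sqrt{\cdot},\,\cdot\}$ threshold, since identical machinery serves both parts with only $d_1$ changing; on the other hand your version is slightly more elementary in that it bypasses the covering argument inside Lemma \ref{lem:UURElem1} entirely for $s=1$. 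Both routes rely on the same Loh--Wainwright extension to lift the $2$-sparse bound to general $\delta_{(j)}$ and on the same $u_T\ge 1/T$ reduction. Your explicit mean/centered split is something the paper leaves implicit (the mean term $u_T\phi\sum_j\|\delta_{(j)}\|_2^2$ is silently absorbed into the final constant there).
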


\begin{proof}[Proof of Lemma \ref{lem:WUURE}]  w.l.o.g. assume $v_T\ge (1/T)$ \big(see, Lemma \ref{lem:nearoptimalcross}\big). Now for any $s\ge 1,$ consider any non-negative $u_T,$ any $\delta_{(j)}\in\cK_{p-1}(2s),$ $j=1,...,p.$ Then for any $d_1>0,$ applying a union bound to the result of Lemma \ref{lem:UURElem1} over the components $j=1,...,p$ we obtain,
	\benr\label{eq:7}
	\qquad\sup_{\substack{\tau\in\cG(u_T,v_T);\\\tau\ge\tau^0}}\sup_{\substack{\delta_{(j)}\in\cK(2s);\\j=1,...,p}}\frac{1}{T}\Big|\sum_{t=\lfloor T\tau^0\rfloor+1}^{\lfloor T\tau\rfloor}\|z_{t,-j}^T\delta_{(j)}\|_2^2-E\|z_{t,-j}^T\delta_{(j)}\|_2^2\Big|\le d_1u_T
	\eenr
	with probability at least $1-2\exp\Big\{-\frac{Tv_T}{2}\Big(\frac{d_1^2}{\la^2}\wedge \frac{d_1}{\la}\Big)+4s\log (p\vee T)\Big\}.$ It can be shown that the bound (\ref{eq:7}) in turn implies that (see, Lemma 12 of supplement of \cite{loh2012}), for all $\tau\in \cG(u_T,v_T),$ and for all $\delta_{(j)}\in\R^{p-1},$ $j=1,...,p,$
	\benr
	\frac{1}{T}\Big|\sum_{t=\lfloor T\tau^0\rfloor+1}^{\lfloor T\tau\rfloor}\sum_{j=1}^p\|z_{t,-j}^T\delta_{(j)}\|_2^2-E\|z_{t,-j}^T\delta_{(j)}\|_2^2\Big|\le 27d_1u_T(\sum_{j=1}^p\|\delta_{(j)}\|_2^2+(1/s)\sum_{j=1}^p\|\delta_{(j)}\|_1^2)\nn
	\eenr
	with probability at least $1-2\exp\Big\{-\frac{Tv_T}{2}\Big(\frac{d_1^2}{\la^2}\wedge \frac{d_1}{\la}\Big)+4s\log (p\vee T)\Big\}.$ Now choose $d_1=10\la\log(p\vee T),$ and note that $\frac{Tv_T}{2}\Big(\frac{d_1^2}{\la^2}\wedge \frac{d_1}{\la}\Big)\ge 5\log(p\vee T).$
	This follows since $Tv_T\ge 1,$ and that $d_1/\la\ge 1.$ A substitution back in the probability bound yields,
	\benr
	\frac{1}{T}\Big|\sum_{t=\lfloor T\tau^0\rfloor+1}^{\lfloor T\tau\rfloor}\sum_{j=1}^p\|z_{t,-j}^T\delta_{(j)}\|_2^2-E\|z_{t,-j}^T\delta_{(j)}\|_2^2\Big|\hspace{2cm}\nn\\
	\le 270 \la u_T\log(p\vee T)\Big\{\sum_{j=1}^p\|\delta_{(j)}\|_2^2+\frac{1}{s}\sum_{j=1}^p\|\delta_{(j)}\|_1^2\Big\}\hspace{-1cm},\nn
	\eenr
	with probability at least $1-2\exp\big\{-5\log(p\vee T)+4s\log (p\vee T)\big\}.$ The statement of Part (i) follows by setting $s= 1.$ The proof of Part (ii) is quite analogous. This can be obtained by proceeding as earlier with (\ref{eq:7}) above, and additionally utilizing $Tv_T\ge \log(p\vee T),$ and setting $d_1=10\la,$ instead of the choice made for Part (i). This completes the proof of the result.
\end{proof}
\bc$\rule{3.5in}{0.1mm}$\ec

\begin{lem}\label{lem:lower.RE.ordinary} Suppose Condition A$'$ and B hold, then for $i=1,2,$
	\benr
	\min_{j=1,...,p;}\inf_{\substack{\tau\in(0,1);\\\tau\ge c_{u1}l_T}}\inf_{\substack{\delta\in\cA_{ij};\\\|\delta\|_2=1}} \frac{1}{\lfloor T\tau\rfloor} \sum_{t=1}^{\lfloor T\tau\rfloor}\delta^Tz_{t,-j}z_{t,-j}^T\delta \ge \frac{\ka}{2}.\nn
	\eenr
	with probability at least $1-2\exp\{-c_u\log(p\vee T)\},$ for some $c_u>0$ and for $T$ sufficiently large.
\end{lem}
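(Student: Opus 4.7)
The plan is to reduce this to a bounded-expectation claim plus a uniform concentration bound over the restricted cone. First, for any $\delta\in\cA_{ij}$ with $\|\delta\|_2=1$, the cone condition gives $\|\delta_{S_{ij}^c}\|_1\le 3\|\delta_{S_{ij}}\|_1\le 3\surd s$, so $\|\delta\|_1\le 4\surd s$ and $\|\delta\|_1^2\le 16s$. Second, I would compute the expectation: since $Ez_{t,-j}z_{t,-j}^T$ equals $\Si_{-j,-j}$ for $t\le \lfloor T\tau^0\rfloor$ and $\D_{-j,-j}$ otherwise, and both matrices have minimum eigenvalue at least $\ka$ by Condition B(ii), the expectation $\frac{1}{\lfloor T\tau\rfloor}\sum_{t=1}^{\lfloor T\tau\rfloor}E\delta^T z_{t,-j}z_{t,-j}^T\delta$ is a convex combination of two Rayleigh quotients each bounded below by $\ka$, giving a lower bound of $\ka\|\delta\|_2^2=\ka$ uniformly in $\tau$.

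Third, I would bound the stochastic deviation uniformly. Splitting $\sum_{t=1}^{\lfloor T\tau\rfloor}=\sum_{t=1}^{\lfloor T\tau^0\rfloor}\pm\sum$(rest), each piece is handled by essentially the argument of Lemma \ref{lem:UURElem1}: for a fixed $\delta\in\cK_{p-1}(2s)$ the sum $\|z_{t,-j}^T\delta\|_2^2-E\|z_{t,-j}^T\delta\|_2^2$ is $\mathrm{subE}(16\si^2)$ by Lemma \ref{lem:sqsubGsubE}, and Bernstein together with a sparse covering over $\cK_{p-1}(2s)$, a union bound over the at most $T$ distinct values of $\lfloor T\tau\rfloor$ satisfying $\lfloor T\tau\rfloor\ge c_{u1}Tl_T$, and a union bound over $j=1,\dots,p$, yields
\benrr
\sup_{\substack{j,\,\tau;\\\lfloor T\tau\rfloor\ge c_{u1}Tl_T}}\sup_{\substack{\delta\in\cA_{ij};\\\|\delta\|_2=1}}\Big|\frac{1}{\lfloor T\tau\rfloor}\sum_{t=1}^{\lfloor T\tau\rfloor}\big\{\delta^Tz_{t,-j}z_{t,-j}^T\delta-E[\cdotp]\big\}\Big|\le c_u(\si^2\vee\phi)\frac{s\log(p\vee T)}{Tl_T},
\eenrr
with probability at least $1-2\exp\{-c_u\log(p\vee T)\}$. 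The $s$ factor arises from the covering plus the identity $\|\delta\|_1^2\le 16s$ (analogously to the extension step used in Lemma \ref{lem:WUURE}); the $1/(Tl_T)$ comes from dividing by $\lfloor T\tau\rfloor\ge c_{u1}Tl_T$ rather than by $T$.

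Fourth, I would invoke Condition A$'$(ii), $c_u\ka T^{(1-b)}l_T\ge(\si^2\vee\phi)s\log(p\vee T)$, which gives $(\si^2\vee\phi)s\log(p\vee T)/(Tl_T)\le c_u\ka/T^b=o(\ka)$. Hence the deviation bound is at most $\ka/2$ for $T$ large enough, and combining with the expectation lower bound of $\ka$ delivers the claim. The main technical obstacle is the uniform concentration step: I need to simultaneously cover $\delta$ in the sparse cone (yielding an $\exp(c\,s\log(p\vee T))$ factor in the failure probability), union-bound over $\lfloor T\tau\rfloor$ (factor $T$) and over $j$ (factor $p$), and still have enough Bernstein slack (which is why $Tl_T\gg s\log(p\vee T)$, guaranteed by A$'$(ii), is essential). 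The split at $\lfloor T\tau^0\rfloor$ is what lets one re-use Lemma \ref{lem:UURElem1} directly in both the $\tau\le\tau^0$ and $\tau>\tau^0$ regimes without new machinery.
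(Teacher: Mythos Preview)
Your proposal is correct and follows essentially the same approach the paper indicates: the paper does not give a detailed proof but simply states that the result is a nearly direct extension of the usual restricted eigenvalue condition, pointing to Corollary~1 of \cite{loh2012} and noting that the additional uniformity over $\tau$, $i$ and $j$ is obtained via extra union bounds. Your write-up spells out precisely this argument---expectation lower bound via Condition~B(ii), uniform sub-exponential concentration via Bernstein plus a sparse covering (the Loh--Wainwright machinery underlying Lemmas~\ref{lem:UURElem1} and~\ref{lem:WUURE}), union bounds over $j$ and $\lfloor T\tau\rfloor$, and closure via the rate restriction of Condition~A$'$(ii)---so there is nothing materially different to compare.
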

Lemma \ref{lem:lower.RE.ordinary} is a nearly direct extension of the usual restricted eigenvalue condition. Its proof is analogous to those available in the literature, for e.g., Corollary 1 of \cite{loh2012}. In comparison to the typical restricted eigenvalue condition, Lemma \ref{lem:lower.RE.ordinary} has  additional uniformity over $\tau,$ $i$ and $j,$ which can be obtained by simply using additional union bounds.
\bc$\rule{3.5in}{0.1mm}$\ec


\section{Auxiliary results}\label{sec:aux}

In the following Definition's \ref{def:subg}, \ref{def:sube}, and Lemma's \ref{lem:tailb}-\ref{lem:subetail}, we provide basic properties of subgaussian and subexponential distributions. These are largely reproduced from \cite{vershynin2019high} and \cite{rigollet201518}. Theorem \ref{thm:kolmogorov} and \ref{thm:argmax} below reproduce Kolmogorov's inequality and the argmax theorem. Lemma \ref{lem:condnumberbound} provides an upper bound for the $\ell_2$ norm of the parameter vectors defined in Section \ref{sec:intro}.

\begin{Def}\label{def:subg} {\bf Sub-gaussian r.v.}: A random variable $X\in\R$ is said to be sub-gaussian with parameter $\si>0$ \big(denoted by $X\sim{\rm subG(\si)}$\big) if $E(X)=0$ and its moment generating function
	\benr
	E(\e^{tX})\le \e^{t^2\si^2/2},\qquad \forall\,\, t\in\R
	\eenr
	Furthermore, a random vector $X\in\R^p$ is said to be sub-gaussian with parameter $\si,$ if the inner products $\langle X, v\rangle\sim {\rm subG}(\si)$ for any $v\in\R^p$ with $\|v\|_2 = 1.$
\end{Def}

\begin{Def}\label{def:sube} {\bf Sub-exponential r.v.}: A random variable $X\in\R$ is said to be sub-exponential with parameter $\si>0$ \big(denoted by $X\sim{\rm subE(\si)}$\big) if $E(X)=0$ and its moment generating function
	\benr
	E(\e^{tX})\le \e^{t^2\si^2/2},\qquad \forall\,\, |t|\le \frac{1}{\si}\nn
	\eenr
\end{Def}

\begin{lem}\label{lem:tailb}[Tail bounds] (i) If $X\sim {\rm subG}(\si),$ then,
	\benr
	pr(|X|\ge \la)\le 2\exp(-\la^2/2\si^2).\nn
	\eenr
	(ii) If $X\sim {\rm subE}(\si),$ then
	\benr
	pr(|X|\ge \la)\le 2\exp\Big\{-\frac{1}{2}\Big(\frac{\la^2}{\si^2}\wedge\frac{\la}{\si}\Big) \Big\}.\nn
	\eenr
\end{lem}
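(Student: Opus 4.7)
\textbf{Proof plan for Lemma \ref{lem:tailb}.} Both parts are classical Chernoff-type bounds. My plan is to invoke Markov's inequality on the exponential moment, then optimize the free parameter $t$ subject to the constraint imposed by the relevant sub-class definition, and finally symmetrize using the fact that $-X$ lies in the same class as $X$.

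For part (i), I would start from Markov's inequality: for any $t>0$,
\benn
pr(X\ge \la)=pr(\e^{tX}\ge \e^{t\la})\le \e^{-t\la}E(\e^{tX})\le \e^{-t\la+t^2\si^2/2},
\eenn
where the last inequality uses Definition \ref{def:subg}. Minimizing the right-hand side over $t>0$ (the minimizer is $t^{*}=\la/\si^2$, which is unconstrained here) gives $pr(X\ge\la)\le \exp(-\la^2/2\si^2)$. Since $-X\sim{\rm subG}(\si)$ as well (the defining moment generating function bound is symmetric in $t$), the same bound applies to $pr(-X\ge\la)$, and a union bound yields the factor $2$ in the stated inequality.

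For part (ii), the argument is identical in spirit but the range of admissible $t$ is now constrained to $|t|\le 1/\si$ by Definition \ref{def:sube}. For $t\in(0,1/\si]$, the same Chernoff step gives $pr(X\ge\la)\le \exp(-t\la+t^2\si^2/2)$. The main (very mild) obstacle is that the unconstrained minimizer $t^{*}=\la/\si^2$ is only feasible when $\la\le \si$. I would therefore split into two cases: if $\la\le\si$, take $t=\la/\si^2$, which is in $(0,1/\si]$ and yields the bound $\exp(-\la^2/2\si^2)$; if $\la>\si$, take $t=1/\si$, giving $\exp(-\la/\si+1/2)$, and bound $-\la/\si+1/2\le -\la/(2\si)$ using $\la/\si\ge 1$. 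Combining the two cases collapses into the single expression
\benn
pr(X\ge\la)\le \exp\Big\{-\tfrac{1}{2}\big((\la^2/\si^2)\wedge(\la/\si)\big)\Big\}.
\eenn
Applying the same argument to $-X$, which is also in ${\rm subE}(\si)$ by the symmetry of the moment generating function bound in $t$, and taking a union bound then gives the factor $2$ and completes the proof.

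No deeper machinery is required; the only place where care is warranted is the split into the two regimes for $\la$ in part (ii), but even there the bounding is elementary.
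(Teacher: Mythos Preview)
Your proposal is correct and follows essentially the same approach as the paper: Chernoff/Markov on the moment generating function, optimization over $t$ (unconstrained for sub-Gaussian, constrained to $(0,1/\si]$ with the same two-case split for sub-exponential), and symmetrization via $-X$ to pick up the factor $2$. There is no meaningful difference between your argument and the paper's.
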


\begin{proof}[Proof of Lemma \ref{lem:tailb}] This proof is a simple application of the Markov inequality. For any $t>0,$
	\benr
	pr(X\ge \la)=pr(tX\ge t\la)\le \frac{E\e^{tX}}{\e^{t\la}}=\e^{-t\la+t^2\si^2/2}.\nn
	\eenr
	Minimizing over $t>0,$ yields the choice $t^*=\la/\si^2,$ and substituting in the above bound ,we obtain,
	\benr
	pr(X\ge \la)\le \inf_{t>0}\e^{-t\la+t^2\si^2/2}=e^{-\la^2/2\si^2}.\nn
	\eenr
	Repeating the same for $P(X\le -\la)$ yields part (i) of the lemma. To prove Part (ii),   repeat the above argument with $t\in(0,1/\si],$ to obtain,
	\benr
	pr(X\ge \la)=pr(tX\ge t\la)\le\e^{-t\la+t^2\si^2/2}.
	\eenr
	As in the subgaussian case, to obtain the tightest bound one needs to find $t^*$ that minimizes $-t\la+t^2\si^2/2,$ with the additional constraint for this subexponential case that $t\in(0,1/\si].$ We know that the unconstrained minimum occurs at $t^*=\la/\si^2>0.$ Now consider two cases:
	\begin{enumerate}[itemindent=0mm]
		\item If $t^*<(0,1/\si] \Leftrightarrow\la\le \si$ then the unconstrained minimum is same as the constrained minimum, and substituting this value yields the same tail behavior as the subgaussian case.
		\item If $t^*>(1/\si)\Leftrightarrow\la>\si,$ then note that $-t\la+t^2\si^2/2$ is decreasing in $t,$ in the interval $(0,(1/\si)],$ thus the minimum occurs at the boundary $t=1/\si.$ Substituting in the tail bound we obtain for this case,
		\benr
		pr(X\ge \la)\le\e^{-t\la+t^2\si^2/2}= \exp\{-(\la/\si)+(1/2)\}\le\exp{(-\la/2\si)},\nn
		\eenr
		where the final inequality follows since $\la>\si.$
	\end{enumerate}
	Part (ii) of the lemma is obtained by combining the results of the above two cases.
\end{proof}

\bc$\rule{3.5in}{0.1mm}$\ec

\begin{lem}[Moment bounds]\label{lem:momentprop} (i) If $X\sim {\rm subG}(\si),$ then
	\benr
	E|X|^k\le 3k\si^{k} k^{k/2}, \qquad k\ge 1.\nn
	\eenr
	(ii) If $X\sim {\rm subE}(\si),$ then
	\benr
	E|X|^k\le 4\si^k k^k, \qquad k> 0.\nn
	\eenr	
\end{lem}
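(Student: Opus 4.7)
The plan is to prove both parts via the layer-cake representation
\[
E|X|^{k}=k\int_{0}^{\infty}\lambda^{k-1}\,pr(|X|\ge \lambda)\,d\lambda,
\]
plugging in the tail bounds already established in Lemma~\ref{lem:tailb} and then reducing each resulting integral to a Gamma function.

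For Part (i), I would substitute the sub-Gaussian tail bound $pr(|X|\ge\lambda)\le 2\exp(-\lambda^{2}/2\sigma^{2})$ into the layer-cake identity and perform the change of variables $u=\lambda^{2}/(2\sigma^{2})$. This turns the integral into $k\sigma^{k}2^{k/2}\Gamma(k/2)$. I would then bound $\Gamma(k/2)$ via a Stirling-type estimate (for example, the elementary inequality $\Gamma(k/2)\le (k/2)^{k/2}$ for $k\ge 2$, handled with an ad-hoc check for $1\le k<2$) to obtain $E|X|^{k}\le k\sigma^{k}k^{k/2}$. The slack factor of $3$ in the statement absorbs the small-$k$ boundary cases.

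For Part (ii), the tail bound in Lemma~\ref{lem:tailb}(ii) has a piecewise exponent, so I would split the layer-cake integral at the crossover point $\lambda=\sigma$. On $[0,\sigma]$ I would simply bound the exponential by $1$, giving a contribution of order $\sigma^{k}$. On $[\sigma,\infty)$ the tail bound is purely exponential, $2e^{-\lambda/(2\sigma)}$, and the substitution $u=\lambda/(2\sigma)$ turns the integral into $2k(2\sigma)^{k}\int_{1/2}^{\infty}u^{k-1}e^{-u}\,du\le 2k(2\sigma)^{k}\Gamma(k)=2(2\sigma)^{k}\Gamma(k+1)$. Using $\Gamma(k+1)=k!\le k^{k}$ for $k\ge 1$ (and a separate Jensen-based check for $0<k<1$ via $E|X|^{k}\le (E|X|)^{k}$ together with the $k=1$ estimate) then yields the stated $4\sigma^{k}k^{k}$ bound after collecting the two pieces.

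The main obstacle is getting the numerical constants to come out as stated: the crude use of $\Gamma(k/2)\le (k/2)^{k/2}$ in Part (i) and $k!\le k^{k}$ in Part (ii) is loose for small $k$, so some care is needed at the boundary $k\approx 1$ (and for Part (ii) also at $k\downarrow 0$) to verify the prefactors $3k$ and $4$ really suffice. Apart from these bookkeeping details, the argument is a routine integration exercise, and the probability content is entirely supplied by Lemma~\ref{lem:tailb}.
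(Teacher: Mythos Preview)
Your Part~(i) is essentially the paper's argument: layer cake (equivalently $E|X|^k=\int_0^\infty pr(|X|^k>t)\,dt$), insert the sub-Gaussian tail, change variables to a Gamma integral, and bound $\Gamma(k/2)$ by $3(k/2)^{k/2}$ for $k\ge 1$. The paper simply records the inequality $\Gamma(x)\le 3x^x$ for $x\ge 1/2$, which handles all $k\ge 1$ at once and makes the factor $3$ transparent.

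Your Part~(ii), however, diverges from the paper and runs into a real obstacle with the constant. The paper does \emph{not} go through the tail bound at all for the sub-exponential case; instead it uses the elementary pointwise inequality
\[
|x|^k\le k^k\bigl(e^x+e^{-x}\bigr),\qquad x\in\R,\ k>0,
\]
(which follows from $\log y\le y$), takes expectations, and evaluates the MGF at $t=\pm 1$ (after normalizing $\si=1$) to get $E|X|^k\le k^k\cdot 2e^{1/2}\le 4k^k$. This bypasses the piecewise tail entirely.

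Your layer-cake route cannot recover the constant $4$. The substitution $u=\lambda/(2\sigma)$ on the exponential tail forces a factor $(2\sigma)^k$, so your bound on the exponential piece is $2(2\sigma)^k\Gamma(k+1)$, and together with the $2\sigma^k$ from $[0,\sigma]$ you need $2+2\cdot 2^k\Gamma(k+1)/k^k\le 4$, i.e.\ $\Gamma(k+1)\le (k/2)^k$. This is false already at $k=2$ (where $\Gamma(3)=2>1$) and at $k=1$ (where the total is $6\sigma>4\sigma$). Even with the sharpest Stirling estimate, the ratio $2^k\Gamma(k+1)/k^k$ exceeds $1$ for $k\lesssim 2.5$, so no amount of bookkeeping fixes it; the Jensen trick for $0<k<1$ then inherits the bad $k=1$ constant. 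Your approach yields a perfectly good bound of the form $C\sigma^k k^k$ with $C\approx 6$ (or $C'\sigma^k(2k)^k$), which would suffice for every downstream use in the paper, but it does not prove the lemma as stated.
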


\begin{proof}[Proof of Lemma \ref{lem:momentprop}]
	Consider $X\sim {\rm subG}(\si),$ and w.l.o.g assume that $\si=1$ (else define $X^*=X/\si$). Using the integrated tail probability expectation formula, we have for any $k> 0,$
	\benr
	E|X|^k&=&\int_{0}^{\iny}pr(|X|^k>t)dt=\int_{0}^{\iny}pr(|X|>t^{1/k})dt\nn\\
	&\le& 2\int_0^\iny\exp\Big(-\frac{t^{2/k}}{2}\Big)dt\nn\\
	&=& 2^{k/2}k\int_0^\iny\e^{-u}u^{k/2-1}du,\qquad u=\frac{t^{2/k}}{2}\nn\\
	&=&  2^{k/2}k \Gamma(k/2)\nn
	\eenr
	Here the first inequality follows from the tail bound Lemma \ref{lem:tailb}. Now, for $x\ge 1/2,$ we have the inequality $\G(x)\le 3x^x,$ thus for $k\ge 1$ we have,  $\G(k/2)\le 3(k/2)^{(k/2)}.$ A substitution back in the moment bound yields desired bound of Part (i).
	
	To prove the moment bound of Part (ii). As before, w.l.o.g. assume  $\si=1.$ Consider the inequality,
	\benr
	|x|^k\le k^k(e^x+e^{-x})\nn
	\eenr
	which is valid for all $x\in\R$ and $k>0.$ Substitute x=X and take expectation to get,
	\benr
	E|X|^k\le k^k(E\e^X+E\e^{-X}).\nn
	\eenr
	Since in this case $\si=1,$ from the mgf condition, at $t=\pm 1$ we have, $E\e^X\le\e^{1/2}\le 2,$ and $E\e^{-X}\le 2.$ Thus for any $k>0,$
	\benr
	E|X|^k\le 4k^k\nn
	\eenr
	This yields the desired moment bound of Part (ii).
\end{proof}

\bc$\rule{3.5in}{0.1mm}$\ec

\begin{lem} \label{lem:lcsubG} Assume that $X\sim{\rm subG}(\si),$ and that $\al\in\R,$ then $\alpha X\sim{\rm subG}(|\alpha|\si).$ Moreover if $X_1\sim{\rm subG}(\si_1)$ and $X_2\sim{\rm subG}(\si_2),$ then $X_1+X_2\sim {\rm subG}(\si_1+\si_2).$
\end{lem}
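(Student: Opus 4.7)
The plan is a direct moment-generating-function (MGF) computation in each part, relying only on Definition \ref{def:subg}. Both resulting variables have mean zero (by linearity of expectation and $EX = EX_1 = EX_2 = 0$), so only the MGF bound needs to be verified.

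For the first claim, I would simply re-parametrize the MGF: for any $t\in\R$,
\[
E\e^{t(\alpha X)} = E\e^{(\alpha t)X} \le \e^{(\alpha t)^2\sigma^2/2} = \e^{t^2(|\alpha|\sigma)^2/2},
\]
where the middle inequality is the subgaussian MGF bound applied at $\alpha t$. This immediately gives $\alpha X\sim {\rm subG}(|\alpha|\sigma)$.

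For the second claim, since the statement does not assume $X_1$ and $X_2$ are independent (so the product-of-MGFs shortcut is unavailable), the natural tool is H\"older's inequality. For any conjugate pair $p,q>1$ with $1/p+1/q=1$,
\[
E\e^{t(X_1+X_2)} = E\big[\e^{tX_1}\e^{tX_2}\big] \le \bigl(E\e^{ptX_1}\bigr)^{1/p}\bigl(E\e^{qtX_2}\bigr)^{1/q} \le \exp\!\Big\{\tfrac{t^2}{2}\bigl(p\sigma_1^2+q\sigma_2^2\bigr)\Big\}.
\]
The remaining step is to minimize $p\sigma_1^2+q\sigma_2^2$ over the conjugacy constraint. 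Writing $p=(\sigma_1+\sigma_2)/\sigma_1$ and $q=(\sigma_1+\sigma_2)/\sigma_2$ (which satisfies $1/p+1/q=1$), a direct calculation yields $p\sigma_1^2+q\sigma_2^2=(\sigma_1+\sigma_2)^2$, and a first-order condition confirms this is the minimum. Substituting back gives $E\e^{t(X_1+X_2)}\le \exp\{t^2(\sigma_1+\sigma_2)^2/2\}$, so $X_1+X_2\sim{\rm subG}(\sigma_1+\sigma_2)$.

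There is no real obstacle here; the only subtle point is recognizing that since independence is not hypothesized, one must pay the H\"older penalty, which is precisely why the bound $\sigma_1+\sigma_2$ appears rather than the Pythagorean $\sqrt{\sigma_1^2+\sigma_2^2}$ familiar from the independent case. The choice of conjugate exponents tied to the ratio $\sigma_2/(\sigma_1+\sigma_2)$ is the one calculation worth writing out carefully.
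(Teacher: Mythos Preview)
Your proposal is correct and follows essentially the same approach as the paper: a direct MGF reparametrization for the scalar-multiple claim, and H\"older's inequality with the optimizing conjugate pair $p=(\si_1+\si_2)/\si_1$, $q=(\si_1+\si_2)/\si_2$ (equivalently written in the paper as $p=1+\si_2/\si_1$, $q=1+\si_1/\si_2$) for the sum. Your added remarks on why independence is not needed and on the first-order optimality of the exponent choice are helpful but do not alter the underlying argument.
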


\begin{proof}[Proof of Lemma \ref{lem:lcsubG}] The first part follows directly from the inequality $E(\e^{t\al X})\le \exp(t^2\al^2\si^2/2).$ To prove Part (ii) use the H\"older's inequality to obtain,
	\benr
	E(\e^{t(X_1+X_2)})&=&E(\e^{tX_1}\e^{tX_2})\le \{E(\e^{tX_1p})\}^{\frac{1}{p}}\{E(\e^{tX_2q})\}^{\frac{1}{q}}\nn\\
	&\le& \e^{\frac{t^2}{2}\si_1^2p^2}\e^{\frac{t^2}{2}\si_2^2q^2}=\e^{\frac{t^2}{2}(p\si_1^2+q\si_2^2)}\nn
	\eenr
	where $p,q\in[1,\iny],$ with $1/p+1/q=1.$ Choose $p^{*}=(\si_2/\si_1)+1,$ $q^*=(\si_1/\si_2)+1$ to obtain $E(\e^{t(X_1+X_2)})\le \exp\big\{\frac{t^2}{2}(\si_1+\si_2)^2\big\}.$ This completes the proof of this lemma.	
\end{proof}

\bc$\rule{3.5in}{0.1mm}$\ec

\begin{lem}\label{lem:lcsubE} Assume that $X\sim{\rm subE(\si)},$ and that $\al\in\R,$ then $\alpha X\sim{\rm subE}(|\alpha|\si).$ Moreover, assume that $X_1\sim{\rm subE(\si_1)}$ and $X_2\sim{\rm subE(\si_2)},$ then $X_1+X_2\sim{\rm subE(\si_1+\si_2)}.$
\end{lem}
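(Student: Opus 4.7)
The plan is to mirror the proof of Lemma \ref{lem:lcsubG} almost verbatim, with the sole extra care being to track the validity range of $t$ in the moment generating function inequalities, since the sub-exponential definition only controls the mgf on $|t| \le 1/\sigma$ rather than on all of $\mathbb{R}$.

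For the scaling part, I would start from the defining inequality $E(e^{sX}) \le e^{s^2\sigma^2/2}$ valid for $|s| \le 1/\sigma$, substitute $s = t\alpha$, and read off that $E(e^{t\alpha X}) \le e^{t^2\alpha^2\sigma^2/2}$ holds precisely when $|t\alpha| \le 1/\sigma$, i.e.\ $|t| \le 1/(|\alpha|\sigma)$. This is exactly the sub-exponential condition with parameter $|\alpha|\sigma$.

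For the sum, I would apply H\"older's inequality with conjugate exponents $p,q\in[1,\infty)$:
\benrr
E\bigl(e^{t(X_1+X_2)}\bigr) \le \bigl\{E(e^{tpX_1})\bigr\}^{1/p}\bigl\{E(e^{tqX_2})\bigr\}^{1/q} \le e^{\frac{t^2}{2}(p\sigma_1^2 + q\sigma_2^2)},
\eenrr
which is legitimate as long as $|tp|\le 1/\sigma_1$ and $|tq|\le 1/\sigma_2$. Choosing $p^{*}=\sigma_2/\sigma_1+1$ and $q^{*}=\sigma_1/\sigma_2+1$ (the same choice that worked in Lemma \ref{lem:lcsubG}), a direct computation gives $p^{*}\sigma_1^{2}+q^{*}\sigma_2^{2}=(\sigma_1+\sigma_2)^{2}$, yielding $E(e^{t(X_1+X_2)}) \le \exp\{t^2(\sigma_1+\sigma_2)^2/2\}$.

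The one thing worth checking carefully, and the only place where this proof differs substantively from the sub-gaussian version, is the admissible range of $t$: with the above choices, the constraint $|tp^{*}|\le 1/\sigma_1$ becomes $|t|\le 1/((\sigma_2/\sigma_1+1)\sigma_1)=1/(\sigma_1+\sigma_2)$, and identically $|tq^{*}|\le 1/\sigma_2$ becomes $|t|\le 1/(\sigma_1+\sigma_2)$. Both constraints coincide with the required sub-exponential range for parameter $\sigma_1+\sigma_2$, so the bound is valid precisely on $|t|\le 1/(\sigma_1+\sigma_2)$, proving $X_1+X_2\sim\mathrm{subE}(\sigma_1+\sigma_2)$. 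There is no real obstacle beyond bookkeeping; the proof is essentially immediate once the conjugate exponents are chosen as above.
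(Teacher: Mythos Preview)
Your proposal is correct and is precisely the approach the paper has in mind: the paper omits the proof entirely, stating only that it ``is analogous to that of Lemma \ref{lem:lcsubG},'' and you have carried out that analogy faithfully, including the one genuinely new ingredient---verifying that the H\"older constraints $|tp^*|\le 1/\sigma_1$ and $|tq^*|\le 1/\sigma_2$ both reduce to $|t|\le 1/(\sigma_1+\sigma_2)$. There is nothing to add.
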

The proof of Lemma \ref{lem:lcsubE} is analogous to that of Lemma \ref{lem:lcsubG} and is thus omitted.

\bc$\rule{3.5in}{0.1mm}$\ec

\begin{lem}[Lemma 1.12 of \cite{rigollet201518}]\label{lem:sqsubGsubE} Let $X\sim {\rm subG}(\si)$ then the random variable $Z=X^2-E[X^2]$ is sub-exponential: $Z\sim {\rm subE(16\si^2)}.$
\end{lem}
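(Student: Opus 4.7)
The plan is to verify sub-exponentiality directly from the definition by bounding the moment generating function $E\exp(tZ)$ where $Z = X^2 - EX^2$, and showing that for all $|t| \le 1/(16\sigma^2)$ we have $E\exp(tZ) \le \exp(t^2(16\sigma^2)^2/2)$. First I would reduce to the case $\sigma=1$ using the scaling property in Lemma \ref{lem:lcsubG} (note that $X\sim \mathrm{subG}(\sigma)$ implies $X/\sigma \sim \mathrm{subG}(1)$, and $Z/\sigma^2 = (X/\sigma)^2 - E(X/\sigma)^2$), so it suffices to prove that $Z = X^2 - EX^2 \sim \mathrm{subE}(16)$ when $X\sim \mathrm{subG}(1)$.

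Next, I would expand the MGF as a power series:
\[
E\exp(tZ) \;=\; e^{-tEX^2}\sum_{k=0}^\infty \frac{t^k E[X^{2k}]}{k!}.
\]
Centering gives that the $k=0$ and $k=1$ terms in the expansion of $E\exp(tZ)$ together yield $1 + 0 \cdot t + O(t^2)$, so I would split the series and show the tail starting at $k=2$ is bounded by a constant multiple of $t^2$. To control the moments I would invoke Lemma \ref{lem:momentprop}(i), which gives $E|X|^{2k} \le 3(2k)(2k)^{k}$ for $k\ge 1$. Combined with the elementary inequality $k! \ge (k/e)^k$, this yields a bound of the form $E|X|^{2k}/k! \le C \cdot (2e \cdot 2)^k = C \cdot (4e)^k$, so the series $\sum_{k\ge 2} t^k E|X|^{2k}/k!$ is dominated by a geometric series in $4e|t|$ that converges whenever $|t| < 1/(4e)$.

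From there the plan is to collect the $O(t^2)$ contribution, use the fact that $EX^2 \le 1$ (which follows by matching coefficients in the sub-Gaussian MGF bound) to absorb the $e^{-tEX^2}$ prefactor, and conclude that $E\exp(tZ) \le 1 + At^2$ for some universal constant $A$, whenever $|t|$ is sufficiently small. The final step is the routine estimate $1 + At^2 \le \exp(At^2)$ and then verifying that the constants $A$ and the radius of convergence can be chosen to give precisely the $\mathrm{subE}(16)$ parameter, i.e., $A \le 16^2/2 = 128$ on the range $|t| \le 1/16$.

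The main obstacle is bookkeeping of constants: getting the explicit factor $16$ (rather than some larger universal constant) requires being slightly careful in the geometric series estimate and in how one bounds $E|X|^{2k}/k!$. A cleaner route, if the constant $16$ proves tricky, is to first show via Lemma \ref{lem:tailb}(i) that $P(|X|^2 > u) \le 2e^{-u/2}$ (so $X^2$ satisfies a sub-exponential tail bound), and then translate this tail estimate into an MGF bound on the centered variable $Z$ by the standard equivalence between sub-exponential tails and MGF growth; this route naturally produces numerical constants of the required order of magnitude.
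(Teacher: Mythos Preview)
The paper does not supply its own proof of this lemma; it is simply reproduced as Lemma 1.12 of \cite{rigollet201518} and used as a black box. Your proposed argument---reduce to $\si=1$ by scaling, expand the MGF as a power series, control $E|X|^{2k}/k!$ via the sub-Gaussian moment bound of Lemma~\ref{lem:momentprop}(i) together with Stirling, and sum the resulting geometric series---is exactly the standard proof in Rigollet's notes, so you are on the right track.

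Two small corrections in the bookkeeping. First, rather than expanding $e^{-tEX^2}\sum_k t^kE[X^{2k}]/k!$, it is cleaner (and this is what Rigollet does) to expand $E[e^{tZ}]=1+\sum_{k\ge 2}t^kE[Z^k]/k!$ directly, using $EZ=0$, and then bound $E|Z|^k\le 2^kE[X^{2k}]$ via $|Z|\le X^2+EX^2$ and Jensen's inequality $(EX^2)^k\le EX^{2k}$. This avoids having to reassemble the $k=0,1$ terms with the prefactor. Second, your estimate $E|X|^{2k}/k!\le C(4e)^k$ is slightly off: Lemma~\ref{lem:momentprop}(i) gives $E|X|^{2k}\le 6k(2k)^k$, so $E|X|^{2k}/k!\le 6k(2e)^k$, with an extra factor of $k$ but a base of $2e$ rather than $4e$. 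Combined with the factor $2^k$ from the $|Z|^k$ bound this gives a series in $(4e|t|)^k$, which is what ultimately produces a parameter of order $4e\approx 11$; the constant $16$ in the statement is simply a convenient round-up.
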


\bc$\rule{3.5in}{0.1mm}$\ec

The next result is Bernstein's inequality, reproduced from Lemma 1.13 of \cite{rigollet201518}.
\begin{lem}[Bernstein's inequality]\label{lem:subetail} Let $X_1,X_2,...,X_T$ be independent random variables such that $X_t\sim {\rm subE}(\si).$ Then for any $d>0$ we have,
	\benr
	pr(|\bar X|>d)\le 2\exp\Big\{-\frac{T}{2}\Big(\frac{d^2}{\si^2}\wedge \frac{d}{\si}\Big)\Big\}\nn
	\eenr
\end{lem}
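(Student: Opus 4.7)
The plan is to prove Bernstein's inequality for sub-exponential random variables by the standard Chernoff/exponential-moment method, carefully respecting the bounded range of admissible $t$ in the sub-exponential MGF definition. This mirrors the technique used earlier in the proof of Lemma \ref{lem:tailb}(ii) for a single sub-exponential r.v., with the additional step of exploiting independence to multiply MGFs.

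First I would write $\bar X = T^{-1}\sum_{t=1}^T X_t$ and, for any $t>0$ with $t \le 1/\sigma$, apply Markov's inequality to the exponential moment:
\[
\mathrm{pr}(\bar X > d) \;=\; \mathrm{pr}\!\Bigl(\sum_{i=1}^T X_i > Td\Bigr) \;\le\; e^{-tTd}\,\prod_{i=1}^T E(e^{tX_i}) \;\le\; \exp\!\Bigl\{-tTd + \tfrac{T t^2 \sigma^2}{2}\Bigr\},
\]
where the last inequality uses independence (factoring the MGF) together with Definition \ref{def:sube}, which is valid precisely because $|t|\le 1/\sigma$. Thus any upper bound will come from minimizing $f(t) = -tTd + Tt^2\sigma^2/2$ over the constrained set $t\in(0,1/\sigma]$.

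Next I would carry out the constrained optimization exactly as in the single-variable case (Lemma \ref{lem:tailb}(ii)). The unconstrained minimizer is $t^* = d/\sigma^2$. If $d \le \sigma$, then $t^*\in(0,1/\sigma]$ and plugging in gives $f(t^*) = -Td^2/(2\sigma^2)$, producing the sub-Gaussian-type tail $\exp\{-Td^2/(2\sigma^2)\}$. If $d>\sigma$, then $f$ is decreasing on $(0,1/\sigma]$, so the constrained minimum is attained at the boundary $t=1/\sigma$, giving $f(1/\sigma) = -Td/\sigma + T/2 \le -Td/(2\sigma)$, which yields the sub-exponential-type tail $\exp\{-Td/(2\sigma)\}$. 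Combining the two regimes via a minimum gives
\[
\mathrm{pr}(\bar X > d) \;\le\; \exp\!\Bigl\{-\tfrac{T}{2}\Bigl(\tfrac{d^2}{\sigma^2} \wedge \tfrac{d}{\sigma}\Bigr)\Bigr\}.
\]

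Finally, I would repeat the argument with $-X_t$ in place of $X_t$ (noting that $-X_t$ is also $\mathrm{subE}(\sigma)$, by the same MGF bound applied at $-t$, which is symmetric in Definition \ref{def:sube}) to obtain the identical bound for $\mathrm{pr}(\bar X < -d)$, and conclude by a union bound that
\[
\mathrm{pr}(|\bar X| > d) \;\le\; 2\exp\!\Bigl\{-\tfrac{T}{2}\Bigl(\tfrac{d^2}{\sigma^2} \wedge \tfrac{d}{\sigma}\Bigr)\Bigr\}.
\]
There is no real obstacle here; the only subtlety is remembering that the sub-exponential MGF bound is only available for $|t|\le 1/\sigma$, which forces the split into two regimes and is what distinguishes the tail from the purely Gaussian one.
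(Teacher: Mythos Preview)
Your proof is correct and is precisely the standard Chernoff argument for this inequality. The paper does not actually prove this lemma; it simply states that the result is reproduced from Lemma~1.13 of \cite{rigollet201518}, so your write-up supplies the details that the cited reference contains.
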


\bc$\rule{3.5in}{0.1mm}$\ec

The next result is Kolmogorov's inequality reproduced from \cite{hajek1955generalization}
\begin{thm}[Kolmogorov's inequality]\label{thm:kolmogorov} If $\xi_1,\xi_2,...$ is a sequence of mutually independent random variables with mean values $E(\xi_k)=0$ and finite variance ${\rm var}(\xi_k)=D_k^2$ $(k=1,2,...),$ we have, for any $\vep>0,$
	\benr
	pr\Big(\max_{1\le k\le m}\big|\xi_1+\xi_2+...+\xi_k\big|>\vep\Big)\le \frac{1}{\vep^2}\sum_{k=1}^mD_k^2\nn
	\eenr	
\end{thm}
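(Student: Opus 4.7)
The plan is to prove Kolmogorov's inequality by a first-entry (stopping-time) decomposition of the event $\{\max_k |S_k| > \varepsilon\}$, where $S_k = \xi_1 + \cdots + \xi_k$, and then to bound the second moment of $S_m$ from below using the independence and zero-mean properties of the increments. This is the classical Kolmogorov-Doob argument and carries over verbatim to the present setting since Theorem \ref{thm:kolmogorov} makes no assumption beyond independence, zero mean, and finite variance.

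First, I would partition the "bad" event according to the first index at which the partial sum exceeds $\varepsilon$ in absolute value. Set $A_k = \{|S_j| \le \varepsilon \text{ for } j < k,\ |S_k| > \varepsilon\}$ for $k=1,\ldots,m$. The events $A_k$ are pairwise disjoint, each $A_k$ is measurable with respect to $\sigma(\xi_1,\ldots,\xi_k)$, and $\bigsqcup_{k=1}^m A_k = \{\max_{1\le k\le m}|S_k| > \varepsilon\}$. The independence of $\xi_{k+1},\ldots,\xi_m$ from $\mathcal{F}_k := \sigma(\xi_1,\ldots,\xi_k)$, combined with $E(\xi_j) = 0$, gives $E[(S_m - S_k)\mathbb{1}_{A_k} S_k] = E[S_k \mathbb{1}_{A_k}] \cdot E[S_m - S_k] = 0$. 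I would expand $S_m^2 = S_k^2 + 2S_k(S_m - S_k) + (S_m - S_k)^2 \ge S_k^2 + 2S_k(S_m - S_k)$ and multiply by $\mathbb{1}_{A_k}$, take expectations, and use the orthogonality above to conclude
\[
E[S_m^2 \mathbb{1}_{A_k}] \ge E[S_k^2 \mathbb{1}_{A_k}] \ge \varepsilon^2 \, pr(A_k),
\]
where the final inequality uses $|S_k| > \varepsilon$ on $A_k$.

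Summing over $k = 1,\ldots,m$ and using the disjointness of the $A_k$ gives
\[
\sum_{k=1}^m D_k^2 \;=\; E(S_m^2) \;\ge\; \sum_{k=1}^m E[S_m^2 \mathbb{1}_{A_k}] \;\ge\; \varepsilon^2 \sum_{k=1}^m pr(A_k) \;=\; \varepsilon^2 \, pr\!\left(\max_{1\le k\le m}|S_k| > \varepsilon\right),
\]
where the equality $E(S_m^2) = \sum_k D_k^2$ uses the independence of the $\xi_k$ (so their cross-covariances vanish). Rearranging yields Kolmogorov's inequality.

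The only nonroutine step is verifying the orthogonality $E[S_k \mathbb{1}_{A_k}(S_m - S_k)] = 0$; there is no real obstacle because $S_k \mathbb{1}_{A_k}$ is $\mathcal{F}_k$-measurable while $S_m - S_k$ is independent of $\mathcal{F}_k$ and has mean zero, but one should be careful to note that $\mathbb{1}_{A_k}$ depends only on $\xi_1,\ldots,\xi_k$ (this is why first-passage indicators are used rather than, say, $\{|S_k| > \varepsilon\}$ alone, which would not give disjointness nor the clean decomposition). All the remaining manipulations are elementary.
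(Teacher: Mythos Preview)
Your proof is correct and is the classical Kolmogorov--Doob first-passage argument. Note, however, that the paper does not actually prove this theorem: it simply reproduces the statement from \cite{hajek1955generalization} as an auxiliary result in the appendix, so there is no paper proof to compare against. Your argument is the standard one and would be an appropriate self-contained justification.
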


\bc$\rule{3.5in}{0.1mm}$\ec

The following theorem is the well known `Argmax' theorem reproduced from Theorem 3.2.2 of \cite{vaart1996weak}
\begin{thm}[Argmax Theorem]\label{thm:argmax} Let $\cM_n,\cM$ be stochastic processes indexed by a metric space $H$ such that $\cM_n\Rightarrow\cM$ in $\ell^{\iny}(K)$ for every compact set $K\subseteq H.$ Suppose that almost all sample paths $h\to \cM(h)$ are upper semicontinuous and posses a unique maximum at a (random) point $\h h,$ which as a random map in $H$ is tight. If the sequence $\h h_n$ is uniformly tight and satisfies $\cM_n(\h h_n)\ge \sup_h \cM_n(h)-o_p(1),$ then $\h h_n\Rightarrow \h h$ in $H.$
\end{thm}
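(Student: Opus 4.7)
\vspace{2mm}
\noindent\textbf{Proof proposal for Theorem \ref{thm:wc.non.vanishing}.}

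The plan is to apply the Argmax Theorem (Theorem \ref{thm:argmax}) to the recentered objective process, with the crucial observation that in the non-vanishing regime the indexing set must be taken to be the discrete space $\Z$ rather than $\R$. Concretely, for each integer $r$ set $\tau^* = \tau^0 + r/T$ and consider $\cC(\tau^*,\h\mu,\h\g)$ as defined in (\ref{def:cC}). Since $\lfloor T\tilde\tau\rfloor-\lfloor T\tau^0\rfloor$ is an integer-valued random variable by construction, its weak limit must live on $\Z$, and the candidate limit process is $\cC_{\iny}(r)$ defined in (\ref{def:cCr}). Three items then need to be verified: (i) uniform tightness of the sequence $\lfloor T\tilde\tau\rfloor-\lfloor T\tau^0\rfloor$; (ii) finite-dimensional weak convergence of $r\mapsto\cC(\tau^0+r/T,\h\mu,\h\g)$ to $r\mapsto\cC_{\iny}(r)$ on every compact $K\subseteq\Z$; and (iii) regularity of the argmax of the limit process.

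First I would dispatch (i) as an immediate consequence of Theorem \ref{thm:optimalapprox}(ii), which under the stated rate condition $s\log^{3/2}(p\vee T)/\surd(Tl_T)=o(1)$ and $\psi\to\psi_\iny\in(0,\iny)$ yields $\lfloor T\tilde\tau\rfloor-\lfloor T\tau^0\rfloor=O_p(1)$. For (iii), I would cite Lemma \ref{lem:regularity.argmax}, which under Condition B$'$ establishes that $\cC_\iny$ has almost surely unique maximum attained at some a.s.\ finite random integer $\omega_\iny$, using the negative drift together with continuity of the increment law $\cL$ to rule out ties.

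The substantive work sits in (ii), and I would split it via the decomposition in (\ref{eq:algebra}) into the ``known nuisance" part $\cC(\tau^*,\mu^0,\g^0)$ and the plug-in error $\cC(\tau^*,\h\mu,\h\g)-\cC(\tau^*,\mu^0,\g^0)$. For the known-nuisance part I would write
\benr
\cC(\tau^*,\mu^0,\g^0)=p^{-1}\sum_{t=\lfloor T\tau^0\rfloor+1}^{\lfloor T\tau^0\rfloor+r}\sum_{j=1}^p\bigl\{2\vep_{tj}z_{t,-j}^T\eta^0_{(j)}-\eta^{0T}_{(j)}z_{t,-j}z_{t,-j}^T\eta^0_{(j)}\bigr\}\nn
\eenr
for $r>0$, and directly invoke Condition B$'$: each inner summand converges weakly to $\cL(-\psi_\iny^2\si_2^2,\bar\si_2^2)$, and since $r$ is a fixed finite integer the whole sum converges weakly to $\sum_{t=1}^r \cL(-\psi_\iny^2\si_2^2,\bar\si_2^2)$ by the continuous mapping theorem applied to the finite-dimensional joint distribution (using independence of the $w_t,x_t$ to obtain joint convergence). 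The mirroring case $r<0$ is symmetric and yields the $\cL(-\psi_\iny^2\si_1^2,\bar\si_1^2)$ side. For the plug-in discrepancy I would appeal to Lemma \ref{lem:est.known.cC.approx}, which provides uniform $o_p(1)$ control over $\cG(c_uT^{-1}\psi^{-2},0)$; since $\psi$ is bounded below, any fixed integer neighborhood of $\lfloor T\tau^0\rfloor$ lies in this collection for $T$ large.

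The main obstacle will be clarifying that no functional CLT is available here: the sum has only $r=O(1)$ terms and the increments are not converging to normals, so the argument must lean entirely on the distributional assumption in Condition B$'$ rather than on Lindeberg-type CLT machinery as was used in Theorem \ref{thm:limitingdist}. A secondary subtlety is verifying that the Argmax Theorem applies in the discrete setting: since $\Z$ with the discrete metric has compact sets that are finite, weak convergence on each compact is equivalent to joint convergence of finitely many coordinates, which is exactly what Condition B$'$ plus Lemma \ref{lem:est.known.cC.approx} delivers, and the upper semicontinuity hypothesis is automatic. Combining (i)--(iii) via Theorem \ref{thm:argmax} then yields $\lfloor T\tilde\tau\rfloor-\lfloor T\tau^0\rfloor\Rightarrow \argmax_{r\in\Z}\cC_\iny(r)$, as claimed.
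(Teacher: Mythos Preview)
Your proposal is not a proof of the stated theorem. The statement given is Theorem~\ref{thm:argmax}, the Argmax Theorem, which the paper does not prove at all: it is simply quoted from Theorem~3.2.2 of \cite{vaart1996weak} as an auxiliary tool. There is therefore no ``paper's own proof'' of this statement to compare against, and you have not attempted one either.

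What you have written is instead a proof of Theorem~\ref{thm:wc.non.vanishing}, as your own heading indicates. If that was your intent, then your argument matches the paper's proof of Theorem~\ref{thm:wc.non.vanishing} essentially step for step: the same three-part verification of the Argmax Theorem hypotheses, with (i) tightness from Theorem~\ref{thm:optimalapprox}(ii), (iii) regularity from Lemma~\ref{lem:regularity.argmax}, and (ii) handled by the same decomposition into the known-nuisance piece (controlled directly by Condition~B$'$ since $r$ is a fixed finite integer) plus the plug-in discrepancy (controlled uniformly by Lemma~\ref{lem:est.known.cC.approx}). Your additional remarks on why the discrete indexing space $\Z$ makes the weak-convergence and semicontinuity hypotheses trivial, and on why no functional CLT is available or needed here, are correct elaborations that the paper leaves implicit.
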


\bc$\rule{3.5in}{0.1mm}$\ec

\begin{lem}\label{lem:condnumberbound} Suppose condition B holds, and let $\mu^0_{(j)}$ and $\g^0_{(j)},$ be as defined in (\ref{def:muga}). Then we have,
	\benr
	\max_{1\le j\le p}\Big(\|\mu^0_{(j)}\|_2 \vee \|\g^0_{(j)}\|_2\Big)\le \nu,\nn
	\eenr
\end{lem}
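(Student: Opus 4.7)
The plan is to prove the bound by viewing each $\mu^0_{(j)}$ (and analogously $\g^0_{(j)}$) as the product of an inverse of a principal submatrix with a truncated column of the parent matrix, and then to invoke standard operator-norm and eigenvalue-interlacing facts from linear algebra.

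First I would apply sub-multiplicativity of the operator norm to write
\[
\|\mu^0_{(j)}\|_2 \;=\; \|\Si_{-j,-j}^{-1}\Si_{-j,j}\|_2 \;\le\; \|\Si_{-j,-j}^{-1}\|_{\mathrm{op}}\,\|\Si_{-j,j}\|_2,
\]
reducing the problem to bounding the two factors individually. For the first factor, note that $\Si_{-j,-j}$ is a principal submatrix of $\Si$, so by the Cauchy interlacing theorem its eigenvalues lie between the smallest and largest eigenvalues of $\Si$, which by Condition B(ii) lie in $[\ka,\phi]$. In particular $\min\mathrm{eigen}(\Si_{-j,-j})\ge\ka$, so $\|\Si_{-j,-j}^{-1}\|_{\mathrm{op}}\le 1/\ka$.

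For the second factor, let $e_j\in\R^p$ denote the $j$-th standard basis vector, so that $\Si e_j$ is the $j$-th column of $\Si$. Since $\Si_{-j,j}$ is obtained from $\Si e_j$ by deleting one coordinate,
\[
\|\Si_{-j,j}\|_2 \;\le\; \|\Si e_j\|_2 \;\le\; \|\Si\|_{\mathrm{op}}\,\|e_j\|_2 \;\le\; \phi,
\]
using Condition B(ii) once more. Combining these two bounds yields $\|\mu^0_{(j)}\|_2\le \phi/\ka = \nu$ uniformly in $j$. The identical argument with $\Si$ replaced by $\D$ gives the analogous bound for $\g^0_{(j)}$, and taking the maximum over $j$ and over $\mu^0,\g^0$ completes the proof.

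There is no real obstacle here: the result is a clean consequence of Cauchy interlacing combined with operator-norm sub-multiplicativity. The only point worth flagging is the use of interlacing to propagate the lower eigenvalue bound from $\Si$ (and $\D$) to their $(p-1)\times(p-1)$ principal submatrices, which is what allows the bound on $\|\Si_{-j,-j}^{-1}\|_{\mathrm{op}}$ to be expressed in terms of the parent matrix's condition-number parameters.
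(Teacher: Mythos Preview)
Your proof is correct and arrives at the same bound $\nu=\phi/\ka$, but by a different route than the paper. The paper works through the precision matrix $\Omega=\Si^{-1}$: it invokes the block-inverse identities $\Omega_{-j,j}=-\Omega_{jj}\mu^0_{(j)}$, bounds $\|\Omega e_j\|_2\le 1/\ka$ via the operator norm of $\Omega$, bounds $|\Omega_{jj}|\ge 1/\phi$ via the diagonal of $\Omega$, and divides. Your argument stays entirely on the $\Si$ side, using sub-multiplicativity together with Cauchy interlacing to control $\|\Si_{-j,-j}^{-1}\|_{\mathrm{op}}$ and a direct column-norm bound for $\|\Si_{-j,j}\|_2$. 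Your route is arguably more elementary since it avoids the detour through block-inverse formulas; the paper's route has the side benefit of making explicit the link between $\mu^0_{(j)}$ and the precision-matrix entries, which is the graphical-model interpretation used elsewhere in the article. Either way the argument is short and both yield exactly $\phi/\ka$.
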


\begin{proof}{\bf of Lemma \ref{lem:condnumberbound}} Let $\Omega=\Sigma^{-1}$ be the precision matrix corresponding to $\Si.$ Then we can write $\Om_{jj}=-(\Si_{jj}-\Si_{j,-j}\mu^0_{(j)})^{-1},$ and $\Om_{-j,j}=-\Om_{jj}\mu^0_{(j)},$ for each $j=1,...,p,$ (see, e.g., \cite{yuan2010high}). We also have that $1\big/\phi\le \max_{j}|\Om_{jj}|\le 1\big/\ka.$ Now note that the $\ell_2$ norm of the rows (or columns) of $\Om$ are bounded above,
	i.e., $\|\Om_{j\cdot}\|_2=\|\Om e_j\|_2\le 1/\ka.$ This finally implies that
	\benr\label{eq:2}
	\|\mu^0_{(j)}\|_2=\|-\Om_{-j,j}\big/\Om_{jj}\|_2\le \|\Om_{j\cdot}\|_2\big/ |\Om_{jj}|\le \frac{\phi}{\ka}=\nu
	\eenr
	Since the r.h.s. in (\ref{eq:2}) is free of $j,$ this implies that $\max_{j}\|\mu^0_{(j)}\|\le \nu.$ Identical arguments can be used to show that $\max_{j}\|\g^0_{(j)}\|\le \nu.$ These two statements together imply the statement of the lemma.
\end{proof}

\bc$\rule{3.5in}{0.1mm}$\ec

\section{Further computational details and additional numerical results}\label{sec:add.numerical}

\vspace{5mm}
\noi{\bf Computation of asymptotic variances and negative drifts}: Here we discuss the computation of the asymptotic variance parameters $\si_1^{*2},\bar\si_1^2,\si_2^{*2},\bar\si_2^2,$ and the negative drift parameters $\si_1^2,\si_2^2$ and $\psi_{\iny}.$ Estimation of these quantities is necessary for the implementation of confidence intervals for $\tau^0$ using the result of Theorem \ref{thm:limitingdist} and Theorem \ref{thm:wc.non.vanishing}. 

We begin by recalling that Step 2 of Algorithm 1 or Algorithm 2 yields the availablility of $\ell_1$ regularized estimates $\h\mu_{(j)}$ and $\h\g_{(j)},$ $j=1,...,p$ that satisfy the error bounds (\ref{eq:optimalmeans}). In order to alleviate finite sample regularization biases we undertake a first supplemental step of refitting these coefficient estimates as ordinary least squares on the set of estimated non-zero indices computed as follows. Let $\h S_{1j}=\{k\,;\,\h\mu_{(j)k\ne 0}$ and $\h S_{2j}=\{k\,;\,\h\g_{(j)k\ne 0}\},$ $j=1,...,p.$ Then define the refitted least squares versions as, $\tilde\mu_{(j)}=(\tilde\mu^T_{(j)\h S_{1j}},0^T_{S_{1j}^c})^T,$ and  $\tilde\g_{(j)}=(\tilde\g^T_{(j)\h S_{2j}},0^T_{S_{2j}^c})^T,$ where,
\benr
\tilde\mu_{(j)\h S_{1j}}=\argmin_{\substack{\mu_{(j)}\in \R^{|\h S_{1j}|}}}\frac{1}{\lfloor T\tilde\tau\rfloor}\sum_{t=1}^{\lfloor T\tilde\tau\rfloor}\big(z_{tj}-\big(z_{t,-j}\big)_{\h S_{1j}}^T\mu_{(j)}\big)^2,\quad j=1,...,p\nn
\eenr
and symmetrically define $\tilde\g_{(j)\h S_{2j}},$ $j=1,...,p.$  It is known from the literature that refitted estimates preserve the rate of convergence of the regularized version while reducing finite sample biases, see, e.g. \cite{belloni2011square} and \cite{belloni2017pivotal}. The jump sizes $\xi_{2,2}$ and $\psi$ are then estimated using these refitted parameters, i.e., let $\tilde\eta_{(j)}=\tilde\mu_{(j)}-\tilde\g_{(j)},$ then 
$\tilde\xi_{2,2}=\big(\sum_{j=1}^p\|\tilde\eta_{(j)}\|_2^2\big)^{1/2}$ and $\tilde\psi=\tilde\xi_{2,2}/\surd{p}.$ 

Next we consider the drift parameters $\si_1^2$ and $\si_2^2$ defined in Condition D. Note the finite sample representation of these parameters is, $\xi_{2,2}^{-2}\sum_{j=1}^p\eta^{0T}_{(j)}\Si_{-j,-j}\eta^0_{(j)}$ and $\xi_{2,2}^{-2}\sum_{j=1}^p\eta^{0T}_{(j)}\D_{-j,-j}\eta^0_{(j)},$ respectively. A plug in version is computed by utilizing the above described $\tilde\xi_{2,2}$ and $\tilde\eta_{(j)},$ $j=1,...,p.$ The covariances in the above calculation are chosen as the sample covariances $\tilde\Si$ and $\tilde\D$ computed on the binary partition of the data from the estimated change point $\tilde\tau.$ We note that since we are not interested in the estimation of the covariances themselves but instead the quardratic form described above, thus utilizing the sample covariances is effectively identical to utilizing refitted covariances on the adjacency matrix defined by the jump parameters $\tilde\eta_{(j)},$ in turn making this shortcut valid despite potential high dimensionality. 

Next consider the asymptotic variances $\si_1^{*2},\bar\si_1^2,\si_2^{*2},\bar\si_2^2,$ defined in Condition D and Condition B$'$. A plug in estimate of these quantities is infeasible since no closed form expressions are available for these variances. Instead we approximate them numerically as follows. Recall from Condition D the finite sample representation of $\si_1^{*2}$ defined as ${\rm var}\big[\xi_{2,2}^{-1}p^{-1/2}\sum_{j=1}^p\vep_{tj}z_{t,-j}^T\eta^0_{(j)}\big].$ In order to approximate this expression, define, 
\benr
\tilde\vep_{tj}=\begin{cases}z_{tj}-z_{t,-j}^T\tilde\mu_{(j)}, & t=1,...,\lfloor T\tilde\tau\rfloor\\
	z_{tj}-z_{t,-j}^T\tilde\g_{(j)}, & t=\lfloor T\tilde\tau\rfloor+1,...,T,	
\end{cases}\nn
\eenr
Then one can obtain $\lfloor T\tilde\tau\rfloor$ predicted realizations as $\tilde\z_t^*=\tilde\xi_{2,2}^{-1}p^{-1/2}\sum_{j=1}^p\tilde\vep_{tj}z_{t,-j}^T\tilde\eta_{(j)},$ $t=1,...,\lfloor T\tilde\tau\rfloor.$ The parameter $\si_1^{*2}$ is then estimated as the sample variance of these realizations, 
\benr
\tilde\si_1^{*2}=\frac{1}{\lfloor T\tilde\tau\rfloor}\sum_{j=1}^{\lfloor T\tilde\tau\rfloor}\big(\tilde\z_t^{*}-{\bar{\tilde\z}}_t^*\big)^2.\nn
\eenr 
The parameter $\si_2^{*2}$ is approximated similarly as the sample variance of predicted realizations $\tilde\z_t^*$ from the post binary partition $\lfloor T\tilde\tau\rfloor+1,...,T.$ The approach to estimation of the variance parameters $\bar\si_1^2$ and $\bar\si_2^2$ is conceptually similar as above. Recall the representation of these parameters from Condition B$'$: ${\rm var}\big[p^{-1}\sum_{j=1}^p\big\{2\vep_{tj}z_{t,-j}^T\eta^0_{(j)}-\eta^{0T}_{(j)}z_{t,-j}z_{t,-j}^T\eta^0_{(j)}\big\}\big].$ One can obtain predicted realizations of this distribution using the previously estimated parameters and approximate $\bar\si_1^{2}$ as corresponding sample variances of these realizations. Similar calculation can be utilized for the approximation of $\bar\si_2^2.$

\vspace{5mm}
\noi{\bf Empirically fitting distribution law $\cL$}: This subsection illustrates an example of the process that can be utilized to empirically fit a distribution $\cL$ of Condition $B',$ in order to implement the result of Theorem \ref{thm:wc.non.vanishing} thereby allowing construction of a confidence interval for the change point parameter in the non-vanishing jump size regime. First note that the distribution under question is that of the sequence $p^{-1}\sum_{j=1}^p\big\{2\vep_{tj}z_{t,-j}^T\eta^0_{(j)}-\eta^{0T}_{(j)}z_{t,-j}z_{t,-j}^T\eta^0_{(j)}\big\}$ in the limit. As described in the previous subsection, the available data $z$ and plug in estimates of underlying parameters allow one to obtain predicted realizations from this distribution. Figure \ref{fig:increment.alg3} provides an example of the centered and scaled distribution of these realizations when the data generating process is Gaussian. 

Two key observations at this stage are as follows. First, given the sub-gaussian assumption of Condition B, the distribution under investigation must be sub-exponential. This observation allows considerable reduction of the potential distributions to be tested to a sub-class of well known sub-expoential distributions. Next, note that the second part of the sequence under consideration is a quadratic form, thus it induces a skewness in the distribution with the underlying skewness diminishing with a decreasing jump size\footnote{Recall from (\ref{eq:finite.var}) that the variance of the quadratic form is upper bounded by $O(\psi^4),$ whereas the first symmetric part if $O(\psi^2).$}. Since this quadratic form appears with a negative sign in the distribution of interest, thus the skewness appears through a larger left tail. 

In view of the above two observations, we consider a negative centered and scaled chi-square distribution as an empirical fit. The negative sign switches the right skew of a chi-square to a left skew, moreover, an increasing degrees of freedom parameter of this chi-square allows one to fit a distribution from complete left skew all the way to perfect symmetry. Specifically, we utlize Algorithm 3 to empirically fit a distribution $\cL,$ where the degrees of freedom of this chi-square distribution is chosen so as to maximize the p-value of the Kolmogorov-Smirnov goodness of fit test, i.e., so as to provide the best fitting chi-square to the distribution of interest.

\vspace{-2mm}
\begin{figure}[H]
	\noi\rule{\textwidth}{0.5pt}
	
	\vspace{-2mm}
	\flushleft {\bf Algorithm 3:} Empirically fitting a centered and sclaed $\chi^2_k$ to the distribution law $\cL.$
	
	\vspace{-2mm}
	\noi\rule{\textwidth}{0.5pt}
	
	\vspace{-1.5mm}
	\flushleft{\bf Step 1:} Obtain refitted estimates $\tilde\mu_{(j)},$ $\tilde\g_{(j)}$ and $\tilde\eta_{(j)},$ $j=1,...,p$ and obtain predicted realizations,
	\benr
	\tilde\z_t=p^{-1}\sum_{j=1}^p\big\{2\tilde\vep_{tj}z_{t,-j}^T\tilde\eta_{(j)}-\tilde\eta^{T}_{(j)}z_{t,-j}z_{t,-j}^T\tilde\eta_{(j)}\big\}, & t=1,...,T.
	\eenr

	\vspace{-1.5mm}
	\flushleft{\bf Step 2:} Piecewise center and scale the predicted realizations $\tilde\z_t,$ i.e.
	\benr
	\tilde\z_t^*=\begin{cases}
		(\tilde\z_t-\bar\z_{t1})/sd_{1}, & t=1,...,\lfloor T\tilde\tau\rfloor\\
		(\tilde\z_t-\bar\z_{t2})/sd_{2}, &  t=\lfloor T\tilde\tau\rfloor+1,...,T.
	\end{cases}\nn
	\eenr
	Here $\bar\z_{t1},\bar\z_{t2}$ and $sd_{1},sd_{2}$ are the piecewise sample means and standard deviations, respectively. 
	
	\vspace{-1.5mm}
	\flushleft{\bf Step 3:} Consider a negative centered and scaled $\chi^2_k$ distribution with $k$ degrees of freedom, i.e., $X=-(\chi^2_{k}-k)/\surd{(2k)}$ and utilize the Kolmogorv-Smirnov (K-S) goodness of fit test to check for the empirical fit between $X$ and the realizations $\tilde\z_t^*,$ $t=1,...,T.$ 
	
	\vspace{-1.5mm}
	\flushleft{\bf Step 4:} Repeat Step 3 on a grid of values for the degrees of freedom $k\in\{1,2,3....\}$ and choose $k$ as the maximizing value of the p-value of the K-S	test. 
	
	\vspace{-1.5mm}
	\noi\rule{\textwidth}{0.5pt}
\end{figure}

An illustration of the fitted distribution using Algorithm 3 is provided in Figure \ref{fig:increment.alg3}. The remainder of this section provides additional numerical results of the simulated experiments discussed in Section \ref{sec:nuisance} of the main article.

\begin{figure}[]
	\centering
	\begin{minipage}[l]{0.35\textwidth}
		\includegraphics[width=0.8\textwidth]{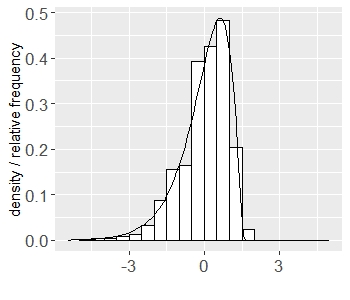}
	\end{minipage}
	\hspace{3mm}
	\begin{minipage}[r]{0.35\textwidth}
		\includegraphics[width=0.8\textwidth]{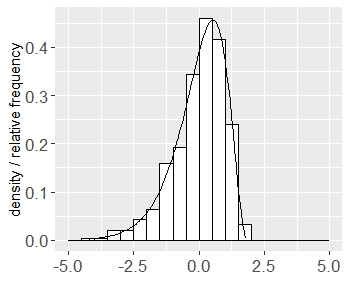}
	\end{minipage}
	\begin{minipage}[l]{0.35\textwidth}
		\includegraphics[width=0.8\textwidth]{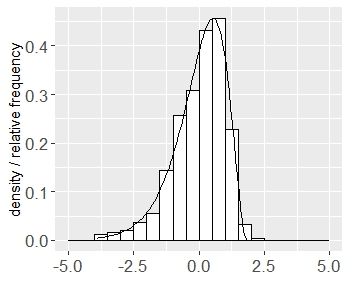}
	\end{minipage}
	\hspace{3mm}
	\begin{minipage}[r]{0.35\textwidth}
		\includegraphics[width=0.8\textwidth]{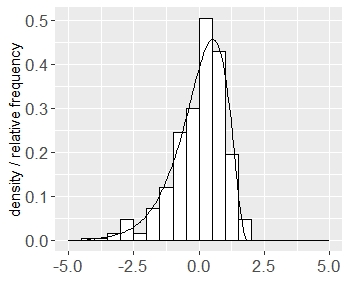}
	\end{minipage}
	\caption{\footnotesize{Empirically fitting a negative centered and scaled $\chi^2_k$ distribution to predicted realizations $\tilde\z_t^^*$ (see, Step 2 of Algorithm 3) for identification of distribution $\cL.$ Figure illustrates histograms of $\tilde\z_t^*,$ along with superimposed densities of fitted negative centered and scaled $\chi^2_k$ distributions with degrees of freedom (df) identified via K-S goodness of fit test. 
			{\it Top panels:} Cases $p=25,$ and $p=50,$ (fitted distributions with df=$5,7,$ respectively. Corresponding p-values of K-S test: $0.90,$ $0.95,$ respectively.) {\it Bottom panels:} Cases $p=150$ and $p=250.$ (fitted distributions with df=$7,8,$ respectively. Corresponding p-values of K-S test: $0.99,$ $0.82,$ respectively.). Data generating process same as that of Section \ref{sec:numerical}, all underlying parameters estimated as described in Appendix \ref{sec:add.numerical}.}}
	\label{fig:increment.alg3}
\end{figure}

\begin{table}[H]
	\centering
	\resizebox{1\textwidth}{!}{
		\begin{tabular}{cclllllll}
			\toprule
			\multicolumn{3}{c}{\multirow{2}{*}{$\tau^0=0.6$}} & \multicolumn{6}{c}{Coverage (Av. margin of error)}                                                                                                                                                \\ \cmidrule{4-9} 
			\multicolumn{3}{c}{}                              & \multicolumn{3}{c}{Non-vanishing}                                                               & \multicolumn{3}{c}{Vanishing}                                                                   \\ \midrule
			$n$   & $p$   & \multicolumn{1}{c}{bias (rmse)}   & \multicolumn{1}{c}{$\al=0.1$} & \multicolumn{1}{c}{$\al=0.05$} & \multicolumn{1}{c}{$\al=0.01$} & \multicolumn{1}{c}{$\al=0.1$} & \multicolumn{1}{c}{$\al=0.05$} & \multicolumn{1}{c}{$\al=0.01$} \\ \midrule
			300   & 25    & 0.018 (0.279)                    & 0.94 (0.006)                 & 0.95 (0.19)                   & 0.99 (0.91)                   & 0.94 (0.187)                 & 0.94 (0.278)                  & 0.94 (0.53)                    \\
			300   & 50    & 0.022 (0.265)                    & 0.97 (0)                     & 0.97 (0.008)                  & 0.99 (0.456)                  & 0.98 (0.091)                 & 0.98 (0.138)                  & 0.98 (0.265)                  \\
			300   & 150   & 0.050 (0.688)                    & 0.98 (0)                     & 0.98 (0.002)                  & 0.98 (0.024)                  & 0.98 (0.03)                  & 0.98 (0.047)                  & 0.98 (0.091)                   \\
			300   & 250   & 6.768 (13.31)                    & 0.65 (0.008)                 & 0.65 (0.19)                   & 0.66 (0.362)                  & 0.65 (0.034)                 & 0.65 (0.053)                  & 0.65 (0.104)                  \\ 
			\midrule
			400   & 25    & 0.030 (0.349)                    & 0.94 (0.002)                 & 0.95 (0.182)                  & 0.98 (0.944)                  & 0.95 (0.193)                 & 0.95 (0.289)                  & 0.95 (0.552)                  \\
			400   & 50    & 0.012 (0.155)                    & 0.98 (0)                     & 0.98 (0.004)                  & 0.99 (0.434)                  & 0.98 (0.092)                 & 0.98 (0.139)                  & 0.98 (0.268)                  \\
			400   & 150   & 0.006 (0.077)                    & 0.99 (0)                     & 0.99 (0)                      & 0.99 (0.006)                  & 0.99 (0.028)                 & 0.99 (0.043)                  & 0.99 (0.084)                  \\
			400   & 250   & 0.480 (3.865)                    & 0.97 (0)                     & 0.97 (0.012)                  & 0.98 (0.03)                   & 0.97 (0.019)                 & 0.97 (0.03)                   & 0.97 (0.058)                  \\ 
			\midrule
			500   & 25    & 0.008 (0.261)                    & 0.94 (0)                     & 0.95 (0.162)                  & 0.99 (0.958)                  & 0.95 (0.195)                 & 0.95 (0.292)                  & 0.95 (0.558)                  \\
			500   & 50    & 0.002 (0.118)                    & 0.98 (0)                     & 0.98 (0.002)                  & 0.99 (0.536)                  & 0.98 (0.093)                 & 0.98 (0.141)                  & 0.98 (0.272)                  \\
			500   & 150   & 0 (0)                            & 1 (0)                        & 1 (0)                         & 1 (0.01)                      & 1 (0.027)                    & 1 (0.042)                     & 1 (0.082)                      \\
			500   & 250   & 0 (0)                            & 1 (0)                        & 1 (0)                         & 1 (0)                         & 1 (0.016)                    & 1 (0.025)                     & 1 (0.049)                      \\ 
			\bottomrule
	\end{tabular}}
	\caption{\footnotesize{Summary of monte-carlo simulation results at $\tau^0=0.60$ based on 500 replications. Bias, rmse and av.margin of error rounded to three decimals, coverage rounded to two decimals.}}
	\label{tab:t03}
\end{table}

\begin{table}[H]
	\centering
	\resizebox{1\textwidth}{!}{
		\begin{tabular}{cclllllll}
			\toprule
			\multicolumn{3}{c}{\multirow{2}{*}{$\tau^0=0.8$}} & \multicolumn{6}{c}{Coverage (Av. margin of error)}                                                                                                                                                \\ \cmidrule{4-9} 
			\multicolumn{3}{c}{}                              & \multicolumn{3}{c}{Non-vanishing}                                                               & \multicolumn{3}{c}{Vanishing}                                                                   \\ \midrule
			$n$   & $p$   & \multicolumn{1}{c}{bias (rmse)}   & \multicolumn{1}{c}{$\al=0.1$} & \multicolumn{1}{c}{$\al=0.05$} & \multicolumn{1}{c}{$\al=0.01$} & \multicolumn{1}{c}{$\al=0.1$} & \multicolumn{1}{c}{$\al=0.05$} & \multicolumn{1}{c}{$\al=0.01$} \\ \midrule
			300   & 25    & 0.088 (0.400)                    & 0.90 (0.018)                 & 0.93 (0.198)                  & 0.97 (0.882)                  & 0.90 (0.185)                 & 0.90 (0.281)                  & 0.90 (0.543)                  \\
			300   & 50    & 0.098 (0.417)                    & 0.93 (0)                     & 0.93 (0.020)                  & 0.96 (0.430)                  & 0.93 (0.095)                 & 0.93 (0.147)                  & 0.93 (0.285)                  \\
			300   & 150   & 7.342 (9.891)                    & 0.35 (0.008)                 & 0.35 (0.314)                  & 0.36 (0.708)                  & 0.35 (0.055)                 & 0.35 (0.083)                  & 0.35 (0.166)                  \\
			300   & 250   & 11.82 (14.71)                    & 0.16 (0.014)                 & 0.16 (0.282)                  & 0.16 (0.83)                   & 0.16 (0.039)                 & 0.16 (0.059)                  & 0.16 (0.118)                  \\ 
			\midrule
			400   & 25    & 0.034 (0.332)                    & 0.92 (0.008)                 & 0.95 (0.212)                  & 0.99 (0.92)                   & 0.93 (0.187)                 & 0.93 (0.284)                  & 0.93 (0.549)                  \\
			400   & 50    & 0.058 (0.279)                    & 0.93 (0)                     & 0.93 (0.020)                  & 0.96 (0.438)                  & 0.94 (0.094)                 & 0.94 (0.145)                  & 0.94 (0.281)                  \\
			400   & 150   & 2.664 (6.291)                    & 0.71 (0.002)                 & 0.71 (0.070)                  & 0.72 (0.238)                  & 0.71 (0.042)                 & 0.71 (0.064)                  & 0.71 (0.126)                  \\
			400   & 250   & 16.87 (17.84)                    & 0.07 (0.006)                 & 0.07 (0.518)                  & 0.07 (1.038)                  & 0.07 (0.047)                 & 0.07 (0.074)                  & 0.07 (0.147)                  \\ 
			\midrule
			500   & 25    & 0.052 (0.379)                    & 0.91 (0.004)                 & 0.93 (0.200)                  & 0.98 (0.940)                  & 0.91 (0.192)                 & 0.91 (0.290)                  & 0.91 (0.560)                   \\
			500   & 50    & 0.028 (0.179)                    & 0.97 (0)                     & 0.97 (0.004)                  & 0.99 (0.536)                  & 0.97 (0.094)                 & 0.97 (0.143)                  & 0.97 (0.278)                  \\
			500   & 150   & 0.146 (1.017)                    & 0.93 (0)                     & 0.93 (0.002)                  & 0.94 (0.066)                  & 0.93 (0.032)                 & 0.93 (0.050)                   & 0.93 (0.097)                  \\
			500   & 250   & 13.23 (16.96)                    & 0.30 (0)                     & 0.30 (0.338)                  & 0.30 (0. 718)                  & 0.30 (0.042)                 & 0.30 (0.066)                  & 0.30 (0.131)                  \\ 
			\bottomrule
	\end{tabular}}
	\caption{\footnotesize{Summary of monte-carlo simulation results at $\tau^0=0.80$ based on 500 replications. Bias, rmse and av.margin of error rounded to three decimals, coverage rounded to two decimals.}}
	\label{tab:t04}
\end{table}

\end{document}